\theoremstyle{plain}  
\newtheorem{thm}{Theorem}[section]
\newtheorem{lem}[thm]{Lemma}
\newtheorem{prop}[thm]{Proposition}
\newtheorem{cor}[thm]{Corollary}
\theoremstyle{definition}  
\newtheorem{defn}[thm]{Definition}
\theoremstyle{remark}  
\newtheorem{rem}[thm]{Remark}
\newcommand{\setof}[1]{\left\{ {#1}\right\}}
\newcommand{\N}{{\mathbb{N}}}
\newcommand{\R}{{\mathbb{R}}}
\newcommand{\cA}{{\mathcal A}}
\newcommand{\cB}{{\mathcal B}}
\newcommand{\cE}{{\mathcal E}}
\newcommand{\cF}{{\mathcal F}}
\newcommand{\cK}{{\mathcal K}}
\newcommand{\cL}{{\mathcal L}}
\newcommand{\cM}{{\mathcal M}}
\newcommand{\cN}{{\mathcal N}}
\newcommand{\cT}{{\mathcal T}}
\newcommand{\cV}{{\mathcal V}}
\newcommand{\cW}{{\mathcal W}}
\newcommand{\sA}{{\mathsf A}}
\newcommand{\sD}{{\mathsf D}}
\newcommand{\sH}{{\mathsf H}}
\newcommand{\sJ}{{\mathsf J}}
\newcommand{\sL}{{\mathsf L}}
\newcommand{\sN}{{\mathsf N}}
\newcommand{\sO}{{\mathsf O}}
\newcommand{\sP}{{\mathsf P}}
\newcommand{\sQ}{{\mathsf Q}}
\newcommand{\sU}{{\mathsf U}}
\newcommand{\sMG}{\mathsf{MG}}
\newcommand{\mvmap}{\rightrightarrows}
\newcommand{\sAtt}{{\mathsf{ Att}}}
\newcommand{\sMD}{{\mathsf{ MD}}}
\newcommand{\sInvset}{{\mathsf{ Invset}}}
\newcommand{\sANbhd}{{\mathsf{ ANbhd}}}
\newcommand{\pred}[1]{\stackrel{\leftarrow}{#1}}
\newcommand{\down}{\mathop{\downarrow}}
\newcommand{\Inv}{\mathop{\mathrm{Inv}}\nolimits}
\newcommand{\Int}{\mathop{\mathrm{int}}\nolimits} 
\newcommand{\cl}{\mathop{\mathrm{cl}}\nolimits}
\definecolor{gray85}{gray}{0.85} 
\definecolor{gray8}{gray}{0.8} 
\definecolor{gray7}{gray}{0.7} 
\definecolor{gray6}{gray}{0.6} 
\definecolor{gray5}{gray}{0.5} 
\definecolor{gray4}{gray}{0.4} 
\definecolor{gray35}{gray}{0.35} 
\title{Global Dynamics for Steep Sigmoidal Nonlinearities in Two Dimensions}
\author{Tom\'{a}\v{s} Gedeon}
\address{Department of Mathematical Sciences \\
Montana State University\\
Bozeman, MT 59715\\
tel. (406)-994-5359\\
fax. (406)-994-1789\\
{\rm gedeon@math.montana.edu}}
\author{Shaun Harker}
\address{Department of Mathematics, Hill Center-Busch Campus\\
Rutgers, The State University of New Jersey \\
Piscataway, NJ  08854-8019, USA}
\author{Hiroshi Kokubu}
\address{Department of Mathematics\\ 
Kyoto University \\
Kyoto, 606-8502, Japan}
\author{Konstantin Mischaikow}
\address{Department of Mathematics, Hill Center-Busch Campus\\
Rutgers, The State University of New Jersey \\
Piscataway, NJ  08854-8019, USA}
\author{Hiroe Oka}
\address{Department of Applied Mathematics and Informatics \\
Ryukoku University \\
Seta, Otsu 520-2194, Japan}
\date{June 30, 2015}                                           
\begin{document}
\maketitle

\section{Introduction}

This paper discusses a novel approach to obtaining mathematically rigorous results on the global dynamics of ordinary differential equations. The motivation is twofold.
The first arises from applications, in particular the study of regulatory networks.
Because of their centrality in the study of systems biology anything beyond the most cursory comments are beyond the scope of this introduction. Instead we refer the interested reader to \cite{albert:collins:glass}.
The second arises from ongoing work of the authors to develop a mathematical framework, which we call a Database for Dynamics, that provides a computationally efficient and mathematically rigorous analysis of global dynamics of multiparameter nonlinear systems.
Reasonable success has been obtained in the context of nonlinear systems generated by maps \cite{siam,chaos, bush:mischaikow, newton}.
However, extending these methods to ordinary differential equations is proving to be technically challenging since the rigorous evaluation of a map needs to be replaced by the rigorous evaluation of solutions to a differential equation \cite{mischaikow:mrozek:weilandt, miyaji:pilarczyk:gameiro:kokubu:mischaikow}.
We return to this topic in the conclusion.

A regulatory network can be represented as an annotated directed graph.
The vertices  represent a regulatory object, e.g. a protein, and an edge from node $m$ to node $n$ indicates that $m$ directly regulates $n$. 
The annotation on this edge indicates whether $m$ activates (up regulates) or represses (down regulates) $n$.
It is natural to think of modeling the dynamics of this system via an ordinary differential equation.
In the context of gene regulatory networks, the associated proteins have a natural decay rate, and thus given a network with $N$ genes one is led to a system ordinary differential equation of the form
\begin{equation}
\label{eq:abstract}
\dot{x}_n = -\gamma_n x_n + f_n(x),\quad n=1,\ldots, N
\end{equation}
where $\gamma_n >0$ and $f_n$ is independent of $x_m$ if there is no edge from node $m$ to node $n$.
In the context of applications at this level of generality there is little that one can hope to say.
However, it is typically assumed that the interactions have switch like behavior and thus the nonlinearities used to model  interactions between individual nodes are often assumed to have a sigmoidal shape.
A common construction is based on Hill functions
\begin{equation}
\label{eq:hill}
\frac{x^k}{\theta^k + x^k}\quad\text{or}\quad\frac{\theta^k}{\theta^k + x^k}
\end{equation}
where the former and latter expressions are used to model activation and  repression, respectively.
However, it is important to keep in mind that there are other models that are probably more representative of the underlying biochemistry \cite{bintuA, bintuB}.
Even for $N$ of moderate size the analysis of \eqref{eq:abstract} with arbitrary Hill functions is intractable.
A standard simplification is to let $k\to \infty$ in which case one obtains nonlinearities that take the form of piecewise constant functions.
We refer to this as a {\em switching system} (for a precise definition see Section~\ref{sec:switchSystem}) and denote it by
\begin{equation}
\label{eq:switchN}
\dot{x} = -\Gamma x + \Lambda(x),\quad x\in \R^N.
\end{equation}
We view \eqref{eq:switchN} as a computational model, the purpose of which is to give us insight into the behavior of the biologically motivated model \eqref{eq:abstract} in which the nonlinearities $f_n$ are Lipschitz continuous though a particular analytic form is not known.
In particular, we are not concerned with identifying solutions to \eqref{eq:switchN}.

We recall \cite{albert:collins:glass} that there is a long tradition of associating {\em state transition diagrams}, which take the form of directed graphs, to regulatory networks. 
The paths through the state transition diagram are then used to represent the dynamics of the network.
As is described in Section~\ref{sec:transitionDiagram} we use \eqref{eq:switchN} to define a particular choice of state transition diagram.
Because our focus is on dynamics we find it convenient to represent this state transition diagram as a combinatorial multivalued map $\cF\colon\cV \mvmap \cV$.  
In this notation, $\cV$ denotes the set of vertices in the state transition diagram and there exists a directed edge $u\to v$ in the state transition diagram if and only if $v\in \cF(u)$.

The number of elements in $\cV$ can grow rapidly as a function of the size of the regulatory network.  
In particular, in the approach we take here, if $O(n)$ denotes the number of out edges at node $n$, then the size of $\cV$ is of order
\[
\prod_{n=1}^N (O(n)+1).
\]
Cataloguing all the paths in a graph of this size is not practical.
However, there are efficient (both in time and memory) graph algorithms that allow one to identify essential dynamical structures: the {\em recurrent dynamics}, i.e.\ the nontrivial strongly connected components of $\cF$; and the {\em gradient-like dynamics}, i.e.\ the reachability, defined by paths in $\cF$, between the recurrent components (see \cite{siam,chaos} and references therein and \cite{thieffry}
for an application of these techniques in the context of state transition diagrams).
We encode this information in the form of a {\em Morse graph}, $\sMG(\cF)$.
This is the minimal directed acyclic graph such that each nontrivial strongly connected component  is represented by a distinct node  and the edges indicate the reachability information inherited from $\cF$ between the nodes.

The switching system \eqref{eq:switchN} has an $N$ dimensional phase space, but an $N+3E$ dimensional parameter space ($E$ denotes the number of edges).
In \cite{paper2} we describe a set of algorithms that allows us to decompose parameter space into semi-algebraic sets, such that on each set the state transition diagram $\cF$ is  constant and hence the global dynamics as described by the Morse graph $\sMG(\cF)$ is valid for each parameter value in the set.
The algorithms have been implemented \cite{databaseSoftware} and thus for moderate sized $N$ we have the capability of  describing via the Morse graphs the global dynamics associated with the state transition diagrams for {\em all} parameter values.

As is emphasized above we believe that from the biological perspective \eqref{eq:abstract} provides a more realistic model than \eqref{eq:switchN}. 
Therefore, to justify the biological relevance of the combinatorial computations  described above that are based on \eqref{eq:switchN}, it is important to be able to demonstrate that $\sMG(\cF)$ provides correct and meaningful information about the dynamics of \eqref{eq:abstract}.
Since a Morse graph  is a directed acyclic graph it generates a poset. 
Our goal is to translate the poset structure associated with $\sMG(\cF)$ into information about the structure of invariant sets for a flow generated by a smooth differential equation.
For this we make use of Morse decompositions as defined by Conley \cite{cbms}. 
Recall that given a continuous flow $\varphi\colon \R\times X\to X$ defined on a compact metric space $X$ a \emph{Morse decomposition} of $X$ consists of a finite collection of mutually disjoint compact invariant sets called \emph{Morse sets} indexed by a partially ordered set $(\sP,\leq)$ with the property that if 
\[
x\in X\setminus \bigcup_{p\in\sP}M(p),
\]
where $M(p)$ denotes the Morse set indexed by $p$, then
\[
\alpha(x,\varphi) \subset M(p)\quad\text{and}\quad \omega(x,\varphi) \subset M(q)
\]
where $q\leq p$.

The primary goal of this paper is Theorem~\ref{thm:controlledPert}, which roughly states that given a regulatory network we can use \eqref{eq:switchN} to construct a state transition diagram for which we can efficiently compute a Morse graph $\sMG(\cF)$ from which we can determine a Morse decomposition for the dynamics defined by a smooth system of the form \eqref{eq:abstract}.
The following outline indicates the tools and constructions that are used to obtain a proof of Theorem~\ref{thm:controlledPert}.

We begin in Section~\ref{sec:conley} with a brief description of  Conley theory.
Section~\ref{sec:posetLattice} contains elementary ideas from lattice and poset theory and a statement of Birkhoff's theorem that relates finite distributive lattices and finite posets.
Sections~\ref{sec:conleyComb} and \ref{sec:conleyCont} presents the necessary definitions of Conley theory in the settings of combinatorial and continuous dynamics, respectively.
Section~\ref{sec:translation} contains Theorem~\ref{thm:translate}, which provides the theoretical framework by which we translate the  information from the combinatorial dynamics to the continuous dynamics.

As indicated above Section~\ref{sec:switchSystem} presents the definition of a switching system \eqref{eq:switchN} and associated notation and definitions.
The material in this section is restricted to two-dimensional systems, but can straightforwardly be extended to systems of arbitrary finite dimension.
In Section~\ref{sec:controlP} $\delta$-constrained continuous switching systems
\begin{equation}
\label{eq:dconstrained}
\dot{x} = -\Gamma x + f^{(\delta)}(x)
\end{equation}
are defined.  The definition begins with a given switching system, which determines $\Gamma$  and a positive number $\delta$,
which indicates the width of a collar around the lines of discontinuity of $\Lambda$.  The function $f^{(\delta)}$ is obtained by replacing  $\Lambda$  by a continuous function on this collar.
Again, this construction is done in the setting of $\R^2$, but can be extended to $\R^n$.

Theorem~\ref{thm:controlledPert} states that the $\sMG(\cF)$ derived from the switching system \eqref{eq:switchN} determines a Morse decomposition for any associated $\delta$-constrained continuous switching system \eqref{eq:dconstrained} for any $0<\delta <\delta^*$, where $\delta^*$ is explicitly determined by  $\Gamma$ and $\Lambda$ \eqref{eq:delta}.
The strategy of the proof is as follows.
We use the results of Section~\ref{sec:conley} and in particular Birkhoff's theorem to pass from the poset structure induced by $\sMG(\cF)$ to a lattice of attractors for the state transition graph $\cF$. 
We use Theorem~\ref{thm:translate} to guide the construction of a lattice of forward invariant sets for the $\delta$-constrained continuous switching system.
Birkhoff's theorem is then once more employed to identify the poset structure of the Morse decomposition for $\delta$-constrained continuous switching system.

The state transition graph $\cF$ associated with a switching system \eqref{eq:switchN} is defined in Section~\ref{sec:transitionDiagram}. This construction is presented in the setting of $\R^2$, but has been extended to $\R^n$ \cite{paper2}. 

The major technical work of this paper is to construct a lattice of trapping regions for the $\delta$-constrained continuous switching system \eqref{eq:dconstrained} that is isomorphic to the lattice of attractors for $\cF$. 
This is done in two steps, both of which are restricted to $\R^2$. 
The first is to construct  elementary regions in $\R^2$ that we call tiles and chips, which are used to construct the trapping regions. 
Tiles are related to the definition of $f^{(\delta)}$ and defined in Section~\ref{sec:controlP}.
Chips are defined in Section~\ref{sec:chips} along with a proof that if the constraints on $\delta$ imposed by \eqref{eq:delta} are satisfied, then the flow of \eqref{eq:dconstrained} is transverse along the edges of interest of tiles and chips.
The second step involves the construction of the desired trapping regions. 
This is done in Section~\ref{sec:proof} in slightly more generality than needed as we construct a trapping regions associated with any forward invariant set of $\cF$.
There are two important remarks that need to be made about the content of Section~\ref{sec:proof}.  
First, it is a tedious case by case local analysis of the constructing and verification of the trapping regions and hence too cumbersome to generalize to higher dimensions.
Second, we  do not know of counter examples to the construction for higher dimensional systems, thus we believe that with an alternate proof it might be possible to generalize Theorem~\ref{thm:controlledPert} to $n$-dimensional systems.

The formal statement and proof of Theorem~\ref{thm:controlledPert} along with related results are presented in Section~\ref{sec:theorem}.

\section{Conley Theory}
\label{sec:conley}

Our proof that  efficient graph theoretic computations can lead to rigorous mathematical results for smooth switching systems is based on new developments in Conley theory as presented in a series of papers \cite{kmv0,kmv1,kmv2,kmv3}. 
We review the essential ideas of these results along the lines in which we employ these ideas in this paper.
We begin with a brief review of posets and lattices, describe Conley theory first in the context of directed graphs (combinatorial dynamics), and then in the setting of continuous flows on compact metric spaces. Finally, we state  a theorem using this language that provides the framework by which the state transition graph leads to mathematically rigorous statements about the global dynamics for a smooth switching system.

\subsection{Posets and Lattices}
\label{sec:posetLattice}

We assume the reader is familiar with the concepts of partially ordered sets (posets) and lattices (see \cite{roman, davey:priestley}), but review some fundamental concepts as a means of establishing notation.

Given a poset $(\sP,\leq)$, $\sD\subset \sP$ is a \emph{down set} of $\sP$ if $p\in \sP$, $q\in \sD$, and $p\leq q$ implies that $p\in\sD$.  
The set of down sets of $\sP$ is denoted by $\sO(\sP)$ and is a lattice under the operations of union and intersection.
In fact, $\sO$ defines a contravariant functor from the category of posets to the category of lattices.

Let $(\sL,\vee,\wedge,{\bf 0},{\bf 1})$ be a bounded distributive lattice where ${\bf 0}$ and ${\bf 1}$ denote the minimal and maximal elements, i.e.\
\[
{\bf 0}\wedge \sU = {\bf 0}\quad\text{and}\quad {\bf 1}\wedge \sU = \sU
\]
for all $U\in\sL$.
The lattice algebra induces a partial order on its elements as follows. Given $U,V\in\sL$
\begin{equation}
\label{eq:po}
\text{if}\quad U \wedge V = U\quad\text{then}\quad U \leq V.
\end{equation}

Recall that a nonzero element $\sU\in\sL$ is \emph{join irreducible} if $U = A\vee B$ implies that $U = A$ or $U = B$.
We denote the set of join irreducible elements of $\sL$ by $\sJ^\vee(\sL)$.
Observe that in a finite lattice if $U$ is a join irreducible element, then it has a unique predecessor with regard to the partial order \eqref{eq:po}.
We denote this unique predecessor by
\begin{equation}
\label{eq:!pred}
\pred{U}.
\end{equation}
Using the partial order \eqref{eq:po} $(\sJ^\vee(\sL),\leq)$ is a poset and, more generally, $\sJ^\vee$ defines a contravariant functor from the category of bounded distributive lattices to the category of finite posets.

\begin{thm}
[Birkhoff's theorem]
Let $(\sL,\vee,\wedge)$ be a finite distributive lattice and let $(\sP,\leq)$ be a finite poset.  Then,
\begin{enumerate}
\item[(i)] $\sO(\sJ^\vee(\sL))$ is lattice isomorphic to $\sL$.
\item[(ii)] $\sJ^\vee(\sO(\sP))$ is poset isomorphic to $\sP$.
\end{enumerate}
\end{thm}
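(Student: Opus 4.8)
The plan is to prove the two statements separately by exhibiting explicit isomorphisms, with essentially all of the work concentrated in part (i). For (i), I would introduce the candidate map $\eta\colon \sL \to \sO(\sJ^\vee(\sL))$ defined by
\[
\eta(U) = \setdef{p \in \sJ^\vee(\sL)}{p \leq U},
\]
the set of join irreducibles lying below $U$. This is visibly a down set in $(\sJ^\vee(\sL),\leq)$, so $\eta$ is well defined. The entire argument then rests on a single structural fact about finite distributive lattices, which I would isolate first as a lemma: a nonzero element $p$ is join irreducible if and only if it is \emph{join prime}, meaning that $p \leq A \vee B$ implies $p \leq A$ or $p \leq B$. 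The forward implication is exactly where distributivity enters --- from $p \leq A \vee B$ one writes $p = p \wedge (A \vee B) = (p \wedge A)\vee(p\wedge B)$ and invokes join irreducibility --- whereas the converse holds in any lattice.

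With this lemma in hand I would assemble the two ingredients needed to show $\eta$ is a lattice isomorphism. The first is that in any finite lattice every element is the join of the join irreducibles beneath it, that is $U = \bigvee \eta(U)$; this follows by well-founded induction on $\leq$, since an element is either join irreducible (or $\mathbf{0}$, realized as the empty join) and hence trivially such a join, or it factors as $A \vee B$ with $A,B < U$, to which the inductive hypothesis applies. The second is that $\eta$ preserves the lattice operations: preservation of meets, $\eta(U\wedge V)=\eta(U)\cap\eta(V)$, is immediate from the defining property of $\wedge$, while preservation of joins, $\eta(U\vee V)=\eta(U)\cup\eta(V)$, is precisely the join primeness lemma applied elementwise. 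Injectivity of $\eta$ then follows from $U = \bigvee\eta(U)$, and surjectivity follows by verifying that for a down set $\sD\subseteq \sJ^\vee(\sL)$ the element $U=\bigvee\sD$ satisfies $\eta(U)=\sD$; the only nontrivial inclusion $\eta(U)\subseteq\sD$ again uses join primeness to place a join irreducible lying below $\bigvee\sD$ beneath some member of $\sD$, and then uses that $\sD$ is a down set.

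For (ii), I would show that the map $p \mapsto \down p$, sending each element to its principal down set, is the desired poset isomorphism onto $\sJ^\vee(\sO(\sP))$. It is order preserving and order reflecting, hence injective, because $\down p \subseteq \down q$ is equivalent to $p\leq q$. The substance is in identifying the image: each $\down p$ is join irreducible in $\sO(\sP)$, since if $\down p = A\cup B$ then $p$ lies in, say, $A$, forcing $\down p\subseteq A\subseteq \down p$; conversely every down set is the union of the principal down sets of its elements, so a join irreducible down set must coincide with a single $\down p$. This shows $p\mapsto\down p$ maps $\sP$ bijectively onto $\sJ^\vee(\sO(\sP))$ and respects the orders, completing the proof.

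I expect the main obstacle to be the join primeness lemma together with its clean deployment. It is the only point at which distributivity is genuinely invoked, and getting it right is exactly what makes both the join preservation of $\eta$ and the surjectivity argument go through; the remaining verifications, though numerous, become routine once this equivalence is in place.
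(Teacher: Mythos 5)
Your proof is correct, and it is the standard textbook argument: the paper itself states Birkhoff's theorem without proof, citing standard references on lattices and posets, so there is no in-paper proof to compare against. Your identification of the join-irreducible/join-prime equivalence as the sole point where distributivity enters is exactly right, and both the isomorphism $U\mapsto\setdef{p\in\sJ^\vee(\sL)}{p\leq U}$ for part (i) and $p\mapsto\down p$ for part (ii) are the classical ones; all the supporting steps (every element is the join of the join irreducibles beneath it, surjectivity via down sets, and the characterization of join irreducibles of $\sO(\sP)$ as principal down sets) are carried out correctly.
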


As is made clear below our interest in Birkhoff's theorem lies in the fact that it guarantees that one can represent the same information either in a poset or a lattice and that there is a well defined transformation between the two.

\subsection{Combinatorial Conley Theory}
\label{sec:conleyComb}
 
 As indicated in the Introduction, to emphasize the fact that we are interested in dynamical structures we represent a directed graph as a combinatorial multivalued map $\cF\colon \cV \mvmap \cV$, where $\cV$ is the finite set of vertices and there is a directed edge $\nu \rightarrow \nu'$ if and only if $\nu'\in\cF(\nu)$.
Note that we allow self edges in our directed graph thus it is possible that $\nu\in\cF(\nu)$.
We use the  notation $\nu\leadsto \nu'$ to indicate the existence of a path from $\nu$ to $\nu'$. 
Using the multivalued map notation, $\nu\leadsto \nu'$ is equivalent to the statement that there exists $n\in\N$ such that $\nu'\in\cF^n(\nu)$.
Backward paths in the graph can be associated with reversal of time. With this in mind define $\cF^{-1}\colon \cV\to \cV$ by
\[
\nu'\in\cF^{-1}(\nu)\quad\text{if and only if}\quad \nu\in\cF(\nu').
\]

Elements $\nu,\nu'\in\cV$ belong to the same \emph{strongly connected path component} of $\cF$ if $\nu\leadsto \nu'$ and $\nu'\leadsto \nu$.
Since we allow self edges it is possible that a strongly connected path component consists of a single vertex with a self edge.
We refer to a strongly connected path component of $\cF$ as a {\em Morse set} of $\cF$ and denote it by $\cM\subset \cV$.
The collection of all strongly connected path components of $\cF$ is denoted by 
\[
\sMD(\cF) :=\setof{\cM(p)\subset \cV\mid p\in \sP}
\]
and forms a  {\em Morse decomposition} of $\cF$.
We impose a partial order on the indexing set $\sP$ of $\sMD(\cF)$  by defining
\[
q \leq p\quad \text{if there exists a path in $\cF$ from an element of $\cM(p)$ to an element of $\cM(q)$}.
\]

\begin{defn}
The \emph{Morse graph} of $\cF$, $\sMG(\cF)$, is the Hasse diagram of the poset $(\sP,\leq)$.
We refer to the elements of $\sP$ as the \emph{Morse nodes} of the graph.
\end{defn}

As discussed in the introduction given $\cF\colon \cV\mvmap \cV$ identification of Morse sets and the Morse graph is computationally feasible.
Thus, these are the objects that we extract from the state transition diagram.
However, we know of no direct means of transferring knowledge of the Morse graph to an associated continuous system of the form \eqref{eq:switchN}.
As indicated above Birkhoff's theorem guarantees that we will not lose information by considering the lattice of down sets
$\sO(\sP)$, where $(\sP,\leq)$ is the down set that defines the Morse graph.
To identify this lattice in the directed graph $\cF\colon \cV\mvmap \cV$ recall the following concept.

\begin{defn}
A set $\cN\subset \cV$ is \emph{forward invariant} under $\cF$ if $\cF(\cN)\subset \cN$.
\end{defn}

The collection of all forward invariant sets of $\cF$ is denoted by $\sInvset^+(\cF)$ and as discussed in \cite[Section 2]{kmv2} is a bounded distributive lattice where ${\bf 0} := \emptyset$ and ${\bf 1} =\cV$.
Given $\cN\in \sInvset^+(\cF)$ define
\[
\cN^0 := \setof{\nu\in \cN\mid \cF^{-1}(\nu)\cap \cN \neq \emptyset}.
\]

Of central interest is the following special type of forward invariant set.
\begin{defn}
A set $\cA\subset \cV$ is an \emph{attractor} for $\cF$ if $\cF(\cA) = \cA$.
\end{defn}
Observe that given an attractor $\cA$, $\cA^0 = \cA$.

The collection of all attractors in $\cV$ under $\cF$ is denoted by  $\sAtt(\cF)$ and as discussed in \cite[Section 2]{kmv2} is a bounded distributive lattice where ${\bf 0} := \emptyset$ and ${\bf 1} = \max\setof{\cA\mid \cA\in\sAtt(\cF)}$.
Furthermore, given $\cA_0,\cA_1\in\sAtt(\cF)$ the lattice operations are defined by
\begin{equation}
\label{eq:veeAtt}
\cA_0\vee \cA_1 := \cA_0\cup \cA_1
\end{equation}
and
\begin{equation}
\label{eq:wedgeAtt}
\cA_0\wedge \cA_1 := \max\setof{\cA\in\sAtt(\cF)\mid \cA\subset \cA_0\cap \cA_1}.
\end{equation}

Given a Morse set $\cM(p)$ let 
\[
\down(\cM(p))= \setof{\nu\in \cV\mid \exists \nu'\in \cM(p)\ \text{such that}\ \nu'\leadsto \nu}.
\]
Note that $\down(\cM(p))\in \sAtt(\cF)$ and, in fact, $\setof{\down(\cM(p))\mid p\in \sP}$ generates $\sAtt(\cF)$.
Thus $\sAtt(\cF) \cong \sO(\sP)$ (see \cite{kmv3} for details).

We  make use of the following result, which follows via a finite induction argument from the definition of a forward invariant set.

\begin{prop}
\label{prop:vinA}
Assume $\cN\in\sInvset^+(\cF)$.
If $\nu\in \cN$ and  $\nu\leadsto \nu'$, then $\nu'\in\cN$.
\end{prop}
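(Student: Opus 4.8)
The plan is to prove the contrapositive-free, direct statement by induction on the length of the path $\nu\leadsto\nu'$. Recall that $\nu\leadsto\nu'$ means, in the multivalued-map notation, that there exists some $n\in\N$ with $\nu'\in\cF^n(\nu)$. So I would fix $\cN\in\sInvset^+(\cF)$, assume $\nu\in\cN$, and induct on this integer $n$.

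For the base case, if $n=1$ then $\nu'\in\cF(\nu)$. Since $\nu\in\cN$ we have $\cF(\nu)\subset\cF(\cN)$, and because $\cN$ is forward invariant, $\cF(\cN)\subset\cN$. Hence $\nu'\in\cN$. (If one wishes to include $n=0$, i.e.\ $\nu'=\nu$, the conclusion is immediate.) For the inductive step, suppose the claim holds for paths of length $n$, and let $\nu'\in\cF^{n+1}(\nu)=\cF\bigl(\cF^n(\nu)\bigr)$. Then there exists an intermediate vertex $\mu\in\cF^n(\nu)$ with $\nu'\in\cF(\mu)$; equivalently $\nu\leadsto\mu$ via a path of length $n$ and $\mu\leadsto\nu'$ in one step. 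By the inductive hypothesis applied to $\nu\leadsto\mu$, we obtain $\mu\in\cN$. Now the base-case argument applied to $\mu$ in place of $\nu$ gives $\nu'\in\cF(\mu)\subset\cF(\cN)\subset\cN$, completing the induction.

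The single mechanism doing all the work is the defining inclusion $\cF(\cN)\subset\cN$: it propagates membership forward one edge at a time, and finite induction stitches these single-edge steps into an arbitrary path. There is no real obstacle here, which is presumably why the paper simply remarks that the result ``follows via a finite induction argument.'' The only point requiring a modicum of care is the unwinding of the iterated multivalued map, namely the factorization $\cF^{n+1}=\cF\circ\cF^n$ and the observation that $\nu'\in\cF^{n+1}(\nu)$ exactly when some $\mu$ with $\nu'\in\cF(\mu)$ lies in $\cF^n(\nu)$; making this composition explicit is what licenses the choice of the intermediate vertex $\mu$ and ensures the path genuinely decomposes into a length-$n$ prefix and a final edge. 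With that bookkeeping in place the argument is routine.
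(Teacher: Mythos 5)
Your proof is correct and is precisely the finite induction on path length that the paper alludes to when it says the result ``follows via a finite induction argument from the definition of a forward invariant set''; the paper does not write out the details, and your unwinding of $\cF^{n+1}=\cF\circ\cF^n$ supplies them faithfully. Nothing further is needed.
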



\subsection{Continuous Conley Theory}
\label{sec:conleyCont}

We assume the reader is familiar with basic concepts from the theory of dynamical systems \cite{robinson}.
Let $\varphi\colon \R\times X\to X$ be a continuous flow defined on a compact metric space.
Let $\sInvset(X,\varphi)$ denote the collection of invariant sets in $X$ under $\varphi$.

\begin{defn}
A compact set $N\subset X$ is an \emph{attracting neighborhood} if $\omega(N,\varphi)\subset \Int(N)$ where $\omega(N,\varphi)$ denotes the $\omega$-limit set of $N$ under $\varphi$.
\end{defn}

The set of all attracting neighborhoods in $X$ under $\varphi$ is denoted  by $\sANbhd(X,\varphi)$ and as shown in \cite{kmv1} is a bounded distributive lattice (but in general is not finite).
In this paper we make use of a special class of attracting neighborhoods.

\begin{defn}
Assume the flow $\varphi$ is generated by a differential equation $\dot{x} = f(x)$, $x\in\R^2$.  
Let $N\subset \R^2$ be a regular closed set \cite{walker} whose boundary is made up of a finite number of straight line segments. 
For each closed boundary edge $e$ of $N$ let $n_e$ denote the outward normal vector.
We say that $N$ is an \emph{attracting block} if $f(x)\cdot n_e <0$ for every $x\in e$.
\end{defn}

Recall that an invariant set $A\in\sInvset(X,\varphi)$ is an \emph{attractor} for  $\varphi$ if there exists an attracting neighborhood such that $A = \omega(N,\varphi)$.  
The \emph{dual repeller} of an attractor $A$ is defined to be
\[
A^* := \setof{x\in X\mid \omega(x,\varphi) \cap A = \emptyset}.
\]

In what follows we assume that $\sA$ is a finite bounded sublattice of $\sANbhd(X,\varphi)$.
Recall that $\sJ^\vee(\sA)$ forms a poset.
Let $\iota\colon \sQ\to \sJ^\vee(\sA)$ be a poset isomorphism, i.e.\ we are using the poset $(\sQ,\leq)$ to index the elements of $\sJ^\vee(\sA)$.
Define $\mu \colon \sQ\to \sInvset(X,\varphi)$ by
\[
\mu(q) := \Inv(\iota(q),\varphi) \cap \left(\Inv\left( \stackrel{\leftarrow}{\iota(q)} ,\varphi \right)\right)^*.
\]
As is shown in \cite{kmv3} $\mu(\sQ)$ is a \emph{Morse decomposition} of $X$ under $\varphi$, i.e.\ it is a collection of mutually disjoint compact invariant sets with the following property:  if 
\[
x\in X\setminus \bigcup_{q\in\sQ} \mu(q),
\]
then there exists $q,q'\in \sQ$ such that
\[
\alpha(x,\varphi)\subset \mu(q)\quad\text{and}\quad \omega(x,\varphi)\subset \mu(q')
\]
and furthermore $q' < q$ under the partial order on $\sQ$.
The Hasse diagram of $\sQ$ is the \emph{Morse graph} associated to the Morse decomposition. 

\subsection{The Translational Theorem}
\label{sec:translation}

\begin{defn}
\label{defn:barP}
Given a poset $(\sP,\leq_\sP)$ define $(\bar{\sP},\leq_{\bar{\sP}})$ to be the poset where $\bar{\sP} = \sP \cup \setof{\bar{p}}$ and $\leq_{\bar{\sP}}$ restricted to $\sP$ equals $\leq_\sP$ along with the additional relations $p\leq_{\bar{\sP}} \bar{p}$ for all $p\in\sP$.
\end{defn}

The primary result of this paper is a corollary of the following theorem \cite{kmv3}.

\begin{thm}
\label{thm:translate}
Let $\varphi\colon \R\times X\to X$ be a flow on a compact metric space.
Let $(\sN,\wedge,\vee,{\bf 0},{\bf 1})$ be a finite distributive lattice that satisfies the following properties:
\begin{enumerate}
\item  $\sN\subset  \sInvset^+(X,\varphi)$.  	
\item  ${\bf 0}=\emptyset$,
\item  $N\wedge N' \subset N\cap N'$ and $N\vee N' = N\cup N'$.
\end{enumerate}
Let the poset $(\sP,\leq) \cong (\sJ^\vee(\sN),\leq)$  be an indexing set for $\sJ^\vee(\sN)$.
For each $p\in\bar{\sP}$ define
\[
M(p) := \begin{cases}
 \Inv(N(p),\varphi) \cap \left(\Inv\left(\pred{N(p)},\varphi \right)\right)^*  & \text{if $p\in \sP$}\\
 \left(\Inv\left({\bf 1},\varphi \right)\right)^* & \text{if $p=\bar{p}$}.
 \end{cases}
\]
Then the collection of invariant sets $M(p)$, $p\in \bar{\sP}$ defines a Morse decomposition of  $\varphi$ and $\leq_{\bar{\sP}}$ is an admissible order.
\end{thm}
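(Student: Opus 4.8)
The plan is to deduce the statement from the Morse-decomposition result for finite sublattices of attracting neighborhoods already recorded in Section~\ref{sec:conleyCont} (the construction $\mu$, shown in \cite{kmv3} to yield a Morse decomposition of $X$), the only genuinely new ingredient being a uniform device for adjoining the global repeller indexed by $\bar{p}$. The guiding observation is that the passage from $\sP$ to $\bar{\sP}$ in Definition~\ref{defn:barP} is precisely the effect on join-irreducibles of adjoining the whole space $X$ as a new top element to $\sN$.

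Concretely, I would set $\hat{\sN}:=\sN\cup\{X\}$. After first checking that $\sN$ sits inside $\sANbhd(X,\varphi)$ (so that for each $N\in\sN$ the forward invariance gives $\Inv(N,\varphi)=\omega(N,\varphi)$ an attractor with a well-defined dual repeller), I would note that hypothesis (3) records that the lattice operations on $\sN$ satisfy $N\wedge N'\subset N\cap N'$ and $N\vee N'=N\cup N'$, so $\sN$ is a sublattice of $\sANbhd(X,\varphi)$; since $X\vee N=X=X\cup N$ and $X\wedge N=N\subset X\cap N$, the set $\hat{\sN}$ is again a finite bounded distributive sublattice of $\sANbhd(X,\varphi)$, now with top $X$. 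Because the top of $\hat{\sN}$ is $X$, one has $\Inv(X,\varphi)=X$ and $(\Inv(X,\varphi))^*=\emptyset$, so the cited result applies to $\hat{\sN}$ and yields a Morse decomposition of all of $X$ (whereas $\sN$ alone, whose top attractor $\Inv({\bf 1},\varphi)$ may be a proper subset of $X$, would leave the repelling set $(\Inv({\bf 1},\varphi))^*$ uncovered; this is exactly what $\bar{p}$ is for).

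It then remains to identify this Morse decomposition with the one in the statement. When ${\bf 1}\neq X$ the element $X$ is join-irreducible in $\hat{\sN}$ with unique predecessor ${\bf 1}$, while every join-irreducible of $\sN$ remains join-irreducible in $\hat{\sN}$ with unchanged predecessor, since adjoining a top alters no relation below it. Hence $\sJ^\vee(\hat{\sN})\cong\sJ^\vee(\sN)\cup\{X\}$ as posets, which under $\sP\cong\sJ^\vee(\sN)$ and $X\mapsto\bar{p}$ is precisely $\bar{\sP}$, with $\bar{p}$ maximal, in agreement with the fact that a repeller is a source in the Morse order. Feeding this identification into $\mu$ gives, for $p\in\sP$, the set $\Inv(N(p),\varphi)\cap(\Inv(\pred{N(p)},\varphi))^*=M(p)$, and for the top, $\Inv(X,\varphi)\cap(\Inv({\bf 1},\varphi))^*=(\Inv({\bf 1},\varphi))^*=M(\bar{p})$; if instead ${\bf 1}=X$ this last Morse set is empty and may be discarded. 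The admissibility of $\leq_{\bar{\sP}}$ is then exactly the admissibility delivered by the cited result for $\hat{\sN}$, transported through the poset isomorphism, which is where Birkhoff's theorem is used to read off predecessors of join-irreducibles.

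I expect the principal difficulty to be the preliminary reduction that $\sN\subset\sANbhd(X,\varphi)$, i.e.\ verifying that forward invariance (hypothesis (1)) together with the algebraic hypotheses (2)--(3) really places each member of $\sN$ among the attracting neighborhoods, with the lattice meet and join of $\sN$ agreeing with those of $\sANbhd(X,\varphi)$, so that the $\mu$-construction is applicable. This is where the distinction between ``forward invariant'' and ``attracting neighborhood'' must be reconciled, for instance by passing to suitable closures or by using that a compact forward invariant neighborhood of its maximal invariant set is an attracting neighborhood and that $\Inv$ is then a lattice homomorphism into $\sAtt(X,\varphi)$. Once that is secured, the remaining steps are the essentially formal lattice bookkeeping of adjoining a top and computing its effect on join-irreducibles, and the attractor--repeller argument for $\bar{p}$ is subsumed by the cited result.
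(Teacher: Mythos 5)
The paper itself offers no proof of Theorem~\ref{thm:translate}: it is quoted verbatim from \cite{kmv3} (``The primary result of this paper is a corollary of the following theorem \cite{kmv3}''), so there is no internal argument to compare yours against. Judged on its own terms, your strategy --- adjoin $X$ as a new top of $\sN$, apply the $\mu$-construction of Section~\ref{sec:conleyCont} to the enlarged lattice, and use Birkhoff's theorem to identify $\sJ^\vee(\sN\cup\{X\})$ with $\bar{\sP}$ --- is the natural route and is consistent with how the surrounding text positions the result. The lattice bookkeeping in your third paragraph (that $X$ is join-irreducible with unique predecessor ${\bf 1}$ when ${\bf 1}\neq X$, and that the join-irreducibles of $\sN$ and their predecessors are unchanged) is correct.

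The genuine gap is the step you yourself flag and then defer: passing from hypothesis (1), $\sN\subset\sInvset^+(X,\varphi)$, to $\sN\subset\sANbhd(X,\varphi)$. This implication is false in general, and without it the $\mu$-construction is not applicable and the conclusion can even fail as literally stated. Take $X=[0,1]$ with the flow of $\dot{x}=x(1-x)$ and $\sN=\{\emptyset,\{0\}\}$; this satisfies (1)--(3), yet $\{0\}$ is a compact forward invariant set that is not an attracting neighborhood ($\omega(\{0\})=\{0\}\not\subset\Int\{0\}$), $\Inv(\{0\},\varphi)=\{0\}$ is a repeller rather than an attractor, and $M(\bar{p})=(\Inv({\bf 1},\varphi))^*=(0,1]$ is neither compact nor a legitimate Morse set. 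So the theorem implicitly requires more of $\sN$ than forward invariance --- in \cite{kmv3} the relevant lattice is one of attracting blocks/trapping regions, and in this paper's application that stronger property is exactly what Proposition~\ref{prop:trapN} supplies ($N_\cN\in\sANbhd(\varphi)$). Your suggested repairs (``passing to suitable closures'' or invoking that a forward invariant \emph{neighborhood} of its maximal invariant set is attracting) do not close this, since a forward invariant set need not be a neighborhood of $\Inv(N,\varphi)$ at all. To make the argument go through you must either strengthen hypothesis (1) to attracting neighborhoods (matching how the theorem is actually used here), or prove separately that each $\Inv(N,\varphi)$, $N\in\sN$, is an attractor and that $\Inv(\cdot,\varphi)$ restricts to a lattice homomorphism of $\sN$ onto a finite sublattice of $\sAtt(X,\varphi)$; neither is available from (1)--(3) alone.
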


\section{Two-dimensional Switching Systems}
\label{sec:switchSystem}
In this section we provide a formal definition of a general two-dimensional switching system and provide elementary results about the associated dynamics.  
We begin with two sets of  non-negative real numbers $\Xi := \setof{\xi_i\mid i=0,\ldots,I+1}$ and $\sH := \setof{\eta_j\mid j=0,\ldots, J+1}$ that we refer to as \emph{threshold values}, with the property that
\begin{align*}
0 & =\xi_0 < \xi_1 < \ldots < \xi_I < \xi_{I+1} = \infty \\
0 & =\eta_0 < \eta_1 < \ldots < \eta_J < \eta_{J+1} = \infty.
\end{align*}
Let 
\begin{equation}
\label{eq:gridpoints}
\Pi := \setof{(\xi_i,\eta_j) \mid i = 0,\ldots, I,\ j=0,\ldots, J}\subset [0,\infty)^2.
\end{equation}
In addition we assume that we are given partitions of the sets of threshold values
\[
\Xi = \Xi^1\cup \Xi^2\quad\text{and}\quad \sH = \sH^1\cup \sH^2.
\]
Of primary importance is the following collection of open rectangles, called \emph{cells}, defined in terms of the thresholds as follows:
\[
\cK : = \setof{ \kappa(i,j) : = (\xi_i,\xi_{i+1})\times (\eta_j,\eta_{j+1})\subset (0,\infty)^2 \mid i = 0,\ldots, I,\ j=0,\ldots, J}.
\]

\begin{defn}
\label{defn:switchingsystem}
The \emph{switching system} $\Sigma = \Sigma(\Gamma,\Lambda,\Xi^1,\Xi^2,\sH^1,\sH^2)$ is defined to be
the system of differential equations
\begin{equation}
\label{eq:switching}
\dot{x} = -\Gamma x + \Lambda(x), \qquad x\in \bigcup_{\kappa \in \cK} \kappa\subset (0,\infty)^2
\end{equation}
where
\[
\Gamma = \begin{bmatrix}\gamma_1 & 0 \\ 0 & \gamma_2 \end{bmatrix},\qquad \gamma_i >0
\]
and
\[
\Lambda(x) = \begin{bmatrix}\Lambda_1(x) \\ \Lambda_2(x)\end{bmatrix}
\]
is constant on the cells $\cK$.
Furthermore, $\Lambda$ satisfies the following constraints for all $i\in\setof{0,\ldots, I}$, $j\in\setof{0,\ldots, J}$,
\begin{equation}
\label{eq:Liconstant}
\begin{aligned}
\xi_i\in \Xi^2 \quad &\Rightarrow \quad \Lambda_1(\kappa(i,j)) = \Lambda_1(\kappa(i+1,j)) \\ 
\xi_i\in \Xi^1 \quad &\Rightarrow \quad \Lambda_2(\kappa(i,j)) = \Lambda_2(\kappa(i+1,j)) \\ 
\eta_j\in \sH^1 \quad &\Rightarrow \quad \Lambda_2(\kappa(i,j)) = \Lambda_2(\kappa(i,j+1)) \\ 
\eta_j\in \sH^2 \quad &\Rightarrow \quad \Lambda_1(\kappa(i,j)) = \Lambda_1(\kappa(i,j+1)).
\end{aligned}
\end{equation} 
For the sake of simplicity  we assume that  if $\kappa_i \neq \kappa_j$, then 
\[
\Lambda(\kappa_i) \neq \Lambda(\kappa_j).
\]
Furthermore, setting
\begin{equation}
\label{eq:PHI}
\Phi(\kappa) := \Gamma^{-1}\Lambda(\kappa)
\end{equation}
we assume that for all $\kappa\in\cK$
\begin{equation}
\label{eq:notTvalue}
\Phi_1(\kappa) \not\in \Xi\quad\text{and}\quad \Phi_2(\kappa) \not\in \sH.
\end{equation}
\end{defn}

A consequence of assumption \eqref{eq:notTvalue} is that for a fixed switching system $\Sigma$ 
\begin{equation}
\label{eq:muSigma}
\mu =\mu(\Sigma) :=  \min_{\kappa\in\cK,\ \xi\in\Xi,\ \eta\in\sH}\setof{|\Phi_1(\kappa)-\xi |,|\Phi_2(\kappa)-\eta |} > 0.
\end{equation}
Two other positive constants that are used later in the paper are 
\begin{equation}
\label{eq:PointDisplacement}
\rho = \rho(\Sigma) :=  \max_{\kappa =\kappa(i,j)\in \cK}\setof{ | \Phi_1(\kappa)-\xi_i|,| \Phi_1(\kappa)-\xi_{i+1}|,  | \Phi_2(\kappa)-\eta_j|,| \Phi_2(\kappa)-\eta_{j+1}|},
\end{equation}
which provides a measurement of the maximal displacement of the attracting fixed point $\Phi(\kappa)$ from the cell $\kappa$, and
\begin{equation}
\label{eq:gammaRatio}
\bar{\gamma} = \bar{\gamma}(\Sigma) :=  \min\setof{ \frac{\gamma_1}{\gamma_2}, \frac{\gamma_2}{\gamma_1}}\leq 1.
\end{equation}

Given a particular cell $\kappa = \kappa(i,j)\in \cK$ there is an associated affine vector field, which we call the \emph{$\kappa$-equation},  given by
\begin{equation}
\label{eq:Kaffine}
\dot{x} = -\Gamma x + \Lambda(\kappa).
\end{equation}
We denote the flow generated by the $\kappa$-equation by $\psi_\kappa$.
Observe that $\Phi(\kappa)$, as defined in \eqref{eq:PHI}, is an attracting fixed point for the $\kappa$-equation.

We begin by labeling the cells according to the behavior of the associated $\kappa$-equation on the cell.
In particular, 
\begin{equation}
\label{eq:coarseType}
\text{$\kappa(i,j)$ is of type}\ 
\begin{cases}
N & \text{if $\xi_i < \Phi_1(\kappa) < \xi_{i+1}$ and $\eta_{j+1} < \Phi_2(\kappa)$} \\
NE & \text{if $\xi_{i+1} <\Phi_1(\kappa) $ and $\eta_{j+1} < \Phi_2(\kappa)$} \\
E & \text{if $\xi_{i+1} <\Phi_1(\kappa) $ and $\eta_{j} < \Phi_2(\kappa)< \eta_{j+1}$}  \\
SE & \text{if $\xi_{i+1} <\Phi_1(\kappa) $ and $ \Phi_2(\kappa)$}< \eta_{j}   \\
S & \text{if $\xi_i < \Phi_1(\kappa) < \xi_{i+1}$ and $ \Phi_2(\kappa)$}< \eta_{j}   \\
SW & \text{if $ \Phi_1(\kappa) < \xi_{i}$ and $ \Phi_2(\kappa)< \eta_{j}$}   \\
W & \text{if $ \Phi_1(\kappa) < \xi_{i}$ and $\eta_{j} < \Phi_2(\kappa)< \eta_{j+1}$}  \\
NW & \text{if $ \Phi_1(\kappa) < \xi_{i}$ and $\eta_{j+1} < \Phi_2(\kappa)$}  \\
A & \text{if $\xi_i < \Phi_1(\kappa) < \xi_{i+1}$ and $\eta_{j} < \Phi_2(\kappa)< \eta_{j+1}$}
\end{cases}
\end{equation}

We remark that  assumption \eqref{eq:notTvalue} implies that every cell $\kappa$ is of the type indicated above.
A cell of type $A$ is called an \emph{attracting cell}; a cell of type $N$, $E$, $S$, or $W$ is called a \emph{focussing cell}; and a cell of type $NE$, $SE$, $SW$, or $NW$ is called a \emph{translating cell}.
Furthermore, for bookkeeping purposes we find it convenient to place a star in the cell to indicate its type (see Figure~\ref{fig:stars}).

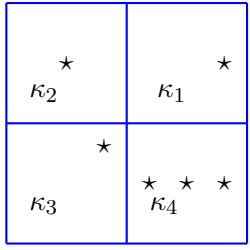
\begin{figure}
\begin{center}
\begin{tikzpicture}
[scale=0.4]

	\draw[blue, thick](0,0) -- (8,0);
	\draw[blue, thick](0,4) -- (8,4);
	\draw[blue, thick](0,0) -- (0,8);
	\draw[blue, thick](8,0) -- (8,8);
	\draw[blue, thick](4,0) -- (4,8);
	\draw[blue, thick](0,8) -- (8,8);
	
	\draw(1.25,5) node{$\kappa_2$};

	\draw(2,6) node{$\star$};
	
	\draw(5.5,5) node{$\kappa_1$};
	
	\draw(7.25,6) node{$\star$};

	\draw(1.25,1.25) node{$\kappa_3$};
	
	\draw(3.25,3.25) node{$\star$};
	
	\draw(5.25,1.25) node{$\kappa_4$};
	
	\draw(4.75,2) node{$\star$};
	\draw(6,2) node{$\star$};
	\draw(7.25,2) node{$\star$};
	
\end{tikzpicture}
\end{center}
\caption{The figure indicates four cells labeled $\kappa_i$, $i=1,2,3,4$. 
The star in the center of cell $\kappa_2$ indicates that it is of type $A$. 
The star in the center right of cell $\kappa_1$ indicates that it is of type $E$. 
The star in the upper right corner of cell $\kappa_3$ indicates that it is of type $NE$.
The multiple stars in $\kappa_4$ indicate that it is of type $W$, $A$, or $E$.}
\label{fig:stars}
\end{figure}

As indicated in the introduction we use the switching system  to construct the state transition diagram.
The boundaries of the cells are used in the definition of the set of vertices of the state transition diagram. 
The precise definition is as follows.

\begin{defn}
\label{defn:faces}
Let $\kappa = \kappa(i,j)\in\cK$.
The set of \emph{faces} of $\kappa$ is denoted by  $\cV(\kappa)$ and consists of
\begin{align*}
v_{*,\bar{j}} &:= \setof{\xi_*}\times (\eta_j,\eta_{j+1}), \quad *=i,i+1 \\
v_{\bar{i},*} &:= (\xi_i,\xi_{i+1})\times\setof{\eta_*}, \quad *=j,j+1 
\end{align*}
and, additionally, 
\[
w_\kappa   \quad\text{if $\kappa$ is of type $A$.}
\]
The complete set of faces is given by
\[
\cV := \bigcup_{\kappa\in\cK} \cV(\kappa)
\]
\end{defn}
We remark that elements of $\cV(\kappa)$ have two interpretations: faces of $\kappa$ and hence subsets of $[0,\infty)^2$, and 
vertices as elements of the state transition diagram.
Whether we are employing them as topological or combinatorial objects should be clear from the context.

\begin{defn}
Given a cell $\kappa = \kappa(i,j)$ we label the faces $\cV(\kappa)$ as follows
\begin{align*}
v_{i,\bar{j}}\ &\ \text{is an} \begin{cases} \text{ \emph{entrance} face} & \text{if $\Phi_1(\kappa) >\xi_i$} \\
\text{ \emph{absorbing} face} & \text{if $\Phi_1(\kappa) <\xi_i$} 
\end{cases} \\
v_{i+1,\bar{j}}\ &\ \text{is an} \begin{cases} \text{ \emph{entrance} face} & \text{if $\Phi_1(\kappa) <\xi_{i+1}$} \\
\text{ \emph{absorbing} face} & \text{if $\Phi_1(\kappa) >\xi_{i+1}$} 
\end{cases}\\
v_{\bar{i},j}\ &\ \text{is an} \begin{cases} \text{ \emph{entrance} face} & \text{if $\Phi_2(\kappa) >\eta_j$} \\
\text{ \emph{absorbing} face} & \text{if $\Phi_2(\kappa) <\eta_j$} 
\end{cases} \\
v_{\bar{i},j+1}\ &\ \text{is an} \begin{cases} \text{ \emph{entrance} face} & \text{if $\Phi_2(\kappa) <\eta_{i+1}$} \\
\text{ \emph{absorbing} face} & \text{if $\Phi_2(\kappa) >\eta_{i+1}$} 
\end{cases} \\
w_\kappa\ &\ \text{is an \emph{absorbing} face.} 
\end{align*}
\end{defn}

Let $\cV_e(\kappa)$ and $\cV_a(\kappa)$ denote the entrance and absorbing faces of $\cV(\kappa)$.
Observe that
\[
\cV_e(\kappa)\cap\cV_a(\kappa) =\emptyset.
\]

As the following proposition indicates, given a pair of adjacent cells that share a face $v$ there are constraints on the possible cell types.
Using the notation of Definition~\ref{defn:faces} we can assume $v= v_{i,\bar{j}} = \setof{\xi_i}\times (\eta_j,\eta_{j+1})$ or $v=v_{\bar{i},j}=(\xi_i,\xi_{i+1})\times \setof{\eta_j}$. 
In an abuse of notation we write 
\[
v_{i,\bar{j}}\in\Xi^n\quad\text{if}\quad  \xi_i\in\Xi^n
\]
and 
\[
v_{\bar{i},j} \in\sH^n\quad\text{if}\quad\eta_j\in\sH^n
\] 
for $n=1,2$.

\begin{prop}
\label{prop:adjacentCells}
The following figures show the possible types of two cells that share a face $v$ where it is indicated whether $v\in \sH^n$ or $v\in \Xi^n$ for $n=1,2$.
\begin{center}
\begin{tikzpicture}
[scale=0.4]

	\draw[blue, thick](0,0) -- (4,0);
	\draw[blue, thick](0,4) -- (4,4);
	\draw[blue, thick](0,0) -- (0,8);
	\draw[blue, thick](4,0) -- (4,8);
	\draw[blue, thick](0,8) -- (4,8);
	
	\draw(-1.0,4) node{$\sH^1$};

	\draw(0.75,7.25) node{$\star$};
	\draw(2,7.25) node{$\star$};	
	\draw(3.25,7.25) node{$\star$};
	\draw(0.75,6) node{$\star$};
	\draw(2,6) node{$\star$};
	\draw(3.25,6) node{$\star$};

	
	\draw(0.75,3.25) node{$\star$};
	\draw(2,3.25) node{$\star$};
	\draw(3.25,3.25) node{$\star$};
		
	\draw(2,-2) node{(i)};	
\end{tikzpicture}
\quad
\begin{tikzpicture}
[scale=0.4]

	\draw[blue, thick](0,0) -- (4,0);
	\draw[blue, thick](0,4) -- (4,4);
	\draw[blue, thick](0,0) -- (0,8);
	\draw[blue, thick](4,0) -- (4,8);
	\draw[blue, thick](0,8) -- (4,8);
	
	\draw(-1.0,4) node{$\sH^1$};

	\draw(0.75,4.75) node{$\star$};	
	\draw(2,4.75) node{$\star$};
	\draw(3.25,4.75) node{$\star$};
	
	
	\draw(0.75,2) node{$\star$};
	\draw(2,2) node{$\star$};
	\draw(3.25,2) node{$\star$};
	\draw(0.75,0.75) node{$\star$};
	\draw(2,0.75) node{$\star$};
	\draw(3.25,0.75) node{$\star$};
	
	\draw(2,-2) node{(ii)};	
\end{tikzpicture}
\quad
\begin{tikzpicture}
[scale=0.4]

	\draw[blue, thick](0,0) -- (4,0);
	\draw[blue, thick](0,4) -- (4,4);
	\draw[blue, thick](0,0) -- (0,8);
	\draw[blue, thick](4,0) -- (4,8);
	\draw[blue, thick](0,8) -- (4,8);
	
	\draw(-1.0,4) node{$\sH^2$};


	\draw(0.75,7.25) node{$\star$};
	\draw(0.75,6) node{$\star$};
	\draw(0.75,4.75) node{$\star$};	

	
	\draw(0.75,3.25) node{$\star$};
	\draw(0.75,2) node{$\star$};
	\draw(0.75,0.75) node{$\star$};
	
	\draw(2,-2) node{(iii)};	
\end{tikzpicture}
\quad
\begin{tikzpicture}
[scale=0.4]

	\draw[blue, thick](0,0) -- (4,0);
	\draw[blue, thick](0,4) -- (4,4);
	\draw[blue, thick](0,0) -- (0,8);
	\draw[blue, thick](4,0) -- (4,8);
	\draw[blue, thick](0,8) -- (4,8);
	
	\draw(-1.0,4) node{$\sH^2$};


	\draw(2,7.25) node{$\star$};	
	\draw(2,6) node{$\star$};
	\draw(2,4.75) node{$\star$};

	
	\draw(2,3.25) node{$\star$};
	\draw(2,2) node{$\star$};
	\draw(2,0.75) node{$\star$};
	
	\draw(2,-2) node{(iv)};	
\end{tikzpicture}
\quad
\begin{tikzpicture}
[scale=0.4]

	\draw[blue, thick](0,0) -- (4,0);
	\draw[blue, thick](0,4) -- (4,4);
	\draw[blue, thick](0,0) -- (0,8);
	\draw[blue, thick](4,0) -- (4,8);
	\draw[blue, thick](0,8) -- (4,8);
	
	\draw(-1.0,4) node{$\sH^2$};


	\draw(3.25,7.25) node{$\star$};
	\draw(3.25,6) node{$\star$};
	\draw(3.25,4.75) node{$\star$};

	
	\draw(3.25,3.25) node{$\star$};
	\draw(3.25,2) node{$\star$};
	\draw(3.25,0.75) node{$\star$};
	
	\draw(2,-2) node{(v)};	
\end{tikzpicture}
\\
\begin{tikzpicture}
[scale=0.4]

	\draw[blue, thick](0,0) -- (8,0);
	\draw[blue, thick](0,4) -- (8,4);
	\draw[blue, thick](0,0) -- (0,4);
	\draw[blue, thick](8,0) -- (8,4);
	\draw[blue, thick](4,0) -- (4,4);
	
	\draw(4,5) node{$\Xi^1$};

	
	\draw(0.75,3.25) node{$\star$};
	\draw(2,3.25) node{$\star$};
	\draw(3.25,3.25) node{$\star$};
	
	
	\draw(4.75,3.25) node{$\star$};
	\draw(6,3.25) node{$\star$};
	\draw(7.25,3.25) node{$\star$};
		
	\draw(4,-2) node{(vi)};	
\end{tikzpicture}
\quad
\begin{tikzpicture}
[scale=0.4]

	\draw[blue, thick](0,0) -- (8,0);
	\draw[blue, thick](0,4) -- (8,4);
	\draw[blue, thick](0,0) -- (0,4);
	\draw[blue, thick](8,0) -- (8,4);
	\draw[blue, thick](4,0) -- (4,4);
	
	\draw(4,5) node{$\Xi^1$};

	
	\draw(0.75,2) node{$\star$};
	\draw(2,2) node{$\star$};
	\draw(3.25,2) node{$\star$};
	
	
	\draw(4.75,2) node{$\star$};
	\draw(6,2) node{$\star$};
	\draw(7.25,2) node{$\star$};
		
	\draw(4,-2) node{(vii)};	
\end{tikzpicture}
\quad
\begin{tikzpicture}
[scale=0.4]

	\draw[blue, thick](0,0) -- (8,0);
	\draw[blue, thick](0,4) -- (8,4);
	\draw[blue, thick](0,0) -- (0,4);
	\draw[blue, thick](8,0) -- (8,4);
	\draw[blue, thick](4,0) -- (4,4);
	
	\draw(4,5) node{$\Xi^1$};

	
	\draw(0.75,0.75) node{$\star$};
	\draw(2,0.75) node{$\star$};
	\draw(3.25,0.75) node{$\star$};
	
	
	\draw(4.75,0.75) node{$\star$};
	\draw(6,0.75) node{$\star$};
	\draw(7.25,0.75) node{$\star$};
		
	\draw(4,-2) node{(viii)};	
\end{tikzpicture}
\\
\begin{tikzpicture}
[scale=0.4]

	\draw[blue, thick](0,0) -- (8,0);
	\draw[blue, thick](0,4) -- (8,4);
	\draw[blue, thick](0,0) -- (0,4);
	\draw[blue, thick](8,0) -- (8,4);
	\draw[blue, thick](4,0) -- (4,4);
	
	\draw(4,5) node{$\Xi^2$};

	
	\draw(3.25,3.25) node{$\star$};
	\draw(3.25,2) node{$\star$};
	\draw(3.25,0.75) node{$\star$};
	
	
	\draw(6,3.25) node{$\star$};
	\draw(7.25,3.25) node{$\star$};
	\draw(6,2) node{$\star$};
	\draw(7.25,2) node{$\star$};
	\draw(6,0.75) node{$\star$};
	\draw(7.25,0.75) node{$\star$};
		
	\draw(4,-2) node{(ix)};	
\end{tikzpicture}
\quad
\begin{tikzpicture}
[scale=0.4]

	\draw[blue, thick](0,0) -- (8,0);
	\draw[blue, thick](0,4) -- (8,4);
	\draw[blue, thick](0,0) -- (0,4);
	\draw[blue, thick](8,0) -- (8,4);
	\draw[blue, thick](4,0) -- (4,4);
	
	\draw(4,5) node{$\Xi^2$};

	
	\draw(0.75,3.25) node{$\star$};
	\draw(2,3.25) node{$\star$};
	\draw(0.75,2) node{$\star$};
	\draw(2,2) node{$\star$};
	\draw(0.75,0.75) node{$\star$};
	\draw(2,0.75) node{$\star$};
	
	
	\draw(4.75,3.25) node{$\star$};
	\draw(4.75,2) node{$\star$};
	\draw(4.75,0.75) node{$\star$};
		
	\draw(4,-2) node{(x)};	
\end{tikzpicture}
\end{center}
\end{prop}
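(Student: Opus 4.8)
The plan is to reduce the proposition to a short case analysis resting on two structural facts. The first is that the type assignment \eqref{eq:coarseType} factors as a product: the horizontal location of $\Phi_1(\kappa)$ relative to $(\xi_i,\xi_{i+1})$ alone determines the ``column'' of the type (whether it lies in $\{W,NW,SW\}$, $\{N,A,S\}$, or $\{E,NE,SE\}$), while the vertical location of $\Phi_2(\kappa)$ relative to $(\eta_j,\eta_{j+1})$ alone determines the ``row'' ($\{N,NW,NE\}$, $\{W,A,E\}$, or $\{S,SW,SE\}$). The second is that crossing a shared face preserves exactly one component of $\Phi$. Granting both, the starred patterns become a matter of intersecting a constrained component against an unconstrained one.

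First I would make the invariance precise. Because $\Gamma$ is diagonal, \eqref{eq:PHI} gives $\Phi_1=\Lambda_1/\gamma_1$ and $\Phi_2=\Lambda_2/\gamma_2$, so the four implications of \eqref{eq:Liconstant} transfer verbatim from $\Lambda$ to $\Phi$. Reading them off, a shared face lying in $\Xi^1$ or $\sH^1$ forces the two adjacent cells to have a common value of $\Phi_2$, whereas a shared face in $\Xi^2$ or $\sH^2$ forces a common value of $\Phi_1$. Throughout I would invoke \eqref{eq:notTvalue} to guarantee that no component of $\Phi$ ever equals a threshold, so that each cell is unambiguously one of the nine types and, where needed, the common preserved value lies strictly to one side of the shared threshold.

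The core step is to split the ten configurations according to whether the preserved component is \emph{parallel} or \emph{transverse} to the shared face. When it is parallel --- a vertical ($\Xi$) face preserving $\Phi_2$, or a horizontal ($\sH$) face preserving $\Phi_1$ --- the two cells additionally share the interval governing that component, so they necessarily lie in the same row (resp.\ column); the three admissible positions of that component, together with the complete freedom of the other component, reproduce the three-panel families (vi)--(viii) and (iii)--(v). When it is transverse --- a vertical face preserving $\Phi_1$, or a horizontal face preserving $\Phi_2$ --- the two cells meet along the shared threshold in the very coordinate that is preserved, so the single common value sits strictly to one side of that threshold; these two alternatives, with the other component free, reproduce the two-panel families (i)--(ii) and (ix)--(x). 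Matching each alternative against \eqref{eq:coarseType} yields exactly the drawn cells.

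I expect the only genuine care to be needed in the transverse case, where the effect of the common value on the two cells is asymmetric. For the cell lying on the side of the shared threshold opposite the value, the value is beyond the threshold the cell shares, pinning that cell to the single extreme class facing the value; for the cell on the same side, the value may fall inside that cell's own interval or beyond its outer threshold, leaving it free to be either the middle class or that extreme class. This asymmetry is precisely what distinguishes the two starred cells in panels (i), (ii), (ix), and (x). The parallel case and the enumeration of the unconstrained component are routine bookkeeping.
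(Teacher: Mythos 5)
Your proposal is correct and rests on exactly the same observation as the paper's proof: the constraints \eqref{eq:Liconstant} transfer through the diagonal $\Gamma$ to say that one component of $\Phi$ is preserved across the shared face, and comparing that preserved component (strictly off-threshold by \eqref{eq:notTvalue}) against the row/column factorization of \eqref{eq:coarseType} yields each panel. The paper verifies only two representative panels and declares the rest similar, whereas your parallel/transverse dichotomy packages all ten uniformly; that is a tidier bookkeeping of the same argument, not a different route.
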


\begin{proof}
The arguments for all the cases are essentially the same so we only provide explicit proofs in two cases.

(i) Without loss of generality assume the lower cell is $\kappa(i,j)$.  
Then the adjacent cell is $\kappa(i,j+1)$.
The stars indicate that $\kappa(i,j)$ is of type $NW$, $N$ or $NE$.  
This implies that $\Phi_2(\kappa(i,j)) > \eta_{j+1}$.  
By assumption $v\in H^1$. 
Thus $\Phi_2(\kappa(i,j+1) = \Phi_2(\kappa(i,j)) > \eta_{j+1}$, and therefore, $\kappa(i,j+1)$ is of type $NW$, $N$, $NE$, $W$, $A$, or $E$, as indicated.

(vii)  Without loss of generality assume the left cell is $\kappa(i,j)$.  
Then the adjacent cell is $\kappa(i+1,j)$.
The stars indicate that $\kappa(i,j)$ is of type $W$, $A$ or $E$.  
This implies that $\eta_j < \Phi_2(\kappa(i,j)) < \eta_{j+1}$.  
By assumption $v\in\Xi^1$. 
Thus $\Phi_2(\kappa(i+1,j) = \Phi_2(\kappa(i,j))$, and therefore, $\kappa(i+1,j)$ is of type $W$, $A$ or $E$.  
\end{proof}

\section{$\delta$-Constrained Continuous Switching Systems}
\label{sec:controlP}

As indicated in the introduction the goal of this section as follows: given a fixed switching system, $\Sigma = \Sigma(\Gamma,\Lambda,\Xi^1,\Xi^2,\sH^1,\sH^2)$, construct $f^{(\delta)}$  to define an associated $\delta$-constrained continuous switching system \eqref{eq:dconstrained}.

Starting with the observation that
\begin{equation}
\label{eq:cellWidth}
\lambda = \lambda(\Sigma) := \frac{1}{2} \min
\setof{ \min_{i=1,\ldots, I}\setof{\xi_{i}-\xi_{i-1}},\min_{j=1,\ldots, J } \setof{\eta_{j}-\eta_{j-1}} },
\end{equation}
is half of the minimal width of any cell in $\Sigma$ we choose
\[
0<\delta < \lambda.
\]

For each $\kappa = \kappa(i,j) = (\xi_i,\xi_{i+1})\times (\eta_j,\eta_{j+1})\in\cK$,  $i=1,\ldots,I$, $j=1,\ldots,J$ define 
\begin{equation}
\label{eq:G2}
\begin{aligned}
G^2(\kappa)=G^2(i,j)  & := [\xi_i + \delta, \xi_{i+1}-\delta]\times [\eta_j+\delta,\eta_{j+1}-\delta]\\
G^2(0,0)   &:= (0, \xi_{1}-\delta]\times (0,\eta_{1}-\delta] \\
G^2(i,0) &:= [\xi_i + \delta, \xi_{i+1}-\delta]\times (0,\eta_{1}-\delta] \\
G^2(0,j) &:= (0,\xi_1-\delta)\times  [\eta_j+\delta,\eta_{j+1}-\delta].
\end{aligned}
\end{equation}
For $i=1,\ldots, I$ and $j=1,\ldots, J$ define
\begin{equation}
\label{eq:G1}
\begin{aligned}
G^1(v_{i,\bar{j}}) = G^1(i,\bar{j}) &: = [\xi_i - \delta, \xi_{i}+\delta]\times [\eta_j+\delta,\eta_{j+1}-\delta] \\
G^1(v_{\bar{i},j}) = G^1(\bar{i},j) &: = [\xi_i + \delta, \xi_{i+1}-\delta]\times [\eta_j-\delta,\eta_{j}+\delta] \\
G^1(v_{i,\bar{0}}) = G^1(i,\bar{0}) &: = [\xi_i - \delta, \xi_{i}+\delta]\times (0,\eta_{1}-\delta] \\
G^1(v_{\bar{0},j}) = G^1(\bar{0},j) &: = (0, \xi_{1}-\delta]\times [\eta_j-\delta,\eta_{j}+\delta]
\end{aligned}
\end{equation}
and let
\begin{equation}
\label{eq:G0}
G^0(\pi)  :=  [\xi_i - \delta, \xi_{i}+\delta]\times [\eta_j-\delta,\eta_{j}+\delta]
\end{equation}
where $\pi \in\Pi$ as defined in \eqref{eq:gridpoints}.
We refer to these compact sets shown in Figure~\ref{fig:tiles} as \emph{tiles}, and more precisely $G^i$, $i=0,1,2$ is called an \emph{$i$-tile}.

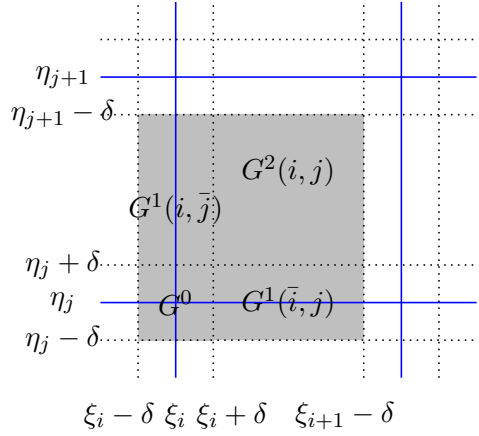
\begin{figure}
\begin{center}
\begin{tikzpicture}

	\fill[lightgray] (2.0,2.0) -- (4.0,2.0) -- (4.0,4.0) -- (2.0,4.0) -- (2.0,2.0);
	\fill[lightgray] (1.0,2.0) -- (2.0,2.0) -- (2.0,4.0) -- (1.0,4.0) -- (1.0,2.0);
	\fill[lightgray] (1.0,2.0) -- (2.0,2.0) -- (2.0,1.0) -- (1.0,1.0) -- (1.0,2.0);
	\fill[lightgray] (2.0,2.0) -- (4.0,4.0) -- (4.0,1.0) -- (2.0,1.0) -- (2.0,2.0);

	\draw[dotted, semithick] (0.5,5) -- (5.5,5);
	\draw[blue, semithick](0.5,4.5) -- (5.5,4.5);
	\draw[dotted, semithick] (0.5,4) -- (5.5,4);
	
	\draw[dotted, semithick] (0.5,2) -- (5.5,2);
	\draw[blue, semithick](0.5,1.5) -- (5.5,1.5);
	\draw[dotted, semithick] (0.5,1) -- (5.5,1);

	\draw[dotted, semithick] (1,0.5) -- (1,5.5);
	\draw[blue, semithick] (1.5,0.5) -- (1.5,5.5);
	\draw[dotted, semithick] (2,0.5) -- (2,5.5);
	
	\draw[dotted, semithick] (4,0.5) -- (4,5.5);
	\draw[blue, semithick] (4.5,0.5) -- (4.5,5.5);
	\draw[dotted, semithick] (5,0.5) -- (5,5.5);

	\draw (0.75,0) node {$\xi_i-\delta$};
	\draw (1.5,0) node {$\xi_i$};
	\draw (2.25,0) node {$\xi_i+\delta$};
	\draw (3.75,0) node {$\xi_{i+1}-\delta$};
	
	\draw (3,3.25) node {$G^2(i,j)$};
	\draw (1.5,2.75) node {$G^1(i,\bar{j})$};
	\draw (1.5,1.5) node {$G^0$};
	\draw (3,1.5) node {$G^1(\bar{i},j)$};
	
	\draw (0,4.5) node {$\eta_{j+1}$};
	\draw (0,4.0) node {$\eta_{j+1}-\delta$};
	\draw (0,2.0) node {$\eta_j+\delta$};
	\draw (0,1.5) node {$\eta_j$};
	\draw (0,1.0) node {$\eta_j-\delta$};

\end{tikzpicture}
\end{center}
\caption{The shaded regions indicate the four tiles $G^2(i,j)$, $G^1(i,\bar{j})$, $G^1(\bar{i},j)$, and $G^0=G^0(\pi)$ where $\pi = (\xi_i,\eta_j)$.}
\label{fig:tiles}
\end{figure}

We define the continuous nonlinearity $f^{(\delta)}\colon (0,\infty)^2 \to (0,\infty)^2$ in steps. 
Observe that $\Lambda$ is constant on any given  2-tile $G^2$, and hence, $\Lambda(G^2)$ is a unique well defined vector.
Thus we define $f^{(\delta)}\colon \cup_{i,j}G^2(i,j)\to (0,\infty)^2$ by
\begin{equation}
\label{eq:fG2}
f^{(\delta)}(x) := \Lambda(G^2(i,j))\quad x\in G^2(i,j).
\end{equation}

To define the action of $f^{(\delta)}$ on the 1-tiles we consider four cases.
\begin{itemize}
\item If $\xi_i\in \Xi^1$, then $\Lambda_1(\kappa(i-1,j) \neq \Lambda_1(\kappa(i,j))$ and $\Lambda_2(\kappa(i-1,j)) =\Lambda_2(\kappa(i,j))$.
Thus we define
\begin{equation}
\label{eq:f2G1X1}
f^{(\delta)}_2(x) := \Lambda_2(\kappa(i,j))\quad x\in  G^1(i,\bar{j}).
\end{equation}
and choose $f^{(\delta)}_1$ to be a continuous function on $G^1(i,\bar{j})$ which agrees with $f^{(\delta)}_1$ as defined by \eqref{eq:fG2} on $G^2(i-1,j)\cap G^1(i,\bar{j})$ and $G^1(i,\bar{j})\cap G^2(i,j)$ with the constraint that for $x\in G^1(i,\bar{j})$
\begin{equation}
\label{eq:f1G1X1}
\min\setof{\Lambda_1(\kappa(i-1,j)), \Lambda_1(\kappa(i,j))}\leq f^{(\delta)}_1(x) \leq 
\max\setof{\Lambda_1(\kappa(i-1,j)), \Lambda_1(\kappa(i,j))}.
\end{equation}

\item If $\xi_i\in \Xi^2$, then $\Lambda_1(\kappa(i-1,j) = \Lambda_1(\kappa(i,j))$ and $\Lambda_2(\kappa(i-1,j)) \neq\Lambda_2(\kappa(i,j))$.
Thus we define
\begin{equation}
\label{eq:f1G1X2}
f^{(\delta)}_1(x) := \Lambda_1(\kappa(i,j))\quad x\in  G^1(i,\bar{j}).
\end{equation}
and choose $f^{(\delta)}_2$ to be a continuous function on $G^1(i,\bar{j})$ which agrees with $f^{(\delta)}_2$ as defined by \eqref{eq:fG2} on $G^2(i-1,j)\cap G^1(i,\bar{j})$ and $G^1(i,\bar{j})\cap G^2(i,j)$ with the constraint that for $ x\in G^1(i,\bar{j})$
\begin{equation}
\label{eq:f2G1X2}
\min\setof{\Lambda_2(\kappa(i-1,j)), \Lambda_2(\kappa(i,j))}\leq f^{(\delta)}_2(x) \leq 
\max\setof{\Lambda_2(\kappa(i-1,j)), \Lambda_2(\kappa(i,j))}.
\end{equation}

\item If $\eta_j\in \sH^1$, then $\Lambda_1(\kappa(i,j-1) \neq \Lambda_1(\kappa(i,j))$ and $\Lambda_2(\kappa(i,j-1)) =\Lambda_2(\kappa(i,j))$.
Thus we define
\begin{equation}
\label{eq:f2G1H1}
f^{(\delta)}_2(x) := \Lambda_2(\kappa(i,j))\quad x\in G^1(\bar{i},j).
\end{equation}
and choose $f^{(\delta)}_1$ to a continuous function on $G^1(\bar{i},j)$ which agrees with $f^{(\delta)}_1$ as defined by \eqref{eq:fG2} on $G^2(i,j-1)\cap G^1(\bar{i},j)$ and $G^1(\bar{i},j)\cap G^2(i,j)$ with the constraint that for $x\in G^1(\bar{i},j)$
\begin{equation}
\label{eq:f1G1H1}
\min\setof{\Lambda_1(\kappa(i,j-1)), \Lambda_1(\kappa(i,j))}\leq f^{(\delta)}_1(x) \leq 
\max\setof{\Lambda_1(\kappa(i,j-1)), \Lambda_1(\kappa(i,j))}.
\end{equation}

\item If $\eta_j\in \sH^2$, then $\Lambda_1(\kappa(i,j-1) = \Lambda_1(\kappa(i,j))$ and $\Lambda_2(\kappa(i,j-1)) \neq\Lambda_2(\kappa(i,j))$.
Thus we define
\begin{equation}
\label{eq:f1G1H2}
f^{(\delta)}_1(x) := \Lambda_1(\kappa(i,j))\quad x\in  G^1(\bar{i},j).
\end{equation}
and choose $f^{(\delta)}_2$ to a continuous function on $G^1(\bar{i},j)$ which agrees with $f^{(\delta)}_2$ as defined by \eqref{eq:fG2} on $G^2(i,j-1)\cap G^1(\bar{i},j)$ and $G^1(\bar{i},j)\cap G^2(i,j)$ with the constraint that for $x\in G^1(\bar{i},j)$
\begin{equation}
\label{eq:f2G1H2}
\min\setof{\Lambda_2(\kappa(i,j-1)), \Lambda_2(\kappa(i,j))}\leq f^{(\delta)}_2(x) \leq 
\max\setof{\Lambda_2(\kappa(i,j-1)), \Lambda_2(\kappa(i,j))}.
\end{equation}
\end{itemize}

At this point we have defined $f^{(\delta)}\colon (0,\infty)^2 \setminus \left(\bigcup_{\pi\in\Pi} \Int\left(G^0(\pi)\right)\right)\to (0,\infty)^2$.
For each $\pi\in\Pi$ we extend $f^{(\delta)}$ to $G^0(\pi)$ continuously. 


\section{Cells, Chips, and Transversality}
\label{sec:chips}

As described in the introduction we construct the lattice of trapping regions using tiles and chips.
Tiles are introduced in the previous section.
In this section we define chips, which are closed right triangular regions, and prove results concerning the transversality of the vector field of a $\delta$-constrained switching system on the edges of tiles and the hypothenuse of chips.
The transversality results are used in Section~\ref{sec:proof} to verify the construction of trapping regions. 

A \emph{chip} is a closed right triangular subset of a 1-tile $G^1(v)$ and is uniquely identified by the 2-tile $G^2(\kappa)$ and the 0-tile $G^0(\pi)$ that it intersects.
With this in mind we denote a chip by
\[
C^n(\kappa, v, \pi)\quad\text{or}\quad C^w(\kappa, v, \pi)
\]
depending on whether it is a \emph{narrow chip} or \emph{wide chip}, respectively. 
If $v = v_{i,\bar{j}}$ or $v = v_{\bar{i},j}$, then the lengths of the edges of a narrow chip are $\delta$ and $(\eta_{j+1}-\eta_j)/2$ or $(\xi_{i+1} - \xi_i)/2$, respectively, while the lengths of the edges of a wide chip are $2\delta$ and $(\eta_{j+1}-\eta_j)/2$ or $(\xi_{i+1} - \xi_i)/2$, respectively.  Representative wide and narrow chips are shown in Figure~\ref{fig:chips}.

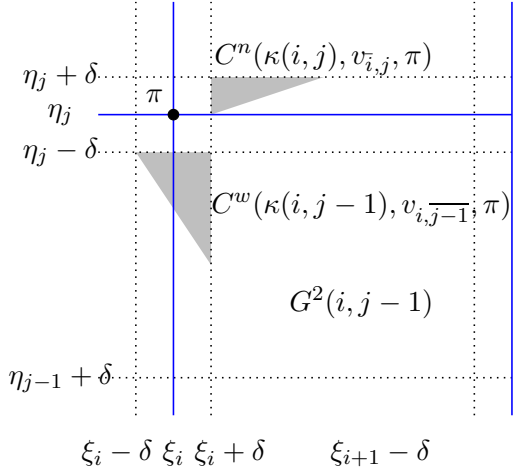
\begin{figure}
\begin{center}
\begin{tikzpicture}

	\fill[lightgray] (1.0,4.0) -- (2.0,4.0) -- (2.0,2.5) -- (1.0,4.0);
	\fill[lightgray] (2.0,5.0) -- (2.0,4.5) -- (3.5,5.0) -- (2.0,5.0);

	\draw[dotted, semithick] (0.5,5) -- (6,5);
	\draw[blue, semithick](0.5,4.5) -- (6,4.5);
	\draw[dotted, semithick] (0.5,4) -- (6,4);
	
	\draw[dotted, semithick] (0.5,1) -- (6,1);

	\draw[dotted, semithick] (1,0.5) -- (1,6);
	\draw[blue, semithick] (1.5,0.5) -- (1.5,6);
	\draw[dotted, semithick] (2,0.5) -- (2,6);
	
	\draw[dotted, semithick] (5.5,0.5) -- (5.5,6);
	\draw[blue, semithick] (6,0.5) -- (6,6);

	\draw (0.75,0) node {$\xi_i-\delta$};
	\draw (1.5,0) node {$\xi_i$};
	\draw (2.25,0) node {$\xi_i+\delta$};
	\draw (4.25,0) node {$\xi_{i+1}-\delta$};
	
	\draw (4,2.0) node {$G^2(i,j-1)$};
	\draw (4,3.25) node {$C^w(\kappa(i,j-1),v_{i,\overline{j-1}},\pi)$};
	\draw (3.5,5.25) node {$C^n(\kappa(i,j),v_{\bar{i},j},\pi)$};
	\draw (1.25,4.75) node {$\pi$};
	\filldraw(1.5,4.5) circle (2pt);

	\draw (0,5.0) node {$\eta_{j}+\delta$};
	\draw (0,4.5) node {$\eta_{j}$};
	\draw (0,4.0) node {$\eta_{j}-\delta$};
	\draw (0,1.0) node {$\eta_{j-1}+\delta$};

\end{tikzpicture}
\end{center}
\caption{The shaded triangular regions indicate the wide chip $C^w(\kappa(i,j-1),v_{i,\overline{j-1}},\pi)$ and the narrow chip $C^n(\kappa(i,j),v_{\bar{i},j},\pi)$.  
Observe that $C^n(\kappa(i,j),v_{\bar{i},j},\pi)\subset G^1(v_{\bar{i},j})$ and its edges lie in $G^2(\kappa(i,j))$ and $G^0(\pi)$ where $\pi = (\xi_i,\eta_j)$.}
\label{fig:chips}
\end{figure}
 
As indicated above, we use the tiles and chips to construct trapping regions for $\delta$-constrained switching systems. 
For this we need to know that the vector field associated with the $\delta$-constrained switching system are transverse to the appropriate edges of the tiles and chips.
We catalogue this information in the following sequence of propositions.

For the remainder of this section $\Sigma = \Sigma(\Gamma,\Lambda,\Xi^1,\Xi^2,\sH^1,\sH^2)$ is assumed to be a fixed switching system.
Choose $\delta >0$ such that
\begin{equation}
\label{eq:delta}
\delta < \delta_*(\lambda,\mu,\rho,\bar{\gamma}):=
\min\left\{\frac{\lambda\mu\bar\gamma}{\sqrt{2}(2\lambda+3\rho)},\ 
\sqrt{\frac{\lambda\mu\bar\gamma}{32}}\right\},
\end{equation}
where $\mu = \mu(\Sigma)$, $\lambda = \lambda(\Sigma)$, $\rho = \rho(\Sigma)$, and $\bar{\gamma} = \bar{\gamma}(\Sigma)$ are  defined as in \eqref{eq:muSigma}, \eqref{eq:cellWidth}, \eqref{eq:PointDisplacement}, and \eqref{eq:gammaRatio}, respectively.
Note that the above condition implies $\delta<\lambda$, as $\bar{\gamma}\leq 1$ and $\mu\leq\rho$.

\begin{prop}
\label{prop:TG2}
Consider $\kappa \in\cK$. If $v\in\cV_e(\kappa)$, then the vector field \eqref{eq:switching} is transverse in to $G^2(\kappa)$ along the associated edge.
\end{prop}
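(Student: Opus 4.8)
The plan is to reduce the statement to a computation for the $\kappa$-equation, since on $G^2(\kappa)$ the nonlinearity is constant. Indeed, by \eqref{eq:fG2} (together with the fact that $\Lambda$ is constant on each cell) we have $f^{(\delta)}\equiv\Lambda(\kappa)$ on $G^2(\kappa)$, and because $G^2(\kappa)\subset\kappa$ the switching vector field \eqref{eq:switching} also equals $\Lambda(\kappa)$ there. Hence on $G^2(\kappa)$ the vector field is exactly the $\kappa$-equation \eqref{eq:Kaffine}, whose coordinates decouple as $\dot{x}_\ell=-\gamma_\ell\bigl(x_\ell-\Phi_\ell(\kappa)\bigr)$ for $\ell=1,2$, where $\Phi(\kappa)=\Gamma^{-1}\Lambda(\kappa)$ as in \eqref{eq:PHI}. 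I interpret ``transverse in to $G^2(\kappa)$'' in the sense of the attracting-block definition: the vector field satisfies $f(x)\cdot n_e<0$ along the associated edge $e$, where $n_e$ is the outward normal.

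Next I would treat each of the four possible entrance faces in turn; they are mutually symmetric, so one case is representative. Take $v=v_{i,\bar{j}}\in\cV_e(\kappa)$, so by definition $\Phi_1(\kappa)>\xi_i$. The edge of $G^2(\kappa)$ associated with $v$ is $e=\{\xi_i+\delta\}\times[\eta_j+\delta,\eta_{j+1}-\delta]$, with outward normal $n_e=(-1,0)$. Along $e$,
\[
f(x)\cdot n_e = -\dot{x}_1 = \gamma_1\bigl(\xi_i+\delta-\Phi_1(\kappa)\bigr),
\]
so the inward-transversality condition $f(x)\cdot n_e<0$ is equivalent to $\Phi_1(\kappa)>\xi_i+\delta$. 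The same computation for the remaining three entrance faces yields the conditions $\Phi_1(\kappa)<\xi_{i+1}-\delta$, $\Phi_2(\kappa)>\eta_j+\delta$, and $\Phi_2(\kappa)<\eta_{j+1}-\delta$, respectively.

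The final step is to upgrade each strict entrance inequality (e.g.\ $\Phi_1(\kappa)>\xi_i$) to its $\delta$-shifted counterpart using the separation constant $\mu$. By \eqref{eq:muSigma} we have $|\Phi_1(\kappa)-\xi_i|\geq\mu$, and since $\Phi_1(\kappa)>\xi_i$ this forces $\Phi_1(\kappa)\geq\xi_i+\mu$. It therefore suffices to establish $\delta<\mu$, which I would extract from \eqref{eq:delta} by discarding the $3\rho$ term and using $\bar{\gamma}\leq1$:
\[
\delta<\frac{\lambda\mu\bar{\gamma}}{\sqrt{2}\,(2\lambda+3\rho)}\leq\frac{\lambda\mu}{2\sqrt{2}\,\lambda}=\frac{\mu}{2\sqrt{2}}<\mu.
\]
Combining gives $\Phi_1(\kappa)\geq\xi_i+\mu>\xi_i+\delta$, the required inequality, and the other three faces are identical.

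Since the proposition is elementary, there is no serious obstacle; the only care needed is the bookkeeping that matches each entrance face to the correct edge and outward normal of $G^2(\kappa)$, and the extraction of the single quantitative fact $\delta<\mu$ from \eqref{eq:delta}. For the boundary cells $i=0$ or $j=0$, where the tiles adjacent to $\xi_0=0$ or $\eta_0=0$ in \eqref{eq:G2} are half-open, the relevant entrance faces are those sitting at nonzero thresholds, and the same argument applies verbatim.
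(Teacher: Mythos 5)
Your proposal is correct and follows essentially the same route as the paper: identify the edge of $G^2(\kappa)$ associated with the entrance face, observe that $v\in\cV_e(\kappa)$ together with \eqref{eq:muSigma} forces $\Phi_1(\kappa)\geq\xi_i+\mu>\xi_i+\delta$, and conclude inward transversality. The only difference is that you spell out the inner-product computation and the extraction of $\delta<\mu$ from \eqref{eq:delta}, both of which the paper leaves implicit.
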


\begin{proof}
Let $\kappa = \kappa(i,j)$. Assume $v = v(i,\bar{j})$.  Since $v\in\cV_e(\kappa)$, $\kappa$ is of type $N$, $NE$, $A$, $E$, $S$, or $SE$ and $\Phi_1(\kappa) \geq \xi_i + \mu > \xi_i + \delta$. Thus the vector field \eqref{eq:switching} is transverse in to $G^2(\kappa)$ along the edge $\setof{\xi_i+\delta}\times [\eta_j+\delta, \eta_{j+1}-\delta]$.
The arguments for the remain faces are similar.
\end{proof}

Similar arguments lead to the following propositions.

\begin{prop}
\label{prop:TG1j}
Consider $G^1(v_{i,\bar{j}})$.
\begin{enumerate}
\item[(i)]  If $v_{\bar{i},j}\in \cV_e(\kappa(i,j))$, then the vector field \eqref{eq:switching} is transverse in to $G^1(v_{i,\bar{j}})$ along the edge $[\xi_i,\xi_i + \delta]\times \setof{\eta_j + \delta}$.
\item[(ii)]  If $v_{\overline{i-1},j}\in \cV_e(\kappa(i-1,j))$, then the vector field \eqref{eq:switching} is transverse in to $G^1(v_{i,\bar{j}})$ along the edge $[\xi_i -  \delta,\xi_i]\times \setof{\eta_j + \delta}$.
\item[(iii)]  If $v_{\bar{i},j+1}\in \cV_e(\kappa(i,j))$, then the vector field \eqref{eq:switching} is transverse in to $G^1(v_{i,\bar{j}})$ along the edge $[\xi_i,\xi_i + \delta]\times \setof{\eta_{j+1} - \delta}$.
\item[(iv)]  If $v_{\overline{i-1},j+1}\in \cV_e(\kappa(i-1,j))$, then the vector field \eqref{eq:switching} is transverse in to $G^1(v_{i,\bar{j}})$ along the edge $[\xi_i - \delta,\xi_i ]\times \setof{\eta_{j+1} - \delta}$.
\end{enumerate}
\end{prop}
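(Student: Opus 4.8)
The plan is to reduce each of the four claims to a one-sided sign condition on the second component of the vector field and then read off that sign from the entrance-face hypothesis exactly as in the proof of Proposition~\ref{prop:TG2}. All four edges in the statement are horizontal, so their outward normals are $\pm(0,1)$: for the two bottom edges (i)--(ii), at height $x_2=\eta_j+\delta$, the outward normal points in the $-x_2$ direction, so transversality in to $G^1(v_{i,\bar{j}})$ means $\dot{x}_2>0$; for the two top edges (iii)--(iv), at height $x_2=\eta_{j+1}-\delta$, it means $\dot{x}_2<0$. Since $\dot{x}_2=-\gamma_2 x_2+\Lambda_2(x)$ depends only on the second coordinate and on $\Lambda_2$, the first-coordinate dynamics are irrelevant and only the value of $\Lambda_2$ along the edge matters.

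The next step is to identify that value. Because $\delta<\lambda\le(\eta_{j+1}-\eta_j)/2$ the heights $\eta_j+\delta$ and $\eta_{j+1}-\delta$ lie strictly inside $(\eta_j,\eta_{j+1})$; moreover the right half-edges (i) and (iii), where $x_1\in[\xi_i,\xi_i+\delta]$, lie in $\overline{\kappa(i,j)}$, while the left half-edges (ii) and (iv), where $x_1\in[\xi_i-\delta,\xi_i]$, lie in $\overline{\kappa(i-1,j)}$. Consequently along (i) and (iii) the switching field is the affine field with $\Lambda_2(x)=\Lambda_2(\kappa(i,j))=\gamma_2\Phi_2(\kappa(i,j))$, and along (ii) and (iv) it has $\Lambda_2(x)=\Lambda_2(\kappa(i-1,j))=\gamma_2\Phi_2(\kappa(i-1,j))$. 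This cell/half-edge pairing is precisely why the four sub-claims carry their respective hypotheses.

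It remains to convert each hypothesis into the required inequality and use the smallness of $\delta$. For (i), $v_{\bar{i},j}\in\cV_e(\kappa(i,j))$ means $\Phi_2(\kappa(i,j))>\eta_j$, hence $\Phi_2(\kappa(i,j))\ge\eta_j+\mu$ by \eqref{eq:muSigma}; therefore on that edge $\dot{x}_2=\gamma_2\bigl(\Phi_2(\kappa(i,j))-(\eta_j+\delta)\bigr)\ge\gamma_2(\mu-\delta)>0$, the final inequality holding because \eqref{eq:delta} forces $\delta<\mu$. For (iii), $v_{\bar{i},j+1}\in\cV_e(\kappa(i,j))$ means $\Phi_2(\kappa(i,j))<\eta_{j+1}$, so $\Phi_2(\kappa(i,j))\le\eta_{j+1}-\mu$ and $\dot{x}_2\le\gamma_2\bigl((\eta_{j+1}-\mu)-(\eta_{j+1}-\delta)\bigr)=\gamma_2(\delta-\mu)<0$. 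Claims (ii) and (iv) are verbatim copies with $\kappa(i,j)$ replaced by $\kappa(i-1,j)$. To justify $\delta<\mu$ I note $\lambda\bar{\gamma}\le\lambda\le\sqrt{2}(2\lambda+3\rho)$, whence $\delta<\delta_*\le\lambda\mu\bar{\gamma}/\bigl(\sqrt{2}(2\lambda+3\rho)\bigr)\le\mu$.

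The only genuinely delicate point is the identification of $\Lambda_2$ along a half-edge with the value of the adjacent cell, and, if one prefers to state the conclusion for the $\delta$-constrained field $f^{(\delta)}$ of \eqref{eq:dconstrained} rather than for the switching field \eqref{eq:switching}, the $\Xi^2$ case of $G^1(v_{i,\bar{j}})$: there $f^{(\delta)}_2$ is not constant but only trapped between $\Lambda_2(\kappa(i-1,j))$ and $\Lambda_2(\kappa(i,j))$ by \eqref{eq:f2G1X2}. I expect this to be the main obstacle, and plan to handle it by invoking that interval bound together with the same $\mu$-gap, noting that when $\xi_i\in\Xi^1$ the component $f^{(\delta)}_2\equiv\Lambda_2(\kappa(i,j))$ by \eqref{eq:f2G1X1}, so the argument above applies unchanged.
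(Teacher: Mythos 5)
Your proof is correct and follows essentially the same route the paper intends: the paper proves only Proposition~\ref{prop:TG2} explicitly and notes that ``similar arguments'' give Proposition~\ref{prop:TG1j}, and your argument is exactly that argument — the entrance-face hypothesis gives $|\Phi_2(\kappa)-\eta_*|\geq\mu$ via \eqref{eq:muSigma}, the relevant half-edge lies in the closure of the corresponding cell so only that cell's $\Lambda_2$ matters, and \eqref{eq:delta} forces $\delta<\mu$, yielding the sign of $\dot{x}_2$ on each horizontal edge. Your closing verification that $\delta_*\leq\mu$ and the remark on the $f^{(\delta)}$ case are both sound and slightly more explicit than what the paper records.
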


\begin{prop}
\label{prop:TG1i}
Consider $G^1(v_{\bar{i},j})$.
\begin{enumerate}
\item[(i)]  If $v_{i,\bar{j}}\in \cV_e(\kappa(i,j))$, then the vector field \eqref{eq:switching} is transverse in to $G^1(v_{\bar{i},j})$ along the edge $\setof{\xi_i + \delta}\times [\eta_j ,\eta_j+ \delta]$.
\item[(ii)]  If $v_{i,\overline{j-1}}\in \cV_e(\kappa(i,j-1))$, then the vector field \eqref{eq:switching} is transverse in to $G^1(v_{\bar{i},j})$ along the edge $\setof{\xi_i + \delta}\times [\eta_j - \delta ,\eta_j ]$.
\item[(iii)]  If $v_{i+1,\bar{j}}\in \cV_e(\kappa(i,j))$, then the vector field \eqref{eq:switching} is transverse in to $G^1(v_{\bar{i},j})$ along the edge $\setof{\xi_{i+1} - \delta}\times [\eta_j ,\eta_j+ \delta]$.
\item[(iv)]  If $v_{i+1,\overline{j-1}}\in \cV_e(\kappa(i,j-1))$, then the vector field \eqref{eq:switching} is transverse in to $G^1(v_{\bar{i},j})$ along the edge $\setof{\xi_{i+1} - \delta}\times [\eta_j -  \delta ,\eta_j]$.
\end{enumerate}
\end{prop}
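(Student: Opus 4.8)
The plan is to mirror the proof of Proposition~\ref{prop:TG2}, treating each of the four edges separately and reducing transversality to a one-sided sign condition on the first component $\dot{x}_1 = -\gamma_1 x_1 + f^{(\delta)}_1(x)$, since each edge in question is vertical and hence has a horizontal outward normal. For the two left edges (parts (i) and (ii)) transversality into $G^1(v_{\bar{i},j})$ along $\setof{\xi_i+\delta}\times[\eta_j,\eta_j+\delta]$ and $\setof{\xi_i+\delta}\times[\eta_j-\delta,\eta_j]$ amounts to $f^{(\delta)}_1 > \gamma_1(\xi_i+\delta)$, while for the two right edges (parts (iii) and (iv)) it amounts to $f^{(\delta)}_1 < \gamma_1(\xi_{i+1}-\delta)$. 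In each case the hypothesis that the indicated face is an entrance face translates, via the definition of entrance/absorbing faces and the gap estimate \eqref{eq:muSigma}, into $\Lambda_1$ of the relevant cell being bounded away from $\gamma_1\xi_i$ (resp.\ $\gamma_1\xi_{i+1}$) by $\gamma_1\mu$; for instance in part (i) the condition $v_{i,\bar{j}}\in\cV_e(\kappa(i,j))$ gives $\Lambda_1(\kappa(i,j)) = \gamma_1\Phi_1(\kappa(i,j)) \geq \gamma_1(\xi_i+\mu)$.

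Next I would split according to the threshold class of $\eta_j$, since this determines the form of $f^{(\delta)}_1$ on $G^1(v_{\bar{i},j})$. If $\eta_j\in\sH^2$, then by \eqref{eq:f1G1H2} the component $f^{(\delta)}_1$ is identically equal to $\Lambda_1(\kappa(i,j)) = \Lambda_1(\kappa(i,j-1))$ on the whole $1$-tile, and the desired sign of $\dot{x}_1$ follows verbatim as in Proposition~\ref{prop:TG2}, using only $\delta<\mu$ (a consequence of \eqref{eq:delta}) so that $\gamma_1(\xi_i+\mu) > \gamma_1(\xi_i+\delta)$ on the left edges and symmetrically on the right. If instead $\eta_j\in\sH^1$, then $f^{(\delta)}_1$ is no longer constant: it interpolates in the $x_2$-direction between $\Lambda_1(\kappa(i,j-1))$ on the edge $x_2=\eta_j-\delta$ and $\Lambda_1(\kappa(i,j))$ on the edge $x_2=\eta_j+\delta$, subject to the two-sided envelope \eqref{eq:f1G1H1}. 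Here I would use that on the upper half $[\eta_j,\eta_j+\delta]$ the operative extreme is the value $\Lambda_1(\kappa(i,j))$ dictated by the upper cell (and on the lower half $[\eta_j-\delta,\eta_j]$ the value $\Lambda_1(\kappa(i,j-1))$ dictated by the lower cell), so that the entrance hypothesis of precisely that cell, together with $\delta<\mu$, yields the required strict inequality.

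The main obstacle is exactly this last point: controlling the interpolated value of $f^{(\delta)}_1$ on a half-edge. The envelope \eqref{eq:f1G1H1} alone only bounds $f^{(\delta)}_1$ below by $\min\setof{\Lambda_1(\kappa(i,j-1)),\Lambda_1(\kappa(i,j))}$, which invokes the cell whose face is \emph{not} assumed to be an entrance face; so the proof must use the freedom in the construction of $f^{(\delta)}$ to confine the interpolant, on the closed upper (resp.\ lower) half of each vertical edge, to the value supplied by the cell named in the corresponding hypothesis. Once the interpolant is so confined, each of the four assertions reduces to the single-cell estimate $\mu > \delta$ already used for Proposition~\ref{prop:TG2}, and the case $\eta_j\in\sH^2$ is subsumed since there the two cell values coincide. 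Parts (iii) and (iv) are handled identically after replacing the lower bound $f^{(\delta)}_1 > \gamma_1(\xi_i+\delta)$ by the upper bound $f^{(\delta)}_1 < \gamma_1(\xi_{i+1}-\delta)$ and invoking the entrance condition at the right threshold $\xi_{i+1}$.
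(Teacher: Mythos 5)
Your reduction to the sign of $\dot{x}_1=-\gamma_1 x_1+f_1$ on a vertical edge, and the translation of the entrance-face hypothesis into $\Lambda_1(\kappa(i,j))\geq\gamma_1(\xi_i+\mu)$ via \eqref{eq:muSigma}, is exactly the paper's argument (it is the same one-line computation as in Proposition~\ref{prop:TG2}). However, the ``main obstacle'' you identify does not arise for the statement as written, and your proposed way around it is not available. The proposition asserts transversality of the vector field \eqref{eq:switching}, i.e.\ of the piecewise-constant field $-\Gamma x+\Lambda(x)$, not of the interpolated field $f^{(\delta)}$. The four edges are split at $x_2=\eta_j$ precisely so that each half-edge lies in the closure of a single cell: on $\setof{\xi_i+\delta}\times(\eta_j,\eta_j+\delta]$ one has $\Lambda_1\equiv\Lambda_1(\kappa(i,j))$, so the hypothesis of (i) together with $\delta<\mu$ gives $\dot{x}_1=\gamma_1\bigl(\Phi_1(\kappa(i,j))-\xi_i-\delta\bigr)\geq\gamma_1(\mu-\delta)>0$ outright, and likewise for (ii)--(iv) with the appropriate cell. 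There is no interpolant to control, which is why the paper dismisses these cases with ``similar arguments.''

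The step in your write-up that would actually fail is the claim that one may ``use the freedom in the construction of $f^{(\delta)}$ to confine the interpolant'' on each half-edge to the value of the cell named in the hypothesis. You have no such freedom: the proposition (and the downstream trapping-region argument) must hold for \emph{every} $f^{(\delta)}$ satisfying the stated constraints, and when $\eta_j\in\sH^1$ the only constraint on $f^{(\delta)}_1$ in the interior of $G^1(v_{\bar{i},j})$ is the two-sided envelope \eqref{eq:f1G1H1}; imposing more would change the class of $\delta$-constrained systems and weaken Theorem~\ref{thm:controlledPert}. Your underlying observation --- that a one-sided entrance hypothesis plus the envelope does not by itself bound $f^{(\delta)}_1$ on a half-edge --- is a fair remark about how these lemmas interface with Section~\ref{sec:proof}, but note that there the proposition is invoked in pairs (e.g.\ ``Proposition~\ref{prop:TG1i}(i--ii)''), so both adjacent cells satisfy the entrance condition, both $\Lambda_1$ values exceed $\gamma_1(\xi_i+\mu)$, and the envelope \eqref{eq:f1G1H1} then does yield the bound for $f^{(\delta)}$ on the whole edge. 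So the correct repair is not to constrain $f^{(\delta)}$ further, but to prove the statement for $\Lambda$ cell by cell as above and to observe that the paired hypotheses, when both hold, transfer the estimate to $f^{(\delta)}$ through the min--max envelope.
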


\begin{prop}
\label{prop:TG0}
Consider $G^0(i,j)$.
\begin{enumerate}
\item[(i)]  If $\kappa(i,j)$ is of type $NW$, $W$, or $SW$, then the vector field \eqref{eq:switching} is transverse in to $G^0(i,j)$ along the edge $\setof{\xi_i + \delta}\times [\eta_j ,\eta_j+ \delta]$.
\item[(ii)]  If $\kappa(i,j)$ is of type $SW$, $S$, or $SE$, then  the vector field \eqref{eq:switching} is transverse in to $G^0(i,j)$ along the edge $[\xi_i,\xi_i + \delta]\times \setof{\eta_j + \delta}$.
\item[(iii)]  If $\kappa(i-1,j)$ is of type $SW$, $S$, or $SE$, then the vector field \eqref{eq:switching} is transverse in to $G^0(i,j)$ along the edge $[\xi_i-\delta,\xi_i]\times \setof{\eta_j+ \delta}$.
\item[(iv)]  If $\kappa(i-1,j)$ is of type $NE$, $E$, or $SE$, then the vector field \eqref{eq:switching} is transverse in to $G^0(i,j)$ along the edge $\setof{\xi_i - \delta}\times [\eta_j ,\eta_j+ \delta]$.
\item[(v)]  If $\kappa(i-1,j-1)$ is of type $NE$, $E$, or $SE$, then the vector field \eqref{eq:switching} is transverse in to $G^0(i,j)$ along the edge $\setof{\xi_i - \delta}\times [\eta_j -\delta,\eta_j]$.
\item[(vi)]  If $\kappa(i-1,j-1)$ is of type $NW$, $N$, or $NE$, then the vector field \eqref{eq:switching} is transverse in to $G^0(i,j)$ along the edge $[\xi_i-\delta,\xi_i]\times \setof{\eta_j- \delta}$.
\item[(vii)]  If $\kappa(i,j-1)$ is of type $NW$, $N$, or $NE$, then the vector field \eqref{eq:switching} is transverse in to $G^0(i,j)$ along the edge $[\xi_i,\xi_i+\delta]\times \setof{\eta_j- \delta}$.
\item[(viii)]  If $\kappa(i,j-1)$ is of type $NW$, $W$, or $SW$, then the vector field \eqref{eq:switching} is transverse in to $G^0(i,j)$ along the edge $\setof{\xi_i + \delta}\times [\eta_j - \delta ,\eta_j]$.
\end{enumerate}
\end{prop}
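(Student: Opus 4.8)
The plan is to dispatch all eight items with one uniform argument, since each simply selects one of the four cells meeting the grid point $\pi=(\xi_i,\eta_j)$ together with one of the eight half-edges bounding the square $G^0(i,j)$. First I would record the geometric fact that, because $\delta<\lambda$ and $\lambda$ is half the minimal cell width, the relative interior of each listed edge lies in the \emph{open} cell named in that item and is offset by $\delta$ from the grid lines on which $\Lambda$ is discontinuous. For instance, in (i) the edge $\{\xi_i+\delta\}\times[\eta_j,\eta_j+\delta]$ satisfies $\xi_i+\delta\in(\xi_i,\xi_{i+1})$ and $(\eta_j,\eta_j+\delta)\subset(\eta_j,\eta_{j+1})$, so its interior lies in $\kappa(i,j)$; the corresponding containments for the other seven edges follow because the $\pm\delta$ offset never reaches a neighboring threshold. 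Consequently the vector field \eqref{eq:switching} is single-valued along each edge and equals the affine field $-\Gamma x+\Lambda(\kappa)=\Gamma(\Phi(\kappa)-x)$ of the indicated cell $\kappa$, whose components are $\gamma_1(\Phi_1(\kappa)-x_1)$ and $\gamma_2(\Phi_2(\kappa)-x_2)$.

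Next I would reduce transversality to the sign of a single component. Each listed edge is either vertical, lying on a line $x_1=\xi_i\pm\delta$ whose outward normal for $G^0(i,j)$ points in the $\pm x_1$-direction, or horizontal, lying on $x_2=\eta_j\pm\delta$ with outward normal in the $\pm x_2$-direction. The field is transverse into $G^0(i,j)$ exactly when its inner product with the outward normal is negative; since $\gamma_1,\gamma_2>0$, this becomes a sign condition on $\Phi_1(\kappa)-x_1$ for a vertical edge and on $\Phi_2(\kappa)-x_2$ for a horizontal edge.

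Finally I would read off the sign from the coarse-type hypothesis, observing that in every case the $\pm\delta$ offset acts in the favorable direction. For the right-hand vertical edges (i) and (viii) the hypothesis ($NW$, $W$, $SW$) gives $\Phi_1(\kappa)<\xi_i<\xi_i+\delta$, hence a negative first component, which with the outward normal in the $+x_1$-direction yields inward transversality; for the left-hand vertical edges (iv) and (v) the hypothesis ($NE$, $E$, $SE$) gives $\Phi_1(\kappa)>\xi_i>\xi_i-\delta$, and the outward normal in the $-x_1$-direction again yields inward transversality. Symmetrically, the top edges (ii) and (iii) use ($SW$, $S$, $SE$) to get $\Phi_2(\kappa)<\eta_j<\eta_j+\delta$, and the bottom edges (vi) and (vii) use ($NW$, $N$, $NE$) to get $\Phi_2(\kappa)>\eta_j>\eta_j-\delta$. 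Here one must take the relevant threshold to be the one shared by $\pi$ and the named cell, so that $\xi_i$ is the left threshold for $\kappa(i,j),\kappa(i,j-1)$ but the right threshold for $\kappa(i-1,j),\kappa(i-1,j-1)$, and analogously for $\eta_j$; with this reading each coarse type indeed forces the stated inequality. Strictness of every inequality is precisely assumption \eqref{eq:notTvalue} ($\Phi_1(\kappa)\ne\xi_i$, $\Phi_2(\kappa)\ne\eta_j$), so the field is never tangent.

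I do not anticipate an analytic obstacle. In contrast with Propositions~\ref{prop:TG2}--\ref{prop:TG1i}, here the $\delta$-shift enlarges rather than shrinks the transversality margin, so neither the full strength of \eqref{eq:delta} nor even $\delta<\mu$ is needed---only $\delta<\lambda$, used to secure the cell-containment in the first step. The one delicate point is bookkeeping: correctly pairing each of the eight half-edges with its cell, its outward normal, and the favorable offset direction, and matching a coarse type against the appropriate threshold of its cell. Since the eight cases are structurally identical, I would present one vertical and one horizontal case in full and leave the remaining six to the evident symmetry.
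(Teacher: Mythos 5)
Your proof is correct and follows exactly the route the paper intends: Proposition~\ref{prop:TG0} is left as a ``similar argument'' to the proof of Proposition~\ref{prop:TG2}, which is precisely your reduction to the sign of one component of $-\Gamma x+\Lambda(\kappa)=\Gamma(\Phi(\kappa)-x)$ against the outward normal, using the coarse type of the cell containing the edge's relative interior. Your added observation that the $\delta$-offset here is in the favorable direction, so only $\delta<\lambda$ (cell containment) rather than $\delta<\mu$ is needed, is accurate.
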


We now give a proposition which shows the transversality on the hypotenuse of a chip.  
Consider a $\delta$-constrained continuous switching system \eqref{eq:dconstrained}  associated with 
a switching system $\Sigma = \Sigma(\Gamma,\Lambda,\Xi^1,\Xi^2,\sH^1,\sH^2)$, and suppose 
a narrow chip $C^n(\kappa, v, \pi)$ or a wide chip $C^w(\kappa, v, \pi)$ is introduced for $\Sigma$.
Recall that $\delta>0$ is chosen to satisfy the condition \eqref{eq:delta}.   

Without loss of generality, one can assume, by applying rotation if necessary, that a chip $C$ appears associated with $\kappa=\kappa(i,j),\ v=v_{\bar{i},j},\ \pi=(\xi_i,\eta_j)$.  
Let $H$ be the hypotenuse of the chip $C$.  
Under these circumstance, the transversality results can be formulated as follows:

\begin{prop}
\label{prop:TC}
\begin{enumerate}
\item[(i)] If $C=C^n(\kappa,v,\pi)$ for $\kappa=\kappa(i,j),\ v=v_{\bar{i},j},\ \pi=(\xi_i,\eta_j)$,
$\kappa=\kappa(i,j)$ is of type $A$, $N$, $NE$, $E$, and $\kappa'=\kappa(i,j-1)$ is of type $NW$, 
then the vector field \eqref{eq:dconstrained} is transverse in to $C$ along the hypotenuse $H$.
\item[(ii)] If $C=C^w(\kappa,v,\pi)$ for $\kappa=\kappa(i,j),\ v=v_{\bar{i},j},\ \pi=(\xi_i,\eta_j)$,
$\kappa=\kappa(i,j)$ is of type $A$, $N$, $NE$, or $E$, and $\kappa'=\kappa(i,j-1)$ is of type $N$
or $NE$, then the vector field \eqref{eq:dconstrained} is transverse in to $C$ along the hypotenuse $H$.
\end{enumerate}
\end{prop}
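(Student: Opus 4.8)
The plan is to reduce the claim to a sign condition for the vector field against the outward normal of the hypotenuse and then estimate the two components separately using the quantities $\mu$, $\rho$, $\lambda$, $\bar\gamma$ from \eqref{eq:muSigma}--\eqref{eq:gammaRatio}. With the stated normalization $\kappa = \kappa(i,j)$, $v = v_{\bar{i},j}$, $\pi = (\xi_i,\eta_j)$, the chip $C$ lies in the $1$-tile $G^1(v_{\bar{i},j}) = [\xi_i+\delta,\xi_{i+1}-\delta]\times[\eta_j-\delta,\eta_j+\delta]$ with right angle at $(\xi_i+\delta,\eta_j+\delta)$; writing $L := (\xi_{i+1}-\xi_i)/2 \ge \lambda$, the hypotenuse $H$ runs from $(\xi_i+\delta,\eta_j)$ to $(\xi_i+\delta+L,\eta_j+\delta)$ in the narrow case and from $(\xi_i+\delta,\eta_j-\delta)$ to $(\xi_i+\delta+L,\eta_j+\delta)$ in the wide case. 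The interior of $C$ lies above and to the left of $H$, so an outward normal is $n_H = (\delta,-L)$ (narrow) or $n_H = (2\delta,-L)$ (wide). Since transversality into $C$ along $H$ means the field $v(x) := -\Gamma x + f^{(\delta)}(x)$ satisfies $v(x)\cdot n_H < 0$, writing $v=(v_1,v_2)$ it suffices to prove $\beta\,v_1(x) < L\,v_2(x)$ for all $x\in H$, with $\beta=\delta$ (narrow) or $\beta=2\delta$ (wide).

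The heart of the argument is that both hypotheses force the two cells adjacent across $\eta_j$ to have their target heights above $\eta_j$, so that the second component points uniformly upward on $H$. Each of the types $A$, $N$, $NE$, $E$ listed for $\kappa(i,j)$ gives $\Phi_2(\kappa(i,j)) > \eta_j$, and each of $NW$ (part (i)) and $N$, $NE$ (part (ii)) listed for $\kappa' = \kappa(i,j-1)$ gives $\Phi_2(\kappa') > \eta_{(j-1)+1} = \eta_j$; by \eqref{eq:muSigma} both exceed $\eta_j$ by at least $\mu$. Whether $\eta_j\in\sH^1$ or $\eta_j\in\sH^2$, the defining relations \eqref{eq:f2G1H1} and \eqref{eq:f2G1H2} force $f^{(\delta)}_2(x)$ to lie between $\Lambda_2(\kappa')$ and $\Lambda_2(\kappa(i,j))$, so $f^{(\delta)}_2(x)\ge \gamma_2(\eta_j+\mu)$; since $x_2\le \eta_j+\delta$ on $H$ this yields $v_2(x)\ge \gamma_2(\mu-\delta)$. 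Dually, \eqref{eq:f1G1H1} and \eqref{eq:f1G1H2}, together with $\Phi_1(\kappa(i,j))>\xi_i$ and the displacement bound \eqref{eq:PointDisplacement}, give $f^{(\delta)}_1(x)\le \gamma_1(\xi_i+\rho)$, hence $v_1(x)\le \gamma_1(\rho-\delta)\le\gamma_1\rho$ wherever $x_1\ge \xi_i+\delta$ on $H$. I would note that part (i) in fact forces $\eta_j\in\sH^1$ (since $\kappa'$ of type $NW$ has $\Phi_1<\xi_i$ while $\kappa(i,j)$ has $\Phi_1>\xi_i$, so $\Phi_1$ cannot be preserved), whereas part (ii) must be argued uniformly over both partitions through these $f^{(\delta)}$ bounds.

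With both estimates in hand the inequality becomes numerical. When $v_1(x)\le 0$ it holds trivially because $v_2(x)>0$; when $v_1(x)>0$ it suffices that $\beta\gamma_1\rho < L\gamma_2(\mu-\delta)$, and after using $L\ge\lambda$, $\beta\le 2\delta$, and $\gamma_2/\gamma_1\ge\bar\gamma$ this reduces to a bound of the form $\delta(2\gamma_1\rho+\lambda\gamma_2)<\lambda\gamma_2\mu$. The first term of $\delta_*$ in \eqref{eq:delta} has exactly this shape, and the remark following \eqref{eq:delta} already secures $\delta<\mu$ so that $\mu-\delta>0$; hence $\delta<\delta_*$ closes both parts, the only difference between narrow and wide being the factor $\beta\in\{\delta,2\delta\}$ multiplying $v_1$ (which is why the wide chip, more sensitive to $v_1$, is matched with the neighbor types $N$, $NE$).

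The main obstacle is that $n_H$ is genuinely diagonal, so the sign condition couples both components and cannot be read off at one point: $v_1$ is largest at the low-$x_1$ end of $H$ while $v_2$ is smallest at the high-$x_2$ end, so $\beta v_1 - L v_2$ must be controlled uniformly along the whole segment. Carrying this out cleanly---parametrizing $H$, verifying that the extremum of $\beta v_1-Lv_2$ is negative, and tracking the $\sqrt{2}$ and the $(2\lambda+3\rho)$ denominator that appear in \eqref{eq:delta}---while simultaneously handling the $\sH^1$ and $\sH^2$ cases in part (ii), is the delicate bookkeeping the proof must absorb; the auxiliary bound $\delta<\sqrt{\lambda\mu\bar\gamma/32}$ from \eqref{eq:delta} is available should a $\delta^2$ term need to be dominated.
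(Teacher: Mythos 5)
Your proof is correct, but it takes a genuinely different and more elementary route than the paper's. The paper parametrizes the hypotenuse as $p(t)$, proves a minimum-angle lemma (Lemma~\ref{lem:TC}) to reduce to a single worst-case point, replaces the field there by an affine field $V^*(x)=-\Gamma(x-\Phi^*)$, and then analyzes the resulting expression $T(t,\delta)=K(t)\delta^2+L(t)\delta+M$ as a quadratic in $\delta$, with a discriminant case analysis that differs between the narrow chip (where $K(t)>0$) and the wide chip (where $K(t)$ can vanish or be negative); the precise form of $\delta_*$ in \eqref{eq:delta}, including the factor $\sqrt{2}$, the denominator $2\lambda+3\rho$, and the second term $\sqrt{\lambda\mu\bar\gamma/32}$, is an artifact of that analysis. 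You instead bound the two components of the field uniformly on $H$ --- $v_2\geq\gamma_2(\mu-\delta)>0$ from the type hypotheses on $\kappa$ and $\kappa'$ together with \eqref{eq:muSigma}, and $v_1\leq\gamma_1(\rho-\delta)$ from \eqref{eq:PointDisplacement} --- and then verify a single linear-in-$\delta$ inequality $\beta v_1<L v_2$; because the bounds are uniform pointwise, the ``extremum along $H$'' issue you worry about at the end does not actually arise, and your argument needs only a condition of the form $\delta<\lambda\mu\bar\gamma/(\lambda+3\rho)$, which is strictly weaker than $\delta<\delta_*$, so the stated hypothesis closes both parts (and, notably, the second term of \eqref{eq:delta} is never used). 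Two small inaccuracies to fix: the far endpoint of $H$ is $(\xi_i+a/2,\eta_j+\delta)$ rather than $(\xi_i+\delta+L,\eta_j+\delta)$, so the horizontal leg has length $a/2-\delta$ and the outward normal should be $(\beta,-(a/2-\delta))$; since $a/2-\delta\geq\lambda-\delta$ and $\delta<\mu\leq\rho$, the same arithmetic still goes through with room to spare, but you should carry the $-\delta$ explicitly. What your approach buys is brevity and transparency; what the paper's buys is a sharper accounting of how $\delta_*$ depends on $\gamma_1/\gamma_2$ along the hypotenuse, which is why their constant has the particular form it does.
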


To prove the above proposition, recall that the vector field \eqref{eq:dconstrained} is rewritten as
\[
\dot{x} = -\Gamma x + f^{(\delta)}(x) = -\Gamma(x - \Phi^{(\delta)}(x)).
\]
From the above definition of $f^{(\delta)}(x)$, we see that $\Phi^{(\delta)}(x)=\Phi(\kappa)$ for any $x\in G^2(\kappa)$ with $\kappa=\kappa(i,j)$ or $\kappa(i,j-1)$.  
We first show that the transversality of the vector field \eqref{eq:dconstrained} on $H$ is reduced to the transversality of an affine vector field.  
Let $p(t)=(1-t)H_0+tH_1\ (0\leq t\leq 1)$ where $H_0$ and $H_1$ are the end points of $H$, and let $\theta(t)$ is the angle (measured counter-clockwise from the positive direction of the $x_1$-axis) of the vector 
$V(x)=-\Gamma x + f^{(\delta)}(x)$ evaluated at $p(t)$. 
In other words,
\[
V(p(t))=R(t)\cdot\begin{pmatrix}\cos\theta(t) \\ \sin\theta(t)\end{pmatrix}\quad (R(t)>0).
\]
Observe that the types of $\kappa, \kappa'$ and the definition of $f^{(\delta)}(x)$ on 
$G^1(v)$ imply that the angle $\theta(t)$ must satisfy $0<\theta(t)<\pi$ for any $t\in[0,1]$.

Let $\nu=r\cdot\begin{pmatrix}\cos\varphi \\ \sin\varphi\end{pmatrix}$ be a normal vector to
$H$ pointing upward, hence $\pi/2 < \varphi < \pi$.  
The vector field \eqref{eq:dconstrained} is
transverse in to the chip $C$ along $H$ if and only if the inner product $V(p(t))\cdot\nu$ 
at any point of $p(t)\in H\ (t\in[0,1]$ is positive.   Let $\theta_{\rm min}$ be the minimum of 
$\theta(t)$, and let $t_{\rm min}\in [0,1]$ be such that $\theta(t_{\rm min})=\theta_{\rm min}$.

\begin{lem}
\label{lem:TC}
Let $V_{\rm min}=V(p(t_{\rm min}))$.
If $V_{\rm min}\cdot\nu>0$, then $V(p(t))\cdot\nu>0$ for any $t\in [0,1]$.
\end{lem}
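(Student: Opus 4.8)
The plan is to reduce the sign of the inner product $V(p(t))\cdot\nu$ to the sign of a single cosine and then exploit the a priori angular constraints already recorded just before the lemma. Writing $V(p(t))$ and $\nu$ in their polar forms, one computes
\[
V(p(t))\cdot\nu = R(t)\,r\bigl(\cos\theta(t)\cos\varphi + \sin\theta(t)\sin\varphi\bigr) = R(t)\,r\,\cos\bigl(\theta(t)-\varphi\bigr).
\]
Since $R(t)>0$ and $r>0$, the inner product is positive precisely when $\cos(\theta(t)-\varphi)>0$, i.e.\ when $\theta(t)$ lies in the open angular window $(\varphi-\tfrac{\pi}{2},\,\varphi+\tfrac{\pi}{2})$. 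This is the reduction that turns a pointwise transversality statement into a statement about where an angle lies.

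First I would observe that the \emph{upper} half of this window is free. Because $\varphi\in(\pi/2,\pi)$ we have $\varphi+\tfrac{\pi}{2}>\pi$, while the discussion preceding the lemma guarantees $0<\theta(t)<\pi$ for all $t\in[0,1]$; hence $\theta(t)<\pi<\varphi+\tfrac{\pi}{2}$ automatically. Consequently the condition $V(p(t))\cdot\nu>0$ is equivalent to the single one-sided inequality $\theta(t)>\varphi-\tfrac{\pi}{2}$.

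It then remains only to control this one-sided inequality, and here the hypothesis on $V_{\rm min}$ enters. Applying the equivalence just established at $t=t_{\rm min}$, the assumption $V_{\rm min}\cdot\nu>0$ is exactly the statement $\theta_{\rm min}>\varphi-\tfrac{\pi}{2}$. Since $\theta_{\rm min}$ is by definition the minimum of $\theta$ over $[0,1]$, for every $t$ we have $\theta(t)\geq\theta_{\rm min}>\varphi-\tfrac{\pi}{2}$ together with $\theta(t)<\pi<\varphi+\tfrac{\pi}{2}$, so $\theta(t)$ stays inside the angular window and $V(p(t))\cdot\nu>0$ throughout, as claimed. (Continuity of $\theta$ on the compact interval $[0,1]$, which follows from the continuity of $f^{(\delta)}$ and the fact that $R(t)>0$ forces $V(p(t))\neq 0$, guarantees that $\theta_{\rm min}$ is actually attained, so that $t_{\rm min}$ makes sense.)

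The only real content, and the step I would highlight, is the observation that the angular window is effectively one-sided: the geometry of the chip forces the outward normal to have angle $\varphi\in(\pi/2,\pi)$, and the type constraints on $\kappa,\kappa'$ force $\theta(t)\in(0,\pi)$, so the upper end $\varphi+\tfrac{\pi}{2}$ of the window can never be reached. This collapses transversality along the \emph{entire} hypotenuse to a condition at the single angle-minimizing point, which is precisely the assertion of the lemma. No delicate estimate involving $\delta$, $\mu$, $\rho$, or $\bar\gamma$ is needed here; those bounds will instead be used afterward to verify the hypothesis $V_{\rm min}\cdot\nu>0$ itself.
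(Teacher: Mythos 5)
Your proposal is correct and follows essentially the same route as the paper: both reduce $V(p(t))\cdot\nu$ to $R(t)\,r\cos(\theta(t)-\varphi)$ and exploit the bounds $0<\theta(t)<\pi$, $\pi/2<\varphi<\pi$ to compare with the value at $t_{\rm min}$. Your phrasing of the final step as a one-sided angular window $\theta(t)>\varphi-\pi/2$ is a slightly cleaner packaging of the paper's case split using monotonicity of cosine on $(-\pi,0)$, but it is the same argument.
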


\begin{proof}
Observe that
\[
V(p(t))\cdot\nu = R(t)\cdot\begin{pmatrix}\cos\theta(t) \\ \sin\theta(t)\end{pmatrix}\cdot
r\cdot\begin{pmatrix}\cos\varphi \\ \sin\varphi\end{pmatrix} = R(t)r\cos(\theta(t)-\varphi),
\]
where $-\pi<\theta_{\rm min}-\varphi\leq \theta(t)-\varphi<\pi/2$.  If $0\leq \theta(t)-\varphi<\pi/2$,
then we immediately obtain the positivity of $V(p(t))\cdot\nu$.  So we consider the case 
$-\pi<\theta_{\rm min}-\varphi\leq \theta(t)-\varphi<0$.  In this range, the cosine function is 
monotone increasing, and hence $\cos(\theta_{\rm min}-\varphi)\leq \cos(\theta(t)-\varphi)$.
Therefore, if $V_{\rm min}\cdot\nu>0$, then we have $0<\cos(\theta_{\rm min}-\varphi)\leq 
\cos(\theta(t)-\varphi)$, and therefore $V(p(t))\cdot\nu>0$.
\end{proof}

Now define $\Phi^*=-\Gamma^{-1}V_{\rm min}$ and consider the affine vector field $V^*(x)=
-\Gamma(x-\Phi^*)$.  Observe that, from the definition of $f^{(\delta)}(x)$ on $G^1(v)$, 
$\Phi^*=\begin{pmatrix} \Phi_1^* \\ \Phi_2^*\end{pmatrix}$ satisfies
\[
|\Phi_1^* - \xi|<\rho, \quad |\Phi_2^*-\eta|<\rho
\]
for any $\xi\in\Xi,\ \eta\in\sH$.  Similarly,  it also satisfies
\[
\mu < |\Phi_2^*-\eta|
\]
for $\eta=\eta_j$.

In order to prove $V_{\rm min}\cdot\nu>0$, we first consider the narrow chip $C=C^n(\kappa,v,\pi)$.

In this case, we can use coordinate system with origin at $(\xi_i, \eta_j)$ to evaluate
\[
p(t)=(1-t)\begin{pmatrix}\delta \\ 0\end{pmatrix}+t\begin{pmatrix}a/2 \\ \delta\end{pmatrix}
=\begin{pmatrix}\delta+(a/2-\delta)t \\ \delta\end{pmatrix}
\]
and one can choose $\nu=\begin{pmatrix}-\delta \\ a/2 - \delta\end{pmatrix}$, where 
$a =\xi_{i+1} - \xi_i(>2\lambda)$.  Therefore the inner product $V^*(p(t))\cdot\nu$ defines a 
function $T(t,\delta)$ which is affine in $t\in[0,1]$ and quadratic in $\delta$.   More explicitly, 
we can obtain $T(t,\delta)=K(t)\delta^2+L(t)\delta+M(t)$, where
\begin{eqnarray}
K(t)&=& (1-t)\gamma_1 + t\gamma_2\\
L(t)&=& (\gamma_1-\gamma_2)(a/2)t-\gamma_1(\Phi_1^*-\xi_i)-\gamma_2(\Phi_2^*-\eta_j)\\
M(t)&=& (a/2)\gamma_2(\Phi_2^*-\eta_j)
\end{eqnarray}
In fact, $M$ does not depend on $t$, $M(t)=M$, and is strictly positive.  Also $K(t)$ is strictly positive
because $\gamma_1, \gamma_2>0$ and $t\in[0,1]$.  So the quadratic function $T(t,\delta)=K(t)\delta^2+L(t)\delta+M$
is always positive for $\delta>0$ if the discriminant $D=D(t)=L(t)^2-4K(t)M$ is negative, or if $D\geq 0$ 
and $L>0$.  So the only case where $T(t,\delta)$ can take a negative value is the case $D\geq 0$ and 
$L<0$.  In this case, we can easily see that $T(t,\delta)>0$ for any $\delta\in (0,\delta_-(t))$, where 
$\delta_-(t)$ is a smaller root of the quadratic function $T(t,\delta)=0$, namely 
\[
\delta_-(t) = \frac{-L(t)-\sqrt{D(t)}}{2K(t)}.
\]
In order to obtain an estimate which does not depend on $t$, we observe that $\delta_-(t)>-M/L(t)$ in 
case $L(t)<0$.  This easily follows by comparing the graph of $T(t,\delta)$ and its tangent line at 
$\delta=0$.  Substituting above and using the estimates \eqref{eq:muSigma}, 
\eqref{eq:PointDisplacement}, \eqref{eq:gammaRatio}, \eqref{eq:cellWidth}, we obtain
\begin{eqnarray*}
-\frac{M}{L(t)}&=&\frac{(a/2)\gamma_2(\Phi_2^*-\eta_j)}
{-(\gamma_1-\gamma_2)(a/2)t+\gamma_1(\Phi_1^*-\xi_i)+\gamma_2(\Phi_2^*-\eta_j)}\\
&\geq&\frac{(a/2)\gamma_2(\Phi_2^*-\eta_j)}
{|\gamma_1-\gamma_2|(a/2)+\gamma_1(\Phi_1^*-\xi_i)+\gamma_2(\Phi_2^*-\eta_j)}\\
&=&\frac{|\Phi_2^*-\eta_j|}
{|(\gamma_1/\gamma_2)-1|+(2/a)\{(\gamma_1/\gamma_2)|\Phi_1^*-\xi_i|+|\Phi_2^*-\eta_j|\}}\\
&\geq&\frac{\mu}{|(1/\bar\gamma)-1|+(1/\lambda)\{(1/\bar\gamma)\rho+\rho\}}\\
&=&\frac{\lambda\mu\bar\gamma}{(1-\bar\gamma)\lambda+(1+\bar\gamma)\rho}\\
&\geq&\frac{\lambda\mu\bar\gamma}{\lambda+2\rho}
\end{eqnarray*}
Therefore, in the case of the narrow chip, if we choose $\delta$ so that 
$0<\delta<\lambda\mu\bar\gamma/(\lambda+2\rho)$, then we can conclude that the vector 
field $V^*(x)$, and hence $V^{(\delta)}(x)$ as well, is transverse in to the chip along its hypotenuse.

For the wide chip, we can argue in the same manner, but the estimates become more complicated.
Using 
\[
p(t)=(1-t)\begin{pmatrix}\delta \\ -\delta\end{pmatrix}+t\begin{pmatrix}a/2 \\ \delta\end{pmatrix}
=\begin{pmatrix}\delta+(a/2-\delta)t \\ (2t-1)\delta\end{pmatrix}
\]
and $\nu=\begin{pmatrix}-2\delta \\ a/2 - \delta\end{pmatrix}$, one can similarly define
\[
T(t,\delta):=V^*(p(t))\cdot\nu=K(t)\delta^2+L(t)\delta+M
\]
where
\begin{eqnarray}
K(t)&=& 2\gamma_1-\gamma_2 - 2(\gamma_1-\gamma_2)t,\\
L(t)&=& (\gamma_1-\gamma_2)at +(a/2)\gamma_2 -2\gamma_1(\Phi_1^*-\xi_i)
-\gamma_2(\Phi_2^*-\eta_j),\\
M(t)&=& (a/2)\gamma_2(\Phi_2^*-\eta_j).
\end{eqnarray}
As in the case of narrow chip, $M$ does not depend on $t$ and is positive.  However, $K(t)$ can change
its sign.  In case $K(t)>0$, the same argument works, and we obtain that $-M/L(t)$ gives a bound for 
$\delta$ in case $L(t)<0$, otherwise $T(t,\delta)>0$ for any $\delta>0$.  

Observe that $K(t)$ vanishes at some $t_0\in [0,1]$ only when $\gamma_2\geq 2\gamma_1$.  
In case $K(t_0)=0$,  $T(t_0,\delta)=0$ when $\delta=-M/L(t_0)$ and 
\[
L(t_0)=(\gamma_1-\gamma_2)at_0 +(a/2)\gamma_2 -2\gamma_1(\Phi_1^*-\xi_i)
-\gamma_2(\Phi_2^*-\eta_j)=(\gamma_1-\gamma_2)a -2\gamma_1(\Phi_1^*-\xi_i)
-\gamma_2(\Phi_2^*-\eta_j)<0, 
\]
so the upper bound of $\delta$ for $T(t_0,\delta)>0$ is $\delta<-M/L(t_0)$ which is the same as 
before, or more precisely,
\begin{eqnarray*}
-\frac{M}{L(t_0)}&=&\frac{(a/2)\gamma_2(\Phi_2^*-\eta_j)}
{(\gamma_2-\gamma_1)a+2\gamma_1(\Phi_1^*-\xi_i)+\gamma_2(\Phi_2^*-\eta_j)}\\
&=&\frac{|\Phi_2^*-\eta_j|}
{2|1-(\gamma_1/\gamma_2)|+(2/a)\{2(\gamma_1/\gamma_2)|\Phi_1^*-\xi_i|+|\Phi_2^*-\eta_j|\}}\\
&>&\frac{\lambda\mu\bar\gamma}{2(1-\bar\gamma)\lambda+(2+\bar\gamma)\rho}\\
&\geq&\frac{\lambda\mu\bar\gamma}{2\lambda+3\rho}.
\end{eqnarray*}

In case $K(t)<0$, which occurs when $\gamma_2>2\gamma_1$ and $0<t<t_0$, 	we have $T(t,\delta)>0$
if $0<\delta<\delta_+(t)=(-L(t)-\sqrt{D(t)})/2K(t)$, where $\delta_+(t)$ is a larger root of the quadratic
equation $T(t,\delta)=0$.  Using an inequality
\[
\sqrt{x+h}\geq \sqrt{x}+\frac12\frac{h}{\sqrt{x+h}}\quad (x>0, h>0)
\]
we have, independent of whether $L(t)>0$ or not,
\[
\delta_+(t)\geq\frac{L(t)+|L(t)|+\frac12\frac{4(-K(t))M}{\sqrt{L(t)^2+4(-K(t))M}}}{2(-K(t))}
\geq\frac{M}{\sqrt{L(t)^2+4(-K(t))M}}.
\]
Since $L(t)^2+4(-K(t))M<\max\{2L(t)^2, 2\times 4(-K(t))M\}$, we finally have an estimate
\[
\delta_+(t)\geq \min\left\{\frac{M}{\sqrt{2}|L(t)|},\ \frac{1}{2\sqrt{2}}\sqrt{\frac{M}{-K(t)}}\right\}
\]
The first term in the min can be treated exactly the same way, up to the constant $1/\sqrt{2}$.  The 
second term can be treated as follows:
\[
\frac{M}{-K(t)}>\frac{(a/2)\gamma_2(\Phi_2^*-\eta_j)}
{|2\gamma_1-\gamma_2| + 2|\gamma_1-\gamma_2|}
=\frac{(a/2)(\Phi_2^*-\eta_j)}{|2(\gamma_1/\gamma_2)-1|+2|(\gamma_1/\gamma_2)-1|}
>\frac{\lambda\mu\bar\gamma}{(2-\bar\gamma)+2(1-\bar\gamma)}
>\frac{\lambda\mu\bar\gamma}{4}
\]

Putting all the above estimates together, for both the narrow chip case and the wide chip case, we 
define
\begin{equation}
\label{eq:mindelta}
\delta_*=\delta_*(\lambda,\mu,\rho,\bar{\gamma}):=
\min\left\{\frac{\lambda\mu\bar\gamma}{\sqrt{2}(2\lambda+3\rho)},\ 
\sqrt{\frac{\lambda\mu\bar\gamma}{32}}\right\}.
\end{equation}
The above argument proves that, for any $\delta$ with $0<\delta<\delta_*$, the vector field 
$V^*(x)$, and hence $V^{(\delta)}(x)$ as well, is transverse in to the (both narrow and wide) chip 
along its hypotenuse.  This completes the proof of Proposition \ref{prop:TC}.

\section{State Transition Diagram}
\label{sec:transitionDiagram}

\begin{defn}
Given a cell $\kappa$ the directed \emph{$\kappa$-graph}, $\cF_\kappa\colon \cV_e(\kappa)\mvmap \cV_a(\kappa)$, is defined by
\[
v \in \cF(u)\quad\text{if and only if}\quad v\in\cV_a(\kappa)\ \text{and}\ u\in \cV_e(\kappa).
\]
In addition, if $w_\kappa\in\cV(\kappa)$, then $\cF_\kappa(w_\kappa) = w_\kappa$.
\end{defn}

To give some geometric perspective to these definitions the reader is referred to Figure~\ref{fig:CoarseFkappaMap} where the directed graphs $\cF_\kappa$ are shown for various types of cells $\kappa$. 
The following proposition, whose proof is left to the reader, relates the definitions of entrance and absorbing faces to the dynamics of 
 the associate $\kappa$-equation.

\begin{prop}
Let $v\in\cV_e(\kappa)$. For every $x\in v$, there exists a unique $t = t(x) \geq 0$ and a $v'\in\cV_a(\kappa)$ such that
\[
\psi_\kappa(t,x)\in\cl(v')\quad\text{or}\quad \lim_{t\to \infty}\psi_\kappa(t,v) = \Phi(\kappa).
\]
In the latter case, $\kappa$ is an attracting cell and $\cF_\kappa(v) = w$.
\end{prop}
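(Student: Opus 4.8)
The plan is to exploit the fact that the $\kappa$-equation \eqref{eq:Kaffine} is affine with diagonal linear part, so its flow decouples into two scalar linear equations whose solutions are explicit and monotone. Writing $x=(x_1,x_2)$ and $\Phi(\kappa)=(\Phi_1,\Phi_2)$, the components of $\psi_\kappa(t,x)$ are
\[
x_k(t) = \Phi_k + (x_k(0)-\Phi_k)e^{-\gamma_k t},\qquad k=1,2,
\]
so each $x_k(t)$ converges monotonically to $\Phi_k$, with $|x_k(t)-\Phi_k|$ strictly decreasing whenever $x_k(0)\neq\Phi_k$. Thus the trajectory moves monotonically toward $\Phi(\kappa)$ in each coordinate, and in particular the set of times $t\geq 0$ for which $\psi_\kappa(t,x)\in\cl(\kappa)$ is an interval, either $[0,t(x)]$ or $[0,\infty)$. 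The entire argument reduces to reading off these two scalar trajectories.

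First I would dispose of the attracting case. By \eqref{eq:coarseType}, $\kappa$ is of type $A$ precisely when $\Phi(\kappa)$ lies in the open cell, i.e.\ $\xi_i<\Phi_1<\xi_{i+1}$ and $\eta_j<\Phi_2<\eta_{j+1}$. In that situation both coordinates converge to interior values, so $\psi_\kappa(t,x)$ remains in $\kappa$ for all $t>0$ and $\lim_{t\to\infty}\psi_\kappa(t,x)=\Phi(\kappa)$. Since $\Phi(\kappa)$ is approached only in the limit, no finite exit time occurs, and we record $\cF_\kappa(v)=w_\kappa$. This yields the second alternative, and by \eqref{eq:coarseType} it occurs exactly when $\kappa$ is attracting.

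In the remaining (non-type-$A$) case, assumption \eqref{eq:notTvalue} guarantees that at least one coordinate of $\Phi(\kappa)$ lies strictly outside the corresponding cell interval. For each such coordinate $k$, monotone convergence of $x_k(t)$ toward $\Phi_k$ forces $x_k$ to reach the endpoint of the $k$-interval nearest $\Phi_k$ at a unique finite time $\tau_k$ (set $\tau_k=+\infty$ when $\Phi_k$ lies inside the interval). I would then define $t(x):=\min\{\tau_1,\tau_2\}<\infty$; this is the first exit time from the open cell, and its uniqueness follows from the monotonicity, which also shows that once a coordinate has crossed a boundary it never returns, so $\psi_\kappa(t,x)\in\cl(\kappa)$ fails strictly for $t>t(x)$ and the trajectory meets $\partial\kappa$ at the single time $t(x)$. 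The face $v'$ through which it leaves is the one toward which the relevant coordinate is moving, and the crucial observation is that this face is automatically \emph{absorbing}: for instance $x_1(t)\nearrow\xi_{i+1}$ is possible only if $\Phi_1>\xi_{i+1}$, which is exactly the defining condition for $v_{i+1,\bar{j}}$ to be absorbing, and the other three edges are identical. In particular the exit never occurs through the entrance face $v$, since there $\Phi$ lies on the cell side and the corresponding coordinate moves away from $v$. Hence $\psi_\kappa(t(x),x)\in\cl(v')$ with $v'\in\cV_a(\kappa)$, giving the first alternative.

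The one point requiring care is the corner case $\tau_1=\tau_2<\infty$, where the trajectory exits through a vertex of $\kappa$ and so lands in the closure of two edges at once; here the directionality argument shows that \emph{both} of these edges are absorbing, so a valid choice of $v'\in\cV_a(\kappa)$ with $\psi_\kappa(t(x),x)\in\cl(v')$ still exists. This bookkeeping, together with matching the sign of each coordinate's motion against the entrance/absorbing labeling of Definition~\ref{defn:faces}, is the only genuine content of the proof; everything else is an immediate consequence of the explicit monotone solutions, which is presumably why the statement is left to the reader.
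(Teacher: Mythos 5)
Your proof is correct, and since the paper explicitly leaves this proposition to the reader, your argument via the explicit decoupled solutions $x_k(t)=\Phi_k(\kappa)+(x_k(0)-\Phi_k(\kappa))e^{-\gamma_k t}$ and their monotone convergence is precisely the intended one, including the use of \eqref{eq:notTvalue} to exclude the degenerate case $\Phi_k(\kappa)$ equal to a threshold and the observation that a coordinate can cross a threshold in finite time only in the direction that makes the corresponding face absorbing. The corner case $\tau_1=\tau_2$ is handled appropriately: the time $t(x)$ is still unique and both incident faces are absorbing, so a valid $v'\in\cV_a(\kappa)$ exists.
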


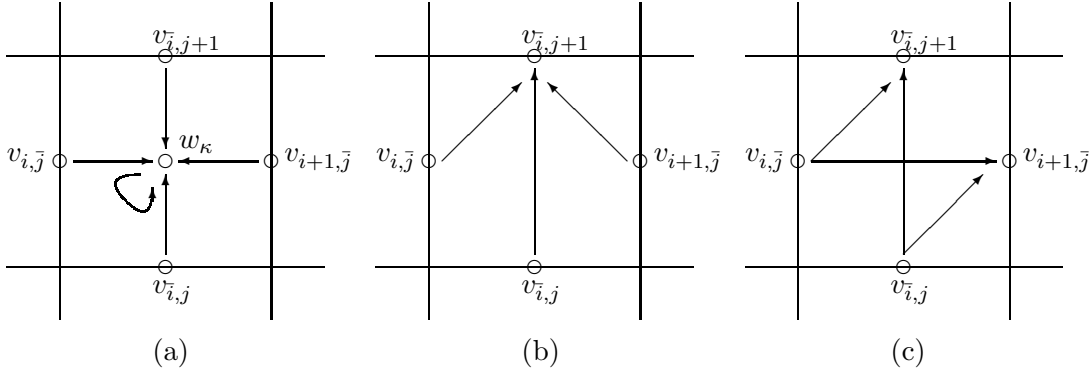
\begin{figure}[t]
\begin{picture}(125,140)(0,-20)
\multiput(0,20)(0,80){2}{\line(1,0){120}}
\multiput(20,0)(80,0){2}{\line(0,1){120}}
\multiput(60,20)(0,80){2}{\circle{5}}
\multiput(20,60)(80,0){2}{\circle{5}}
\put(60,60){\circle{5}}
\put(25,60){\vector(1,0){30}}
\put(95,60){\vector(-1,0){30}}
\put(60,25){\vector(0,1){30}}
\put(60,95){\vector(0,-1){30}}
\put(65,65){$w_{\kappa}$}
\put(0,60){$v_{i,\bar{j}}$}
\put(105,60){$v_{i+1,\bar{j}}$}
\put(55,10){$v_{\bar{i},j}$}
\put(55,105){$v_{\bar{i},j+1}$}

\qbezier(50,55)(35,55)(45,45)
\qbezier(55,50)(55,35)(45,45)
\put(55,45){\vector(0,1){5}}

\put(55,-15){(a)}
\end{picture}
\quad
\begin{picture}(125,140)(0,-20)
\multiput(0,20)(0,80){2}{\line(1,0){120}}
\multiput(20,0)(80,0){2}{\line(0,1){120}}
\multiput(60,20)(0,80){2}{\circle{5}}
\multiput(20,60)(80,0){2}{\circle{5}}
\put(25,60){\vector(1,1){30}}
\put(60,25){\vector(0,1){70}}
\put(95,60){\vector(-1,1){30}}
\put(0,60){$v_{i,\bar{j}}$}
\put(105,60){$v_{i+1,\bar{j}}$}
\put(55,10){$v_{\bar{i},j}$}
\put(55,105){$v_{\bar{i},j+1}$}

\put(55,-15){(b)}
\end{picture}
\quad
\begin{picture}(125,140)(0,-20)
\multiput(0,20)(0,80){2}{\line(1,0){120}}
\multiput(20,0)(80,0){2}{\line(0,1){120}}
\multiput(60,20)(0,80){2}{\circle{5}}
\multiput(20,60)(80,0){2}{\circle{5}}
\put(25,60){\vector(1,0){70}}
\put(25,60){\vector(1,1){30}}
\put(60,25){\vector(0,1){70}}
\put(60,25){\vector(1,1){30}}
\put(0,60){$v_{i,\bar{j}}$}
\put(105,60){$v_{i+1,\bar{j}}$}
\put(55,10){$v_{\bar{i},j}$}
\put(55,105){$v_{\bar{i},j+1}$}

\put(55,-15){(c)}
\end{picture}
\caption{The directed graphs or multivalued maps $\cF_\kappa\colon \cV_e(\kappa)\mvmap \cV_a(\kappa)$ for $\kappa = \kappa(i,j)$.
(a) $\kappa$ is attracting and hence of type $A$. (b) $\kappa$ is focusing of type $N$. (c) $\kappa$ is translating cell of type $NE$. }
\label{fig:CoarseFkappaMap}
\end{figure}

\begin{defn}
\label{defn:stateTransitionGraph}
Given a switching system $\Sigma$ the associated state transition diagram $\cF \colon \cV\mvmap \cV$ is the directed graph with vertices 
\[
\cV :=\left ( \bigcup_{\kappa\in\cK} \cV(\kappa)\right) / \sim
\]
where $\sim$ is equivalence relation $v \sim w$ when $v \in \cV(\kappa)$, $w \in \cV(\kappa')$ and $\kappa \cap \kappa'$ intersect in a 1-dimensional interval that correspond to both $v$ and 
$w$.  
\end{defn}

We label the possible configurations of $\cF_\kappa$ and $\cF_{\kappa'}$ under the assumption that $\kappa$ and $\kappa'$ share a common face. 

\begin{defn}
Consider the state transition diagram $\cF\colon\cV\mvmap\cV$  for a switching system.
Let $v\in\cV$.  
If $v=w_{i,j}\in\cV(\kappa(i,j))$, then $v$ is called an \emph{minimal vertex}.  
Assume $v\in \cV(\kappa)\cap \cV(\kappa')$.
If there exists $u\in \cV(\kappa)$ and $w\in \cV(\kappa')$ such that $u\to v \to w$, then $v$ is a \emph{transparent vertex}.
If there exists $u\in \cV(\kappa)$ and $w\in \cV(\kappa')$ such that $u\to v$ and $w\to v$, then $v$ is a \emph{black vertex}.
If there exists $u\in \cV(\kappa)$ and $w\in \cV(\kappa')$ such that $v\to u$ and $v\to w$, then $v$ is a \emph{white vertex}.
The sets of minimal, transparent, black, and white vertices are denoted by $\cM$, $\cT$, $\cB$, and $\cW$, respectively.
\end{defn}

\begin{prop}
\label{prop:MTBWpart}
Consider a state transition diagram $\cF\colon\cV\mvmap\cV$  for a switching system.
The minimal, transparent, black, and white vertices partition $\cV$.
\end{prop}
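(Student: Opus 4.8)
The plan is to prove both exhaustiveness and pairwise disjointness of the four classes by reducing every case to the entrance/absorbing dichotomy together with the complete bipartite structure of the graphs $\cF_\kappa$. The first thing I would record is purely combinatorial-geometric: because the cells $\cK$ form a regular rectangular grid, a vertex $v\in\cV$ is either an interior face $w_\kappa$ or a relatively open segment of a threshold line; in the latter case it is a common face of exactly two cells when the segment is interior to the quadrant, and of a single cell when it lies on the boundary $\{x_1=0\}\cup\{x_2=0\}$ (the would-be faces at $\xi_{I+1}=\infty$ and $\eta_{J+1}=\infty$ are not subsets of the plane and contribute no vertices). The quotient $\sim$ of Definition~\ref{defn:stateTransitionGraph} is precisely the identification of the two-sided faces. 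Thus every $v$ lies in one or two cells, and I will handle these possibilities in turn.

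I would first dispatch the minimal vertices. If $v=w_\kappa$, then $\kappa$ is of type $A$, and $v$ lies in the interior of $\kappa$ and on no threshold line, so it is identified with no other face and $v\notin\cV(\kappa')$ for all $\kappa'\neq\kappa$. Hence $v$ is minimal, and since each of the transparent, black, and white conditions presupposes $v\in\cV(\kappa)\cap\cV(\kappa')$ for two \emph{distinct} cells, $v$ belongs to none of them. This already separates $\cM$ from $\cT\cup\cB\cup\cW$.

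The core of the argument is a vertex $v$ shared by two cells $\kappa,\kappa'$. By \eqref{eq:notTvalue}, $\Phi_1(\kappa)\notin\Xi$ and $\Phi_2(\kappa)\notin\sH$, so the definition of entrance and absorbing faces places $v$ in exactly one of $\cV_e(\kappa),\cV_a(\kappa)$, and likewise for $\kappa'$ (using $\cV_e(\kappa)\cap\cV_a(\kappa)=\emptyset$). Now $\cF_\kappa$ carries \emph{every} entrance face to \emph{every} absorbing face, and every cell has at least one of each---for each coordinate the two opposite faces cannot both be absorbing, since $\Phi(\kappa)$ lies to one side, yielding an entrance face, while an absorbing face is supplied either by the far face when $\Phi(\kappa)$ is exterior in some coordinate, or by $w_\kappa$ when $\kappa$ is of type $A$. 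Consequently the edges at $v$ are fully determined by its two statuses: entrance for $\kappa$ forces an out-edge into $\kappa$ and no in-edge from $\kappa$, and dually for absorbing. The four ordered status pairs then collapse as $(\mathrm{abs},\mathrm{ent})$ and $(\mathrm{ent},\mathrm{abs})\mapsto$ transparent ($u\to v\to w$), $(\mathrm{abs},\mathrm{abs})\mapsto$ black ($u\to v\leftarrow w$), and $(\mathrm{ent},\mathrm{ent})\mapsto$ white ($u\leftarrow v\to w$); the witnessing $u,w$ exist by the nonemptiness just noted. This yields exhaustiveness on shared faces and, simultaneously, disjointness: the three classes are distinguished by whether $v$ has both an in- and an out-edge across $\{\kappa,\kappa'\}$, only in-edges, or only out-edges, and a vertex cannot meet two of these descriptions without being both entrance and absorbing for one cell, contradicting $\cV_e(\kappa)\cap\cV_a(\kappa)=\emptyset$.

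It remains to classify the faces on $\{x_1=0\}\cup\{x_2=0\}$, which lie in a single cell $\kappa$. Since $\Phi(\kappa)\in(0,\infty)^2$, the field points inward along such a face, so it is an entrance face, i.e.\ a combinatorial source, and I would assign it to the white class. I expect this last piece of boundary bookkeeping, rather than the interior dichotomy, to be the one delicate point: one must check that the source faces on the domain boundary are genuinely covered and that their treatment is consistent with the two-cell phrasing of the white condition. Granting that, every vertex of $\cV$ falls in exactly one of $\cM,\cT,\cB,\cW$, which is the assertion.
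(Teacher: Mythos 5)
Your proof is correct and takes essentially the same route as the paper's: both rest on the observation that each face is exactly one of entrance or absorbing for each incident cell, so within $\cF_\kappa$ it has either an out-edge or an in-edge but not both, and the class of $v$ is then read off from the ordered pair of statuses over its two incident cells. The one place you go beyond the paper is the explicit treatment of faces on $\{x_1=0\}\cup\{x_2=0\}$, which lie in a single cell; the paper's proof is silent on these, and your resolution (they are entrance faces, hence combinatorial sources, hence white) is the consistent one.
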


\begin{proof}
We need to demonstrate that $\cM \cup \cT \cup \cB \cup \cW = \cV$ and that $\cM$, $\cT$, $\cB$, and $\cW$ are mutually disjoint.
By definition $\cM \cap (\cT \cup \cB \cup \cW) = \emptyset$.
Thus we can restrict our attention to vertices that are associated with faces of cells.

Observe that from Figure~\ref{fig:CoarseFkappaMap} if $v\in \cV(\kappa)$, then there is either an edge to $v$ or an edge from $v$ in $\cF_\kappa$, but not both.  
The first statement follows from the existence of the edges. 
The second statement follows from the fact that it is not possible to have both types of edges within one cell.
\end{proof}

For the remainder of the paper we make the following assumption
\begin{description}
\item[$\not$B]
The state transition diagram $\cF\colon\cV\mvmap\cV$  does not contain a black vertex.
\end{description}

We hasten to add that this is not an unreasonable assumption.
To understand why, consider the following result that is easily checked.
\begin{lem}
\label{lem:trivial}
If $v(i,\bar{j})$ is a black vertex, then
\[
\Lambda_1(\kappa(i-1,j)) > \gamma_1\xi_i > \Lambda_1(\kappa(i,j).
\]
\end{lem}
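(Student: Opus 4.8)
The plan is to unwind the definition of a black vertex directly into the entrance/absorbing labeling of the two cells that share the face $v_{i,\bar{j}}$, and then convert the absorbing conditions into inequalities on $\Lambda_1$ via the relation $\Phi = \Gamma^{-1}\Lambda$.

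First I would identify the two cells meeting along $v_{i,\bar{j}} = \setof{\xi_i}\times(\eta_j,\eta_{j+1})$: this vertical face is the right face of $\kappa(i-1,j)$ and the left face of $\kappa(i,j)$. By the definition of a black vertex there exist $u\in\cV(\kappa(i-1,j))$ and $w\in\cV(\kappa(i,j))$ (or with the roles reversed) such that $u\to v_{i,\bar{j}}$ and $w\to v_{i,\bar{j}}$. Since every edge within a single cell runs from an entrance face to an absorbing face, these incoming edges force $v_{i,\bar{j}}$ to be an \emph{absorbing} face of both $\kappa(i-1,j)$ and $\kappa(i,j)$. Here I would remark, for the equivalence, that no cell can be simultaneously absorbing on its left and right faces (that would require $\Phi_1<\xi_i$ and $\Phi_1>\xi_{i+1}$), so each cell has at least one entrance face and hence an absorbing face automatically receives an incoming edge; this is the mechanism behind the ``easily checked'' claim.

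Next I would translate the two absorbing conditions using the labeling definition. In the local labeling of $\kappa(i,j)$ the face $v_{i,\bar{j}}$ is the left face, so it is absorbing precisely when $\Phi_1(\kappa(i,j))<\xi_i$; in the local labeling of $\kappa(i-1,j)$ it is the right face $v_{(i-1)+1,\bar{j}}$, so it is absorbing precisely when $\Phi_1(\kappa(i-1,j))>\xi_i$. Substituting $\Phi_1(\kappa)=\Lambda_1(\kappa)/\gamma_1$ and multiplying through by $\gamma_1>0$ turns these into $\Lambda_1(\kappa(i,j))<\gamma_1\xi_i$ and $\Lambda_1(\kappa(i-1,j))>\gamma_1\xi_i$, and chaining them yields the asserted $\Lambda_1(\kappa(i-1,j))>\gamma_1\xi_i>\Lambda_1(\kappa(i,j))$.

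There is no genuine obstacle in this argument; it is a direct substitution. The only point demanding care is the bookkeeping of the shared face under the two differing local cell labelings --- it is the right face $v_{(i-1)+1,\bar{j}}$ of the left cell but the left face $v_{i,\bar{j}}$ of the right cell --- and correctly reading off the \emph{opposite} inequality directions ($\Phi_1>\xi_i$ versus $\Phi_1<\xi_i$) that the two absorbing conditions produce. Once that is pinned down, the two inequalities combine immediately into the stated chain.
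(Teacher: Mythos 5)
Your argument is correct and is precisely the computation the paper omits when it calls this lemma ``easily checked'': the two incoming edges force $v_{i,\bar{j}}$ to be an absorbing face of both $\kappa(i-1,j)$ (its right face, so $\Phi_1(\kappa(i-1,j))>\xi_i$) and $\kappa(i,j)$ (its left face, so $\Phi_1(\kappa(i,j))<\xi_i$), and $\Phi_1=\Lambda_1/\gamma_1$ with $\gamma_1>0$ gives the stated chain of inequalities. Your side remark that every cell has an entrance face (since the left and right faces cannot both be absorbing) correctly justifies why absorbing-in-both-cells is equivalent to, and not merely implied by, the black-vertex condition, though only the forward implication is needed here.
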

Observe that starting with a regulatory network this implies that $x_1$ has a self-edge that corresponds to repression.
For many biological applications this type of self-regulation is better modeled by two nodes $x_1$ and $y$ where $x_1$ activates $y$ and $y$ represses $x_1$.

From a mathematical perspective, Lemma~\ref{lem:trivial} can be used to show that given a system of the form \eqref{eq:abstract} it is possible to approximate the nonlinear functions $f_1$ using piecewise constant functions in such a way that the state transition graph for resulting switching system does not contain black walls. 
This approach is discussed in \cite{edwards2015} and in future work \cite{CFG}. 

We conclude this section with three results concerning the structure of forward invariant sets under $\cF$.

\begin{prop}
\label{prop:uvw}
Consider $\cN\in \sInvset^+(\cV,\cF)$  under assumption {\bf $\not$B}.
If $v\in \cN^0$, then $v\in \cM\cup \cT$. 
Furthermore, if $v\in \cT$, then there exist distinct $u\in \cN$ and $w\in\cN^0$ such that $u\to v\to w$.
\end{prop}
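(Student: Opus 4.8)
The plan is to reduce both assertions to the local edge-direction dichotomy recorded in the proof of Proposition~\ref{prop:MTBWpart}: for a cell $\kappa$ and a face $v \in \cV(\kappa)$, the map $\cF_\kappa$ has edges pointing into $v$ exactly when $v \in \cV_a(\kappa)$ and pointing out of $v$ exactly when $v \in \cV_e(\kappa)$, and never both. Since the two cells meeting along an interior face determine the type of that face, the type controls which edges $v$ can carry, and this is what I would exploit throughout.

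For the first claim I would start from the definition of $\cN^0$: membership $v \in \cN^0$ forces $\cF^{-1}(v) \cap \cN \neq \emptyset$, so $v$ has at least one incoming edge. By the dichotomy, a white vertex is an entrance face of each cell containing it and therefore carries only outgoing edges; the same is true of the boundary faces on $\{x_1 = 0\}$ and $\{x_2 = 0\}$, which are always entrance faces. Hence a vertex with an incoming edge cannot be white. Using the partition $\cV = \cM \cup \cT \cup \cB \cup \cW$ from Proposition~\ref{prop:MTBWpart} and assumption {\bf $\not$B} (so that $\cB = \emptyset$), I then conclude $v \in \cM \cup \cT$.

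For the second claim I continue to assume $v \in \cN^0$ and suppose moreover $v \in \cT$. Being transparent, $v$ is a shared face of two adjacent cells $\kappa, \kappa'$, absorbing for exactly one of them, say $v \in \cV_a(\kappa)$, and an entrance face for the other, $v \in \cV_e(\kappa')$. First I would produce $w$: since $v \in \cV_e(\kappa')$ there is an outgoing edge $v \to w$ with $w \in \cV_a(\kappa')$ and $w \neq v$; forward invariance together with $v \in \cN$ gives $w \in \cF(v) \subset \cN$, and then $v \in \cF^{-1}(w) \cap \cN$ yields $w \in \cN^0$. Next I would produce $u$: from $v \in \cN^0$ there is $u \in \cN$ with $u \to v$, and because every incoming edge of $v$ comes from the cell in which $v$ is absorbing, $u \in \cV_e(\kappa)$ with $u \neq v$. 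This gives $u \to v \to w$ with $u \in \cN$ and $w \in \cN^0$.

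The one step that is more than bookkeeping — and the place I would concentrate — is the distinctness $u \neq w$. Here I would use the grid geometry: $u$ is a face of $\kappa$ and $w$ is a face of $\kappa'$, while the adjacent cells $\kappa$ and $\kappa'$ meet only along $v$, so $\cV(\kappa) \cap \cV(\kappa') = \{v\}$. Since $u \neq v$ and $w \neq v$, an equality $u = w$ would force a second common face, a contradiction; hence $u \neq w$. The key observation underlying this is that the incoming and outgoing edges of a transparent vertex are split between the two distinct cells meeting along it, which makes distinctness automatic rather than something requiring a separate construction.
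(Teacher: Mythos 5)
Your proof is correct and follows essentially the same route as the paper's: rule out $v\in\cW$ via the observation that white vertices have no incoming edges (so $\cF^{-1}(v)=\emptyset$ contradicts $v\in\cN^0$), invoke Proposition~\ref{prop:MTBWpart} and assumption {\bf $\not$B}, and then obtain $u$ from the definition of $\cN^0$ and $w$ from forward invariance of $\cN$. The only difference is that you spell out the distinctness $u\neq w$ via $\cV(\kappa)\cap\cV(\kappa')=\{v\}$, a detail the paper leaves implicit; that is a welcome addition, not a change of approach.
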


\begin{proof}
By Proposition~\ref{prop:MTBWpart} and  assumption {\bf $\not$B}, it is sufficient to show that $v\not\in\cW$.
If $v\in\cW$, then $\cF^{-1}(v) = \emptyset$, hence $v\not\in\cN^0$, a contradiction.

Since $v\in\cN^0$, by definition there  exists $u\in \cF^{-1}(v)\cap \cN$.
If $v\in \cT$, then $v$ does not have a self-edge hence $u\neq v$.
By definition, if $v\in \cT$, then there exists $w\in\cV$ such that $v\to w$. 
This implies that $w\in\cF(\cN)$.
By definition of a forward invariant set $\cF(\cN) \subset \cN$ and hence $w\in\cN$.
Since $v\to w$, $w\in\cN^0$.  
\end{proof}

\begin{prop}
\label{prop:uvwCell}
Let $\cN\in\sInvset^+(\cF)$.  
If $\cV(\kappa)\cap \cN^0 \cap \cT\neq \emptyset$, then $\cV_e(\kappa) \cap \cN \neq  \emptyset$ and $\cV_a(\kappa) \cap \cN\subset \cN^0$.
\end{prop}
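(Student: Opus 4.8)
The plan is to reduce everything to two structural facts already available. First, within a single cell the $\kappa$-graph $\cF_\kappa$ is \emph{complete bipartite}: by its definition $v'\in\cF(u)$ holds for \emph{every} pair $u\in\cV_e(\kappa)$, $v'\in\cV_a(\kappa)$. Second, a transparent vertex is an absorbing face of one of its two incident cells and an entrance face of the other. Granting these, the proposition follows once I exhibit a single entrance face of $\kappa$ lying in $\cN$, since forward images across $\kappa$ reach all absorbing faces of $\kappa$ simultaneously.

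I would fix $v\in\cV(\kappa)\cap\cN^0\cap\cT$; since transparent vertices are shared between two cells we have $v\neq w_\kappa$, so $v$ is a genuine shared face and is therefore either an entrance or an absorbing face of $\kappa$. If $v\in\cV_e(\kappa)$, I simply take $u:=v$, which lies in $\cV_e(\kappa)\cap\cN$ because $\cN^0\subseteq\cN$. If instead $v\in\cV_a(\kappa)$, then transparency forces $v$ to be an entrance face of the other cell $\kappa'$ sharing it; consequently $\kappa'$ contributes only out-edges at $v$, so every predecessor of $v$ arises within $\kappa$ and hence lies in $\cV_e(\kappa)$. Since $v\in\cN^0$ guarantees some $u\in\cF^{-1}(v)\cap\cN$, this $u$ lies in $\cV_e(\kappa)\cap\cN$.

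In both cases I obtain $u\in\cV_e(\kappa)\cap\cN$, which is precisely the first conclusion. For the second, I would take an arbitrary $v'\in\cV_a(\kappa)\cap\cN$ and use complete bipartiteness of $\cF_\kappa$ to produce the edge $u\to v'$; then $u\in\cF^{-1}(v')\cap\cN$ together with $v'\in\cN$ gives $v'\in\cN^0$, establishing $\cV_a(\kappa)\cap\cN\subseteq\cN^0$.

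The one step requiring care is the localization of predecessors in the case $v\in\cV_a(\kappa)$: I must rule out that the predecessor supplied by $v\in\cN^0$ comes from the neighboring cell $\kappa'$ rather than from $\kappa$. This is exactly where the transparent classification is essential, since it pins $v$ down as an entrance face of $\kappa'$ and thereby removes any in-edge contributed by $\kappa'$. After that the argument is purely formal, relying only on the definition of the $\kappa$-graph and of $\cN^0$; neither assumption $\not$B nor forward invariance of $\cN$ beyond $\cN^0\subseteq\cN$ is actually needed.
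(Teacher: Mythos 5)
Your proof is correct and follows essentially the same route as the paper: locate a vertex $u\in\cV_e(\kappa)\cap\cN$ by splitting on whether $v$ is an entrance or an absorbing face of $\kappa$, then use that $\cF_\kappa(u)=\cV_a(\kappa)$ to push membership in $\cN^0$ onto the absorbing faces. The only (valid) deviation is that where the paper localizes the predecessor of $v$ via Proposition~\ref{prop:uvw} under assumption {\bf $\not$B}, you obtain the same localization directly from the definition of a transparent vertex, which also justifies your closing remark that {\bf $\not$B} is not needed for this particular statement.
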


\begin{proof}
Let $v\in \cV(\kappa)\cap \cN^0 \cap \cT$.  
By Proposition~\ref{prop:uvw} there exist exist distinct $u\in \cN$ and $w\in\cN^0$ such that $u\to v\to w$.
If $v\in \cV_e(\kappa)$, then $\cV_a(\kappa)= \cF_\kappa(v)$ and hence $\cV_a(\kappa)\subset \cN^0$.
If $v\in \cV_a(\kappa)$, then $u\in\cN\cap \cV_e(\kappa)$ and $\cF_\kappa(u) = \cV_a(\kappa)$.
Therefore, $\cV_a(\kappa)\subset \cN^0$.
\end{proof}

\begin{prop}
\label{prop:cellAtt_Rev}
Consider $\cN\in \sInvset^+(\cF)$ under assumption {\bf $\not$B}.
If $v\in \cN^0\cap \cV(\kappa)$, then 
\[
\cV_a(\kappa)\subset \cN^0.
\]
Furthermore, if $\cN^0\cap \cV(\kappa)\neq \setof{w_\kappa}$, then $\cV_e(\kappa)\cap \cN \neq\emptyset$.
\end{prop}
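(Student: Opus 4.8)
The plan is to reduce everything to the two structural facts already established: Proposition~\ref{prop:uvw}, which under assumption \textbf{$\not$B} forces any vertex of $\cN^0$ to lie in $\cM\cup\cT$, and Proposition~\ref{prop:uvwCell}, which promotes a transparent vertex of $\cV(\kappa)\cap\cN^0$ to the existence of an entrance face of $\kappa$ inside $\cN$. The additional ingredient is the ``complete bipartite'' shape of the cell map: by the definition of $\cF_\kappa$, every entrance face $u\in\cV_e(\kappa)$ satisfies $\cF_\kappa(u)=\cV_a(\kappa)$, so a single entrance face lying in $\cN$ already pushes the entire absorbing set into $\cN$ by forward invariance.

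First I would apply Proposition~\ref{prop:uvw} to the given $v\in\cN^0$, obtaining $v\in\cM\cup\cT$, which rules out the black and white cases. For the first assertion I split on this dichotomy. If $v\in\cM$ then $v=w_\kappa$, so $\kappa$ is an attracting cell; in that case $\Phi(\kappa)$ lies interior to $\kappa$, every boundary face is an entrance face, and hence $\cV_a(\kappa)=\setof{w_\kappa}$. Since $w_\kappa=v\in\cN^0$ the inclusion $\cV_a(\kappa)\subset\cN^0$ is immediate. If instead $v\in\cT$, then $\cV(\kappa)\cap\cN^0\cap\cT\neq\emptyset$, so Proposition~\ref{prop:uvwCell} yields an entrance face $u\in\cV_e(\kappa)\cap\cN$. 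Now $\cF_\kappa(u)=\cV_a(\kappa)\subset\cF(u)\subset\cN$ by forward invariance, and every $z\in\cV_a(\kappa)$ has the predecessor $u\in\cN$, so $z\in\cN^0$; thus $\cV_a(\kappa)\subset\cN^0$.

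For the second assertion I would use the hypothesis $\cN^0\cap\cV(\kappa)\neq\setof{w_\kappa}$ together with the fact that this set is nonempty (it contains $v$) to produce a vertex $v'\in\cN^0\cap\cV(\kappa)$ with $v'\neq w_\kappa$. Applying Proposition~\ref{prop:uvw} again gives $v'\in\cM\cup\cT$, and since $w_\kappa$ is the unique minimal vertex belonging to $\cV(\kappa)$, the condition $v'\neq w_\kappa$ forces $v'\in\cT$. Then $\cV(\kappa)\cap\cN^0\cap\cT\neq\emptyset$, and Proposition~\ref{prop:uvwCell} delivers $\cV_e(\kappa)\cap\cN\neq\emptyset$.

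The only genuinely delicate point is isolating the attracting-cell case $v=w_\kappa$: there the usual argument via an entrance face is unavailable, but it is also unnecessary, since $\cV_a(\kappa)$ collapses to the single vertex $w_\kappa$. Everything else is a bookkeeping assembly of the cited propositions, the definition of $\cF_\kappa$, and forward invariance of $\cN$, and I do not anticipate any real obstacle beyond keeping the minimal/transparent dichotomy straight.
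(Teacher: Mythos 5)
Your proof is correct, and it reaches the same underlying mechanism as the paper's — produce an entrance face of $\kappa$ lying in $\cN$, push it forward through $\cF_\kappa(u)=\cV_a(\kappa)$, and invoke forward invariance — but it is organized differently. The paper's proof splits on whether $v\in\cV_e(\kappa)$ or $v\in\cV_a(\kappa)$ and, in the absorbing case, runs a further case analysis by cell type (attracting, translating, with the focussing case left to the reader), using Proposition~\ref{prop:uvw} and assumption {\bf $\not$B} directly to manufacture the predecessor $u\in\cV_e(\kappa)\cap\cN$. You instead split on the minimal/transparent dichotomy from Proposition~\ref{prop:uvw} and delegate the entire transparent case to Proposition~\ref{prop:uvwCell}, which already packages both the existence of an entrance face in $\cN$ and the absorption statement; the attracting case collapses because $\cV_a(\kappa)=\setof{w_\kappa}$. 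This factoring buys two things: it avoids repeating the cell-type case analysis (which is exactly what the proof of Proposition~\ref{prop:uvwCell} already did), and it gives an explicit argument for the second assertion about $\cV_e(\kappa)\cap\cN\neq\emptyset$, which the paper's proof establishes only implicitly through the cases of the first assertion. The one point worth double-checking in your write-up — that a minimal vertex of $\cV(\kappa)$ must be $w_\kappa$ itself, so $v'\neq w_\kappa$ forces $v'\in\cT$ — is indeed guaranteed by Definition~\ref{defn:faces}, since $w_{\kappa'}$ belongs only to $\cV(\kappa')$.
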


\begin{proof}   
First, assume $v\in\cV_e(\kappa)$. Then, $\cV_a(\kappa)\subset \cF_\kappa(v)$, and hence, by Proposition~\ref{prop:vinA} $\cV_a(\kappa)\subset \cA$.

Now assume $v\in\cV_a(\kappa)$. 
To indicate the line of reasoning we provide the proofs of two of the three cases.
Assume $\kappa$  is an attracting cell. 
This implies that  $\cV_a(\kappa)=\setof{w_\kappa} = \setof{v}\subset \cV_a(\kappa)$. 
Assume $\kappa$  is a translating cell.  
By Proposition~\ref{prop:uvw} there exists $u\in \cN$ and $w\in\cN^0$ such that $u\to v\to w$.
Since $v\in\cV_a(\kappa)$, by {\bf $\not$B} $u\in\cV(\kappa)$.
Furthermore, $u\to v$ implies that $u\in \cV_e(\kappa)$.
Thus by Proposition~\ref{prop:vinA} $\cF_\kappa(u) =\cV_a(\kappa) \subset \cN^0$.
\end{proof}

\begin{figure}
\begin{picture}(140,140)
\multiput(0,20)(0,80){2}{\line(1,0){120}}
\multiput(20,0)(80,0){2}{\line(0,1){120}}

\put(65,25){\vector(1,1){30}}
\put(60,25){\vector(0,1){70}}

\put(60,20){\circle{5}}
\put(60,100){\circle*{5}}
\put(20,60){\circle*{5}}
\put(100,60){\circle*{5}}

{\thicklines
\put(25,65){\vector(1,1){30}}
\put(25,60){\vector(1,0){70}}
}
\end{picture}
\begin{picture}(140,140)
\multiput(0,20)(0,80){2}{\line(1,0){120}}
\multiput(20,0)(80,0){2}{\line(0,1){120}}
\put(25,65){\vector(1,1){30}}
\put(25,60){\vector(1,0){70}}

\put(60,20){\circle*{5}}
\put(60,100){\circle*{5}}
\put(20,60){\circle{5}}
\put(100,60){\circle*{5}}

{\thicklines
\put(65,25){\vector(1,1){30}}
\put(60,25){\vector(0,1){70}}
}
\end{picture}
\begin{picture}(140,140)
\multiput(0,20)(0,80){2}{\line(1,0){120}}
\multiput(20,0)(80,0){2}{\line(0,1){120}}

\put(60,20){\circle*{5}}
\put(60,100){\circle*{5}}
\put(20,60){\circle*{5}}
\put(100,60){\circle*{5}}

{\thicklines
\put(25,65){\vector(1,1){30}}
\put(65,25){\vector(1,1){30}}
\put(60,25){\vector(0,1){70}}
\put(25,60){\vector(1,0){70}}
}
\end{picture}
\caption{Possible intersections of  $\cN^0$ with a translation cell.  Vertices of $\cN^0$ are indicated as solid circles.}
\label{fig:AcapT_Rev}
\end{figure}
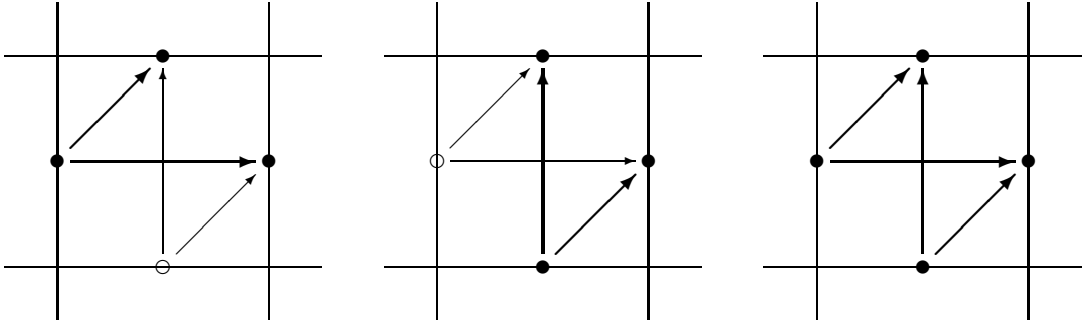

\section{Constructing Trapping Regions for $\delta$-Continuous Switching Systems}
\label{sec:proof}

Let $\cN\in\sInvset^+(\cF)$.  
Our goal is to identify a compact set $N_\cN\subset (0,\infty)^2$ such that $N_\cN$ is a trapping region for
\begin{equation}
\label{eq:threshPerturbed}
\dot{x} = -\Gamma x + \Lambda^{(\delta)}(x).
\end{equation}
The region $N_\cN$ is constructed using the above defined tiles and chips.
Furthermore, as is made clear below the identification of $N_\cN$ is done locally. 

\begin{defn}
For each $(i,j)$, $i = 0,\ldots, I$, $j = 0,\ldots, J$ the associated \emph{elementary domain} is defined to be 
\[
E(i,j) := \left(\xi_{i-1} +\delta,\xi_{i+1}-\delta \right) \times \left(\eta_{j-1}+\delta,\eta_{j+1}-\delta\right).
\]
The set of associated vertices is denoted by $\cE(i,j)\subset \cV$ and is defined to be the union of vertices for which the  associated face $v$ satisfies $v\cap E(i,j)\neq \emptyset$ or a face $w_\kappa$ where $\kappa\cap E(i,j)\neq \emptyset$.
We use $\bar{\cE}(i,j)$ to denote the union of vertices in $\cE(i,j)$ and the vertices associated with the faces of the cells in 
$\left[ \xi_{i-1},\xi_{i+1} \right] \times \left[\eta_{j-1},\eta_{j+1} \right]$.
\end{defn}

We leave it to the reader to check that
\begin{equation}
\label{eq:domainCovers}
(\delta,\infty)^2 = \bigcup_{\begin{array}{c}i = 0,\ldots, I \\ j = 0,\ldots, J \end{array}} E(i,j)
\end{equation}

Fix an elementary domain  $E(i,j)$. 
In the applying the following rules, unless otherwise specified, it is assumed that the cells $\kappa$, the faces $v$, and $\pi = (\xi_i,\eta_j)\in\Pi$ all intersect $E(i,j)$ nontrivially.
\begin{description}
\item[Rule 0]
If $w_{\kappa}\in \cN^0\cap\cM\cap \cE(i,j)$, then
\[
G^2(\kappa)  \subset N_\cN.
\]
\item[Rule 1]
Let $u\in \cN^0\cap \cT \cap \cE(i,j)$ and let $r,s\in\bar{\cE}(i,j)$ such that $r\to u\to s$. 
Consider the grid elements $\kappa_r$ and $\kappa_s$ such that $r,u\in \cV(\kappa_r)$ and  $u,s\in\cV(\kappa_s)$.  
Then
\[
G^2(\kappa_r) \cup G^2(\kappa_s) \cup G^1(u) \subset N_\cN.
\]
\item[Rule 2]
Assume 
\[
G^2(\kappa_\alpha) \cup G^2(\kappa_\beta) \cup G^2(\kappa_\gamma)\cup G^1(v_{\alpha \beta}) \cup G^1(v_{\beta \gamma}) \subset N_\cN,
\] 
where $v_{\alpha \beta} = \kappa_\alpha\cap \kappa_\beta$ and $v_{\beta \gamma} = \kappa_\beta \cap \kappa_\gamma$.
If $v_{\alpha \beta}\in \cV_a(\kappa_\beta)$ or $v_{\beta \gamma}\in \cV_a(\kappa_\beta)$, then
\[
G^0(\pi) \subset N_\cN.
\]
\item[Rule 3] 
Assume $G^2(\kappa) \cup G^1(v_\alpha)\cup G^0(\pi) \subset N_\cN$ where $v_\alpha\in \cV(\kappa) \cap \cE(i,j)$.
If $\cV(\kappa)\cap \cE(i,j) \subset \cV_e(\kappa)$, then 
\[
C^n(\kappa, v_\beta,\pi)\subset N_\cN
\] 
where $v_\beta\in \cV(\kappa)\cap \cE(i,j)$.
\item[Rule 4] 
Assume $G^0(\pi) \subset N_\cN$ and $G^2(\kappa_\alpha) \not\subset N_\cN$.
Let $\setof{v_{\alpha\beta},v_{\alpha\gamma}} = \cV(\kappa_\alpha)\cap \cE(i,j)$.
If $v_{\alpha\beta} \not\in\cV_a(\kappa)$, then 
\[
C^w(\kappa_\gamma,v_{\alpha\gamma},\pi)\subset N_\cN
\]
where $v_{\alpha\gamma} = \kappa_\alpha \cap \kappa_\gamma$.
\item[Rule 5]  Assume $G^2(\kappa_\alpha)\cup G^2(\kappa_\beta)\cup G^0(\pi)\subset N_\cN$ and 
$v_{\alpha\beta} = \kappa_\alpha\cap \kappa_\beta$.
If $\cV(\kappa_\alpha)\cap \cE(i,j) \subset \cV_e(\kappa_\alpha)$ and $\cV(\kappa_\beta)\cap \cE(i,j) \subset \cV_e(\kappa_\beta)$, then
\[
G^1(v_{\alpha\beta})\subset N_\cN.
\]
\end{description}

\begin{defn}
\label{defn:latticeMorphism}
For each $\cN\in\sInvset^+(\cF)$ define  $N_\cN\subset (0,\infty)^2$ to be the union of the minimal collection of tiles and chips that satisfies {\bf Rules 0} - {\bf 5} over all elementary domains $E(i,j)$, $i=0,\ldots I$, $j=0,\ldots, J$.
\end{defn}

For future reference we highlight the following remark.
\begin{lem}
\label{lem:trivialN}
If $\cN =\emptyset$, then $N_\cN = N_\emptyset = \emptyset$.
\end{lem}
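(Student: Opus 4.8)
The plan is to exploit the fact that $N_\cN$ is defined in Definition~\ref{defn:latticeMorphism} as the \emph{minimal} collection of tiles and chips satisfying {\bf Rules 0}--{\bf 5} over all elementary domains. I would show that when $\cN = \emptyset$ the empty collection already satisfies every rule, so minimality forces $N_\emptyset = \emptyset$.

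First I would observe that $\cN = \emptyset$ implies $\cN^0 = \emptyset$: by definition $\cN^0 = \setof{\nu\in \cN\mid \cF^{-1}(\nu)\cap \cN \neq \emptyset}$ is a subset of $\cN$, hence empty. Next I would classify the rules according to their triggering hypotheses. {\bf Rule 0} fires only when $w_\kappa \in \cN^0\cap\cM\cap \cE(i,j)$, and {\bf Rule 1} fires only when $u \in \cN^0\cap \cT \cap \cE(i,j)$; these are the only two rules whose hypotheses reference $\cN$. Since $\cN^0 = \emptyset$, neither hypothesis can be met for any elementary domain $E(i,j)$, so neither rule introduces any tile.

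The remaining rules are purely propagative: each of {\bf Rules 2}--{\bf 5} carries a hypothesis demanding that a specified union of tiles (and, in {\bf Rule 3}, a chip) \emph{already} lie in $N_\cN$. Taking $N_\cN = \emptyset$ renders each such hypothesis vacuously false, so {\bf Rules 2}--{\bf 5} impose no obligation. Consequently the empty collection vacuously satisfies all of {\bf Rules 0}--{\bf 5}, and by the minimality clause of Definition~\ref{defn:latticeMorphism} we conclude $N_\emptyset = \emptyset$.

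There is no serious obstacle here; the only point requiring care is the bookkeeping observation that {\bf Rules 0} and {\bf 1} are the sole \emph{seeding} rules---the only ones capable of introducing a tile on the basis of data drawn from $\cN$---whereas {\bf Rules 2}--{\bf 5} merely propagate tiles that are already present. Once this dichotomy is made explicit, the result is immediate.
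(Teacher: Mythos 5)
Your proof is correct and matches the paper's (implicit) reasoning: the paper states this as a highlighted remark with no written proof, and your observation that \textbf{Rules 0} and \textbf{1} are the only seeding rules (both conditioned on $\cN^0$, which is empty) while \textbf{Rules 2}--\textbf{5} only propagate already-present tiles is exactly the argument the paper relies on, e.g.\ in the proof of Proposition~\ref{prop:minimalN}. Nothing further is needed.
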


\begin{prop}
\label{prop:trapN}
If $\cN\in \sInvset^+(\cF)$, then $N_\cN\in \sANbhd(\phi)$.
\end{prop}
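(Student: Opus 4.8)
The plan is to show that $N_\cN$ is an \emph{attracting block} for the flow $\varphi$ generated by \eqref{eq:threshPerturbed}, and then to invoke the standard fact that every attracting block is an attracting neighborhood: if the vector field points strictly inward along each boundary edge, then $\varphi(t,N_\cN)\subset \Int(N_\cN)$ for all small $t>0$, whence $\omega(N_\cN,\varphi)\subset \Int(N_\cN)$ and $N_\cN\in\sANbhd(\varphi)$. The case $\cN=\emptyset$ is immediate from Lemma~\ref{lem:trivialN}, since $N_\emptyset=\emptyset$ is vacuously an attracting neighborhood, so I assume $\cN\neq\emptyset$.

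Two things must be verified. First, that $N_\cN$ is a regular closed set whose boundary is a finite union of straight line segments; this is immediate from Definition~\ref{defn:latticeMorphism}, since $N_\cN$ is a finite union of tiles (the axis-parallel rectangles $G^2$, $G^1$, $G^0$) and chips (right triangles), each with polygonal boundary, all living in the compact region covered by finitely many elementary domains. Second, and this is the substance of the proposition, that along every boundary edge $e\subset\bdy N_\cN$ the vector field of \eqref{eq:threshPerturbed} is transverse into $N_\cN$, i.e. $(-\Gamma x + \Lambda^{(\delta)}(x))\cdot n_e<0$ for the outward normal $n_e$ and all $x\in e$.

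I would carry out the second step as a local analysis over each elementary domain $E(i,j)$, using \eqref{eq:domainCovers} to reduce the global claim to the edges appearing inside a fixed $E(i,j)$. A boundary edge $e$ occurs exactly where a tile or chip belonging to $N_\cN$ abuts a region not in $N_\cN$; the task is to classify $e$ according to which of {\bf Rules 0}--{\bf 5} placed the adjacent piece in $N_\cN$, and then to match $e$ with the corresponding transversality statement: the edges of a $2$-tile with Proposition~\ref{prop:TG2}, the edges of the $1$-tiles with Propositions~\ref{prop:TG1j} and \ref{prop:TG1i}, the edges of the $0$-tile $G^0(\pi)$ with Proposition~\ref{prop:TG0}, and the hypotenuse of a chip with Proposition~\ref{prop:TC}. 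The hypotheses of these propositions are conditions on faces being entrance faces or on neighboring cells being of prescribed types, and these are exactly what the combinatorial structure of the forward invariant set $\cN$ supplies. Concretely, Propositions~\ref{prop:uvw}, \ref{prop:uvwCell}, and \ref{prop:cellAtt_Rev}, together with assumption {\bf $\not$B}, translate the memberships $u\in\cN^0\cap\cT$ and $w_\kappa\in\cN^0\cap\cM$ that trigger each rule into precisely the entrance-face and cell-type data required, ensuring that no absorbing face is ever left exposed on $\bdy N_\cN$ with the flow pointing outward.

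The main obstacle is the completeness and bookkeeping of this case analysis, the ``tedious case by case local analysis'' advertised in the introduction. The delicate points are: (a) verifying that the \emph{minimal} collection satisfying {\bf Rules 0}--{\bf 5} closes up so that every exposed edge is of entrance or transverse type rather than absorbing, which is where forward invariance of $\cN$ and the exclusion of black vertices are essential; (b) the behavior near a grid point $\pi=(\xi_i,\eta_j)$, where the $0$-tile $G^0(\pi)$ and the chips must fit together so that each of the up-to-eight edges covered by Proposition~\ref{prop:TG0} and each chip hypotenuse is simultaneously inward-transverse; and (c) the chip hypotenuses themselves, where Proposition~\ref{prop:TC}, and hence the quantitative choice $\delta<\delta_*$ of \eqref{eq:delta}, is needed to guarantee transversality. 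Once every boundary edge is accounted for, $N_\cN$ is an attracting block, and the proposition follows.
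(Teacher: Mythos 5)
Your strategy coincides with the paper's: reduce to checking, over each elementary domain $E(i,j)$, that every exposed (interior) boundary edge of $N_\cN$ is inward-transverse, matching $2$-tile edges to Proposition~\ref{prop:TG2}, $1$-tile edges to Propositions~\ref{prop:TG1j} and \ref{prop:TG1i}, $0$-tile edges to Proposition~\ref{prop:TG0}, and chip hypotenuses to Proposition~\ref{prop:TC}, with the combinatorial input supplied by Propositions~\ref{prop:uvw}, \ref{prop:uvwCell}, \ref{prop:cellAtt_Rev} and assumption {\bf $\not$B}. You also correctly isolate the trivial case $\cN=\emptyset$ and the reduction via \eqref{eq:domainCovers}.

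However, what you have written is an outline, not a proof: the entire mathematical content of this proposition \emph{is} the case analysis you defer. The paper first disposes of the case $\cN^0\subset\cM$ (where $N_\cN$ is just a union of $2$-tiles, each a trapping region by Proposition~\ref{prop:TG2}), then fixes a transparent vertex $v\in\cN^0\cap\cT$, normalizes it to ``transparent east'' by symmetry, and runs through a three-level tree of cases on the membership and direction of the neighboring vertices $v_1,v_2,v_3$. Several steps in that tree are not routine bookkeeping and would need to be argued, not merely asserted: (a) certain configurations must be shown to be \emph{impossible} (e.g.\ via Proposition~\ref{prop:cellAtt_Rev} forcing a vertex into $\cN^0$ contradicting an assumption, or via Proposition~\ref{prop:adjacentCells} yielding incompatible cell types); (b) one must prove that the \emph{minimal} collection contains no extra tiles --- e.g.\ that $G^2(\kappa_3)\not\subset N_\cN$ in Case 1(a)(i) --- since a spurious tile would create new exposed edges whose transversality is not guaranteed; (c) the choice between {\bf Rule 3} (narrow chip) and {\bf Rule 4} (wide chip) depends on the precise cell type of the excluded cell, and the two chips have different hypotenuse slopes covered by different parts of Proposition~\ref{prop:TC}; and (d) some cases are closed only by rotating/reflecting into a previously settled case, which requires that the earlier case was actually verified. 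Your claim that the rules ``close up so that every exposed edge is of entrance or transverse type'' is exactly the statement to be proved, and naming it as the delicate point does not discharge it. As it stands the proposal is a correct plan with the proof missing.
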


The goal for the remainder of this section is to prove Proposition~\ref{prop:trapN}.
We will do this by proving that for each $\cN\in\sAtt(\cF)$, $N_\cN$ is a trapping region.
This in turn is done by considering all possible forms of intersection of $N_\cN$ with all possible  elementary domains $E(i,j)$ and checking for transversality as we proceed. 
For this we make use of the following objects.

\begin{defn}
Let $e$ denote a boundary edge of a 2-tile $G^2(\kappa)$, a 1-tile $G^1(v)$, or a 0-tile $G^0(i,j)$.
We say that $e$ is \emph{interior} to $E(i,j)$ if 
\[
e\cap E(i,j) \neq \emptyset
\] 
and \emph{exterior} to $E(i,j)$ if $e$ is not interior to $E(i,j)$, but 
\[
e\subset \cl(E(i,j)).
\]
A boundary edge $e$ of a chip is \emph{interior} to $E(i,j)$ if  $e\cap E(i,j)\neq \emptyset$.
\end{defn}
As the following lemma indicates it is sufficient show that for every $E(i,j)$  along any boundary edge of $N$ that is interior to $E(i,j)$ the vector field of \eqref{eq:threshPerturbed} is transverse in with respect to  $N$.

\begin{lem}
If $N$ is a union of tiles and chips and for every elementary domain $E(i,j)$, $i=1,\ldots, I$, $j=1,\ldots, J$, the interior edges of $N$ with respect to $E(i,j)$ are transverse in, then $N$ is a trapping region.
\end{lem}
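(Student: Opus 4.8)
The plan is to deduce the global trapping property from the local transversality hypothesis by a covering argument over the elementary domains. Since $N$ is a finite union of tiles and chips it is a regular closed set whose boundary $\bdy N$ is a finite union of straight segments, each lying on a boundary edge of one of the constituent tiles $G^0$, $G^1$, $G^2$ or on the hypotenuse of one of the constituent chips $C^n$, $C^w$. By the definition of an attracting block, to conclude that $N$ is a trapping region it is enough to verify that along \emph{every} boundary edge $e$ of $N$ the vector field of \eqref{eq:threshPerturbed} is transverse in with respect to $N$. The entire statement therefore reduces to the geometric claim that every boundary edge $e$ of $N$ is interior to some elementary domain $E(i,j)$ with $1\le i\le I$ and $1\le j\le J$. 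Granting this, the hypothesis applied to that $E(i,j)$ asserts that every interior edge of $N$ with respect to $E(i,j)$, and $e$ is one of these, is transverse in; hence $e$ is transverse in, and as $e$ was arbitrary $N$ is a trapping region.

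To prove the claim I would exploit the alignment between tile edges and elementary domains. Every vertical boundary edge of a tile lies on a line $x=\xi_k\pm\delta$ and every horizontal one on a line $y=\eta_\ell\pm\delta$, with $k\ge 1$ and $\ell\ge 1$ since $\xi_0=\eta_0=0$ carry no interior tile edge. Because $\delta<\lambda$ (see \eqref{eq:cellWidth}) one has $\xi_k-\xi_{k-1}>2\delta$ and $\xi_{k+1}-\xi_k>2\delta$, so $\xi_k\pm\delta$ lies strictly inside $(\xi_{k-1}+\delta,\xi_{k+1}-\delta)$, which is exactly the first factor of $E(k,j)$ for every $j$. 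Thus the line $x=\xi_k\pm\delta$ meets the open set $E(k,j)$, and choosing $j\ge 1$ so that the vertical extent of $e$ meets $(\eta_{j-1}+\delta,\eta_{j+1}-\delta)$ — possible because these intervals overlap and cover $(\delta,\infty)$ — gives $e\cap E(k,j)\neq\emptyset$, i.e.\ $e$ is interior to $E(k,j)$. The symmetric argument handles horizontal edges. A chip hypotenuse emanates from its grid point $\pi=(\xi_i,\eta_j)$ and contains the endpoint adjacent to $\pi$, which lies in $E(i,j)$; hence it too is interior to $E(i,j)$. In every case the indices may be taken in $\{1,\dots,I\}\times\{1,\dots,J\}$.

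The delicate point, and the main obstacle, is the treatment of boundary edges adjacent to the coordinate axes, namely those whose extent dips below $\delta$, since the cover $(\delta,\infty)^2=\bigcup E(i,j)$ leaves the strips $(0,\delta]\times(0,\infty)$ and $(0,\infty)\times(0,\delta]$ uncovered. Here the key is that ``interior to $E(i,j)$'' requires only $e\cap E(i,j)\neq\emptyset$, not $e\subset E(i,j)$: a vertical edge on $x=\xi_k-\delta$ running down toward the axis still meets $E(k,1)$ in the portion $y>\delta$ (using $\eta_1>2\delta$), so it remains an interior edge of some $E(i,j)$ with $i,j\ge 1$, and the hypothesis applies to it along its full length. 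The only remaining segments are those lying on the phase-space boundary $\{\xi_0=0\}$ or $\{\eta_0=0\}$; these are not genuine obstructions, since the nonlinearity in \eqref{eq:threshPerturbed} takes values in $(0,\infty)^2$, so each component of the vector field satisfies $\dot{x}_n>0$ whenever $x_n$ is sufficiently small, the flow enters $(0,\infty)^2$ transversally there, and no trajectory can escape $N$ across such a segment. Combining these observations establishes the claim and hence the lemma.
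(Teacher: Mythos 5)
Your argument is correct and is essentially the paper's proof: both reduce the global trapping property to the local hypothesis via the covering of the phase space by elementary domains, using $\delta<\lambda$ to ensure every boundary edge of $N$ meets some $E(i,j)$. You simply carry out the covering edge-by-edge (and handle the strips near the coordinate axes explicitly) where the paper argues pointwise and more tersely.
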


\begin{proof}
By \eqref{eq:domainCovers} the collection of elementary domains covers the phase space $(0,\infty)^2$.
By definition $\delta <\lambda$ which is half the minimal cell width (see \ref{eq:cellWidth}) and thus every point $x\in (0,\infty)^2$ lies in the interior of some $E(i,j)$.
Thus if the interior edges of $N$ with respect to $E(i,j)$ are transverse in for all elementary domains, then every boundary point of $N$ is transverse in.
Therefore $N$ is a trapping region.
\end{proof}

To simplify the application of the rules we use the symmetry of the elementary domain.
By Proposition~\ref{prop:uvw}, if $\cN\in\sInvset(\cF)$, then $\cN^0\subset \cM\cup\cT$.
Assume that there exists $v\in \cN^0\cap \cT$.
This implies that  $v$ represents the intersection of two cells $\kappa_1$ and $\kappa_2$.
Since $v\in\cT$, there exists $u,w\in\cV$ such that $u\to v\to w$.
Without loss of generality we assume that $u\in\cV(\kappa_2)$ and $w\in \cV(\kappa_1)$.
Symmetry, i.e.\ a reflection or rotation, allows us without loss of generality to assume that $\kappa_2 = \kappa(i,j)$ and  $\kappa_1=\kappa(i+1,j)$ and thus we will refer to $v$ as a transparent face moving east. 
We will study the implications of {\bf Rules 0} - {\bf 5} on the elementary domain $E(i,j)$ shown in  Figure~\ref{fig:MinimalFourCells}
 where the neighboring cells of interest are $\kappa_3 = \kappa(i,j-1)$ and $\kappa_4=\kappa(i+1,j-1)$.

We begin by making simple observations concerning {\bf Rules 0} - {\bf 5}. 
If $\cN^0\cap\cT\cap \cE(i,j)\neq\emptyset$, then {\bf Rule 1}  implies that 
\begin{equation}
\label{eq:vTbasic}
G^2(\kappa_1) \cup G^2(\kappa_2) \cup G^1(v) \subset N_\cN
\end{equation}
as indicated in  Figure~\ref{fig:MinimalFourCells}(b).

{\bf Rule 2} determines if $G^0$ tiles belong to $N_\cN$. 
Therefore, a necessary condition for  $G^0(i,j)\subset N_\cN$ is that at least three of the four $G^2$ and at least two of the four $G^1$ tiles in the elementary domain $E(i,j)$ belong to $N_\cN$.

Observe that {\bf Rule 4} is only applicable if given an elementary domain $E$ exactly three of the four associated $G^2$ tiles belong to $N_\cN$.
Furthermore, if {\bf Rule 3} implies that $C^n(\kappa, v_\beta,(i,j))\subset N_\cN$ and {\bf Rule 4} implies that $C^w(\kappa, v_\beta,(i,j))\subset N_\cN$, then we can ignore {\bf Rule 3} since 
\[
C^n(\kappa, v_\beta,(i,j))\subset C^w(\kappa, v_\beta,(i,j))\subset N_\cN.
\]

Observe that if {\bf Rule 5} applies then, {\bf Rule 3} applies to $\kappa_1$ and $\kappa_4$. However
\[
C^n(\kappa_n, v_1,\pi)\subset G^1(v_1), \quad n=1,4
\]
and hence we can ignore {\bf Rule 3}.

\begin{prop}
\label{prop:looseG^1}
Let $\setof{v_{\alpha\beta}} = \cV(\kappa_\alpha)\cap\cV(\kappa_\beta)$ where $\kappa_n\cap E(i,j)\neq\emptyset$,
$n=\alpha,\beta$.
If $G^2(\kappa_\alpha)\not\subset N_\cN$, then $G^1(v_{\alpha\beta})\not\subset N_\cN$.
\end{prop}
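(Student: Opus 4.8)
The plan is to prove the contrapositive: assuming $G^1(v_{\alpha\beta})\subset N_\cN$, I will show that $G^2(\kappa_\alpha)\subset N_\cN$. The structural fact I would exploit is that, by Definition~\ref{defn:latticeMorphism}, $N_\cN$ is the union of the \emph{minimal} collection of tiles and chips closed under \textbf{Rules 0}--\textbf{5}. Reading this collection as the least fixed point of the monotone operator defined by the rules, every tile it contains is \emph{justified}: it is present precisely because the conclusion of some rule demanded it once that rule's hypotheses were already satisfied. This licenses an argument by cases on which rule first places the full tile $G^1(v_{\alpha\beta})$ into $N_\cN$.

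First I would enumerate the rules whose conclusion can contribute a \emph{full} $1$-tile. Scanning the list, only \textbf{Rule 1} (which outputs $G^1(u)$) and \textbf{Rule 5} (which outputs $G^1(v_{\alpha\beta})$) contain a $1$-tile in their conclusion: \textbf{Rules 0} and \textbf{2} produce $2$- and $0$-tiles, while \textbf{Rules 3} and \textbf{4} produce only the triangular chips $C^n$ and $C^w$. Since each chip is a proper right-triangular subset of a single $1$-tile, and the $1$-tiles have interiors disjoint from all other tiles, no union of chips (together with $2$- and $0$-tiles) can fill out the full rectangle $G^1(v_{\alpha\beta})$. Hence the inclusion $G^1(v_{\alpha\beta})\subset N_\cN$ must originate from \textbf{Rule 1} or \textbf{Rule 5}.

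I would then dispatch the two cases. If \textbf{Rule 5} is responsible, its hypothesis already asserts $G^2(\kappa_\alpha)\cup G^2(\kappa_\beta)\cup G^0(\pi)\subset N_\cN$, so $G^2(\kappa_\alpha)\subset N_\cN$ holds immediately. If \textbf{Rule 1} is responsible, then $u=v_{\alpha\beta}$ and the rule simultaneously outputs $G^2(\kappa_r)\cup G^2(\kappa_s)$, where $\kappa_r,\kappa_s$ are the cells containing $r,u$ and $u,s$ respectively. Because $v_{\alpha\beta}$ is the common face of exactly the two cells $\kappa_\alpha,\kappa_\beta$ (this is the defining hypothesis $\setof{v_{\alpha\beta}}=\cV(\kappa_\alpha)\cap\cV(\kappa_\beta)$), and because a transparent vertex has distinct flanking cells (within a single cell a face cannot be both entrance and absorbing, by Proposition~\ref{prop:MTBWpart}), we obtain $\setof{\kappa_r,\kappa_s}=\setof{\kappa_\alpha,\kappa_\beta}$, whence $G^2(\kappa_\alpha)\subset N_\cN$. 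Both cases yield the desired conclusion, completing the contrapositive.

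The step I expect to require the most care is the justification argument of the first paragraph: that membership in the minimal closed collection always traces back to a single rule application, so that the case analysis is genuinely exhaustive. I would make this precise by building the minimal collection as the least fixed point obtained by iterating the rule operator from the empty collection, and noting that any element introduced at a given stage is introduced by the conclusion of an applicable rule. Combined with the geometric observation that chips can never assemble into a full $1$-tile, this pins the origin of $G^1(v_{\alpha\beta})$ down to \textbf{Rule 1} or \textbf{Rule 5}, which is all the rest of the argument needs.
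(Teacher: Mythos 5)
Your proof is correct and follows essentially the same route as the paper's: the paper's own (much terser) argument simply observes that only \textbf{Rule 1} and \textbf{Rule 5} can force $G^1(v_{\alpha\beta})\subset N_\cN$, and that both rules come packaged with $G^2(\kappa_\alpha)\subset N_\cN$ (as part of the conclusion for \textbf{Rule 1}, as a hypothesis for \textbf{Rule 5}), so minimality gives the claim. Your write-up just makes explicit the least-fixed-point justification and the fact that chips cannot assemble into a full $1$-tile, which the paper leaves implicit.
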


\begin{proof}
Only {\bf Rule 1} and {\bf Rule 5} require that $G^1(v_{\alpha\beta})\subset N_\cN$.
Both these rules are based on  $G^2(\kappa_\alpha)\subset N_\cN$.
Thus the minimality of $N_\cN$ implies that $G^1(v_{\alpha\beta})\not\subset N_\cN$.
\end{proof}

\begin{prop}
\label{prop:N0}
Let $\cN_0,\cN_1\in \sInvset^+(\cF)$. If $\cN_0^0=\cN_1^0$, then 
\[
N_{\cN_0} = N_{\cN_1}.
\]
\end{prop}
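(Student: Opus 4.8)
The plan is to show that the construction of $N_\cN$ in Definition~\ref{defn:latticeMorphism} never consults $\cN$ itself, but only the derived set $\cN^0$ together with the fixed data of the switching system $\Sigma$, so that the output is forced to agree on the two inputs. First I would isolate exactly where $\cN$ enters the rule system. Scanning {\bf Rules 0}--{\bf 5}, the set $\cN$ is consulted only in the seed hypotheses of {\bf Rule 0}, via $w_\kappa\in\cN^0\cap\cM\cap\cE(i,j)$, and {\bf Rule 1}, via $u\in\cN^0\cap\cT\cap\cE(i,j)$; in both instances it is $\cN^0$, never $\cN$ directly, that appears. In {\bf Rule 1} the auxiliary vertices $r,s\in\bar\cE(i,j)$ with $r\to u\to s$ exist by Proposition~\ref{prop:uvw} whenever $u\in\cN^0\cap\cT$, and the cells $\kappa_r,\kappa_s$ are determined by $u$ alone (respectively, the unique cell on which $u$ is an absorbing face and the unique cell on which $u$ is an entrance face), so the tiles $G^2(\kappa_r)\cup G^2(\kappa_s)\cup G^1(u)$ added by the rule depend only on $u$ and the fixed data of $\Sigma$. {\bf Rules 2}--{\bf 5} are closure rules whose hypotheses mention only tiles and chips already present in $N_\cN$ together with fixed geometric data of $\Sigma$ (the cell types of \eqref{eq:coarseType}, the entrance/absorbing labeling of faces, and the grid $\Pi$). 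Hence no rule distinguishes $\cN_0$ from $\cN_1$ once $\cN_0^0=\cN_1^0$ is known.

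Second, I would make precise that ``the minimal collection of tiles and chips satisfying {\bf Rules 0}--{\bf 5}'' is unambiguously determined by the rule system. I would regard the rules as a monotone operator on finite collections of tiles and chips: {\bf Rule 0} and {\bf Rule 1} supply an initial seed collection $S_0$ depending only on $\cN^0$ and $\Sigma$, while each of {\bf Rules 2}--{\bf 5} asserts that, whenever a prescribed subconfiguration is already present, a further tile or chip must also be present. Since enlarging a collection can only enable more rules, this operator is monotone, so it has a unique least fixed point; that least fixed point is exactly the minimal collection $N_\cN$ of Definition~\ref{defn:latticeMorphism}. I would record this observation as the interpretation of ``minimal'' in the definition, so that ``\emph{the}'' minimal collection is well defined.

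Finally, since $\cN_0^0=\cN_1^0$, the seed collections coincide and the closure operator induced by {\bf Rules 2}--{\bf 5} is literally the same function in the two cases, as it depends only on the current collection and on $\Sigma$. Identical seeds under an identical monotone operator produce identical least fixed points, whence $N_{\cN_0}=N_{\cN_1}$. The only genuine content lies in the first step: verifying by direct inspection that no rule secretly depends on more of $\cN$ than the set $\cN^0$. Once that bookkeeping is confirmed, the conclusion is immediate. I expect the sole point requiring mild care to be phrasing ``minimal collection'' as a bona fide least fixed point so that the construction $\cN^0\mapsto N_\cN$ is single-valued; this is routine given the monotonicity just noted.
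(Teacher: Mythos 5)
Your proposal is correct and rests on exactly the same observation as the paper's proof: the only rules that consult $\cN$ are \textbf{Rule 0} and \textbf{Rule 1}, and both do so only through $\cN^0$, while \textbf{Rules 2}--\textbf{5} are closure rules depending only on tiles already present and the fixed data of $\Sigma$. Your additional remark formalizing ``minimal collection'' as the least fixed point of a monotone operator is a reasonable elaboration of what the paper leaves implicit, but it does not change the substance of the argument.
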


\begin{proof}
The only rules that directly depend on the elements of $\cN$ are {\bf Rule 0} and {\bf Rule 1}.
These rules are given in terms of $\cN^0$.
\end{proof}

\begin{prop}
\label{prop:minimalN}
If $\cN = \setof{\kappa_k\mid k = 1,\ldots, K}\subset \cM$, then $\cN = \cN^0 \in \sAtt(\cF)$,
\[
N_\cN = \bigcup_{k = 1,\ldots, K} G^2(\kappa_k) ,
\]
and $N_\cN$ is a trapping region.
\end{prop}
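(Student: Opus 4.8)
The plan is to prove the three assertions in order: the combinatorial identity $\cN = \cN^0 \in \sAtt(\cF)$, the explicit description $N_\cN = \bigcup_k G^2(\kappa_k)$, and the trapping property. Throughout I treat each element of $\cN$ as the minimal vertex $w_{\kappa_k}$ of an attracting cell $\kappa_k$, since by Definition~\ref{defn:faces} the minimal vertices are exactly the $w_\kappa$ of type $A$ cells.

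First I would settle the combinatorial claim. Since $\kappa_k$ is of type $A$, the only edge of $\cF_{\kappa_k}$ incident to $w_{\kappa_k}$ is the self-edge $\cF_{\kappa_k}(w_{\kappa_k}) = w_{\kappa_k}$, and because $w_{\kappa_k}$ is an interior face belonging to no other cell it is not identified with any other vertex; hence $\cF(w_{\kappa_k}) = \setof{w_{\kappa_k}}$. It follows that $\cF(\cN) = \cN$, which gives at once $\cN \in \sInvset^+(\cF)$ and $\cN \in \sAtt(\cF)$. The self-edge also shows $w_{\kappa_k} \in \cF^{-1}(w_{\kappa_k}) \cap \cN$, so $\cN \subset \cN^0$, and since $\cN^0 \subset \cN$ by definition we conclude $\cN = \cN^0$.

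Next I would run {\bf Rules 0}--{\bf 5}. As $\cN^0 = \cN \subset \cM$ and $\cM \cap \cT = \emptyset$ by Proposition~\ref{prop:MTBWpart}, there is no vertex in $\cN^0 \cap \cT$, so {\bf Rule 1} never applies; {\bf Rule 0} applies to each $w_{\kappa_k}$ and forces $G^2(\kappa_k) \subset N_\cN$. The heart of this step is to show that $\setof{G^2(\kappa_k)}$ is already closed under all six rules. The mechanism is a dependency chain: a $1$-tile can enter $N_\cN$ only through {\bf Rule 1} or {\bf Rule 5}, a $0$-tile only through {\bf Rule 2}, while {\bf Rules 2}, {\bf 3}, {\bf 4}, and {\bf 5} each require at least one $1$-tile or $0$-tile to already be present. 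With {\bf Rule 1} inactive, no $1$-tile can be produced from $2$-tiles alone, hence {\bf Rule 2} produces no $0$-tile, and then {\bf Rules 3}, {\bf 4}, and {\bf 5} remain vacuous as well. Thus the hypothesis of every rule except {\bf Rule 0} is unsatisfiable for this collection, so $\setof{G^2(\kappa_k)}$ satisfies the rules and is minimal; by Definition~\ref{defn:latticeMorphism}, $N_\cN = \bigcup_k G^2(\kappa_k)$.

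Finally I would check transversality. By \eqref{eq:fG2} the nonlinearity satisfies $f^{(\delta)} \equiv \Lambda(\kappa_k)$ on $G^2(\kappa_k)$, so the field of \eqref{eq:threshPerturbed} agrees there with the $\kappa_k$-equation. As $\kappa_k$ is of type $A$, all four of its faces are entrance faces, so Proposition~\ref{prop:TG2} yields that the field is transverse in to $G^2(\kappa_k)$ along each of the four edges; that is, $G^2(\kappa_k)$ is an attracting block. Because distinct cells are disjoint open rectangles and $\delta>0$, the tiles $G^2(\kappa_k)$ are pairwise separated, so $N_\cN$ is a finite disjoint union of attracting blocks and hence a regular closed set along whose polygonal boundary the flow is transverse in; therefore $N_\cN$ is a trapping region. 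I expect the main obstacle to be the minimality argument of the third paragraph, where one must verify that the rule system cannot bootstrap any tile beyond those forced by {\bf Rule 0} once no transparent vertex is available to seed {\bf Rule 1}.
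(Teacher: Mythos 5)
Your proof is correct and follows essentially the same route as the paper: \textbf{Rule 0} forces the $2$-tiles, the circular dependency among \textbf{Rules 2} and \textbf{5} (a $G^1$ tile needs a $G^0$ tile, which needs a $G^1$ tile) shows nothing else can be added once \textbf{Rule 1} is inactive, and transversality on all four edges of each type-$A$ tile follows from Proposition~\ref{prop:TG2}. The only difference is that you explicitly verify $\cN=\cN^0\in\sAtt(\cF)$ via the self-edges at the $w_{\kappa_k}$, a step the paper's proof leaves implicit; that is a harmless (indeed welcome) addition, not a different approach.
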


\begin{proof}
By {\bf Rule 0}
\[
 \bigcup_{k = 1,\ldots, K} G^2(\kappa_k) \subset N_\cN.
\]
We now show that this is the minimal collection of tiles and chips that satisfy {\bf Rules 0} - {\bf 5}.
Given that $\cN\cap\cT =\emptyset$, the only way to require the existence of a $G^1$ tile is through {\bf Rule 5}.
However, {\bf Rule 5} requires the existence of $G^0(i,j)$.
The requirement for the existence of $G^0(i,j)$ follows from {\bf Rule 2}, which in turn requires the existence of a $G^1$ tile.
Thus,
\begin{equation}
\label{eq:Aminimal}
 N_\cN=  \bigcup_{k = 1,\ldots, K} G^2(\kappa_k).
\end{equation}

For each $\kappa_k$, $k = 1,\ldots, K$, each face is identified with a vertex that belongs to $\cV_e(\kappa_k)$.
Thus, by Proposition~\ref{prop:TG2}, $G^2(\kappa_k)$ is a trapping region and hence $N_\cN$ is a trapping region.
\end{proof}

The following result is an immediate application of Propositions~\ref{prop:N0} and \ref{prop:minimalN}.

\begin{cor}
Let $\cN\in\sInvset^+(\cF)$.
If $\cN^0 = \setof{\kappa_k\mid k = 1,\ldots, K}\subset \cM$, then
\[
N_\cN = \bigcup_{k = 1,\ldots, K} G^2(\kappa_k) ,
\]
and $N_\cN$ is a trapping region.
\end{cor}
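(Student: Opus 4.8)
The plan is to reduce this corollary to the special case already settled in Proposition~\ref{prop:minimalN}, by exploiting the fact, recorded in Proposition~\ref{prop:N0}, that the assignment $\cN\mapsto N_\cN$ depends only on the derived set $\cN^0$. In other words, since the hypothesis controls $\cN^0$ rather than $\cN$ itself, I expect the entire argument to consist of replacing $\cN$ by a cleaner forward invariant set having the same derived set and quoting the earlier results.

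First I would introduce the auxiliary set $\cN' := \setof{\kappa_k\mid k=1,\ldots,K}$, i.e.\ the collection consisting of exactly the minimal vertices that appear in $\cN^0$. By hypothesis $\cN'\subset\cM$ and is precisely of the form to which Proposition~\ref{prop:minimalN} applies. That proposition then delivers several facts simultaneously: $\cN'\in\sAtt(\cF)\subset\sInvset^+(\cF)$, the identity $(\cN')^0=\cN'$, the explicit formula $N_{\cN'}=\bigcup_{k=1}^{K}G^2(\kappa_k)$, and the statement that $N_{\cN'}$ is a trapping region. Next I would observe the chain $(\cN')^0=\cN'=\cN^0$, where the first equality is the one just quoted from Proposition~\ref{prop:minimalN} and the second is the hypothesis together with the definition of $\cN'$. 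Since $\cN$ and $\cN'$ both lie in $\sInvset^+(\cF)$ and share the same derived set, Proposition~\ref{prop:N0} applies and yields $N_\cN=N_{\cN'}$. Substituting the formula for $N_{\cN'}$ gives $N_\cN=\bigcup_{k=1}^{K}G^2(\kappa_k)$, and since $N_{\cN'}$ was already shown to be a trapping region, so is $N_\cN$.

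There is no genuine obstacle here, which is exactly why the paper labels the result an immediate consequence: all the real geometric work — verifying via Proposition~\ref{prop:TG2} that the vector field is transverse into each $G^2(\kappa_k)$, and checking minimality against Rules 0--5 (no $G^1$ or $G^0$ tiles are forced in because $\cN'\cap\cT=\emptyset$) — has already been discharged inside Proposition~\ref{prop:minimalN}. The only point that warrants a moment's care is confirming that $\cN'$ is a legitimate element of $\sInvset^+(\cF)$, so that Proposition~\ref{prop:N0} is applicable; this is supplied by the assertion $\cN'\in\sAtt(\cF)$ in Proposition~\ref{prop:minimalN}, together with the observation that every attractor is forward invariant.
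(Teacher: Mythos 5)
Your proof is correct and follows exactly the route the paper intends: the paper states the corollary is ``an immediate application of Propositions~\ref{prop:N0} and \ref{prop:minimalN},'' and your argument simply spells out that application, introducing $\cN'=\cN^0$, invoking Proposition~\ref{prop:minimalN} for the formula and the trapping property, and Proposition~\ref{prop:N0} to transfer them to $\cN$. No further comment is needed.
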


For the remainder of the argument we assume that $\cN^0\not\subset \cM$ or equivalently that $\cN^0\cap \cT\neq\emptyset$.
Furthermore, taking advantage of the above described symmetry we always assume that $v = v_{i,\bar{j}}\in \cN^0\cap \cT$ is transparent east.

\begin{prop}
Under the assumption that $v= v_{i,\bar{j}} \in \cN^0\cap \cT$ the cells that intersect $E(i,j)$ must be of the types indicated in Figure~\ref{fig:MinimalFourCells}(a) and as indicated in Figure~\ref{fig:MinimalFourCells}(b) it must be the case that
\[
G^2(\kappa_1)\cup G^2(\kappa_2)\cup G^1(v)\subset N_\cN.
\]
\end{prop}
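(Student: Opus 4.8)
The plan is to handle the two assertions of the proposition separately. The inclusion $G^2(\kappa_1)\cup G^2(\kappa_2)\cup G^1(v)\subset N_\cN$ is essentially a direct invocation of {\bf Rule 1} and is the easy half; the restriction on the cell types is a purely local computation from the labelling of entrance and absorbing faces together with Proposition~\ref{prop:adjacentCells}.

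For the inclusion, I would first observe that $v=v_{i,\bar j}\in\cE(i,j)$, since $v\cap E(i,j)\neq\emptyset$. By hypothesis $v\in\cN^0\cap\cT$, so Proposition~\ref{prop:uvw} provides distinct $u\in\cN$ and $w\in\cN^0$ with $u\to v\to w$. Under the east normalization, $v$ is absorbing only for $\kappa_2$ and an entrance only for $\kappa_1$, so necessarily $u\in\cV(\kappa_2)$ and $w\in\cV(\kappa_1)$, whence $u,w\in\bar{\cE}(i,j)$. Applying {\bf Rule 1} to the transparent vertex $v$, with $u$ and $w$ in the roles of $r$ and $s$ and with $\kappa_r=\kappa_2$, $\kappa_s=\kappa_1$, then gives precisely $G^2(\kappa_2)\cup G^2(\kappa_1)\cup G^1(v)\subset N_\cN$, i.e.\ \eqref{eq:vTbasic} and Figure~\ref{fig:MinimalFourCells}(b). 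The only point needing care is that the neighbors furnished by Proposition~\ref{prop:uvw} genuinely satisfy the cell-membership hypotheses of {\bf Rule 1}, which is exactly what the east normalization secures.

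For the cell types, I would read the transparency of $v$ back through the $\kappa$-graphs. The edge $u\to v$ lies in $\cF_{\kappa_2}$ and forces $v\in\cV_a(\kappa_2)$, while the edge $v\to w$ lies in $\cF_{\kappa_1}$ and forces $v\in\cV_e(\kappa_1)$. Since $v$ is the eastern face of $\kappa_2$ and the western face of $\kappa_1$, both sitting at the common threshold, the labelling of entrance and absorbing faces translates these into the statements that $\Phi_1(\kappa_2)$ and $\Phi_1(\kappa_1)$ both lie to the east of that threshold. Comparing with \eqref{eq:coarseType}, the condition on $\kappa_2$ restricts it to types $NE$, $E$, $SE$, whereas the condition on $\kappa_1$ only excludes the western types $NW$, $W$, $SW$, leaving $N$, $NE$, $E$, $SE$, $S$, $A$; these are the labels displayed in Figure~\ref{fig:MinimalFourCells}(a).

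It remains to constrain the two lower cells $\kappa_3$ and $\kappa_4$, for which I would apply Proposition~\ref{prop:adjacentCells} to the vertically adjacent pairs $(\kappa_3,\kappa_2)$ and $(\kappa_4,\kappa_1)$. Their shared horizontal faces lie in $\sH^1$ or in $\sH^2$, and in either case one coordinate of $\Phi$ is transported unchanged across the face; combined with the types already forced on $\kappa_2$ and $\kappa_1$, this narrows $\kappa_3$ and $\kappa_4$ to the possibilities shown in the figure. The hard part here is organizational rather than conceptual: one must check that the symmetry reduction (a reflection or rotation) really does let the single configuration of Figure~\ref{fig:MinimalFourCells}(a) stand in for every transparent face, and one must run the $\sH^1$ versus $\sH^2$ dichotomy for the lower pair and confirm each subcase against the figure.
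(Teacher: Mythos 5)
Your proof is correct and follows essentially the same route as the paper's: transparency east forces $\Phi_1(\kappa_1)$ and $\Phi_1(\kappa_2)$ to lie east of the shared threshold, which yields exactly the type restrictions of Figure~\ref{fig:MinimalFourCells}(a), and {\bf Rule 1} (with the hypotheses supplied by Proposition~\ref{prop:uvw}) gives the tile inclusion. The only superfluous part is your last paragraph: the caption of Figure~\ref{fig:MinimalFourCells}(a) explicitly asserts that no restrictions are placed on $\kappa_3$ and $\kappa_4$ at this stage, so no adjacency analysis via Proposition~\ref{prop:adjacentCells} is required here (and indeed, taking the union over the $\sH^1$ versus $\sH^2$ cases and over the admissible types of $\kappa_1,\kappa_2$, no net restriction results).
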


\begin{proof}
The assumption that $v = v_{i,\bar{j}}$ is transparent east implies that $\Phi_1(\kappa_1)>\xi_i$ and $\Phi_1(\kappa_2)>\xi_i$.
Thus, $\kappa_1$ is of type $N$, $A$, $S$, $NE$, $E$, or $SE$ and $\kappa_2$ is of type $NE$, $E$, or $SE$.

{\bf Rule 1} implies that $G^2(\kappa_1)\cup G^2(\kappa_2)\cup G^1(v)\subset N_\cN$.
\end{proof}

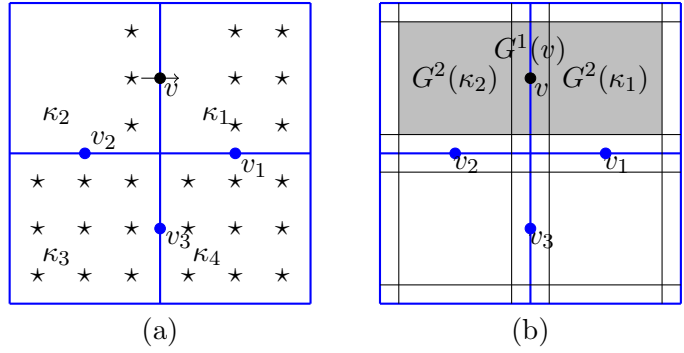
\begin{figure}
\begin{center}
\begin{tikzpicture}
[scale=0.5]

	\draw[blue, thick](0,0) -- (8,0);
	\draw[blue, thick](0,4) -- (8,4);
	\draw[blue, thick](0,0) -- (0,8);
	\draw[blue, thick](8,0) -- (8,8);
	\draw[blue, thick](4,0) -- (4,8);
	\draw[blue, thick](0,8) -- (8,8);

	\draw (4.30,5.70) node {$v$};		
	\filldraw(4,6) circle (4pt);
	\draw[->] (3.5,6) -- (4.5,6);
	\filldraw[blue](2,4) circle (4pt);
	\draw (2.50,4.35) node {$v_2$};
	\filldraw[blue](6,4) circle (4pt);
	\draw (6.50,3.60) node {$v_1$};
	\filldraw[blue](4,2) circle (4pt);
	\draw (4.5,1.70) node {$v_3$};
	
	\draw(5.5,5) node{$\kappa_1$};
	
	\draw(6,7.25) node{$\star$};
	\draw(6,6) node{$\star$};
	\draw(6,4.75) node{$\star$};
	\draw(7.25,7.25) node{$\star$};
	\draw(7.25,6) node{$\star$};
	\draw(7.25,4.75) node{$\star$};
	
	\draw(1.25,5) node{$\kappa_2$};
	
	\draw(3.25,7.25) node{$\star$};
	\draw(3.25,6) node{$\star$};
	\draw(3.25,4.75) node{$\star$};

	\draw(1.25,1.25) node{$\kappa_3$};
	
	\draw(0.75,3.25) node{$\star$};
	\draw(2,3.25) node{$\star$};
	\draw(3.25,3.25) node{$\star$};
	\draw(0.75,2) node{$\star$};
	\draw(2,2) node{$\star$};
	\draw(3.25,2) node{$\star$};
	\draw(0.75,0.75) node{$\star$};
	\draw(2,0.75) node{$\star$};
	\draw(3.25,0.75) node{$\star$};
	
	\draw(5.25,1.25) node{$\kappa_4$};
	
	\draw(4.75,3.25) node{$\star$};
	\draw(6,3.25) node{$\star$};
	\draw(7.25,3.25) node{$\star$};
	\draw(4.75,2) node{$\star$};
	\draw(6,2) node{$\star$};
	\draw(7.25,2) node{$\star$};
	\draw(4.75,0.75) node{$\star$};
	\draw(6,0.75) node{$\star$};
	\draw(7.25,0.75) node{$\star$};

	\draw(4,-0.75) node{(a)};	
\end{tikzpicture}
\qquad
\begin{tikzpicture}
[scale=0.5]

	\fill[lightgray] (0.5,4.5) -- (3.5,4.5) -- (3.5,7.5) -- (0.5,7.5) -- (0.5,4.5);
	\fill[lightgray] (4.5,4.5) -- (7.5,4.5) -- (7.5,7.5) -- (4.5,7.5) -- (4.5,4.5);
	\fill[lightgray] (3.5,4.5) -- (4.5,4.5) -- (4.5,7.5) -- (3.5,7.5) -- (3.5,4.5);
		
	\draw[blue, thick](0,0) -- (8,0);
	\draw[blue, thick](0,4) -- (8,4);
	\draw[blue, thick](0,0) -- (0,8);
	\draw[blue, thick](8,0) -- (8,8);
	\draw[blue, thick](4,0) -- (4,8);
	\draw[blue, thick](0,8) -- (8,8);
		
	\draw (0,0.5) -- (8,0.5);
	\draw (0,3.5) -- (8,3.5);
	\draw (0,4.5) -- (8,4.5);
	\draw (0,7.5) -- (8,7.5);
	\draw (0.5,0) -- (0.5,8);
	\draw (3.5,0) -- (3.5,8);
	\draw (4.5,0) -- (4.5,8);
	\draw (7.5,0) -- (7.5,8);
	
	
		
	\filldraw(4,6) circle (4pt);
	\draw (4.30,5.70) node {$v$};
	\filldraw[blue](2,4) circle (4pt);
	\draw (2.30,3.70) node {$v_2$};

	\draw (6.30,3.70) node {$v_1$};
	\filldraw[blue](6,4) circle (4pt);
	\draw (4.30,1.70) node {$v_3$};
	\filldraw[blue](4,2) circle (4pt);
		
	\draw(2,6) node{$G^2(\kappa_2)$};
	\draw(6,6) node{$G^2(\kappa_1)$};
	\draw(4,6.75) node{$G^1(v)$};
	
	\draw(4,-0.75) node{(b)};	
\end{tikzpicture}
\end{center}
\caption{Black filled dots indicate vertex that belongs to $\cN^0$. Blue filled dot indicates that vertex may or may not belong to $\cN^0$. (a) The vertex $v$ is a transparent face moving east.  
The associated four cells are $\kappa_i$, $i=1,\ldots, 4$. 
The possible cell types of $\kappa_1$ and $\kappa_2$ are indicated by the stars, e.g.\ center is $A$, upper right corner is $NE$, middle rights is $E$. 
Without further assumptions there are no restrictions on the cell types of $\kappa_3$ and $\kappa_4$.
(b) Shaded region indicates tiles that belong to $N_\cN$ under the assumption that $\cN^0\cap\cT\neq \emptyset$.
}
\label{fig:MinimalFourCells}
\end{figure}

\begin{proof}[Proof of Proposition~\ref{prop:trapN}]

\begin{enumerate}
\item {\em Assume $v_1\in\cN^0$ and $v_1$ is transparent north}.  
	By {\bf Rule 1}, $G^2(\kappa_4)\cup G^1(v_1) \subset N_\cN$.
	The possible cell types are indicated in Figure~\ref{fig:fourCellsNew}(a).  
	
\begin{figure}
\begin{tikzpicture}
[scale=0.5]

	\draw[blue, thick](0,0) -- (8,0);
	\draw[blue, thick](0,4) -- (8,4);
	\draw[blue, thick](0,0) -- (0,8);
	\draw[blue, thick](8,0) -- (8,8);
	\draw[blue, thick](4,0) -- (4,8);
	\draw[blue, thick](0,8) -- (8,8);

	\draw (4.30,5.70) node {$v$};		
	\filldraw(4,6) circle (4pt);
	\draw[->] (3.5,6) -- (4.5,6);
	\filldraw[blue](2,4) circle (4pt);
	\draw (2.50,4.35) node {$v_2$};
	
	\draw (6.50,3.60) node {$v_1$};
	\filldraw(6,4) circle (4pt);
	\draw[->] (6,3.5) -- (6,4.5);
	\filldraw[blue](4,2) circle (4pt);
	\draw (4.5,1.70) node {$v_3$};
	
	\draw(5.5,5) node{$\kappa_1$};
	
	\draw(6,7.25) node{$\star$};
	\draw(7.25,7.25) node{$\star$};
	\draw(6,6) node{$\star$};
	\draw(7.25,6) node{$\star$};
	
	\draw(1.25,5) node{$\kappa_2$};
	
	\draw(3.25,7.25) node{$\star$};
	\draw(3.25,6) node{$\star$};
	\draw(3.25,4.75) node{$\star$};

	\draw(1.25,1.25) node{$\kappa_3$};
	
	\draw(0.75,3.25) node{$\star$};
	\draw(2,3.25) node{$\star$};
	\draw(3.25,3.25) node{$\star$};
	\draw(0.75,2) node{$\star$};
	\draw(2,2) node{$\star$};
	\draw(3.25,2) node{$\star$};
	\draw(0.75,0.75) node{$\star$};
	\draw(2,0.75) node{$\star$};
	\draw(3.25,0.75) node{$\star$};
	
	\draw(5.25,1.25) node{$\kappa_4$};
	
	\draw(4.75,3.25) node{$\star$};
	\draw(6,3.25) node{$\star$};
	\draw(7.25,3.25) node{$\star$};

	\draw(4,-0.75) node{(a)};	
	\end{tikzpicture}
\qquad
\begin{tikzpicture}
[scale=0.5]

	\fill[lightgray] (0.5,4.5) -- (3.5,4.5) -- (3.5,7.5) -- (0.5,7.5) -- (0.5,4.5);
	\fill[lightgray] (4.5,4.5) -- (7.5,4.5) -- (7.5,7.5) -- (4.5,7.5) -- (4.5,4.5);
	\fill[lightgray] (3.5,4.5) -- (4.5,4.5) -- (4.5,7.5) -- (3.5,7.5) -- (3.5,4.5);
	\fill[lightgray] (4.5,3.5) -- (7.5,3.5) -- (7.5,4.5) -- (4.5,4.5) -- (4.5,4.5);
	\fill[lightgray] (4.5,0.5) -- (7.5,0.5) -- (7.5,3.5) -- (4.5,3.5) -- (4.5,0.5);	
	
	\draw[blue, thick](0,0) -- (8,0);
	\draw[blue, thick](0,4) -- (8,4);
	\draw[blue, thick](0,0) -- (0,8);
	\draw[blue, thick](8,0) -- (8,8);
	\draw[blue, thick](4,0) -- (4,8);
	\draw[blue, thick](0,8) -- (8,8);
	
	\draw (0,0.5) -- (8,0.5);
	\draw (0,3.5) -- (8,3.5);
	\draw (0,4.5) -- (8,4.5);
	\draw (0,7.5) -- (8,7.5);
	\draw (0.5,0) -- (0.5,8);
	\draw (3.5,0) -- (3.5,8);
	\draw (4.5,0) -- (4.5,8);
	\draw (7.5,0) -- (7.5,8);
	
	
	\filldraw(4,6) circle (4pt);
	\draw (4.30,5.70) node {$v$};
	\filldraw[blue](2,4) circle (4pt);
	\draw (2.30,3.70) node {$v_2$};
	\filldraw(6,4) circle (4pt);
	\draw (6.30,3.70) node {$v_1$};
	\filldraw[blue](4,2) circle (4pt);
	\draw (4.30,1.70) node {$v_3$};
	
	\draw(2,6) node{$G^2(\kappa_2)$};
	\draw(6,6) node{$G^2(\kappa_1)$};
	\draw(4,6.75) node{$G^1(v)$};
	\draw(6,2) node{$G^2(\kappa_4)$};
	\draw(7.25,4.5) node{$G^1(v_1)$};
	
	\draw(4,-0.75) node{(b)};	
\end{tikzpicture}	
\qquad
\begin{tikzpicture}
[scale=0.5]

	\draw[blue, thick](0,0) -- (8,0);
	\draw[blue, thick](0,4) -- (8,4);
	\draw[blue, thick](0,0) -- (0,8);
	\draw[blue, thick](8,0) -- (8,8);
	\draw[blue, thick](4,0) -- (4,8);
	\draw[blue, thick](0,8) -- (8,8);

	\draw (4.30,5.70) node {$v$};		
	\filldraw(4,6) circle (4pt);
	\draw[->] (3.5,6) -- (4.5,6);

	\draw (2.50,4.35) node {$v_2$};
	\draw(2,4) circle (4pt);
		
	\draw (6.50,3.60) node {$v_1$};
	\filldraw(6,4) circle (4pt);
	\draw[->] (6,3.5) -- (6,4.5);
	\filldraw[blue](4,2) circle (4pt);
	\draw (4.5,1.70) node {$v_3$};
	
	\draw(5.5,5) node{$\kappa_1$};
	
	\draw(6,7.25) node{$\star$};
	\draw(7.25,7.25) node{$\star$};
	\draw(6,6) node{$\star$};
	\draw(7.25,6) node{$\star$};
	
	\draw(1.25,5) node{$\kappa_2$};
	
	\draw(3.25,7.25) node{$\star$};
	\draw(3.25,6) node{$\star$};

	\draw(1.25,1.25) node{$\kappa_3$};
	
	\draw(0.75,3.25) node{$\star$};
	\draw(2,3.25) node{$\star$};
	\draw(3.25,3.25) node{$\star$};
	\draw(3.25,2) node{$\star$};
	\draw(3.25,0.75) node{$\star$};
	
	\draw(5.25,1.25) node{$\kappa_4$};
	
	\draw(4.75,3.25) node{$\star$};
	\draw(6,3.25) node{$\star$};
	\draw(7.25,3.25) node{$\star$};

	\draw(4,-0.75) node{(c)};	
	\end{tikzpicture}
\caption{Unfilled dot implies that vertex does not belong to $\cN^0$. (a) Possible cells types  in the setting of Case 1.
(b) Minimal set of tiles in $N_\cN$ in Case 1.
(c) Possible cell types in the setting of Case 1(a).
}
\label{fig:fourCellsNew}
\end{figure}
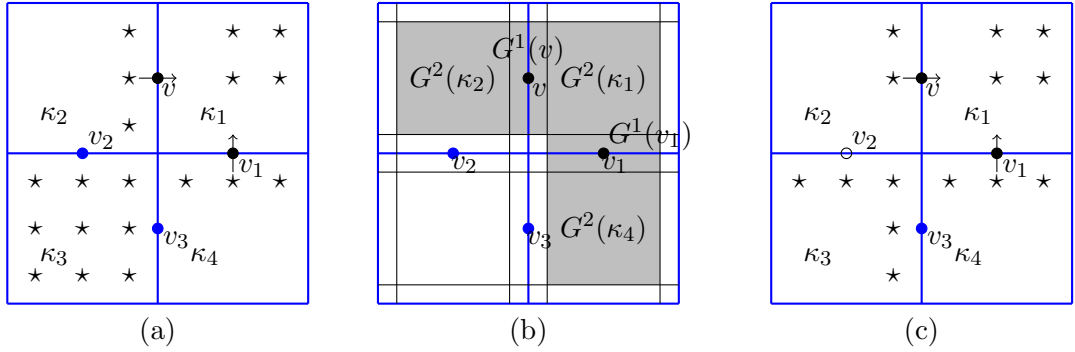
		
	\begin{enumerate}
	\item \emph{Assume $v_2\not\in\cN^0$.}  
		By Proposition~\ref{prop:cellAtt_Rev} $\kappa_2$ cannot be of type $SE$ and thus by Figure~\ref{fig:fourCellsNew}(a) must be  of type $E$ or $NE$.
		By Proposition~\ref{prop:adjacentCells}(i) if $\eta_j\in\sH^1$, then $\kappa_2$ being of type $E$ or $NE$ implies that $\kappa_3$ is of type $NW$, $N$ or $NE$.
		Similarly, by Proposition~\ref{prop:adjacentCells}(v) if $\eta_j\in\sH^2$, then $\kappa_3$ is of type $NE$, $E$, or $SE$.
		The possible cell types are indicated in Figure~\ref{fig:fourCellsNew}(c). 
		Since $v\not\in\cV_e(\kappa_2)$, {\bf Rule 3} does not apply to $\kappa_2$.
		
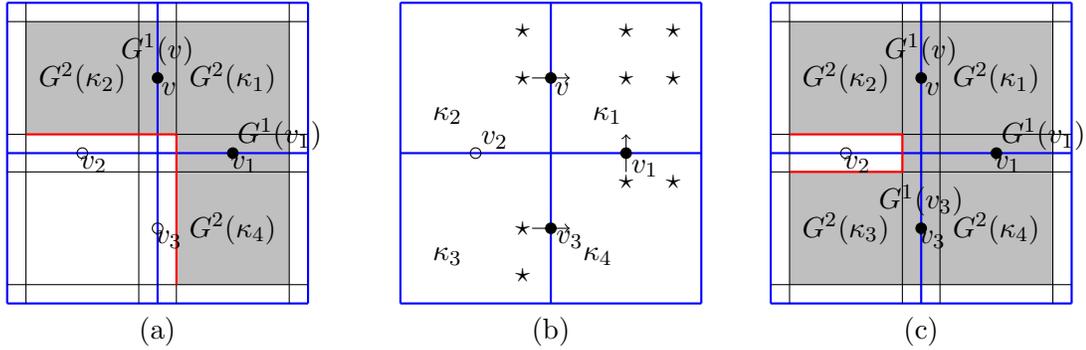
\begin{figure}
\begin{tikzpicture}
[scale=0.5]

	\fill[lightgray] (0.5,4.5) -- (3.5,4.5) -- (3.5,7.5) -- (0.5,7.5) -- (0.5,4.5);
	\fill[lightgray] (4.5,4.5) -- (7.5,4.5) -- (7.5,7.5) -- (4.5,7.5) -- (4.5,4.5);
	\fill[lightgray] (3.5,4.5) -- (4.5,4.5) -- (4.5,7.5) -- (3.5,7.5) -- (3.5,4.5);
	\fill[lightgray] (4.5,3.5) -- (7.5,3.5) -- (7.5,4.5) -- (4.5,4.5) -- (4.5,4.5);
	\fill[lightgray] (4.5,0.5) -- (7.5,0.5) -- (7.5,3.5) -- (4.5,3.5) -- (4.5,0.5);	
	
	\draw[blue, thick](0,0) -- (8,0);
	\draw[blue, thick](0,4) -- (8,4);
	\draw[blue, thick](0,0) -- (0,8);
	\draw[blue, thick](8,0) -- (8,8);
	\draw[blue, thick](4,0) -- (4,8);
	\draw[blue, thick](0,8) -- (8,8);
	
	\draw (0,0.5) -- (8,0.5);
	\draw (0,3.5) -- (8,3.5);
	\draw (0,4.5) -- (8,4.5);
	\draw (0,7.5) -- (8,7.5);
	\draw (0.5,0) -- (0.5,8);
	\draw (3.5,0) -- (3.5,8);
	\draw (4.5,0) -- (4.5,8);
	\draw (7.5,0) -- (7.5,8);
	
	 \draw[red, thick] (0.5,4.5) -- (4.5,4.5) -- (4.5,0.5);	
	
	\filldraw(4,6) circle (4pt);
	\draw (4.30,5.70) node {$v$};
	\draw(2,4) circle (4pt);
	\draw (2.30,3.70) node {$v_2$};
	\filldraw(6,4) circle (4pt);
	\draw (6.30,3.70) node {$v_1$};
	\draw(4,2) circle (4pt);
	\draw (4.30,1.70) node {$v_3$};
	
	\draw(2,6) node{$G^2(\kappa_2)$};
	\draw(6,6) node{$G^2(\kappa_1)$};
	\draw(4,6.75) node{$G^1(v)$};
	\draw(6,2) node{$G^2(\kappa_4)$};
	\draw(7.25,4.5) node{$G^1(v_1)$};
	
	\draw(4,-0.75) node{(a)};	
\end{tikzpicture}
\qquad
\begin{tikzpicture}
[scale=0.5]

	\draw[blue, thick](0,0) -- (8,0);
	\draw[blue, thick](0,4) -- (8,4);
	\draw[blue, thick](0,0) -- (0,8);
	\draw[blue, thick](8,0) -- (8,8);
	\draw[blue, thick](4,0) -- (4,8);
	\draw[blue, thick](0,8) -- (8,8);

	\draw (4.30,5.70) node {$v$};		
	\filldraw(4,6) circle (4pt);
	\draw[->] (3.5,6) -- (4.5,6);
	
	\draw (2.50,4.35) node {$v_2$};	
	\draw(2,4) circle (4pt);

	\draw (6.50,3.60) node {$v_1$};
	\filldraw(6,4) circle (4pt);
	\draw[->] (6,3.5) -- (6,4.5);
	
	\draw (4.5,1.70) node {$v_3$};
	\filldraw(4,2) circle (4pt);
	\draw[->] (3.5,2) -- (4.5,2);

	\draw(5.5,5) node{$\kappa_1$};
	
	\draw(6,7.25) node{$\star$};
	\draw(7.25,7.25) node{$\star$};
	\draw(6,6) node{$\star$};
	\draw(7.25,6) node{$\star$};
	
	\draw(1.25,5) node{$\kappa_2$};
	
	\draw(3.25,7.25) node{$\star$};
	\draw(3.25,6) node{$\star$};

	\draw(1.25,1.25) node{$\kappa_3$};
	
	\draw(3.25,2) node{$\star$};
	\draw(3.25,0.75) node{$\star$};
	
	\draw(5.25,1.25) node{$\kappa_4$};
	
	\draw(6,3.25) node{$\star$};
	\draw(7.25,3.25) node{$\star$};

	\draw(4,-0.75) node{(b)};	
\end{tikzpicture}
\qquad
\begin{tikzpicture}
[scale=0.5]

	\fill[lightgray] (0.5,4.5) -- (3.5,4.5) -- (3.5,7.5) -- (0.5,7.5) -- (0.5,4.5);
	\fill[lightgray] (4.5,4.5) -- (7.5,4.5) -- (7.5,7.5) -- (4.5,7.5) -- (4.5,4.5);
	\fill[lightgray] (3.5,4.5) -- (4.5,4.5) -- (4.5,7.5) -- (3.5,7.5) -- (3.5,4.5);
	\fill[lightgray] (3.5,3.5) -- (4.5,3.5) -- (4.5,4.5) -- (3.5,4.5) -- (3.5,3.5);
	\fill[lightgray] (4.5,3.5) -- (7.5,3.5) -- (7.5,4.5) -- (4.5,4.5) -- (4.5,4.5);
	\fill[lightgray] (4.5,0.5) -- (7.5,0.5) -- (7.5,3.5) -- (4.5,3.5) -- (4.5,0.5);
	\fill[lightgray] (3.5,3.5) -- (4.5,3.5) -- (4.5,0.5) -- (3.5,0.5) -- (3.5,3.5);
	\fill[lightgray] (3.5,3.5) -- (3.5,0.5) -- (0.5,0.5) -- (0.5,3.5) -- (3.5,3.5);

	\draw[blue, thick](0,0) -- (8,0);
	\draw[blue, thick](0,4) -- (8,4);
	\draw[blue, thick](0,0) -- (0,8);
	\draw[blue, thick](8,0) -- (8,8);
	\draw[blue, thick](4,0) -- (4,8);
	\draw[blue, thick](0,8) -- (8,8);
	
	\draw (0,0.5) -- (8,0.5);
	\draw (0,3.5) -- (8,3.5);
	\draw (0,4.5) -- (8,4.5);
	\draw (0,7.5) -- (8,7.5);
	\draw (0.5,0) -- (0.5,8);
	\draw (3.5,0) -- (3.5,8);
	\draw (4.5,0) -- (4.5,8);
	\draw (7.5,0) -- (7.5,8);
	
        \draw[red, thick](0.5,4.5) --  (3.5,4.5) -- (3.5,3.5) -- (0.5,3.5);
        
	
	\filldraw(4,6) circle (4pt);
	\draw (4.30,5.70) node {$v$};
	\draw(2,4) circle (4pt);
	\draw (2.30,3.70) node {$v_2$};
	\filldraw(6,4) circle (4pt);
	\draw (6.30,3.70) node {$v_1$};
	\filldraw(4,2) circle (4pt);
	\draw (4.30,1.70) node {$v_3$};
	
	\draw(2,6) node{$G^2(\kappa_2)$};
	\draw(6,6) node{$G^2(\kappa_1)$};
	\draw(4,6.75) node{$G^1(v)$};
	\draw(2,2) node{$G^2(\kappa_3)$};
	\draw(4,2.75) node{$G^1(v_3)$};
	\draw(6,2) node{$G^2(\kappa_4)$};
	\draw(7.25,4.5) node{$G^1(v_1)$};
	
	\draw(4,-0.75) node{(c)};	
\end{tikzpicture}
\caption{(a) Cells in $N_\cN$ associated with $E(i,j)$ under the assumptions of Case 1(a)(i).
(b) Possible cell types in the setting of Case 1(a)(ii).
(c) Tiles for $N_\cN$ associated with $E(i,j)$ in Case 1(a)(ii).}
\label{fig:case1(a)(i)}
\end{figure}		
		
		\begin{enumerate}
		\item  \emph{Assume $v_3\not\in\cN^0$} 
			Because of the cell type of $\kappa_1$, {\bf Rule 2} does not apply.
			Because $G^0(\pi)\not\subset N_\cN$, {\bf Rule 3}, {\bf Rule 4} and {\bf Rule 5} do not apply.
			We claim that we are in the setting of Figure~\ref{fig:case1(a)(i)}(a).
			In particular, we need to argue that $G^2(\kappa_3)\not\subset N_\cN$.
			Observe that only {\bf Rule 0} and {\bf Rule 1} require the introduction of a 2-tile. 
			By Figure~\ref{fig:fourCellsNew}(c) $\kappa_3$ is not of type $A$ and hence {\bf Rule 0} does not apply.
			Assume {\bf Rule 1} forces the introduction of $G^2(\kappa_3)$.
			Then there exists $u\in\cN^0\cap\cT$ such that $u\in \cV(\kappa_3)$.
			Again, by Figure~\ref{fig:fourCellsNew}(b)$v_2$ or $v_3$ belong to $\cV_a(\kappa_3)$ and hence $v_2$ or $v_3$ belong to $\cN^0$, a contradiction.
			Thus, we are in the setting of Figure~\ref{fig:case1(a)(i)}(a).
			Proposition~\ref{prop:TG2} guarantees the desired transversality on the edges of $G^2(\kappa_2)$ and $G^2(\kappa_4)$.
			Furthermore, Proposition~\ref{prop:TG1j}(i - ii) and Proposition~\ref{prop:TG1i}(i - ii) guarantees the desired transversality on the edges of $G^0(\pi)$.
			
		\item  \emph{Assume $v_3\in\cN^0$ and $v_3$ is transparent east}.  
		The assumption that $v_3$ is transparent east implies that $\kappa_4$ is of type $N$ or $NE$ and $\kappa_3$ is of type $NE$, $E$, or $SE$.
		By Proposition~\ref{prop:cellAtt_Rev} if $\kappa_3$ is of type $NE$ or $\kappa_2$ is of type $SE$, $v_2\in\cN^0$, a contradiction.
		Thus, the possible cell types are as shown in Figure~\ref{fig:case1(a)(i)}(b).
		
		By {\bf Rule 1} $G^2(\kappa_3)\cup G^1(v_3)\subset N_\cN$.
		Applying {\bf Rule 2} to $G^2(\kappa_3)\cup G^2(\kappa_4)\cup G^2(\kappa_1)\cup G^1(v_3)\cup G^1(v_1)$ implies that $G^0(\pi)\subset N_\cN$.		
		This implies that $v_3\not\in\cV_e(\kappa_3)$ and hence, {\bf Rule 3} does not apply to $\kappa_3$.
		{\bf Rule 4} and {\bf Rule 5} do not apply and hence we are in the setting of Figure~\ref{fig:case1(a)(i)}(c).
		Proposition~\ref{prop:TG2} guarantees the desired transversality on the edges of $G^2(\kappa_2)$ and $G^2(\kappa_3)$.
		Proposition~\ref{prop:TG0}(iv-v) guarantees the desired transversality on the edges of $G^0(\kappa_2)$.

		\item  \emph{Assume $v_3\in\cN^0$ and $v_3$ is transparent west}.  
		Then $\kappa_3$ is of type $NW$ or $N$. 
		See Figure~\ref{fig:CellTypes1(b)}(a).
		Since $v_3\in\cN^0$ Proposition~\ref{prop:cellAtt_Rev} implies that $v_2\in\cN^0$ a contradiction.
		Thus this case cannot occur.
		\end{enumerate}

\begin{figure}

\begin{tikzpicture}
[scale=0.5]

	\draw[blue, thick](0,0) -- (8,0);
	\draw[blue, thick](0,4) -- (8,4);
	\draw[blue, thick](0,0) -- (0,8);
	\draw[blue, thick](8,0) -- (8,8);
	\draw[blue, thick](4,0) -- (4,8);
	\draw[blue, thick](0,8) -- (8,8);

	\draw (4.30,5.70) node {$v$};		
	\filldraw(4,6) circle (4pt);
	\draw[->] (3.5,6) -- (4.5,6);
	\draw (2.50,4.35) node {$v_2$};
	\draw(2,4) circle (4pt);

	\draw (6.50,3.60) node {$v_1$};
	\filldraw(6,4) circle (4pt);
	\draw[->] (6,3.5) -- (6,4.5);
	\draw(4.5,1.70) node {$v_3$};
	\filldraw(4,2) circle (4pt);
	\draw[->] (4.5,2) -- (3.5,2);
		
	\draw(5.5,5) node{$\kappa_1$};
	
	\draw(6,7.25) node{$\star$};
	\draw(7.25,7.25) node{$\star$};
	\draw(6,6) node{$\star$};
	\draw(7.25,6) node{$\star$};
	
	\draw(1.25,5) node{$\kappa_2$};
	
	\draw(3.25,7.25) node{$\star$};
	\draw(3.25,6) node{$\star$};

	\draw(1.25,1.25) node{$\kappa_3$};
	
	\draw(0.75,3.25) node{$\star$};
	\draw(2,3.25) node{$\star$};
	
	\draw(5.25,1.25) node{$\kappa_4$};
	
	\draw(4.75,3.25) node{$\star$};
	\draw(6,3.25) node{$\star$};
	\draw(7.25,3.25) node{$\star$};
		
	\draw(4,-0.75) node{(a)};	
\end{tikzpicture}
\qquad
\begin{tikzpicture}
[scale=0.5]

	\draw[blue, thick](0,0) -- (8,0);
	\draw[blue, thick](0,4) -- (8,4);
	\draw[blue, thick](0,0) -- (0,8);
	\draw[blue, thick](8,0) -- (8,8);
	\draw[blue, thick](4,0) -- (4,8);
	\draw[blue, thick](0,8) -- (8,8);

	\draw (4.30,5.70) node {$v$};		
	\filldraw(4,6) circle (4pt);
	\draw[->] (3.5,6) -- (4.5,6);
	\draw (2.50,4.35) node {$v_2$};
	\filldraw(2,4) circle (4pt);
	\draw[->] (2,4.5) -- (2,3.5);	
	
	\draw (6.50,3.60) node {$v_1$};
	\filldraw(6,4) circle (4pt);
	\draw[->] (6,3.5) -- (6,4.5);
	\draw(4.5,1.70) node {$v_3$};
	\filldraw[blue](4,2) circle (4pt);
	
	\draw(5.5,5) node{$\kappa_1$};
	
	\draw(6,7.25) node{$\star$};
	\draw(7.25,7.25) node{$\star$};
	\draw(6,6) node{$\star$};
	\draw(7.25,6) node{$\star$};
	
	\draw(1.25,5) node{$\kappa_2$};
	
	\draw(3.25,4.75) node{$\star$};

	\draw(1.25,1.25) node{$\kappa_3$};
	
	\draw(0.75,2) node{$\star$};
	\draw(2,2) node{$\star$};
	\draw(3.25,2) node{$\star$};
	\draw(0.75,0.75) node{$\star$};
	\draw(2,0.75) node{$\star$};
	\draw(3.25,0.75) node{$\star$};
	
	\draw(5.25,1.25) node{$\kappa_4$};
	
	\draw(4.75,3.25) node{$\star$};
	\draw(6,3.25) node{$\star$};
	\draw(7.25,3.25) node{$\star$};

	\draw(4,-0.75) node{(b)};	
\end{tikzpicture}
\qquad
\begin{tikzpicture}
[scale=0.5]

	\fill[lightgray] (0.5,4.5) -- (3.5,4.5) -- (3.5,7.5) -- (0.5,7.5) -- (0.5,4.5);
	\fill[lightgray] (4.5,4.5) -- (7.5,4.5) -- (7.5,7.5) -- (4.5,7.5) -- (4.5,4.5);
	\fill[lightgray] (3.5,4.5) -- (4.5,4.5) -- (4.5,7.5) -- (3.5,7.5) -- (3.5,4.5);
	\fill[lightgray] (3.5,3.5) -- (4.5,3.5) -- (4.5,4.5) -- (3.5,4.5) -- (3.5,3.5);
	\fill[lightgray] (4.5,3.5) -- (7.5,3.5) -- (7.5,4.5) -- (4.5,4.5) -- (4.5,4.5);
	\fill[lightgray] (4.5,0.5) -- (7.5,0.5) -- (7.5,3.5) -- (4.5,3.5) -- (4.5,0.5);	
	\fill[lightgray] (0.5,0.5) -- (3.5,0.5) -- (3.5,3.5) -- (0.5,3.5) -- (0.5,0.5);
	\fill[lightgray] (0.5,4.5) -- (3.5,4.5) -- (3.5,3.5) -- (0.5,3.5) -- (0.5,4.5);
		
	\draw[blue, thick](0,0) -- (8,0);
	\draw[blue, thick](0,4) -- (8,4);
	\draw[blue, thick](0,0) -- (0,8);
	\draw[blue, thick](8,0) -- (8,8);
	\draw[blue, thick](4,0) -- (4,8);
	\draw[blue, thick](0,8) -- (8,8);
	
	\draw (0,0.5) -- (8,0.5);
	\draw (0,3.5) -- (8,3.5);
	\draw (0,4.5) -- (8,4.5);
	\draw (0,7.5) -- (8,7.5);
	\draw (0.5,0) -- (0.5,8);
	\draw (3.5,0) -- (3.5,8);
	\draw (4.5,0) -- (4.5,8);
	\draw (7.5,0) -- (7.5,8);
	
	\filldraw(4,6) circle (4pt);
	\draw (4.30,5.70) node {$v$};
	\filldraw (2,4) circle (4pt);
	\draw (2.30,3.70) node {$v_2$};
	\filldraw(6,4) circle (4pt);
	\draw (6.30,3.70) node {$v_1$};
	\filldraw[blue](4,2) circle (4pt);
	\draw (4.30,1.70) node {$v_3$};
	
	\draw(2,6) node{$G^2(\kappa_2)$};
	\draw(6,6) node{$G^2(\kappa_1)$};
	\draw(4,6.75) node{$G^1(v)$};
	\draw(6,2) node{$G^2(\kappa_4)$};
	\draw(2,2) node{$G^2(\kappa_3)$};
	\draw(7.25,4.5) node{$G^1(v_1)$};
	\draw(0.75,4.5) node{$G^1(v_2)$};
	
	\draw(4,-0.75) node{(c)};
	
\end{tikzpicture}
\caption{(a) Possible cell types in the setting of Case 1(a)(iii). Observe that the possible cell types of $\kappa_2$ and $\kappa_3$ cannot occur simultaneously. Thus this case cannot occur.
(b) Possible cell types in the setting of Case 1(b).
(c) Minimal set of tiles for Case 1(b).
}
\label{fig:CellTypes1(b)}
\end{figure}
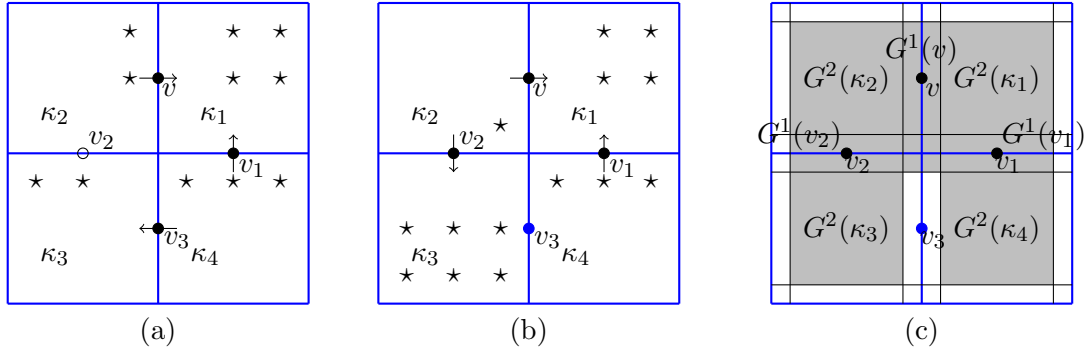
		
	\item \emph{Assume $v_2\in\cN^0$ and $v_2$ is transparent south}.
		This implies that $\kappa_2$ is of type $SE$ and $\kappa_3$ is of type $E$ or $SE$.
		See Figure~\ref{fig:CellTypes1(b)}(b).
		By {\bf Rule 1}, $G^2(\kappa_3)\cup G^1(v_2) \subset N_\cN$.
		By {\bf Rule 2}, $G^0(\pi)\subset N_\cN$.
		See Figure~\ref{fig:CellTypes1(b)}(c).
		
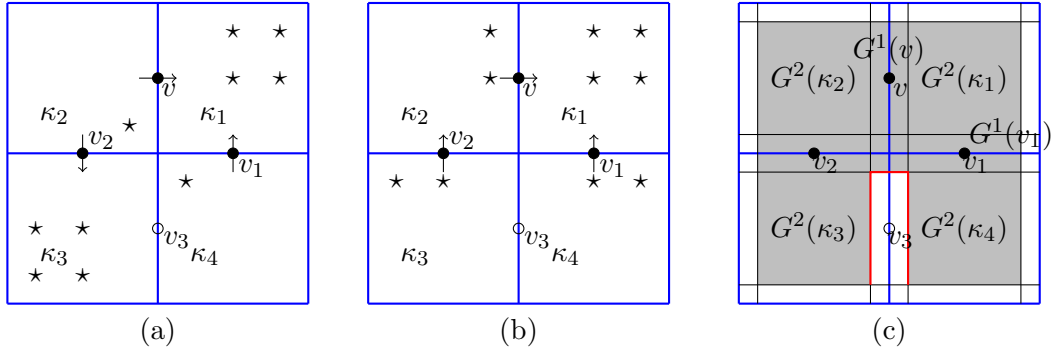
\begin{figure}
\begin{tikzpicture}
[scale=0.5]

	\draw[blue, thick](0,0) -- (8,0);
	\draw[blue, thick](0,4) -- (8,4);
	\draw[blue, thick](0,0) -- (0,8);
	\draw[blue, thick](8,0) -- (8,8);
	\draw[blue, thick](4,0) -- (4,8);
	\draw[blue, thick](0,8) -- (8,8);

	\draw (4.30,5.70) node {$v$};		
	\filldraw(4,6) circle (4pt);
	\draw[->] (3.5,6) -- (4.5,6);
	\draw (2.50,4.35) node {$v_2$};
	\filldraw(2,4) circle (4pt);
	\draw[->] (2,4.5) -- (2,3.5);	
	
	\draw (6.50,3.60) node {$v_1$};
	\filldraw(6,4) circle (4pt);
	\draw[->] (6,3.5) -- (6,4.5);
	\draw(4.5,1.70) node {$v_3$};
	\draw(4,2) circle (4pt);
	
	\draw(5.5,5) node{$\kappa_1$};
	
	\draw(6,7.25) node{$\star$};
	\draw(7.25,7.25) node{$\star$};
	\draw(6,6) node{$\star$};
	\draw(7.25,6) node{$\star$};
	
	\draw(1.25,5) node{$\kappa_2$};
	
	\draw(3.25,4.75) node{$\star$};

	\draw(1.25,1.25) node{$\kappa_3$};
	
	\draw(0.75,2) node{$\star$};
	\draw(2,2) node{$\star$};
	\draw(0.75,0.75) node{$\star$};
	\draw(2,0.75) node{$\star$};
	
	\draw(5.25,1.25) node{$\kappa_4$};
	
	\draw(4.75,3.25) node{$\star$};

	\draw(4,-0.75) node{(a)};	
\end{tikzpicture}%
\qquad
\begin{tikzpicture}
[scale=0.5]

	\draw[blue, thick](0,0) -- (8,0);
	\draw[blue, thick](0,4) -- (8,4);
	\draw[blue, thick](0,0) -- (0,8);
	\draw[blue, thick](8,0) -- (8,8);
	\draw[blue, thick](4,0) -- (4,8);
	\draw[blue, thick](0,8) -- (8,8);

	\draw (4.30,5.70) node {$v$};		
	\filldraw(4,6) circle (4pt);
	\draw[->] (3.5,6) -- (4.5,6);
	\draw (2.50,4.35) node {$v_2$};
	\filldraw(2,4) circle (4pt);
	\draw[->] (2,3.5) -- (2,4.5);	
	
	\draw (6.50,3.60) node {$v_1$};
	\filldraw(6,4) circle (4pt);
	\draw[->] (6,3.5) -- (6,4.5);
	\draw(4.5,1.70) node {$v_3$};
	\draw(4,2) circle (4pt);
	
	\draw(5.5,5) node{$\kappa_1$};
	
	\draw(6,7.25) node{$\star$};
	\draw(7.25,7.25) node{$\star$};
	\draw(6,6) node{$\star$};
	\draw(7.25,6) node{$\star$};
	
	\draw(1.25,5) node{$\kappa_2$};
	
	\draw(3.25,7.25) node{$\star$};
	\draw(3.25,6) node{$\star$};

	\draw(1.25,1.25) node{$\kappa_3$};
	
	\draw(0.75,3.25) node{$\star$};
	\draw(2,3.25) node{$\star$};
	
	\draw(5.25,1.25) node{$\kappa_4$};
	
	\draw(6,3.25) node{$\star$};
	\draw(7.25,3.25) node{$\star$};

	\draw(4,-0.75) node{(b)};	
\end{tikzpicture}
\qquad
\begin{tikzpicture}
[scale=0.5]

	\fill[lightgray] (0.5,4.5) -- (3.5,4.5) -- (3.5,7.5) -- (0.5,7.5) -- (0.5,4.5);
	\fill[lightgray] (4.5,4.5) -- (7.5,4.5) -- (7.5,7.5) -- (4.5,7.5) -- (4.5,4.5);
	\fill[lightgray] (3.5,4.5) -- (4.5,4.5) -- (4.5,7.5) -- (3.5,7.5) -- (3.5,4.5);
	\fill[lightgray] (3.5,3.5) -- (4.5,3.5) -- (4.5,4.5) -- (3.5,4.5) -- (3.5,3.5);
	\fill[lightgray] (4.5,3.5) -- (7.5,3.5) -- (7.5,4.5) -- (4.5,4.5) -- (4.5,4.5);
	\fill[lightgray] (4.5,0.5) -- (7.5,0.5) -- (7.5,3.5) -- (4.5,3.5) -- (4.5,0.5);
	\fill[lightgray] (3.5,3.5) -- (0.5,3.5) -- (0.5,4.5) -- (3.5,4.5) -- (3.5,3.5);
	\fill[lightgray] (3.5,3.5) -- (3.5,0.5) -- (0.5,0.5) -- (0.5,3.5) -- (3.5,3.5);

	\draw[blue, thick](0,0) -- (8,0);
	\draw[blue, thick](0,4) -- (8,4);
	\draw[blue, thick](0,0) -- (0,8);
	\draw[blue, thick](8,0) -- (8,8);
	\draw[blue, thick](4,0) -- (4,8);
	\draw[blue, thick](0,8) -- (8,8);
	
	\draw (0,0.5) -- (8,0.5);
	\draw (0,3.5) -- (8,3.5);
	\draw (0,4.5) -- (8,4.5);
	\draw (0,7.5) -- (8,7.5);
	\draw (0.5,0) -- (0.5,8);
	\draw (3.5,0) -- (3.5,8);
	\draw (4.5,0) -- (4.5,8);
	\draw (7.5,0) -- (7.5,8);
	
        \draw[red, thick](3.5,0.5) --  (3.5,3.5) -- (4.5,3.5) -- (4.5,0.5);
        
	
	\filldraw(4,6) circle (4pt);
	\draw (4.30,5.70) node {$v$};
	\filldraw(2,4) circle (4pt);
	\draw (2.30,3.70) node {$v_2$};
	\filldraw(6,4) circle (4pt);
	\draw (6.30,3.70) node {$v_1$};
	\draw(4,2) circle (4pt);
	\draw (4.30,1.70) node {$v_3$};
	
	\draw(2,6) node{$G^2(\kappa_2)$};
	\draw(6,6) node{$G^2(\kappa_1)$};
	\draw(4,6.75) node{$G^1(v)$};
	\draw(2,2) node{$G^2(\kappa_3)$};
	\draw(6,2) node{$G^2(\kappa_4)$};
	\draw(7.25,4.5) node{$G^1(v_1)$};
	
	\draw(4,-0.75) node{(c)};	
\end{tikzpicture}
\caption{(a) Possible cell types  in the setting of Case 1(b).
(b) Possible cell types in the setting of Case 1(c)(i).
(c) Tiles in $N_\cN$ that are related to $E(i,j)$ in Case 1(c)(i). 
The vector field is transverse in along the  interior edges indicated in red.
}
\label{fig:CellTypes1b}
\end{figure}
		
		\begin{enumerate}
		\item  \emph{Assume $v_3\not\in\cN^0$.} 
			Since $v_2\in\cN^0$, if $\kappa_3$ is of type $E$ or $SE$, then $v_3\in\cV_a(\kappa_3)$, a contradiction.
			Thus, $\kappa_3$ is of type $W$, $A$, $SW$ or $S$, and hence, by Proposition~\ref{prop:adjacentCells}(x) $\kappa_4$ is not of type $N$ or $NE$.
			Therefore, the set of possible types is as in Figure~\ref{fig:CellTypes1b}(a) from which we note that $v_3\in\cV_a(\kappa_4)$.
			Since 
			 $v_1\in\cN^0$, by Proposition~\ref{prop:cellAtt_Rev} $v_3\in \cN^0$, a contradiction.
			Therefore, this case cannot occur.
		\item  \emph{Assume $v_3\in\cN^0$}.  By {\bf Rule 1}, $G^1(v_3) \subset N_\cN$.
		Combining this with the information from Figure~\ref{fig:CellTypes1(b)}(c) we observe that there are no interior boundary edges to check.
		\end{enumerate}
	\item \emph{Assume $v_2\in\cN^0$ and $v_2$ is transparent north}.

		\begin{enumerate}
		\item  \emph{Assume $v_3\not\in\cN^0$}. 
		By Proposition~\ref{prop:cellAtt_Rev} the assumption that $v_3\not\in\cN^0$ implies that $\kappa_3$ is of type $NW$ or $N$ and $\kappa_4$ is of type $N$ or $NE$ (see Figure~\ref{fig:CellTypes1b}(b)).
		By {\bf Rule 1} $G^2(\kappa_3)\cup G^1(v_2)\subset N_\cN$.
		By {\bf Rule 2} $G^0(\pi)\subset N_\cN$.
		{\bf Rule 3}-{\bf 5} do not apply, thus we are in the setting of Figure~\ref{fig:CellTypes1b}(c).
		
		Thus, by Proposition~\ref{prop:TG2} the vector field is transverse in along the boundary edges of $G^2(\kappa_3)$ and $G^2(\kappa_4)$.
		By Proposition~\ref{prop:TG0}(vi-vii) the vector field is transverse in along the boundary edges of $G^0(\pi)$.
		
		\item  \emph{Assume $v_3\in\cN^0$.}  By {\bf Rule 1}, $G^1(v_3) \subset N_\cN$.
		Observe that there are no interior boundary edges to check.
		\end{enumerate}
	\end{enumerate}

\item {\em Assume $v_1\in\cN^0$ and $v_1$ is transparent south}. 
	Recall that Figure~\ref{fig:MinimalFourCells}(a) provides an upper bound of the types of cells.
	Since $v_1$ is transparent down, $v_1\in\cV_a(\kappa_1)$ and $v_1\in\cV_e(\kappa_4)$.
	Therefore, $\kappa_1$ is of type $S$ or $SE$ and $\kappa_4$ is of type $W$, $A$, $E$, $SW$, $S$ or $SE$.
	The possible types of $\kappa_4$ preclude the possibility of $\kappa_3$ being of type $NW$ or $N$.
	Thus we are in the setting of Figure~\ref{fig:Case2}(a).
	
	By {\bf Rule 1}, $G^2(\kappa_4)\cup G^1(v_1) \subset N_\cN$.
	By {\bf Rule 2}, $G^0(\pi) \subset N_\cN$.
	See Figure~\ref{fig:Case2}(b).
	
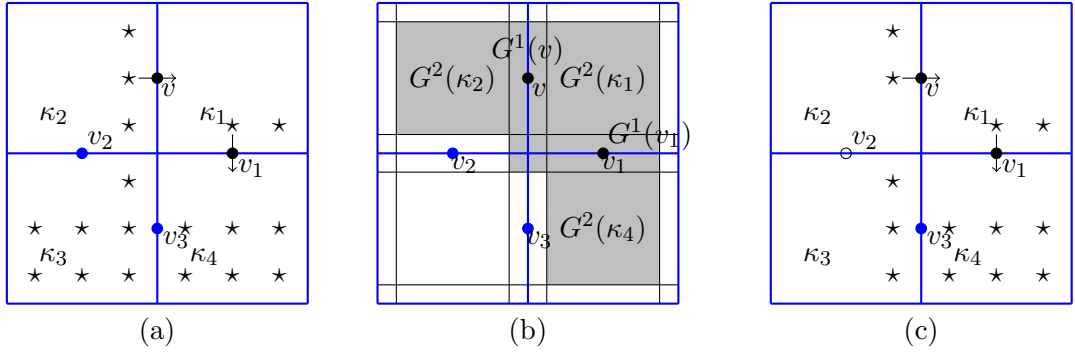
\begin{figure}
\begin{tikzpicture}
[scale=0.5]

	\draw[blue, thick](0,0) -- (8,0);
	\draw[blue, thick](0,4) -- (8,4);
	\draw[blue, thick](0,0) -- (0,8);
	\draw[blue, thick](8,0) -- (8,8);
	\draw[blue, thick](4,0) -- (4,8);
	\draw[blue, thick](0,8) -- (8,8);

	\draw (4.30,5.70) node {$v$};		
	\filldraw(4,6) circle (4pt);
	\draw[->] (3.5,6) -- (4.5,6);
	\draw (2.50,4.35) node {$v_2$};	
	\filldraw[blue](2,4) circle (4pt);
	\draw (6.50,3.60) node {$v_1$};
	\filldraw(6,4) circle (4pt);
	\draw[->] (6,4.5) -- (6,3.5);	
	\draw (4.5,1.70) node {$v_3$};
	\filldraw[blue](4,2) circle (4pt);

	\draw(5.5,5) node{$\kappa_1$};
	
	\draw(6,4.75) node{$\star$};
	\draw(7.25,4.75) node{$\star$};
	
	\draw(1.25,5) node{$\kappa_2$};
	
	\draw(3.25,7.25) node{$\star$};
	\draw(3.25,6) node{$\star$};
	\draw(3.25,4.75) node{$\star$};

	\draw(1.25,1.25) node{$\kappa_3$};
	
	\draw(3.25,3.25) node{$\star$};
	\draw(0.75,2) node{$\star$};
	\draw(2,2) node{$\star$};
	\draw(3.25,2) node{$\star$};
	\draw(0.75,0.75) node{$\star$};
	\draw(2,0.75) node{$\star$};
	\draw(3.25,0.75) node{$\star$};
	
	\draw(5.25,1.25) node{$\kappa_4$};
	
	\draw(4.75,2) node{$\star$};
	\draw(6,2) node{$\star$};
	\draw(7.25,2) node{$\star$};
	\draw(4.75,0.75) node{$\star$};
	\draw(6,0.75) node{$\star$};
	\draw(7.25,0.75) node{$\star$};

	\draw(4,-0.75) node{(a)};	
\end{tikzpicture}
\qquad
\begin{tikzpicture}
[scale=0.5]

	\fill[lightgray] (0.5,4.5) -- (3.5,4.5) -- (3.5,7.5) -- (0.5,7.5) -- (0.5,4.5);
	\fill[lightgray] (4.5,4.5) -- (7.5,4.5) -- (7.5,7.5) -- (4.5,7.5) -- (4.5,4.5);
	\fill[lightgray] (3.5,4.5) -- (4.5,4.5) -- (4.5,7.5) -- (3.5,7.5) -- (3.5,4.5);
	\fill[lightgray] (3.5,3.5) -- (4.5,3.5) -- (4.5,4.5) -- (3.5,4.5) -- (3.5,3.5);
	\fill[lightgray] (4.5,3.5) -- (7.5,3.5) -- (7.5,4.5) -- (4.5,4.5) -- (4.5,4.5);
	\fill[lightgray] (4.5,0.5) -- (7.5,0.5) -- (7.5,3.5) -- (4.5,3.5) -- (4.5,0.5);	
	
	\draw[blue, thick](0,0) -- (8,0);
	\draw[blue, thick](0,4) -- (8,4);
	\draw[blue, thick](0,0) -- (0,8);
	\draw[blue, thick](8,0) -- (8,8);
	\draw[blue, thick](4,0) -- (4,8);
	\draw[blue, thick](0,8) -- (8,8);
	
	\draw (0,0.5) -- (8,0.5);
	\draw (0,3.5) -- (8,3.5);
	\draw (0,4.5) -- (8,4.5);
	\draw (0,7.5) -- (8,7.5);
	\draw (0.5,0) -- (0.5,8);
	\draw (3.5,0) -- (3.5,8);
	\draw (4.5,0) -- (4.5,8);
	\draw (7.5,0) -- (7.5,8);
	
	\filldraw(4,6) circle (4pt);
	\draw (4.30,5.70) node {$v$};
	\filldraw[blue] (2,4) circle (4pt);
	\draw (2.30,3.70) node {$v_2$};
	\filldraw(6,4) circle (4pt);
	\draw (6.30,3.70) node {$v_1$};
	\filldraw[blue](4,2) circle (4pt);
	\draw (4.30,1.70) node {$v_3$};
	
	\draw(2,6) node{$G^2(\kappa_2)$};
	\draw(6,6) node{$G^2(\kappa_1)$};
	\draw(4,6.75) node{$G^1(v)$};
	\draw(6,2) node{$G^2(\kappa_4)$};
	\draw(7.25,4.5) node{$G^1(v_1)$};
	
	\draw(4,-0.75) node{(b)};
	
\end{tikzpicture}
\qquad
\begin{tikzpicture}
[scale=0.5]

	\draw[blue, thick](0,0) -- (8,0);
	\draw[blue, thick](0,4) -- (8,4);
	\draw[blue, thick](0,0) -- (0,8);
	\draw[blue, thick](8,0) -- (8,8);
	\draw[blue, thick](4,0) -- (4,8);
	\draw[blue, thick](0,8) -- (8,8);

	\draw (4.30,5.70) node {$v$};		
	\filldraw(4,6) circle (4pt);
	\draw[->] (3.5,6) -- (4.5,6);
	\draw (2.50,4.35) node {$v_2$};	
	\draw(2,4) circle (4pt);
	\draw (6.50,3.60) node {$v_1$};
	\filldraw(6,4) circle (4pt);
	\draw[->] (6,4.5) -- (6,3.5);	
	\draw (4.5,1.70) node {$v_3$};
	\filldraw[blue](4,2) circle (4pt);
	
	\draw(5.5,5) node{$\kappa_1$};
	
	\draw(6,4.75) node{$\star$};
	\draw(7.25,4.75) node{$\star$};
	
	\draw(1.25,5) node{$\kappa_2$};
	
	\draw(3.25,7.25) node{$\star$};
	\draw(3.25,6) node{$\star$};

	\draw(1.25,1.25) node{$\kappa_3$};
	
	\draw(3.25,3.25) node{$\star$};
	\draw(3.25,2) node{$\star$};
	\draw(3.25,0.75) node{$\star$};
	
	\draw(5.25,1.25) node{$\kappa_4$};
	
	\draw(4.75,2) node{$\star$};
	\draw(6,2) node{$\star$};
	\draw(7.25,2) node{$\star$};
	\draw(4.75,0.75) node{$\star$};
	\draw(6,0.75) node{$\star$};
	\draw(7.25,0.75) node{$\star$};

	\draw(4,-0.75) node{(c)};	
\end{tikzpicture}
\caption{(a) Possible cell types for Case 2.
(b) Minimal set of grid elements in $N_\cN$ related to $E(i,j)$ in Case 2(a).
(c) Possible cell types for Case 2(a).
}
\label{fig:Case2}
\end{figure}

	\begin{enumerate}
	\item \emph{Assume $v_2\not\in\cN^0$}. 
		Figure~\ref{fig:Case2}(a) provides an upper bound on the cell types.
		By Proposition~\ref{prop:cellAtt_Rev} $\kappa_2$ is of type $E$ or $NE$.
		This in turn, by  Proposition~\ref{prop:adjacentCells}, implies that $\kappa_3$ must be of type $E$, $NE$ or $SE$ (see Figure~\ref{fig:Case2}(c)). 
		
		Therefore, $v\in\cV_a(\kappa_2)$ and hence {\bf Rule 3} does not apply to $\kappa_2$.
		Thus we are still in the setting of Figure~\ref{fig:Case2}(b).
			
		\begin{enumerate}
		\item  \emph{Assume $v_3\not\in\cN^0$}. 
			By Proposition~\ref{prop:cellAtt_Rev} there cannot be an edge $v_1\to v_3$.
			Thus, $\kappa_4$ is of type $A$, $S$, $E$, or $SE$ (see Figure~\ref{fig:Case2*}(a)).  
			We claim that $G^2(\kappa_3)\not\subset N_\cN$.
			Observe that only {\bf Rule 0} and {\bf Rule 1} require the introduction of a 2-tile. 
			$\kappa_3$ is not of type $A$ and hence {\bf Rule 0} does not apply.
			Assume {\bf Rule 1} forces the introduction of $G^2(\kappa_3)$.
			Then there exists $u\in\cN^0\cap\cT$ such that $u\in \cV(\kappa_3)$.
			Since $\kappa_3$ is of type $E$, $NE$ or $SE$, $v_3\in\cV_a(\kappa_3)$ and hence $v_3\in\cN^0$, a contradiction.
			By Proposition~\ref{prop:looseG^1}, $G^1(v_2)$ and $G^1(v_3)$ are not in $N_\cN$.
			Furthermore, {\bf Rule 5} does not apply.
			Thus we remain in the setting of Figure~\ref{fig:Case2}(b).			
				
			\begin{enumerate}
			\item  \emph{Assume $\kappa_3$ is of type $NE$. }
				Then {\bf Rule 4} does not apply to $\kappa_3$.
				By {\bf Rule 3} applied to $\kappa_4$, $C^n(\kappa_4, v_3, \pi)\subset N_\cN$.
				See Figure~\ref{fig:Case2*}(b).
				We need to check that the interior edges of $\partial N_\cN$ are transverse in.
				Observe that $v_2\in\cV_e(\kappa_2)$ and $v_3\in\cV_e(\kappa_4)$, hence by Proposition~\ref{prop:TG2} we have the desire transversality along the edge of $G^2(\kappa_2)$ and $G^2(\kappa_4)$.
				By Proposition~\ref{prop:TG2}(iv-vi) we have the desired transversality for $G^0(\pi)$.
				By Proposition~\ref{prop:TC} we have the desired transversality for $C^n(\kappa_4, v_3, \pi)$.
			\item  \emph{Assume $\kappa_3$ is of type $E$ or $SE$. }
				{\bf Rule 4} applied to $\kappa_3$ implies that $C^w(\kappa_4,v_3,\pi)\subset N_\cN$.
				See Figure~\ref{fig:Case2*}(c).
				The desired transversality follows from the argument used in the previous case. 			
			\end{enumerate}
			
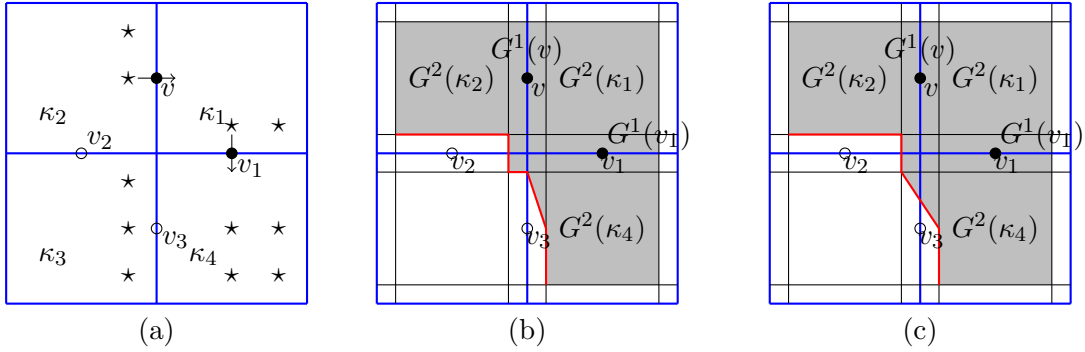
\begin{figure}
\begin{tikzpicture}
[scale=0.5]

	\draw[blue, thick](0,0) -- (8,0);
	\draw[blue, thick](0,4) -- (8,4);
	\draw[blue, thick](0,0) -- (0,8);
	\draw[blue, thick](8,0) -- (8,8);
	\draw[blue, thick](4,0) -- (4,8);
	\draw[blue, thick](0,8) -- (8,8);

	\draw (4.30,5.70) node {$v$};		
	\filldraw(4,6) circle (4pt);
	\draw[->] (3.5,6) -- (4.5,6);
	\draw (2.50,4.35) node {$v_2$};	
	\draw(2,4) circle (4pt);
	\draw (6.50,3.60) node {$v_1$};
	\filldraw(6,4) circle (4pt);
	\draw[->] (6,4.5) -- (6,3.5);	
	\draw (4.5,1.70) node {$v_3$};
	\draw(4,2) circle (4pt);
	
	\draw(5.5,5) node{$\kappa_1$};
	
	\draw(6,4.75) node{$\star$};
	\draw(7.25,4.75) node{$\star$};
	
	\draw(1.25,5) node{$\kappa_2$};
	
	\draw(3.25,7.25) node{$\star$};
	\draw(3.25,6) node{$\star$};

	\draw(1.25,1.25) node{$\kappa_3$};
	
	\draw(3.25,3.25) node{$\star$};
	\draw(3.25,2) node{$\star$};
	\draw(3.25,0.75) node{$\star$};
	
	\draw(5.25,1.25) node{$\kappa_4$};
	
	\draw(6,2) node{$\star$};
	\draw(7.25,2) node{$\star$};
	\draw(6,0.75) node{$\star$};
	\draw(7.25,0.75) node{$\star$};

	\draw(4,-0.75) node{(a)};	
\end{tikzpicture}
\qquad
\begin{tikzpicture}
[scale=0.5]

	\fill[lightgray] (0.5,4.5) -- (3.5,4.5) -- (3.5,7.5) -- (0.5,7.5) -- (0.5,4.5);
	\fill[lightgray] (4.5,4.5) -- (7.5,4.5) -- (7.5,7.5) -- (4.5,7.5) -- (4.5,4.5);
	\fill[lightgray] (3.5,4.5) -- (4.5,4.5) -- (4.5,7.5) -- (3.5,7.5) -- (3.5,4.5);
	\fill[lightgray] (3.5,3.5) -- (4.5,3.5) -- (4.5,4.5) -- (3.5,4.5) -- (3.5,3.5);
	\fill[lightgray] (4.5,3.5) -- (7.5,3.5) -- (7.5,4.5) -- (4.5,4.5) -- (4.5,4.5);
	\fill[lightgray] (4.5,0.5) -- (7.5,0.5) -- (7.5,3.5) -- (4.5,3.5) -- (4.5,0.5);
	\fill[lightgray] (4.0,3.5) -- (4.5,3.5) -- (4.5,2.0) -- (4.0,3.5);

	\draw[blue, thick](0,0) -- (8,0);
	\draw[blue, thick](0,4) -- (8,4);
	\draw[blue, thick](0,0) -- (0,8);
	\draw[blue, thick](8,0) -- (8,8);
	\draw[blue, thick](4,0) -- (4,8);
	\draw[blue, thick](0,8) -- (8,8);

	\draw (0,0.5) -- (8,0.5);
	\draw (0,3.5) -- (8,3.5);
	\draw (0,4.5) -- (8,4.5);
	\draw (0,7.5) -- (8,7.5);
	\draw (0.5,0) -- (0.5,8);
	\draw (3.5,0) -- (3.5,8);
	\draw (4.5,0) -- (4.5,8);
	\draw (7.5,0) -- (7.5,8);
	
        \draw[red, thick](0.5,4.5) --  (3.5,4.5) -- (3.5,3.5) -- (4.0,3.5) -- (4.5,2) -- (4.5,0.5);
        

	\filldraw(4,6) circle (4pt);
	\draw (4.30,5.70) node {$v$};
	\draw(2,4) circle (4pt);
	\draw (2.30,3.70) node {$v_2$};
	\filldraw(6,4) circle (4pt);
	\draw (6.30,3.70) node {$v_1$};
	\draw(4,2) circle (4pt);
	\draw (4.30,1.70) node {$v_3$};
	
	\draw(2,6) node{$G^2(\kappa_2)$};
	\draw(6,6) node{$G^2(\kappa_1)$};
	\draw(4,6.75) node{$G^1(v)$};
	\draw(6,2) node{$G^2(\kappa_4)$};
	\draw(7.25,4.5) node{$G^1(v_1)$};
	
	\draw(4,-0.75) node{(b)};
	
\end{tikzpicture}
\qquad
\begin{tikzpicture}
[scale=0.5]

	\fill[lightgray] (0.5,4.5) -- (3.5,4.5) -- (3.5,7.5) -- (0.5,7.5) -- (0.5,4.5);
	\fill[lightgray] (4.5,4.5) -- (7.5,4.5) -- (7.5,7.5) -- (4.5,7.5) -- (4.5,4.5);
	\fill[lightgray] (3.5,4.5) -- (4.5,4.5) -- (4.5,7.5) -- (3.5,7.5) -- (3.5,4.5);
	\fill[lightgray] (3.5,3.5) -- (4.5,3.5) -- (4.5,4.5) -- (3.5,4.5) -- (3.5,3.5);
	\fill[lightgray] (4.5,3.5) -- (7.5,3.5) -- (7.5,4.5) -- (4.5,4.5) -- (4.5,4.5);
	\fill[lightgray] (4.5,0.5) -- (7.5,0.5) -- (7.5,3.5) -- (4.5,3.5) -- (4.5,0.5);
	\fill[lightgray] (3.5,3.5) -- (4.5,3.5) -- (4.5,2.0) -- (3.5,3.5);

	\draw[blue, thick](0,0) -- (8,0);
	\draw[blue, thick](0,4) -- (8,4);
	\draw[blue, thick](0,0) -- (0,8);
	\draw[blue, thick](8,0) -- (8,8);
	\draw[blue, thick](4,0) -- (4,8);
	\draw[blue, thick](0,8) -- (8,8);

	\draw (0,0.5) -- (8,0.5);
	\draw (0,3.5) -- (8,3.5);
	\draw (0,4.5) -- (8,4.5);
	\draw (0,7.5) -- (8,7.5);
	\draw (0.5,0) -- (0.5,8);
	\draw (3.5,0) -- (3.5,8);
	\draw (4.5,0) -- (4.5,8);
	\draw (7.5,0) -- (7.5,8);
	
        \draw[red, thick](0.5,4.5) --  (3.5,4.5) -- (3.5,3.5) --  (4.5,2) -- (4.5,0.5);

	\filldraw(4,6) circle (4pt);
	\draw (4.30,5.70) node {$v$};
	\draw(2,4) circle (4pt);
	\draw (2.30,3.70) node {$v_2$};
	\filldraw(6,4) circle (4pt);
	\draw (6.30,3.70) node {$v_1$};
	\draw(4,2) circle (4pt);
	\draw (4.30,1.70) node {$v_3$};
	
	\draw(2,6) node{$G^2(\kappa_2)$};
	\draw(6,6) node{$G^2(\kappa_1)$};
	\draw(4,6.75) node{$G^1(v)$};
	\draw(6,2) node{$G^2(\kappa_4)$};
	\draw(7.25,4.5) node{$G^1(v_1)$};
	
	\draw(4,-0.75) node{(c)};
	
\end{tikzpicture}

\caption{(a) Possible cell types for Case 2(a)(i).
(b) Set of grid elements and chips in $N_\cN$ associated with $E(i,j)$ in Case 2(a)(i)(A).
(c) Set of grid elements and chips in $N_\cN$ associated with $E(i,j)$ in Case 2(a)(i)(B).
}
\label{fig:Case2*}
\end{figure}

		\item  \emph{Assume $v_3\in\cN^0$ and $v_3$ is transparent east}.
			If we take this configuration and rotate it counterclockwise by $90^\circ$ then we are in case 1(c)(i) for which we have already shown the desired transversality.
		
			

		\item  \emph{Assume $v_3\in\cN^0$ and $v_3$ is transparent west}. 
			Figure~\ref{fig:Case2}(c) provides an upper bound on the cell types.
			However, $v_3$ is transparent west implies that $\kappa_3$ cannot be of type $NE$, $E$,  or $SE$.
			Thus this case cannot occur.
	\end{enumerate}

\begin{figure}
\begin{tikzpicture}
[scale=0.5]

	\draw[blue, thick](0,0) -- (8,0);
	\draw[blue, thick](0,4) -- (8,4);
	\draw[blue, thick](0,0) -- (0,8);
	\draw[blue, thick](8,0) -- (8,8);
	\draw[blue, thick](4,0) -- (4,8);
	\draw[blue, thick](0,8) -- (8,8);

	\draw (4.30,5.70) node {$v$};		
	\filldraw(4,6) circle (4pt);
	\draw[->] (3.5,6) -- (4.5,6);
	\draw (2.50,4.35) node {$v_2$};	
	\filldraw(2,4) circle (4pt);
	\draw[->] (2,4.5) -- (2,3.5);	
	\draw (6.50,3.60) node {$v_1$};
	\filldraw(6,4) circle (4pt);
	\draw[->] (6,4.5) -- (6,3.5);	
	\draw (4.5,1.70) node {$v_3$};
	\filldraw[blue](4,2) circle (4pt);

	\draw(5.5,5) node{$\kappa_1$};
	
	\draw(6,4.75) node{$\star$};
	\draw(7.25,4.75) node{$\star$};
	
	\draw(1.25,5) node{$\kappa_2$};
	
	\draw(3.25,4.75) node{$\star$};

	\draw(1.25,1.25) node{$\kappa_3$};
	
	\draw(0.75,2) node{$\star$};
	\draw(2,2) node{$\star$};
	\draw(3.25,2) node{$\star$};
	\draw(0.75,0.75) node{$\star$};
	\draw(2,0.75) node{$\star$};
	\draw(3.25,0.75) node{$\star$};
	
	\draw(5.25,1.25) node{$\kappa_4$};
	
	\draw(4.75,2) node{$\star$};
	\draw(6,2) node{$\star$};
	\draw(7.25,2) node{$\star$};
	\draw(4.75,0.75) node{$\star$};
	\draw(6,0.75) node{$\star$};
	\draw(7.25,0.75) node{$\star$};

	\draw(4,-0.75) node{(a)};	
\end{tikzpicture}
\qquad
\begin{tikzpicture}
[scale=0.5]

	\fill[lightgray] (0.5,4.5) -- (3.5,4.5) -- (3.5,7.5) -- (0.5,7.5) -- (0.5,4.5);
	\fill[lightgray] (4.5,4.5) -- (7.5,4.5) -- (7.5,7.5) -- (4.5,7.5) -- (4.5,4.5);
	\fill[lightgray] (3.5,4.5) -- (4.5,4.5) -- (4.5,7.5) -- (3.5,7.5) -- (3.5,4.5);
	\fill[lightgray] (3.5,3.5) -- (4.5,3.5) -- (4.5,4.5) -- (3.5,4.5) -- (3.5,3.5);
	\fill[lightgray] (4.5,3.5) -- (7.5,3.5) -- (7.5,4.5) -- (4.5,4.5) -- (4.5,4.5);
	\fill[lightgray] (4.5,0.5) -- (7.5,0.5) -- (7.5,3.5) -- (4.5,3.5) -- (4.5,0.5);
	\fill[lightgray] (3.5,3.5) -- (0.5,3.5) -- (0.5,4.5) -- (3.5,4.5) -- (3.5,3.5);
	\fill[lightgray] (3.5,3.5) -- (3.5,0.5) -- (0.5,0.5) -- (0.5,3.5) -- (3.5,3.5);

	\draw[blue, thick](0,0) -- (8,0);
	\draw[blue, thick](0,4) -- (8,4);
	\draw[blue, thick](0,0) -- (0,8);
	\draw[blue, thick](8,0) -- (8,8);
	\draw[blue, thick](4,0) -- (4,8);
	\draw[blue, thick](0,8) -- (8,8);
	
	\draw (0,0.5) -- (8,0.5);
	\draw (0,3.5) -- (8,3.5);
	\draw (0,4.5) -- (8,4.5);
	\draw (0,7.5) -- (8,7.5);
	\draw (0.5,0) -- (0.5,8);
	\draw (3.5,0) -- (3.5,8);
	\draw (4.5,0) -- (4.5,8);
	\draw (7.5,0) -- (7.5,8);
	
	
	\filldraw(4,6) circle (4pt);
	\draw (4.30,5.70) node {$v$};
	\filldraw(2,4) circle (4pt);
	\draw (2.30,3.70) node {$v_2$};
	\filldraw(6,4) circle (4pt);
	\draw (6.30,3.70) node {$v_1$};
	\draw(4,2) circle (4pt);
	\draw (4.30,1.70) node {$v_3$};
	
	\draw(2,6) node{$G^2(\kappa_2)$};
	\draw(6,6) node{$G^2(\kappa_1)$};
	\draw(4,6.75) node{$G^1(v)$};
	\draw(2,2) node{$G^2(\kappa_3)$};
	\draw(6,2) node{$G^2(\kappa_4)$};
	\draw(7.25,4.5) node{$G^1(v_1)$};
	
	\draw(4,-0.75) node{(b)};
	
\end{tikzpicture}
\qquad
\begin{tikzpicture}
[scale=0.5]

	\draw[blue, thick](0,0) -- (8,0);
	\draw[blue, thick](0,4) -- (8,4);
	\draw[blue, thick](0,0) -- (0,8);
	\draw[blue, thick](8,0) -- (8,8);
	\draw[blue, thick](4,0) -- (4,8);
	\draw[blue, thick](0,8) -- (8,8);

	\draw (4.30,5.70) node {$v$};		
	\filldraw(4,6) circle (4pt);
	\draw[->] (3.5,6) -- (4.5,6);
	\draw (2.50,4.35) node {$v_2$};	
	\filldraw(2,4) circle (4pt);
	\draw[->] (2,4.5) -- (2,3.5);	
	\draw (6.50,3.60) node {$v_1$};
	\filldraw(6,4) circle (4pt);
	\draw[->] (6,4.5) -- (6,3.5);	
	\draw (4.5,1.70) node {$v_3$};
	\draw(4,2) circle (4pt);

	\draw(5.5,5) node{$\kappa_1$};
	
	\draw(6,4.75) node{$\star$};
	\draw(7.25,4.75) node{$\star$};
	
	\draw(1.25,5) node{$\kappa_2$};
	
	\draw(3.25,4.75) node{$\star$};

	\draw(1.25,1.25) node{$\kappa_3$};
	
	\draw(0.75,2) node{$\star$};
	\draw(2,2) node{$\star$};
	\draw(0.75,0.75) node{$\star$};
	\draw(2,0.75) node{$\star$};
	
	\draw(5.25,1.25) node{$\kappa_4$};
	
	\draw(6,2) node{$\star$};
	\draw(7.25,2) node{$\star$};
	\draw(6,0.75) node{$\star$};
	\draw(7.25,0.75) node{$\star$};

	\draw(4,-0.75) node{(c)};	
\end{tikzpicture}
\caption{(a) Possible cell types for Case 2(b).
(b) Necessary tiles in $N_\cN$ for Case 2(b)(i).
(c) Possible cell types for Case 2(c).
}
\label{fig:Case2b}
\end{figure}
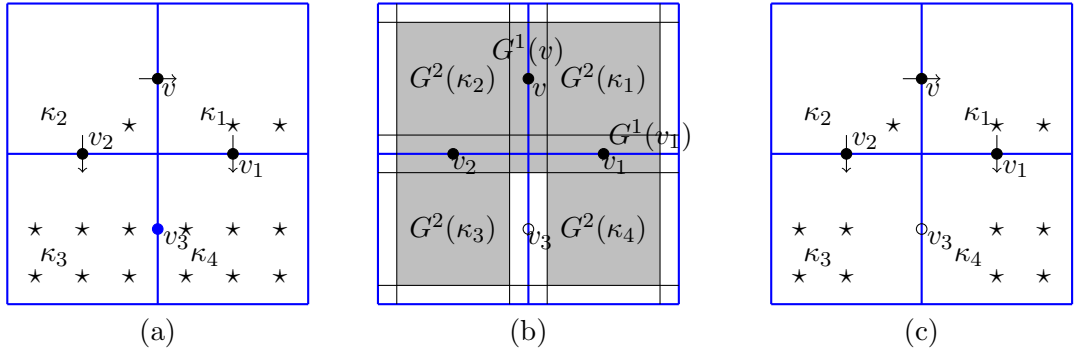	
	
	\item  \emph{Assume $v_2\in\cN^0$ and $v_2$ is transparent south}.
		Figure~\ref{fig:Case2}(a) provides an upper bound on the cell types.
		Since  $v_2$ is transparent south, $\kappa_2$ is of type $SE$ and $\kappa_3$ is of type $W$, $A$, $E$, $SW$, $S$, or $SE$ as indicated in Figure~\ref{fig:Case2b}(a).
		
		By {\bf Rule 1}, $G^2(\kappa_3)\cup G^1(v_2) \subset N_\cN$.
		By {\bf Rule 2}, $G^0(\pi)\subset N_\cN$.
		Thus $N_\cN$ must contain at least the tiles indicated in Figure~\ref{fig:Case2b}(b).
	
		\begin{enumerate}
		\item   \emph{Assume $v_3\not\in\cN^0$}.  
			This implies that $\kappa_3$ cannot be of type $E$ or $SE$ and $\kappa_4$ cannot be of type $W$ or $SW$ (see Figure~\ref{fig:Case2b}(c)).
			Therefore $v_3\in\cW$.
			Thus {\bf Rule 5} applies and $G^1(v_3)\subset N_\cN$.
			Observe that there are no internal edges against which to check internal tangencies.

		\item   \emph{Assume $v_3\in\cN^0$}.  By {\bf Rule 1}, $G^1(v_3) \subset N_\cN$.
		Observe that there are no interior boundary edges to check.
		\end{enumerate}

	\item  \emph{Assume $v_2\in\cN^0$ and $v_2$ is transparent north}.
		Figure~\ref{fig:Case2}(a) provides an upper bound on the cell types.
		Since $v_2$ is transparent north,  $\kappa_3$ must be of type $NE$ and $\kappa_2$ cannot be of type $SE$.
		The adjacency of $\kappa_3$ and $\kappa_4$ implies that $\kappa_4$ cannot be of type $W$ or $SW$.
		See Figure~\ref{fig:Case2c3}(a).
		
		By {\bf Rule 1}, $G^2(\kappa_3)\cup G^1(v_2) \subset N_\cN$.
		By {\bf Rule 2}, $G^0(\pi)\subset N_\cN$.
		The necessary tiles in $N_\cN$ are indicated in Figure~\ref{fig:Case2b}(b).
		
\begin{figure}
\begin{tikzpicture}
[scale=0.5]

	\draw[blue, thick](0,0) -- (8,0);
	\draw[blue, thick](0,4) -- (8,4);
	\draw[blue, thick](0,0) -- (0,8);
	\draw[blue, thick](8,0) -- (8,8);
	\draw[blue, thick](4,0) -- (4,8);
	\draw[blue, thick](0,8) -- (8,8);

	\draw (4.30,5.70) node {$v$};		
	\filldraw(4,6) circle (4pt);
	\draw[->] (3.5,6) -- (4.5,6);
	\draw (2.50,4.35) node {$v_2$};	
	\filldraw(2,4) circle (4pt);
	\draw[->] (2,3.5) -- (2,4.5);	
	\draw (6.50,3.60) node {$v_1$};
	\filldraw(6,4) circle (4pt);
	\draw[->] (6,4.5) -- (6,3.5);	
	\draw (4.5,1.70) node {$v_3$};
	\filldraw[blue](4,2) circle (4pt);

	\draw(5.5,5) node{$\kappa_1$};
	
	\draw(6,4.75) node{$\star$};
	\draw(7.25,4.75) node{$\star$};
	
	\draw(1.25,5) node{$\kappa_2$};
	
	\draw(3.25,7.25) node{$\star$};
	\draw(3.25,6) node{$\star$};

	\draw(1.25,1.25) node{$\kappa_3$};
	
	\draw(3.25,3.25) node{$\star$};
	
	\draw(5.25,1.25) node{$\kappa_4$};
	
	\draw(6,2) node{$\star$};
	\draw(7.25,2) node{$\star$};
	\draw(6,0.75) node{$\star$};
	\draw(7.25,0.75) node{$\star$};

	\draw(4,-0.75) node{(c)};	
\end{tikzpicture}
\qquad
\begin{tikzpicture}
[scale=0.5]

	\draw[blue, thick](0,0) -- (8,0);
	\draw[blue, thick](0,4) -- (8,4);
	\draw[blue, thick](0,0) -- (0,8);
	\draw[blue, thick](8,0) -- (8,8);
	\draw[blue, thick](4,0) -- (4,8);
	\draw[blue, thick](0,8) -- (8,8);

	\draw (4.30,5.70) node {$v$};		
	\filldraw(4,6) circle (4pt);
	\draw[->] (3.5,6) -- (4.5,6);
	\filldraw[blue](2,4) circle (4pt);
	\draw (2.50,4.35) node {$v_2$};
	\draw (6.50,3.60) node {$v_1$};
	\draw(6,4) circle (4pt);
	\draw (4.5,1.70) node {$v_3$};
	\filldraw[blue](4,2) circle (4pt);	
	
	\draw(5.5,5) node{$\kappa_1$};
	
	\draw(6,7.25) node{$\star$};
	\draw(7.25,7.25) node{$\star$};	
	\draw(6,6) node{$\star$};
	\draw(7.25,6) node{$\star$};	
	
	\draw(1.25,5) node{$\kappa_2$};
	
	\draw(3.25,7.25) node{$\star$};
	\draw(3.25,6) node{$\star$};
	\draw(3.25,4.75) node{$\star$};

	\draw(1.25,1.25) node{$\kappa_3$};
	
	\draw(0.75,3.25) node{$\star$};
	\draw(2,3.25) node{$\star$};
	\draw(3.25,3.25) node{$\star$};
	\draw(0.75,2) node{$\star$};
	\draw(2,2) node{$\star$};
	\draw(3.25,2) node{$\star$};
	\draw(0.75,0.75) node{$\star$};
	\draw(2,0.75) node{$\star$};
	\draw(3.25,0.75) node{$\star$};
	
	\draw(5.25,1.25) node{$\kappa_4$};
	
	\draw(4.75,3.25) node{$\star$};
	\draw(6,3.25) node{$\star$};
	\draw(7.25,3.25) node{$\star$};
	\draw(6,2) node{$\star$};
	\draw(7.25,2) node{$\star$};
	\draw(6,0.75) node{$\star$};
	\draw(7.25,0.75) node{$\star$};

	\draw(4,-0.75) node{(b)};	
\end{tikzpicture}
\quad
\begin{tikzpicture}
[scale=0.5]

	\draw[blue, thick](0,0) -- (8,0);
	\draw[blue, thick](0,4) -- (8,4);
	\draw[blue, thick](0,0) -- (0,8);
	\draw[blue, thick](8,0) -- (8,8);
	\draw[blue, thick](4,0) -- (4,8);
	\draw[blue, thick](0,8) -- (8,8);

	\draw(4.30,5.70) node {$v$};		
	\filldraw(4,6) circle (4pt);
	\draw[->] (3.5,6) -- (4.5,6);
	\draw(2.50,4.35) node {$v_2$};	
	\draw(2,4) circle (4pt);

	\draw (6.50,3.60) node {$v_1$};
	\draw(6,4) circle (4pt);
	\draw (4.5,1.70) node {$v_3$};
	\filldraw[blue](4,2) circle (4pt);	
	
	\draw(5.5,5) node{$\kappa_1$};
	
	\draw(6,7.25) node{$\star$};
	\draw(7.25,7.25) node{$\star$};	
	\draw(6,6) node{$\star$};
	\draw(7.25,6) node{$\star$};	
	
	\draw(1.25,5) node{$\kappa_2$};
	
	\draw(3.25,7.25) node{$\star$};
	\draw(3.25,6) node{$\star$};

	\draw(1.25,1.25) node{$\kappa_3$};
	
	\draw(0.75,3.25) node{$\star$};
	\draw(2,3.25) node{$\star$};
	\draw(3.25,3.25) node{$\star$};
	\draw(3.25,2) node{$\star$};
	\draw(3.25,0.75) node{$\star$};
	
	\draw(5.25,1.25) node{$\kappa_4$};
	
	\draw(4.75,3.25) node{$\star$};
	\draw(6,3.25) node{$\star$};
	\draw(7.25,3.25) node{$\star$};
	\draw(6,2) node{$\star$};
	\draw(7.25,2) node{$\star$};
	\draw(6,0.75) node{$\star$};
	\draw(7.25,0.75) node{$\star$};

	\draw(4,-0.75) node{(c)};	
\end{tikzpicture}
\caption{(a) Possible cell types for Case 2(c).
(b) Possible cell types for Case 3.
(c) Possible cell types for Case 3(a).
}
\label{fig:Case2c3}
\end{figure}
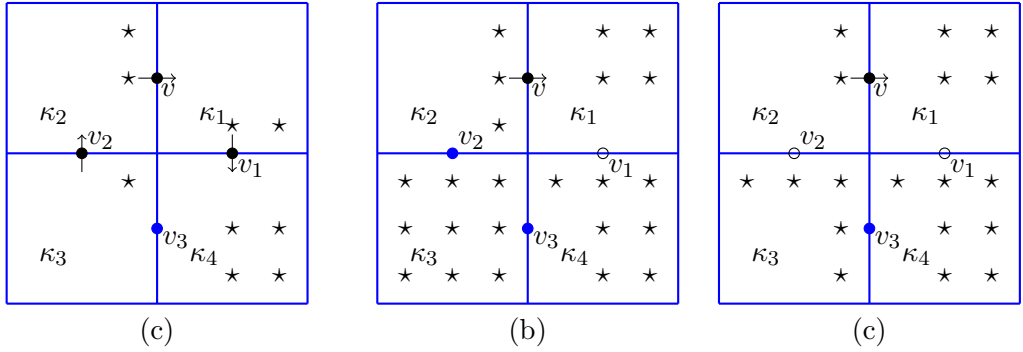	
		
		\begin{enumerate}
		\item   \emph{Assume $v_3\not\in\cN^0$}.  
		Because $v_2\in\cN^0$ and $\kappa_3$ is of type $NE$, by Proposition~\ref{prop:cellAtt_Rev} $v_3\in\cN^0$, a contradiction.
		Thus this case cannot occur.
		
		\item   \emph{Assume $v_3\in\cA$}.
		By {\bf Rule 1}, $G^1(v_3) \subset N_\cN$.
		Observe that there are no interior boundary edges to check.
		\end{enumerate}
	\end{enumerate}

\item {\em Assume $v_1\not\in\cN^0$.}  
Figure~\ref{fig:fourCellsNew}(a) provides an upper bound on the cell types.
Since $v_1\not\in\cN^0$ there is no edge $v\to v_1$.
Therefore  the cell $\kappa_1$  is of type $A$,  $NE$, $N$ or $E$ and $v_1\in \cV_e(\kappa_1)$.
Adjacency of $\kappa_1$ and $\kappa_4$ implies that $\kappa_4$ is not of type $W$ or $SW$.
See Figure~\ref{fig:Case2c3}(b).

	\begin{enumerate}
	\item \emph{Assume $v_2\not\in\cN^0$}. 
		Figure~\ref{fig:Case2c3}(b) provides an upper bound on the cell types.
		By Proposition~\ref{prop:cellAtt_Rev}  we conclude that $\kappa_2$ is of type $NE$ or $E$.
		By Proposition~\ref{prop:adjacentCells}(i) or (iv) the adjacency of $\kappa_2$ and $\kappa_3$ implies that $\kappa_3$ must be of type $NW$, $N$, $NE$, $E$ or $SE$.
		See Figure~\ref{fig:Case2c3}(c).
		The necessary set of tiles in $N_\cN$ is given by Figure~\ref{fig:MinimalFourCells}(b).
		
		\begin{enumerate}
		\item \emph{Assume $v_3\not\in\cN^0$.}
		\begin{enumerate}
		\item \emph{Assume $G^2(\kappa_3)\not\subset N_\cN$ and $G^2(\kappa_4)\not\subset N_\cN$.}
		By Proposition~\ref{prop:looseG^1}, $G^1(v_n)\not\subset N_\cN$, $n=1,2,3$.
		{\bf Rule 2} is not applicable thus, $G^0(\pi)\not\subset N_\cN$.
		Thus the set of tiles is not changed from that of Figure~\ref{fig:MinimalFourCells}(b).
		By Proposition~\ref{prop:TG2}  the lower face of $G^2_2$ and by Proposition~\ref{prop:TG1j} the lower left face of $G^1_v$ is transverse in.

		\item \emph{Assume $G^2(\kappa_3)\subset N_\cN$.}
		By Figure~\ref{fig:Case2c3}(c) $\kappa_3$ is not of type $A$.  
		Thus there exists $u\in \cV(\kappa_3)\cap \cA \cap \cT$.
		By Proposition~\ref{prop:uvwCell} there exists $u\in\cV_e(\kappa_3)\cap \cN^0$. 
		By assumption $u\in\cV(\kappa_3)\setminus\setof{v_2,v_3}$.
		We leave it to the reader to check that the given the possible types of $\kappa_3$ by Proposition~\ref{prop:cellAtt_Rev}, either  $v_2$ or $v_3$ is in $\cA$, a contradiction.
		Thus, it is not possible for $G^2(\kappa_3)\subset N_\cN$.
				
		\item \emph{Assume $G^2(\kappa_4)\subset N_\cN$.}
		By Case 3(a)(i)(B), $G^2(\kappa_3)\not\subset N_\cN$.
		Therefore, by Proposition~\ref{prop:looseG^1}, $G^1(v_3)\not\subset N_\cN$.
		We are in the setting of  Figure~\ref{fig:fourCells2}(a).

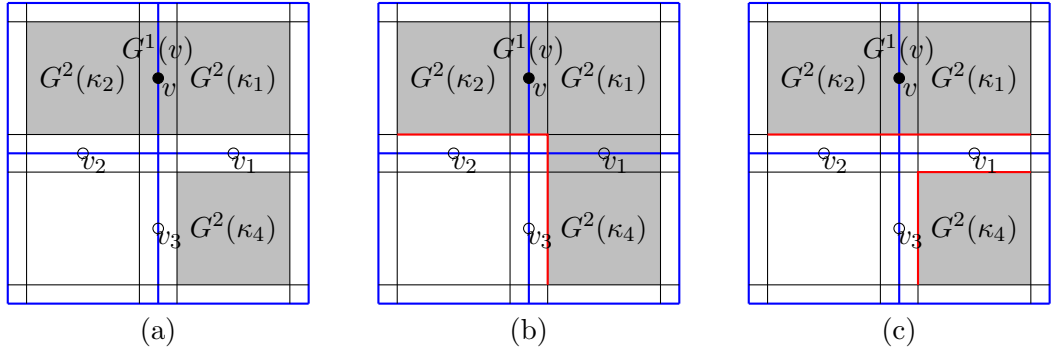
\begin{figure}
\begin{tikzpicture}
[scale=0.5]

	\fill[lightgray] (0.5,4.5) -- (3.5,4.5) -- (3.5,7.5) -- (0.5,7.5) -- (0.5,4.5);
	\fill[lightgray] (4.5,4.5) -- (7.5,4.5) -- (7.5,7.5) -- (4.5,7.5) -- (4.5,4.5);
	\fill[lightgray] (3.5,4.5) -- (4.5,4.5) -- (4.5,7.5) -- (3.5,7.5) -- (3.5,4.5);
	\fill[lightgray] (4.5,0.5) -- (4.5,3.5) -- (7.5,3.5) -- (7.5,0.5) -- (4.5,0.5);
		
	\draw[blue, thick](0,0) -- (8,0);
	\draw[blue, thick](0,4) -- (8,4);
	\draw[blue, thick](0,0) -- (0,8);
	\draw[blue, thick](8,0) -- (8,8);
	\draw[blue, thick](4,0) -- (4,8);
	\draw[blue, thick](0,8) -- (8,8);
		
	\draw (0,0.5) -- (8,0.5);
	\draw (0,3.5) -- (8,3.5);
	\draw (0,4.5) -- (8,4.5);
	\draw (0,7.5) -- (8,7.5);
	\draw (0.5,0) -- (0.5,8);
	\draw (3.5,0) -- (3.5,8);
	\draw (4.5,0) -- (4.5,8);
	\draw (7.5,0) -- (7.5,8);
	
	
		
	\filldraw(4,6) circle (4pt);
	\draw (4.30,5.70) node {$v$};
	\draw(2,4) circle (4pt);
	\draw (2.30,3.70) node {$v_2$};
	\draw(6,4) circle (4pt);
	\draw (6.30,3.70) node {$v_1$};
	\draw(4,2) circle (4pt);
	\draw (4.30,1.70) node {$v_3$};
	
	\draw(2,6) node{$G^2(\kappa_2)$};
	\draw(6,6) node{$G^2(\kappa_1)$};
	\draw(6,2) node{$G^2(\kappa_4)$};
	\draw(4,6.75) node{$G^1(v)$};
	
	\draw(4,-0.75) node{(a)};	
\end{tikzpicture}
\qquad
\begin{tikzpicture}
[scale=0.5]

	\fill[lightgray] (0.5,4.5) -- (3.5,4.5) -- (3.5,7.5) -- (0.5,7.5) -- (0.5,4.5);
	\fill[lightgray] (4.5,4.5) -- (7.5,4.5) -- (7.5,7.5) -- (4.5,7.5) -- (4.5,4.5);
	\fill[lightgray] (3.5,4.5) -- (4.5,4.5) -- (4.5,7.5) -- (3.5,7.5) -- (3.5,4.5);
	\fill[lightgray] (4.5,0.5) -- (4.5,3.5) -- (7.5,3.5) -- (7.5,0.5) -- (4.5,0.5);
	
	\fill[lightgray] (4.5,3.5) -- (4.5,4.5) -- (7.5,4.5) -- (7.5,3.5) -- (4.5,3.5);
		
	\draw[blue, thick](0,0) -- (8,0);
	\draw[blue, thick](0,4) -- (8,4);
	\draw[blue, thick](0,0) -- (0,8);
	\draw[blue, thick](8,0) -- (8,8);
	\draw[blue, thick](4,0) -- (4,8);
	\draw[blue, thick](0,8) -- (8,8);
		
	\draw (0,0.5) -- (8,0.5);
	\draw (0,3.5) -- (8,3.5);
	\draw (0,4.5) -- (8,4.5);
	\draw (0,7.5) -- (8,7.5);
	\draw (0.5,0) -- (0.5,8);
	\draw (3.5,0) -- (3.5,8);
	\draw (4.5,0) -- (4.5,8);
	\draw (7.5,0) -- (7.5,8);
	
	\draw[red, thick](0.5,4.5) --  (4.5,4.5) -- (4.5,0.5);

		
	\filldraw(4,6) circle (4pt);
	\draw (4.30,5.70) node {$v$};
	\draw(2,4) circle (4pt);
	\draw (2.30,3.70) node {$v_2$};
	\draw(6,4) circle (4pt);
	\draw (6.30,3.70) node {$v_1$};
	\draw(4,2) circle (4pt);
	\draw (4.30,1.70) node {$v_3$};
	
	\draw(2,6) node{$G^2(\kappa_2)$};
	\draw(6,6) node{$G^2(\kappa_1)$};
	\draw(6,2) node{$G^2(\kappa_4)$};
	\draw(4,6.75) node{$G^1(v)$};
	
	\draw(4,-0.75) node{(b)};	
\end{tikzpicture}
\qquad
\begin{tikzpicture}
[scale=0.5]

	\fill[lightgray] (0.5,4.5) -- (3.5,4.5) -- (3.5,7.5) -- (0.5,7.5) -- (0.5,4.5);
	\fill[lightgray] (4.5,4.5) -- (7.5,4.5) -- (7.5,7.5) -- (4.5,7.5) -- (4.5,4.5);
	\fill[lightgray] (3.5,4.5) -- (4.5,4.5) -- (4.5,7.5) -- (3.5,7.5) -- (3.5,4.5);
	\fill[lightgray] (4.5,0.5) -- (4.5,3.5) -- (7.5,3.5) -- (7.5,0.5) -- (4.5,0.5);
	
		
	\draw[blue, thick](0,0) -- (8,0);
	\draw[blue, thick](0,4) -- (8,4);
	\draw[blue, thick](0,0) -- (0,8);
	\draw[blue, thick](8,0) -- (8,8);
	\draw[blue, thick](4,0) -- (4,8);
	\draw[blue, thick](0,8) -- (8,8);
		
	\draw (0,0.5) -- (8,0.5);
	\draw (0,3.5) -- (8,3.5);
	\draw (0,4.5) -- (8,4.5);
	\draw (0,7.5) -- (8,7.5);
	\draw (0.5,0) -- (0.5,8);
	\draw (3.5,0) -- (3.5,8);
	\draw (4.5,0) -- (4.5,8);
	\draw (7.5,0) -- (7.5,8);
	
	\draw[red, thick](0.5,4.5) --  (7.5,4.5);
	\draw[red, thick](7.5,3.5) --  (4.5,3.5) -- (4.5,0.5);
	
		
	\filldraw(4,6) circle (4pt);
	\draw (4.30,5.70) node {$v$};
	\draw(2,4) circle (4pt);
	\draw (2.30,3.70) node {$v_2$};
	\draw(6,4) circle (4pt);
	\draw (6.30,3.70) node {$v_1$};
	\draw(4,2) circle (4pt);
	\draw (4.30,1.70) node {$v_3$};
	
	\draw(2,6) node{$G^2(\kappa_2)$};
	\draw(6,6) node{$G^2(\kappa_1)$};
	\draw(6,2) node{$G^2(\kappa_4)$};
	\draw(4,6.75) node{$G^1(v)$};
	
	\draw(4,-0.75) node{(c)};	
\end{tikzpicture}
\caption{(a) Necessary grid elements for Case 3(a)(i)(C).
(b) Tiles in Case  3(a)(i)(C)(I).
(c) Tiles in Case  3(a)(i)(C)(II). 
}
\label{fig:fourCells2}
\end{figure}
					 
			\begin{itemize}
			\item[(I)] \emph{Assume $G^1(v_1)\subset N_\cN$.}
			{\bf Rule 2} does not apply to $\kappa_1$, hence $G^0(\pi)\not\subset N_\cN$.
			We are in the setting of  Figure~\ref{fig:fourCells2}(b).
			The desired transversality follows from Propositions~\ref{prop:TG2}, \ref{prop:TG1j}, \ref{prop:TG1i}, and \ref{prop:TG0}.
			\item[(II)] \emph{Assume $G^1(v_1)\not\subset N_\cN$.}
			Assume $\kappa_4$ is not of type $A$.  
			Thus there exists $u\in \cV(\kappa_4)\cap \cN^0 \cap \cT$.
			By Proposition~\ref{prop:uvwCell} there exists $u\in\cV_e(\kappa_4)\cap \cN^0$. 
			By assumption 
			$u\in\cV(\kappa_4)\setminus\setof{v_1,v_3}$

			We leave it to the reader to check that if $\kappa_4$ in not of type $A$, then by Proposition~\ref{prop:cellAtt_Rev}, either  $v_1$ or $v_3$ is in $\cN^0$, a contradiction.
			Thus, if $G^2(\kappa_4)\subset N_\cN$, then $\kappa_4$ is of type $A$ and we are in the setting of Figure~\ref{fig:fourCells2}(c) and the desired transversality follows from Propositions~\ref{prop:TG2} and \ref{prop:TG1j}. 
			\end{itemize}
		\end{enumerate}

		\item \emph{Assume $v_3\in\cN^0$ and $v_3$ is transparent east}.
			Figure~\ref{fig:Case2c3}(c) provides an upper bound on the possible cell types.
			Observe that $\kappa_3$ cannot be of type $NW$ or $N$ and $\kappa_4$ cannot be of type $NW$.
			Furthermore, by Proposition~\ref{prop:cellAtt_Rev} $\kappa_3$ cannot be of type $NE$ and since the edge $v_3\to v_1$ cannot exist $\kappa_4$ cannot be of type $N$ or $NE$.
			See Figure~\ref{fig:Case3}(a).
			By {\bf Rule 1} $G^2(\kappa_3)\cup G^2(\kappa_4) \cup G^1(v_3)\subset N_\cN$.
			See Figure~\ref{fig:Case3}(b).			
			The existence of $G^0(\pi)$ is determined by {\bf Rule 2}, hence a necessary condition for $G^0(\pi)\subset N_\cN$ is the existence of $G^1(v_1)\subset N_\cN$ or $G^1(v_2)\subset N_\cN$.
			\begin{enumerate}
				\item \emph{Assume $G^1(v_1)\not\subset N_\cN$ and $G^1(v_2)\not\subset N_\cN$.}
				The desired transversality follows from Propositions~\ref{prop:TG2} and \ref{prop:TG1j}. 
								
				\item \emph{Assume $G^1(v_2)\subset N_\cN$.}
				Observe that $v\in\cV_a(\kappa_2)$. Thus {\bf Rule 2} applied to $\kappa_2$ implies that $G^0(\pi)\subset N_\cA$.
				{\bf Rule 5} applies and hence $G^1(v_1)\subset N_\cN$.
				There are no interior boundary edges to check.
								
				\item \emph{Assume $G^1(v_2)\not\subset N_\cN$ and $G^1(v_1)\subset N_\cN$.}
				From Figure~\ref{fig:Case3}(a) we deduce that {\bf Rule 2} does not apply to  $\kappa_1$ or $\kappa_4$.  
				Thus $G^0(\pi)\not\subset N_\cN$.
				This puts us into the setting shown in Figure~\ref{fig:Case3}(c) and the desired transversality follows from Propositions~\ref{prop:TG2}, \ref{prop:TG1j}, and \ref{prop:TG1i}. 
			\end{enumerate}

		\item \emph{Assume $v_3\in\cN^0$ and $v_3$ is transparent west}.
			Figure~\ref{fig:Case2c3}(c) provides an upper bound on the possible cell types.
			Observe that $\kappa_3$ cannot be of type $NE$, $E$, or $SE$ and $\kappa_4$ must be of type $NW$.
			Thus, we are in the setting of Figure~\ref{fig:Case3*}(a).
			By Proposition~\ref{prop:adjacentCells} the cell types of $\kappa_2$ and $\kappa_3$ are not compatible, a contradiction.
			Thus this case cannot occur.
		\end{enumerate}

\begin{figure}
\begin{tikzpicture}
[scale=0.5]

	\draw[blue, thick](0,0) -- (8,0);
	\draw[blue, thick](0,4) -- (8,4);
	\draw[blue, thick](0,0) -- (0,8);
	\draw[blue, thick](8,0) -- (8,8);
	\draw[blue, thick](4,0) -- (4,8);
	\draw[blue, thick](0,8) -- (8,8);

	\draw(4.30,5.70) node {$v$};		
	\filldraw(4,6) circle (4pt);
	\draw[->] (3.5,6) -- (4.5,6);
	\draw(2.50,4.35) node {$v_2$};	
	\draw(2,4) circle (4pt);

	\draw (6.50,3.60) node {$v_1$};
	\draw(6,4) circle (4pt);
	\draw (4.5,1.70) node {$v_3$};
	\filldraw(4,2) circle (4pt);	
	\draw[->] (3.5,2) -- (4.5,2);
		
	\draw(5.5,5) node{$\kappa_1$};
	
	\draw(6,7.25) node{$\star$};
	\draw(7.25,7.25) node{$\star$};	
	\draw(6,6) node{$\star$};
	\draw(7.25,6) node{$\star$};	
	
	\draw(1.25,5) node{$\kappa_2$};
	
	\draw(3.25,7.25) node{$\star$};
	\draw(3.25,6) node{$\star$};

	\draw(1.25,1.25) node{$\kappa_3$};
	
	\draw(3.25,2) node{$\star$};
	\draw(3.25,0.75) node{$\star$};
	
	\draw(5.25,1.25) node{$\kappa_4$};
	
	\draw(6,2) node{$\star$};
	\draw(7.25,2) node{$\star$};
	\draw(6,0.75) node{$\star$};
	\draw(7.25,0.75) node{$\star$};

	\draw(4,-0.75) node{(a)};	
\end{tikzpicture}
\qquad
\begin{tikzpicture}
[scale=0.5]

	\fill[lightgray] (0.5,4.5) -- (3.5,4.5) -- (3.5,7.5) -- (0.5,7.5) -- (0.5,4.5);
	\fill[lightgray] (4.5,4.5) -- (7.5,4.5) -- (7.5,7.5) -- (4.5,7.5) -- (4.5,4.5);
	\fill[lightgray] (3.5,4.5) -- (4.5,4.5) -- (4.5,7.5) -- (3.5,7.5) -- (3.5,4.5);
	\fill[lightgray] (4.5,0.5) -- (7.5,0.5) -- (7.5,3.5) -- (4.5,3.5) -- (4.5,0.5);
	\fill[lightgray] (3.5,3.5) -- (4.5,3.5) -- (4.5,0.5) -- (3.5,0.5) -- (3.5,3.5);
	\fill[lightgray] (3.5,3.5) -- (3.5,0.5) -- (0.5,0.5) -- (0.5,3.5) -- (3.5,3.5);

	\draw[blue, thick](0,0) -- (8,0);
	\draw[blue, thick](0,4) -- (8,4);
	\draw[blue, thick](0,0) -- (0,8);
	\draw[blue, thick](8,0) -- (8,8);
	\draw[blue, thick](4,0) -- (4,8);
	\draw[blue, thick](0,8) -- (8,8);
	
	\draw (0,0.5) -- (8,0.5);
	\draw (0,3.5) -- (8,3.5);
	\draw (0,4.5) -- (8,4.5);
	\draw (0,7.5) -- (8,7.5);
	\draw (0.5,0) -- (0.5,8);
	\draw (3.5,0) -- (3.5,8);
	\draw (4.5,0) -- (4.5,8);
	\draw (7.5,0) -- (7.5,8);
		
	\filldraw(4,6) circle (4pt);
	\draw (4.30,5.70) node {$v$};
	\draw(2.50,4.35) node {$v_2$};	
	\draw(2,4) circle (4pt);

	\draw (6.50,3.60) node {$v_1$};
	\draw(6,4) circle (4pt);
	\filldraw(4,2) circle (4pt);
	\draw (4.30,1.70) node {$v_3$};
	
	\draw(2,6) node{$G^2(\kappa_2)$};
	\draw(6,6) node{$G^2(\kappa_1)$};
	\draw(4,6.75) node{$G^1(v)$};
	\draw(2,2) node{$G^2(\kappa_3)$};
	\draw(4,2.75) node{$G^1(v_3)$};
	\draw(6,2) node{$G^2(\kappa_4)$};
	
	\draw(4,-0.75) node{(b)};	
\end{tikzpicture}
\qquad
\begin{tikzpicture}
[scale=0.5]

	\fill[lightgray] (0.5,4.5) -- (3.5,4.5) -- (3.5,7.5) -- (0.5,7.5) -- (0.5,4.5);
	\fill[lightgray] (4.5,4.5) -- (7.5,4.5) -- (7.5,7.5) -- (4.5,7.5) -- (4.5,4.5);
	\fill[lightgray] (3.5,4.5) -- (4.5,4.5) -- (4.5,7.5) -- (3.5,7.5) -- (3.5,4.5);
	\fill[lightgray] (4.5,3.5) -- (7.5,3.5) -- (7.5,4.5) -- (4.5,4.5) -- (4.5,4.5);
	\fill[lightgray] (4.5,0.5) -- (7.5,0.5) -- (7.5,3.5) -- (4.5,3.5) -- (4.5,0.5);
	\fill[lightgray] (3.5,3.5) -- (4.5,3.5) -- (4.5,0.5) -- (3.5,0.5) -- (3.5,3.5);
	\fill[lightgray] (3.5,3.5) -- (3.5,0.5) -- (0.5,0.5) -- (0.5,3.5) -- (3.5,3.5);

	\draw[blue, thick](0,0) -- (8,0);
	\draw[blue, thick](0,4) -- (8,4);
	\draw[blue, thick](0,0) -- (0,8);
	\draw[blue, thick](8,0) -- (8,8);
	\draw[blue, thick](4,0) -- (4,8);
	\draw[blue, thick](0,8) -- (8,8);
	
	\draw (0,0.5) -- (8,0.5);
	\draw (0,3.5) -- (8,3.5);
	\draw (0,4.5) -- (8,4.5);
	\draw (0,7.5) -- (8,7.5);
	\draw (0.5,0) -- (0.5,8);
	\draw (3.5,0) -- (3.5,8);
	\draw (4.5,0) -- (4.5,8);
	\draw (7.5,0) -- (7.5,8);
		
	\filldraw(4,6) circle (4pt);
	\draw (4.30,5.70) node {$v$};
	\draw(2.50,4.35) node {$v_2$};	
	\draw(2,4) circle (4pt);

	\draw (6.50,3.60) node {$v_1$};
	\draw(6,4) circle (4pt);
	\filldraw(4,2) circle (4pt);
	\draw (4.30,1.70) node {$v_3$};
	
	\draw[red, thick](0.5,4.5) --  (4.5,4.5) -- (4.5,3.5) -- (0.5,3.5);	
	
	\draw(2,6) node{$G^2(\kappa_2)$};
	\draw(6,6) node{$G^2(\kappa_1)$};
	\draw(4,6.75) node{$G^1(v)$};
	\draw(2,2) node{$G^2(\kappa_3)$};
	\draw(4,2.75) node{$G^1(v_3)$};
	\draw(6,2) node{$G^2(\kappa_4)$};
	
	\draw(4,-0.75) node{(c)};	
\end{tikzpicture}
\caption{(a) Possible cell types in Case 3(a)(ii).
(b) Possible cell types in Case 3(a)(iii).
(c) Possible cell types in Case 3(b).
}
\label{fig:Case3}
\end{figure}
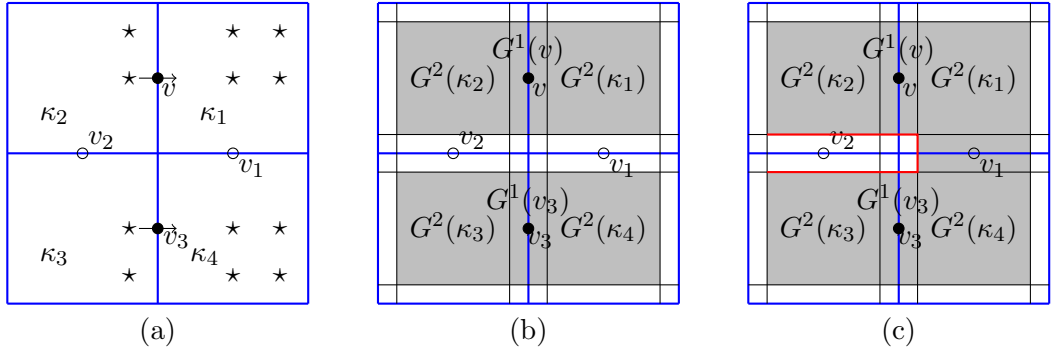	

\begin{figure}
\begin{tikzpicture}
[scale=0.5]

	\draw[blue, thick](0,0) -- (8,0);
	\draw[blue, thick](0,4) -- (8,4);
	\draw[blue, thick](0,0) -- (0,8);
	\draw[blue, thick](8,0) -- (8,8);
	\draw[blue, thick](4,0) -- (4,8);
	\draw[blue, thick](0,8) -- (8,8);

	\draw(4.30,5.70) node {$v$};		
	\filldraw(4,6) circle (4pt);
	\draw[->] (3.5,6) -- (4.5,6);
	\draw(2.50,4.35) node {$v_2$};	
	\draw(2,4) circle (4pt);

	\draw (6.50,3.60) node {$v_1$};
	\draw(6,4) circle (4pt);
	\draw (4.5,1.70) node {$v_3$};
	\filldraw(4,2) circle (4pt);	
	\draw[->] (4.5,2) -- (3.5,2);
	
	\draw(5.5,5) node{$\kappa_1$};
	
	\draw(6,7.25) node{$\star$};
	\draw(7.25,7.25) node{$\star$};	
	\draw(6,6) node{$\star$};
	\draw(7.25,6) node{$\star$};	
	
	\draw(1.25,5) node{$\kappa_2$};
	
	\draw(3.25,7.25) node{$\star$};
	\draw(3.25,6) node{$\star$};

	\draw(1.25,1.25) node{$\kappa_3$};
	
	\draw(0.75,3.25) node{$\star$};
	\draw(2,3.25) node{$\star$};
	
	\draw(5.25,1.25) node{$\kappa_4$};
	
	\draw(4.75,3.25) node{$\star$};

	\draw(4,-0.75) node{(a)};	
\end{tikzpicture}
\qquad
\begin{tikzpicture}
[scale=0.5]

	\draw[blue, thick](0,0) -- (8,0);
	\draw[blue, thick](0,4) -- (8,4);
	\draw[blue, thick](0,0) -- (0,8);
	\draw[blue, thick](8,0) -- (8,8);
	\draw[blue, thick](4,0) -- (4,8);
	\draw[blue, thick](0,8) -- (8,8);

	\draw (4.30,5.70) node {$v$};		
	\filldraw(4,6) circle (4pt);
	\draw[->] (3.5,6) -- (4.5,6);

	\draw (2.50,4.35) node {$v_2$};
	\filldraw(2,4) circle (4pt);	
	\draw[->] (2,3.5) -- (2,4.5);
	
	\draw (6.50,3.60) node {$v_1$};
	\draw(6,4) circle (4pt);
	\draw (4.5,1.70) node {$v_3$};
	\filldraw[blue](4,2) circle (4pt);	
	
	\draw(5.5,5) node{$\kappa_1$};
	
	\draw(6,7.25) node{$\star$};
	\draw(7.25,7.25) node{$\star$};	
	\draw(6,6) node{$\star$};
	\draw(7.25,6) node{$\star$};	
	
	\draw(1.25,5) node{$\kappa_2$};
	
	\draw(3.25,7.25) node{$\star$};
	\draw(3.25,6) node{$\star$};

	\draw(1.25,1.25) node{$\kappa_3$};
	
	\draw(0.75,3.25) node{$\star$};
	\draw(2,3.25) node{$\star$};
	\draw(3.25,3.25) node{$\star$};
	
	\draw(5.25,1.25) node{$\kappa_4$};
	
	\draw(4.75,3.25) node{$\star$};
	\draw(6,3.25) node{$\star$};
	\draw(7.25,3.25) node{$\star$};
	\draw(6,2) node{$\star$};
	\draw(7.25,2) node{$\star$};
	\draw(6,0.75) node{$\star$};
	\draw(7.25,0.75) node{$\star$};

	\draw(4,-0.75) node{(b)};	
\end{tikzpicture}
\qquad
\begin{tikzpicture}
[scale=0.5]

	\fill[lightgray] (0.5,4.0) -- (3.5,4.0) -- (3.5,7.5) -- (0.5,7.5) -- (0.5,4.0);
	\fill[lightgray] (4.5,4.5) -- (7.5,4.5) -- (7.5,7.5) -- (4.5,7.5) -- (4.5,4.5);
	\fill[lightgray] (0.5,0.5) -- (3.5,0.5) -- (3.5,3.5) -- (0.5,3.5) -- (0.5,0.5);
	\fill[lightgray] (0.5,3.5) -- (3.5,3.5) -- (3.5,4.5) -- (0.5,4.5) -- (0.5,3.5);
	\fill[lightgray] (3.5,4.5) -- (4.5,4.5) -- (4.5,7.5) -- (3.5,7.5) -- (3.5,4.5);
	\fill[lightgray] (3.5,3.5) -- (4.5,3.5) -- (4.5,4.5) -- (3.5,4.5) -- (3.5,3.5);
	
	\draw[blue, thick](0,0) -- (8,0);
	\draw[blue, thick](0,4) -- (8,4);
	\draw[blue, thick](0,0) -- (0,8);
	\draw[blue, thick](8,0) -- (8,8);
	\draw[blue, thick](4,0) -- (4,8);
	\draw[blue, thick](0,8) -- (8,8);
	
	\draw (0,0.5) -- (8,0.5);
	\draw (0,3.5) -- (8,3.5);
	\draw (0,4.5) -- (8,4.5);
	\draw (0,7.5) -- (8,7.5);
	\draw (0.5,0) -- (0.5,8);
	\draw (3.5,0) -- (3.5,8);
	\draw (4.5,0) -- (4.5,8);
	\draw (7.5,0) -- (7.5,8);
	
		
	\filldraw(4,6) circle (4pt);
	\draw (4.30,5.70) node {$v$};
	\filldraw(2,4) circle (4pt);
	\draw (2.50,4.35) node {$v_2$};
	\draw(6,4) circle (4pt);
	\draw (6.50,3.60) node {$v_1$};

	\draw (4.5,1.70) node {$v_3$};
	\filldraw[blue](4,2) circle (4pt);
	
	\draw(2,6) node{$G^2(\kappa_2)$};
	\draw(6,6) node{$G^2(\kappa_1)$};
	\draw(2,2) node{$G^2(\kappa_3)$};
	
	\draw(4,-0.75) node{(c)};	
\end{tikzpicture}
\caption{(a) Possible cell types in Case 3(a)(iii).
(b) Possible cell types in Case 3(b).
(c) Necessary grid elements in $N_\cN$ in Case 3(b).
}
\label{fig:Case3*}
\end{figure}
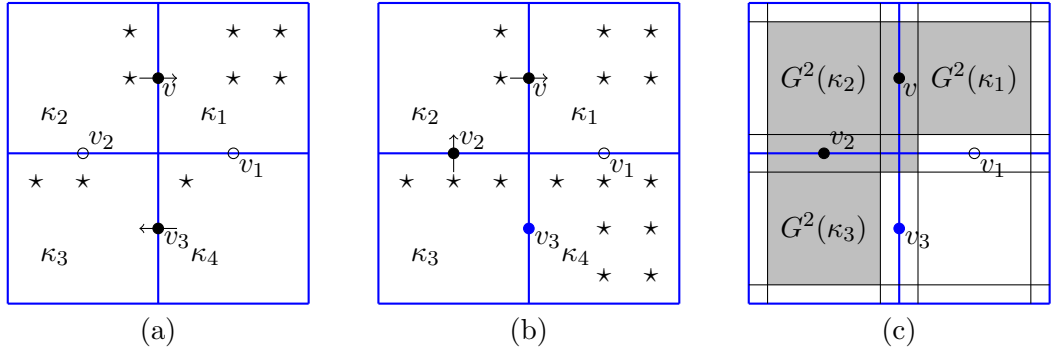

	\item \emph{Assume $v_2\in\cN^0$ and $v_2$ is transparent north}.
		Figure~\ref{fig:Case2c3}(b) provides an upper bound on the possible cell types.
		The assumption that $v_2\in\cN^0$ forces $\kappa_2$ to be of type $NE$ or $E$, and $\kappa_3$ to be of type $NE$, $N$ or $NE$.
		By {\bf Rule 1}, $G^2(\kappa_3) \cup G^1(v_2)\subset N_\cN$.
		By {\bf Rule 2} applied to $\kappa_2$, $G^0(\pi)\subset N_\cN$.
		Thus we are in the setting of Figure~\ref{fig:Case3*}(c).  
	
		\begin{enumerate}
		\item  \emph{Assume $v_3\not\in\cN^0$}.  
			This implies that $\kappa_3$ is of type $A$, $W$, $S$, or $SW$ and hence $v_2,v_3\in\cV_e(\kappa_3)$.
			Since $G^2(\kappa_3)\cup G^1(v_2) \cup G^0(i,j)\subset N_\cN$, {\bf Rule 3} applies and hence $C^n(\kappa_3,v_3,\pi)\subset N_\cN$.
			Similarly, $C^n(\kappa_1,v_1,\pi)\subset N_\cN$.
			By Proposition~\ref{prop:adjacentCells} $\kappa_4$ is of type $NW$.
			This implies that $v_1,v_3\in\cV_a(\kappa_4)$
			Thus {\bf Rule 4} does not apply to $\kappa_4$.
			Figure~\ref{fig:Case3*}(c) indicates $N_\cN\cap E(i,j)$.
			That the vector field is transverse in at all the interior boundary edges (beginning with the top right and moving down to the bottom left) follows from Propositions~\ref{prop:TG2}, \ref{prop:TC}, \ref{prop:TG0}, \ref{prop:TG0}, \ref{prop:TC}, and \ref{prop:TG2}.

		\item  \emph{Assume $v_3\in\cN^0$ and $v_3$ is transparent east}. 
			Observe that rotating clockwise by $90^\circ$ results in Case 2(b)(i) for which the desired transversality has been demonstrated.

												
		\item  \emph{Assume $v_3\in\cN^0$ and $v_3$ is transparent west}. 
			Observe that rotating clockwise by $90^\circ$ results in Case 2(c)(i), which has already been shown not to occur.
		\end{enumerate}		

	\item \emph{Assume $v_2\in\cN^0$ and $v_2$ is transparent south}.  
		We begin by rotating clockwise by $90^\circ$ and the performing a reflection in the east-west direction.
		Figure~\ref{fig:MinimalFourCells}(a) provides an upper bound on the possible cell types.
		Since $v_2$ is transparent south, $\kappa_2$ is of type $SE$ and $\kappa_3$ cannot be of type $NW$, $N$, or $NE$.
		The assumption that $v_3\not\in\cN^0$ implies that $\kappa_3$ cannot be of type $E$ or $SE$
		This results in cell types as shown in Figure~\ref{fig:case3c}(a).  
		{\bf Rule 1} implies that $G^2(\kappa_3)\cup G^1(v_2)\subset N_\cN$.
		{\bf Rule 2} implies that $G^0(\pi)\subset N_\cN$.
		Thus the necessary set of tiles in $N_\cN$ is shown in  Figure~\ref{fig:case3c}(b).  
		
		\begin{enumerate}
			\item \emph{Assume $v_1\in\cN^0$ and $v_1$ is transparent north}. 
			This is the same as Case 1(b)(i), hence the desired transversality has been demonstrated.
			\item \emph{Assume $v_1\in\cN^0$ and $v_1$ is transparent south}. 
			This is the same as Case 2(b)(i), hence the desired transversality has been demonstrated.
			\item \emph{Assume $v_1\not\in\cN^0$}. 
			Figure~\ref{fig:case3c}(a) provides an upper bound on the possible cell types. 
			Since $v_1\not\in\cN^0$, $\kappa_1$ cannot be of type $S$ or $SE$.
			Since $\kappa_2$ is of type $SE$, Proposition~\ref{prop:adjacentCells}(ii) and (ix) implies that $\eta_j\in \sH^1$ and $\xi_i\in \Xi^2$, respectively.
			With these restrictions Proposition~\ref{prop:adjacentCells}(i)  implies that $\kappa_4$ is of type $NW$, $N$, or $NE$, while Proposition~\ref{prop:adjacentCells}(x) implies that $\kappa_4$ is of type $NW$, $W$, or $SW$.
			Thus $\kappa_4$ is of type $NW$ as is indicated in Figure~\ref{fig:case3c}(c).			
		\end{enumerate}	
	\end{enumerate}

\begin{figure}
\begin{tikzpicture}
[scale=0.5]

	\draw[blue, thick](0,0) -- (8,0);
	\draw[blue, thick](0,4) -- (8,4);
	\draw[blue, thick](0,0) -- (0,8);
	\draw[blue, thick](8,0) -- (8,8);
	\draw[blue, thick](4,0) -- (4,8);
	\draw[blue, thick](0,8) -- (8,8);

	\draw (4.30,5.70) node {$v$};		
	\filldraw(4,6) circle (4pt);
	\draw[->] (3.5,6) -- (4.5,6);
	
	\draw (2.50,4.35) node {$v_2$};
	\filldraw(2,4) circle (4pt);
	\draw[->] (2,4.5) -- (2,3.5);

	\filldraw[blue](6,4) circle (4pt);
	\draw (6.50,3.60) node {$v_1$};
	
	\draw (4.5,1.70) node {$v_3$};
	\draw(4,2) circle (4pt);

	\draw(5.5,5) node{$\kappa_1$};
	
	\draw(6,7.25) node{$\star$};
	\draw(6,6) node{$\star$};
	\draw(6,4.75) node{$\star$};
	\draw(7.25,7.25) node{$\star$};
	\draw(7.25,6) node{$\star$};
	\draw(7.25,4.75) node{$\star$};
	
	\draw(1.25,5) node{$\kappa_2$};
	
	\draw(3.25,4.75) node{$\star$};

	\draw(1.25,1.25) node{$\kappa_3$};
	
	\draw(0.75,2) node{$\star$};
	\draw(2,2) node{$\star$};
	\draw(0.75,0.75) node{$\star$};
	\draw(2,0.75) node{$\star$};
	
	\draw(5.25,1.25) node{$\kappa_4$};
	
	\draw(4.75,3.25) node{$\star$};
	\draw(6,3.25) node{$\star$};
	\draw(7.25,3.25) node{$\star$};
	\draw(4.75,2) node{$\star$};
	\draw(6,2) node{$\star$};
	\draw(7.25,2) node{$\star$};
	\draw(4.75,0.75) node{$\star$};
	\draw(6,0.75) node{$\star$};
	\draw(7.25,0.75) node{$\star$};

	\draw(4,-0.75) node{(a)};	
\end{tikzpicture}
\qquad
\begin{tikzpicture}
[scale=0.5]

	\fill[lightgray] (0.5,4.5) -- (3.5,4.5) -- (3.5,7.5) -- (0.5,7.5) -- (0.5,4.5);
	\fill[lightgray] (4.5,4.5) -- (7.5,4.5) -- (7.5,7.5) -- (4.5,7.5) -- (4.5,4.5);
	\fill[lightgray] (3.5,4.5) -- (4.5,4.5) -- (4.5,7.5) -- (3.5,7.5) -- (3.5,4.5);
	\fill[lightgray] (0.5,0.5) -- (0.5,3.5) -- (3.5,3.5) -- (3.5,0.5) -- (0.5,0.5);	
	\fill[lightgray] (0.5,4.5) -- (3.5,4.5) -- (3.5,3.5) -- (0.5,3.5) -- (0.5,4.5);
	\fill[lightgray] (3.5,3.5) -- (4.5,3.5) -- (4.5,4.5) -- (3.5,4.5) -- (3.5,4.5);
		
	\draw[blue, thick](0,0) -- (8,0);
	\draw[blue, thick](0,4) -- (8,4);
	\draw[blue, thick](0,0) -- (0,8);
	\draw[blue, thick](8,0) -- (8,8);
	\draw[blue, thick](4,0) -- (4,8);
	\draw[blue, thick](0,8) -- (8,8);
		
	\draw (0,0.5) -- (8,0.5);
	\draw (0,3.5) -- (8,3.5);
	\draw (0,4.5) -- (8,4.5);
	\draw (0,7.5) -- (8,7.5);
	\draw (0.5,0) -- (0.5,8);
	\draw (3.5,0) -- (3.5,8);
	\draw (4.5,0) -- (4.5,8);
	\draw (7.5,0) -- (7.5,8);
	
	
		
	\filldraw(4,6) circle (4pt);
	\draw (4.30,5.70) node {$v$};
	\filldraw(2,4) circle (4pt);
	\draw (2.30,3.70) node {$v_2$};

	\draw (6.30,3.70) node {$v_1$};
	\filldraw[blue](6,4) circle (4pt);
	\draw (4.5,1.70) node {$v_3$};
	\draw(4,2) circle (4pt);
		
	\draw(2,6) node{$G^2(\kappa_2)$};
	\draw(6,6) node{$G^2(\kappa_1)$};
	\draw(4,6.75) node{$G^1(v)$};
	
	\draw(4,-0.75) node{(b)};	
\end{tikzpicture}
\qquad
\begin{tikzpicture}
[scale=0.5]

	\draw[blue, thick](0,0) -- (8,0);
	\draw[blue, thick](0,4) -- (8,4);
	\draw[blue, thick](0,0) -- (0,8);
	\draw[blue, thick](8,0) -- (8,8);
	\draw[blue, thick](4,0) -- (4,8);
	\draw[blue, thick](0,8) -- (8,8);

	\draw (4.30,5.70) node {$v$};		
	\filldraw(4,6) circle (4pt);
	\draw[->] (3.5,6) -- (4.5,6);
	
	\draw (2.50,4.35) node {$v_2$};
	\filldraw(2,4) circle (4pt);
	\draw[->] (2,4.5) -- (2,3.5);

	\draw(6,4) circle (4pt);
	\draw (6.50,3.60) node {$v_1$};
	
	\draw (4.5,1.70) node {$v_3$};
	\draw(4,2) circle (4pt);

	\draw(5.5,5) node{$\kappa_1$};
	
	\draw(6,7.25) node{$\star$};
	\draw(6,6) node{$\star$};
	\draw(7.25,7.25) node{$\star$};
	\draw(7.25,6) node{$\star$};
	
	\draw(1.25,5) node{$\kappa_2$};
	
	\draw(3.25,4.75) node{$\star$};

	\draw(1.25,1.25) node{$\kappa_3$};
	
	\draw(0.75,2) node{$\star$};
	\draw(2,2) node{$\star$};
	\draw(0.75,0.75) node{$\star$};
	\draw(2,0.75) node{$\star$};
	
	\draw(5.25,1.25) node{$\kappa_4$};
	
	\draw(4.75,3.25) node{$\star$};
		
	\draw(4,-0.75) node{(c)};	
\end{tikzpicture}
\caption{(a) 
The possible cell types for Case 3(c).
(b) Necessary tiles for $N_\cA$ in Case 3(c).
(c) The possible cell types for Case 3(c)(iii).
}
\label{fig:case3c}
\end{figure}
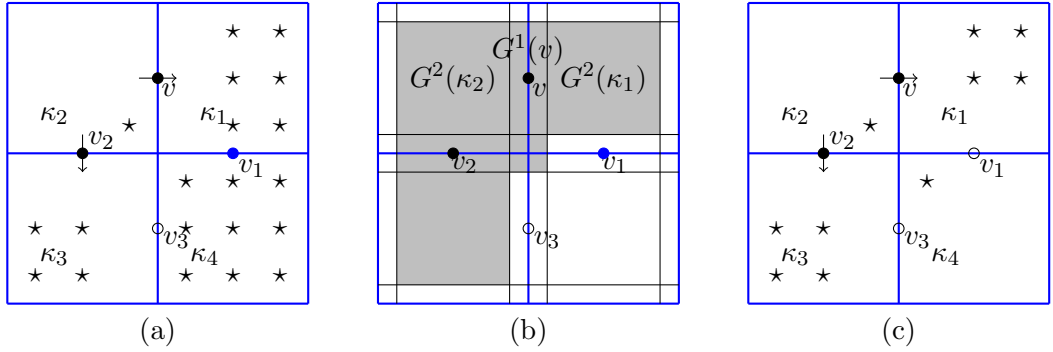

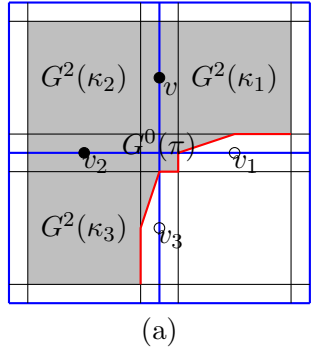
\begin{figure}
\begin{center}
\begin{tikzpicture}
[scale=0.5]

	\fill[lightgray] (0.5,4.0) -- (3.5,4.0) -- (3.5,7.5) -- (0.5,7.5) -- (0.5,4.0);
	\fill[lightgray] (4.5,4.5) -- (7.5,4.5) -- (7.5,7.5) -- (4.5,7.5) -- (4.5,4.5);
	\fill[lightgray] (0.5,0.5) -- (3.5,0.5) -- (3.5,3.5) -- (0.5,3.5) -- (0.5,0.5);
	\fill[lightgray] (0.5,3.5) -- (3.5,3.5) -- (3.5,4.5) -- (0.5,4.5) -- (0.5,3.5);
	\fill[lightgray] (3.5,4.5) -- (4.5,4.5) -- (4.5,7.5) -- (3.5,7.5) -- (3.5,4.5);
	\fill[lightgray] (3.5,3.5) -- (4.5,3.5) -- (4.5,4.5) -- (3.5,4.5) -- (3.5,3.5);
	\fill[lightgray] (3.5,3.5) -- (4.0,3.5) -- (3.5,2.0) -- (3.5,3.5);
	\fill[lightgray] (4.5,4.0) -- (4.5,4.5) -- (6.0,4.5) -- (4.5,4.0);
	
	\draw[blue, thick](0,0) -- (8,0);
	\draw[blue, thick](0,4) -- (8,4);
	\draw[blue, thick](0,0) -- (0,8);
	\draw[blue, thick](8,0) -- (8,8);
	\draw[blue, thick](4,0) -- (4,8);
	\draw[blue, thick](0,8) -- (8,8);
	
	\draw (0,0.5) -- (8,0.5);
	\draw (0,3.5) -- (8,3.5);
	\draw (0,4.5) -- (8,4.5);
	\draw (0,7.5) -- (8,7.5);
	\draw (0.5,0) -- (0.5,8);
	\draw (3.5,0) -- (3.5,8);
	\draw (4.5,0) -- (4.5,8);
	\draw (7.5,0) -- (7.5,8);
	
	\draw[red, thick](7.5,4.5) -- (6.0,4.5) -- (4.5,4.0) -- (4.5,3.5) -- (4.0,3.5) -- (3.5,2.0) -- (3.5,0.5);
	
	
	\filldraw(4,6) circle (4pt);
	\draw (4.30,5.70) node {$v$};
	\filldraw(2,4) circle (4pt);
	\draw (2.30,3.70) node {$v_2$};
	\draw(6,4) circle (4pt);
	\draw (6.30,3.70) node {$v_1$};
	\draw(4,2) circle (4pt);
	\draw (4.30,1.70) node {$v_3$};
	
	\draw(2,6) node{$G^2(\kappa_2)$};
	\draw(6,6) node{$G^2(\kappa_1)$};
	\draw(2,2) node{$G^2(\kappa_3)$};
	\draw(4,4.2) node{$G^0(\pi)$};
	
	\draw(4,-0.75) node{(a)};	
\end{tikzpicture}
\end{center}
\caption{Tiles and chips for Case 3(c)(iii)}
\label{fig:Case3ciii}
\end{figure}

\end{enumerate}	
\end{proof}

\section{Proof of Theorem}
\label{sec:theorem}

The focus of this section is on the proof of Theorem~\ref{thm:controlledPert}, the main theorem of this paper.
We begin with a preliminary result concerning the existence of global attractors and conclude with minor extensions
of Theorem~\ref{thm:controlledPert}.

\begin{prop}
\label{prop:globalAtt}
Consider 
\begin{equation}
\label{eq:general}
\dot{x} = -\Gamma x + f(x),\quad x\in (0,\infty)^n
\end{equation}
where $f\colon (0,\infty)^n \to (0,\infty)^n$ is Lipschitz continuous and satisfies the conditions that there exist positive constants $a_i^\pm$ such that
\[
0 < a_i^- < f_i(x) < a_i^+, \quad i=1,\ldots, n
\]
for all $x\in (0,\infty)^n$.
Furthermore, assume that $\Gamma$ is a diagonal matrix with diagonal elements $\gamma_i >0$.
Then, there exists a global compact attractor $X\subset (0,\infty)^n$ for the flow generated by \eqref{eq:general}.
\end{prop}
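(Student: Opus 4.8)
The plan is to produce an explicit compact rectangular trapping region $B\subset(0,\infty)^n$, to show by scalar comparison applied coordinate-by-coordinate that $B$ absorbs every forward trajectory, and then to take the global attractor to be $X=\omega(B,\varphi)$. The point is that although $f$ couples the coordinates, the two-sided bound $0<a_i^-<f_i(x)<a_i^+$ lets me bound each component $\dot x_i=-\gamma_i x_i+f_i(x)$ between two autonomous scalar linear equations, which completely decouples the asymptotics.

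First I would fix, for each $i$, numbers $b_i^-$ and $b_i^+$ with $0<b_i^-<a_i^-/\gamma_i$ and $b_i^+>a_i^+/\gamma_i$, and set $B:=\prod_{i=1}^n[b_i^-,b_i^+]$, a compact subset of $(0,\infty)^n$. On the face $\{x_i=b_i^+\}$ one has $\dot x_i=-\gamma_i b_i^++f_i(x)<-\gamma_i b_i^++a_i^+<0$, and on the face $\{x_i=b_i^-\}$ one has $\dot x_i=-\gamma_i b_i^-+f_i(x)>-\gamma_i b_i^-+a_i^->0$; thus the vector field of \eqref{eq:general} points strictly into $B$ through every supporting hyperplane, so $B$ is a forward invariant attracting block. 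The same sign computation on the coordinate hyperplane $\{x_i=0\}$ gives $\dot x_i=f_i(x)>a_i^->0$, so $(0,\infty)^n$ is itself positively invariant, while the bound $\lVert -\Gamma x+f(x)\rVert\le \lVert\Gamma\rVert\,\lVert x\rVert+C$ (with $C$ coming from the uniform bounds on $f$) rules out finite-time blow-up; together with the Lipschitz hypothesis this shows \eqref{eq:general} generates a well-defined forward-complete flow $\varphi$ on $(0,\infty)^n$.

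Next I would establish that $B$ is \emph{absorbing}. For any trajectory, each coordinate satisfies the differential inequalities $-\gamma_i x_i+a_i^-\le \dot x_i\le -\gamma_i x_i+a_i^+$, so the scalar comparison principle bounds $x_i(t)$ between the solutions of $\dot u=-\gamma_i u+a_i^\pm$, namely
\[
x_i(0)e^{-\gamma_i t}+\frac{a_i^-}{\gamma_i}\bigl(1-e^{-\gamma_i t}\bigr)\ \le\ x_i(t)\ \le\ x_i(0)e^{-\gamma_i t}+\frac{a_i^+}{\gamma_i}\bigl(1-e^{-\gamma_i t}\bigr).
\]
Hence $a_i^-/\gamma_i\le\liminf_{t\to\infty}x_i(t)$ and $\limsup_{t\to\infty}x_i(t)\le a_i^+/\gamma_i$, and by our choice of $b_i^\pm$ the trajectory enters $\Int(B)$ in finite time. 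Because the crossing times extracted from the two comparison solutions depend only on a bound for $x_i(0)$ (and, for the lower estimate, remain finite even as $x_i(0)\to0^+$, since $b_i^-<a_i^-/\gamma_i$), the absorption time is uniform over bounded subsets of $(0,\infty)^n$. Consequently $X:=\omega(B,\varphi)=\bigcap_{t\ge0}\cl\bigl(\varphi([t,\infty)\times B)\bigr)$ is a nonempty compact invariant set contained in $\Int(B)\subset(0,\infty)^n$ that attracts every bounded set, i.e.\ the desired global compact attractor.

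The genuinely load-bearing step is the coordinatewise comparison, which is elementary once the two-sided bound on $f$ is in hand. The only points requiring care — and the likeliest source of a gap — are the forward completeness and positive invariance needed for $\varphi$ to be an honest flow on the open orthant, and the claim of \emph{uniform} absorption for initial data accumulating on $\partial(0,\infty)^n$; both are controlled by the strict positivity $f_i>a_i^->0$ on the coordinate hyperplanes and the global bounds on $f$, so I do not expect a serious obstacle.
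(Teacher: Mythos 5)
Your proof is correct and follows essentially the same route the paper indicates: the paper leaves this proposition to the reader, hinting only that for $a^-$ small and $a^+$ large the vector field is transverse inward on the boundary of the box $[a^-,a^+]^n$, which is exactly your attracting block $B$. Your coordinatewise comparison argument showing that $B$ absorbs all trajectories (uniformly on bounded sets, including those accumulating on $\partial(0,\infty)^n$) and your treatment of forward completeness simply supply the details the paper omits, so there is nothing to fault here.
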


We leave the proof of Proposition~\ref{prop:globalAtt} to the reader noting that for positive $a^-$ sufficiently small and $a^+$ sufficiently large, the vector field \eqref{eq:general} is transverse in on the boundary of $[a^-,a^+]^n$.

\begin{thm}
\label{thm:controlledPert}
Let $\Sigma = \Sigma(\Gamma,\Lambda,\Xi^1,\Xi^2,\sH^1,\sH^2)$ be a switching system as defined by Definition~\ref{defn:switchingsystem}.
Let $\cF$ be the associated state transition diagram  given by Definition~\ref{defn:stateTransitionGraph}.
Let $\sMG(\cF) = \setof{\cM(p)\mid p\in (\sP,\leq_\sP)}$ be the associated Morse graph.
Choose $0< \delta < \delta^*$, where $\delta^*$ satisfies \eqref{eq:mindelta},
and let 
\begin{equation}
\label{eq:dconstrained2}
\dot{x} = -\Gamma x + f^{(\delta)}(x), \quad x\in (0,\infty)^2
\end{equation}
be an associated $\delta$-constrained continuous switching system as defined in Section~\ref{sec:controlP}. 
Let $\varphi$ be the flow associated with \eqref{eq:dconstrained2} and let $X\subset (0,\infty)^2$ be the associated global attractor.
Let $(\bar{\sP},\leq_\sP)$ be the poset given by Definition~\ref{defn:barP}.
Then, there exists a Morse decomposition for $X$ under $\varphi$ with Morse sets $M(p)$, $p\in\bar{\sP}$, and an admissible order $\leq_{\bar{\sP}}$.
\end{thm}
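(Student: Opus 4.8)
The plan is to combine the combinatorial lattice theory of Section~\ref{sec:conleyComb}, the trapping region construction of Section~\ref{sec:proof}, and the translational Theorem~\ref{thm:translate}. First I would invoke Proposition~\ref{prop:globalAtt} to obtain the compact global attractor $X\subset (0,\infty)^2$ for the flow $\varphi$ generated by \eqref{eq:dconstrained2}, so that the recurrent dynamics is captured by a flow on a compact metric space as Theorem~\ref{thm:translate} requires. By the construction of $\down(\cM(p))$ in Section~\ref{sec:conleyComb}, the collection $\sAtt(\cF)$ is a finite bounded distributive lattice with $\sAtt(\cF)\cong \sO(\sP)$; hence by Birkhoff's theorem $\sJ^\vee(\sAtt(\cF))\cong \sJ^\vee(\sO(\sP))\cong \sP$. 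This identifies $(\sP,\leq_\sP)$ as the poset of join irreducibles of the attractor lattice, and it is this lattice that I will transport to the continuous setting.

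The heart of the argument is to show that the assignment $\cA\mapsto N_\cA\cap X$ of Definition~\ref{defn:latticeMorphism} is an injective, join-preserving map from $\sAtt(\cF)$ into $\sInvset^+(X,\varphi)$. Forward invariance is immediate: by Proposition~\ref{prop:trapN} each $N_\cA$ is an attracting neighborhood, and as it is constructed as a trapping region it is forward invariant, so $\varphi(t,\cdot)$ carries $N_\cA$ into itself for $t\geq 0$; intersecting with the invariant set $X$ preserves this, giving $N_\cA\cap X\in \sInvset^+(X,\varphi)$. For join-preservation I would use Proposition~\ref{prop:N0}, which makes $N_\cN$ depend only on $\cN^0$, together with the identity $(\cA_0\cup \cA_1)^0 = \cA_0\cup \cA_1 = \cA_0^0\cup \cA_1^0$ valid for attractors; since the rules \textbf{Rule 0}--\textbf{5} are local and monotone, the minimal tile--chip collection forced by a union of attractors is exactly the union of the collections forced by each, so $N_{\cA_0\vee \cA_1} = N_{\cA_0}\cup N_{\cA_1}$. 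Injectivity I would obtain by recovering the combinatorial attractor from the geometry: a cell $\kappa$ of type $A$ lies in $\cA$ iff $G^2(\kappa)\subset N_\cA$ (\textbf{Rule 0}), and a transparent vertex lies in $\cA^0$ iff its $G^1$-tile is present (\textbf{Rule 1}), so distinct attractors produce distinct trapping regions, and the relevant tiles survive intersection with $X$ because $X$ contains all recurrent dynamics.

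With join-preservation, injectivity, and $N_\emptyset\cap X = \emptyset$ (Lemma~\ref{lem:trivialN}) in hand, the map is an order embedding for the inclusion orders, since $\cA_0\leq \cA_1 \iff \cA_0\cup \cA_1 = \cA_1 \iff N_{\cA_0}\cup N_{\cA_1} = N_{\cA_1}$. Consequently $\sN:=\setof{N_\cA\cap X \mid \cA\in \sAtt(\cF)}$, ordered by inclusion with join given by union, is a finite distributive lattice isomorphic to $\sAtt(\cF)$, whence $\sJ^\vee(\sN)\cong \sP$. Moreover $\sN$ satisfies the three hypotheses of Theorem~\ref{thm:translate}: condition (1) is the forward invariance just noted, condition (2) is $\mathbf{0}=N_\emptyset\cap X = \emptyset$, and condition (3) holds because the join is union by construction while the meet, being the lattice meet in an inclusion-ordered lattice, automatically satisfies $N\wedge N'\subset N\cap N'$.

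Finally I would apply Theorem~\ref{thm:translate} to $\sN$, using the poset isomorphism $(\sP,\leq_\sP)\cong (\sJ^\vee(\sN),\leq)$. The theorem then yields invariant sets $M(p)$ for $p\in \bar{\sP}$ (with $\bar{p}$ indexing $\left(\Inv(\mathbf{1},\varphi)\right)^*$, where $\mathbf{1}=N_{\mathbf 1}\cap X$ corresponds to the maximal attractor of $\cF$) forming a Morse decomposition of $X$ under $\varphi$ for which $\leq_{\bar{\sP}}$ is an admissible order, which is exactly the assertion of Theorem~\ref{thm:controlledPert}. The step I expect to be the genuine obstacle is the join-preservation and injectivity of $\cA\mapsto N_\cA$: this is precisely where the local, case-by-case analysis of Section~\ref{sec:proof} must be leveraged to guarantee that the geometric construction is a faithful lattice morphism rather than merely a monotone assignment.
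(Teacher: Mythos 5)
Your proposal is correct and follows essentially the same route as the paper: the paper's proof of Theorem~\ref{thm:controlledPert} consists precisely of verifying hypotheses (1)--(3) of Theorem~\ref{thm:translate} via Proposition~\ref{prop:trapN}, Lemma~\ref{lem:trivialN}, and Proposition~\ref{prop:N} (which establishes the lattice identities \eqref{eq:wedge}--\eqref{eq:vee} for $\sN=\setof{N_\cA\mid\cA\in\sAtt(\cF)}$), and then applying the translational theorem. You correctly flag join-preservation as the genuine obstacle; in the paper this is exactly the content of Proposition~\ref{prop:N}, whose hard part is showing that \textbf{Rule 5} never forces a tile into $N_{\cA_0\vee\cA_1}$ that is absent from $N_{\cA_0}\cup N_{\cA_1}$, handled by Lemmas~\ref{lem:2bi} and~\ref{lem:case2bi*}.
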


As indicated in the introduction Theorem~\ref{thm:controlledPert} is obtained as an application of Theorem~\ref{thm:translate}.
Thus, the main task of this section is to establish that desired  finite distributive lattice $(\sN,\wedge,\vee,{\bf 0},{\bf 1})$ that satisfies hypothesis (1)-(3) of  Theorem~\ref{thm:translate}.
With this in mind we introduce the following lemmas and proposition.

\begin{lem}
\label{lem:2bi}
Let $\cA\in\sAtt(\cF)$.
Let  $E(i,j)$ be an elementary domain with the local vector field in Case 2(b)(i) of the proof of  Proposition~\ref{prop:trapN}.
If $v\in \cA$, then $\setof{v,v_1,v_2}\subset \cA$.
\end{lem}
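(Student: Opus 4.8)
The plan is to deduce the lemma as a direct double application of Proposition~\ref{prop:cellAtt_Rev}, using only that an attractor satisfies $\cA^0 = \cA$ and is forward invariant (so $\cA \in \sInvset^+(\cF)$). No transversality estimates, tile or chip constructions, or lattice-theoretic input are needed; the statement is purely about the combinatorics of $\cA$ under $\cF$.

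First I would transcribe the local data of the Case 2(b)(i) configuration. In the notation of the proof of Proposition~\ref{prop:trapN}, the relevant cells are $\kappa_2 = \kappa(i,j)$ and $\kappa_1 = \kappa(i+1,j)$, and $v$ is the face they share, so $v \in \cV(\kappa_1) \cap \cV(\kappa_2)$. In this case $v_1$ and $v_2$ are both transparent south, which is precisely the assertion that $v_1 \in \cV_a(\kappa_1)$ and $v_2 \in \cV_a(\kappa_2)$. Equivalently, this is forced by the cell types: $\kappa_1$ of type $S$ or $SE$ and $\kappa_2$ of type $SE$ each satisfy $\Phi_2(\kappa) < \eta_j$, so their bottom faces $v_1$ and $v_2$ are absorbing by the labelling of faces; the types of $\kappa_3$ and $\kappa_4$ are irrelevant.

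With these identifications the argument closes immediately. Since $\cA \in \sAtt(\cF)$, we have $\cA^0 = \cA$, so the hypothesis $v \in \cA$ gives $v \in \cA^0 \cap \cV(\kappa_2)$ and $v \in \cA^0 \cap \cV(\kappa_1)$. Applying Proposition~\ref{prop:cellAtt_Rev} with $\kappa = \kappa_2$ yields $\cV_a(\kappa_2) \subset \cA^0 = \cA$, whence $v_2 \in \cA$; applying it with $\kappa = \kappa_1$ yields $\cV_a(\kappa_1) \subset \cA$, whence $v_1 \in \cA$. Together with the hypothesis $v \in \cA$, this gives $\setof{v, v_1, v_2} \subset \cA$.

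The only step demanding any care --- and so the ``hard part,'' modest as it is --- is the middle one: one must confirm that $v_1$ and $v_2$ are absorbing faces of $\kappa_1$ and $\kappa_2$ (not entrance faces), since the conclusion of Proposition~\ref{prop:cellAtt_Rev} concerns $\cV_a$. Once that is checked against the Case 2(b)(i) geometry, the rest is a formality.
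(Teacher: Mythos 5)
Your proof is correct and follows essentially the same route as the paper's: the paper deduces $v_1\in\cA$ from the edge $v\to v_1$ (forced by $\kappa_1$ being of type $S$ or $SE$) together with forward invariance, and $v_2\in\cA$ from Proposition~\ref{prop:cellAtt_Rev} applied to $\kappa_2$ of type $SE$; your uniform double application of Proposition~\ref{prop:cellAtt_Rev} is the same argument, since its proof in the case $v\in\cV_e(\kappa_1)$ is exactly the edge-plus-invariance step. Your identification of $v_1,v_2$ as absorbing faces from the Case 2(b)(i) cell types, and your use of $\cA^0=\cA$ for attractors, match the paper's reasoning.
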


\begin{proof}
By Figure~\ref{fig:Case2b}(a), $\kappa_1$ is of type $S$ or $SE$. Thus $v\to v_1$ and hence $v_1\in\cA$.
Furthermore, $\kappa_2$ is of type $SE$ and hence by Proposition~\ref{prop:cellAtt_Rev} $v_2\in \cA$.
\end{proof}

We now extend Lemma~\ref{lem:2bi} to a sequence of elementary domains.
For this need to be able to consider symmetric versions of Case 2(b)(i) as indicated in Figure~\ref{fig:Case2bi*}.
Recall that given an elementary domain $E(i,j)$, $\setof{v_{i,\overline{j}}, v_{i,\overline{j-1}}, v_{\overline{i},j}, v_{\overline{i-1},j}}\subset \cE(i,j)$.

\begin{figure}
\begin{tikzpicture}
[scale=0.5]

	\draw[blue, thick](0,0) -- (8,0);
	\draw[blue, thick](0,4) -- (8,4);
	\draw[blue, thick](0,0) -- (0,8);
	\draw[blue, thick](8,0) -- (8,8);
	\draw[blue, thick](4,0) -- (4,8);
	\draw[blue, thick](0,8) -- (8,8);

	\draw (4.30,5.70) node {$v_1$};		
	\filldraw(4,6) circle (4pt);
	\draw[->] (3.5,6) -- (4.5,6);
	\draw (2.50,4.35) node {$v$};	
	\filldraw(2,4) circle (4pt);
	\draw[->] (2,3.5) -- (2,4.5);	
	\draw (6.50,3.60) node {$v_3$};
	\draw(6,4) circle (4pt);
	\draw (4.5,1.70) node {$v_2$};
	\filldraw(4,2) circle (4pt);
	\draw[->] (3.5,2) -- (4.5,2);
	
	\draw(5.5,5) node{$\kappa_4$};
	
	\draw(6,7.25) node{$\star$};
	\draw(6,6) node{$\star$};
	\draw(7.25,7.25) node{$\star$};
	\draw(7.25,6) node{$\star$};
	
	\draw(1.25,5) node{$\kappa_1$};
	
	\draw(3.25,7.25) node{$\star$};
	\draw(3.25,6) node{$\star$};

	\draw(1.25,1.25) node{$\kappa_2$};
	
	\draw(3.25,3.25) node{$\star$};
	
	\draw(5.25,1.25) node{$\kappa_3$};
	
	\draw(6,2) node{$\star$};
	\draw(7.25,2) node{$\star$};
	\draw(6,0.75) node{$\star$};
	\draw(7.25,0.75) node{$\star$};

	\draw(4,-0.75) node{(a)};	
\end{tikzpicture}
\qquad
\begin{tikzpicture}
[scale=0.5]

	\draw[blue, thick](0,0) -- (8,0);
	\draw[blue, thick](0,4) -- (8,4);
	\draw[blue, thick](0,0) -- (0,8);
	\draw[blue, thick](8,0) -- (8,8);
	\draw[blue, thick](4,0) -- (4,8);
	\draw[blue, thick](0,8) -- (8,8);

	\draw (4.30,5.70) node {$v_1$};		
	\filldraw(4,6) circle (4pt);
	\draw[->] (3.5,6) -- (4.5,6);
	\draw (2.50,4.35) node {$v$};	
	\filldraw(2,4) circle (4pt);
	\draw[->] (2,4.5) -- (2,3.5);	
	\draw (6.50,3.60) node {$v_3$};
	\draw(6,4) circle (4pt);
	\draw (4.5,1.70) node {$v_2$};
	\filldraw(4,2) circle (4pt);
	\draw[->] (3.5,2) -- (4.5,2);
	
	\draw(5.5,5) node{$\kappa_4$};
	
	\draw(6,7.25) node{$\star$};
	\draw(6,6) node{$\star$};
	\draw(7.25,7.25) node{$\star$};
	\draw(7.25,6) node{$\star$};
	
	\draw(1.25,5) node{$\kappa_1$};
	
	\draw(3.25,4.75) node{$\star$};

	\draw(1.25,1.25) node{$\kappa_2$};
	
	\draw(3.25,2) node{$\star$};
	\draw(3.25,0.75) node{$\star$};
	
	\draw(5.25,1.25) node{$\kappa_3$};
	
	\draw(6,2) node{$\star$};
	\draw(7.25,2) node{$\star$};
	\draw(6,0.75) node{$\star$};
	\draw(7.25,0.75) node{$\star$};

	\draw(4,-0.75) node{(b)};	
\end{tikzpicture}
\qquad
\begin{tikzpicture}
[scale=0.5]

	\draw[blue, thick](0,0) -- (8,0);
	\draw[blue, thick](0,4) -- (8,4);
	\draw[blue, thick](0,0) -- (0,8);
	\draw[blue, thick](8,0) -- (8,8);
	\draw[blue, thick](4,0) -- (4,8);
	\draw[blue, thick](0,8) -- (8,8);

	\draw (4.75,5.70) node {$v_{i,\overline{j}}$};		
	\filldraw[blue](4,6) circle (4pt);
	\draw (3.0,3.55) node {$v_{\overline{i-1},j}$};	
	\filldraw[blue](2,4) circle (4pt);
	\draw (6.60,3.55) node {$v_{\overline{i},j}$};
	\filldraw[blue](6,4) circle (4pt);
	\draw (5.0,1.70) node {$v_{i,\overline{j-1}}$};
	\filldraw[blue](4,2) circle (4pt);
	
	\draw(5.5,5) node{$\kappa_4$};
	
	\draw(7.25,7.25) node{$\star$};
	\draw(7.25,6) node{$\star$};
	
	\draw(1.25,5) node{$\kappa_1$};
	
	\draw(3.25,7.25) node{$\star$};
	\draw(3.25,6) node{$\star$};

	\draw(1.25,1.25) node{$\kappa_2$};
	
	\draw(3.25,2) node{$\star$};
	\draw(3.25,0.75) node{$\star$};
	
	\draw(5.75,1.0) node{$\kappa_3$};
	
	\draw(7.25,2) node{$\star$};
	\draw(7.25,0.75) node{$\star$};
		
	\draw(4,-0.75) node{(c)};	
\end{tikzpicture}

\caption{(a) Possible cell types for Case 2(b)(i)* obtained from Case 2(b)(i) by counterclockwise rotation of $90^\circ$.
(b) Possible cell types for Case 2(b)(i)** obtained from Case 2(b)(i)* by a vertical reflection.
(c) Possible cell types in elementary domain $E(i,j)$ in Lemma~\ref{lem:case2bi*}.
}
\label{fig:Case2bi*}
\end{figure}
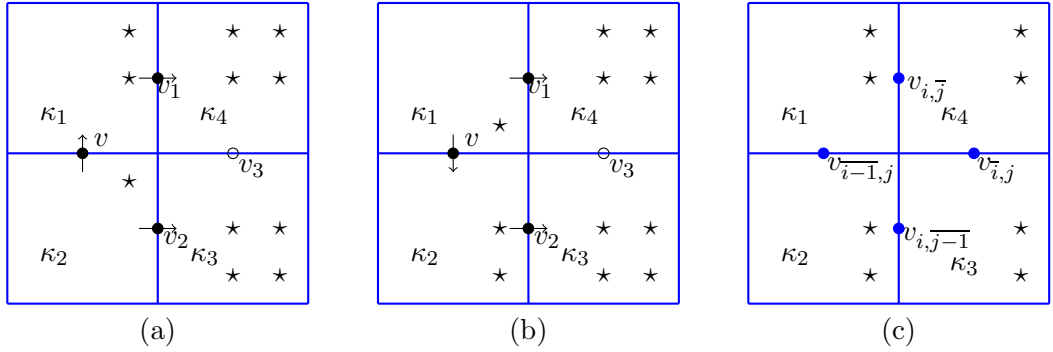

\begin{lem}
\label{lem:case2bi*}
Fix $L\geq 1$.
Consider elementary domains $E(i-\ell,j)$, $\ell=0,\ldots, L$.
Assume that in region regions $E(i-\ell,j)$, $\ell=1,\ldots, L-1$, the possible cell types are of the form indicated in Figure~\ref{fig:Case2bi*}(c). 
Assume that in region $E(i-L,j)$ the possible cell types are of the form  indicated in Figure~\ref{fig:Case2bi*}(a).
Assume that in region $E(i,j)$ the possible cell types are of the form associated with Case 3(a)(ii)(B) of the proof of  Proposition~\ref{prop:trapN}.
Let $\cL = \setof{v_{i-\ell,\overline{j}}, v_{i-\ell,\overline{j-1}}\mid \ell =0,\ldots, L}$.
If $v_{\overline{i-L-1},j}\in\cA_0$, then $\cL\subset \cA_0$.
\end{lem}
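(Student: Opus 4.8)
The plan is to prove the slightly stronger statement that for every $\ell\in\{0,1,\dots,L\}$ \emph{both} vertical faces $v_{i-\ell,\overline{j}}$ and $v_{i-\ell,\overline{j-1}}$ lie in $\cA_0$, by a finite downward induction on $\ell$ running from $\ell=L$ to $\ell=0$. The whole argument rests on one observation: since $\cA_0$ is an attractor we have $\cA_0=\cA_0^0$, so Proposition~\ref{prop:cellAtt_Rev} applies to $\cA_0$ and tells us that as soon as a single face of a cell $\kappa$ belongs to $\cA_0$, every absorbing face of $\kappa$ belongs to $\cA_0$. Each inductive step then reduces to identifying, from the cell types recorded in the figures, that the target face in column $i-\ell$ is absorbing for the cell bridging column $i-\ell-1$ to column $i-\ell$.

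For the base case $\ell=L$ I would work in the leftmost domain $E(i-L,j)$, whose cell types are those of Figure~\ref{fig:Case2bi*}(a). The hypothesis face $v_{\overline{i-L-1},j}$ is shared by the two cells $\kappa(i-L-1,j)$ and $\kappa(i-L-1,j-1)$, which the figure shows to be of type $NE$ or $E$, and of type $NE$, respectively. In both cells the attracting fixed point satisfies $\Phi_1>\xi_{i-L}$, so the right faces $v_{i-L,\overline{j}}$ and $v_{i-L,\overline{j-1}}$ are absorbing. Applying Proposition~\ref{prop:cellAtt_Rev} to each of these two cells, using $v_{\overline{i-L-1},j}\in\cA_0=\cA_0^0$, therefore places both $v_{i-L,\overline{j}}$ and $v_{i-L,\overline{j-1}}$ in $\cA_0$.

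For the inductive step, assuming $v_{i-m-1,\overline{j}},v_{i-m-1,\overline{j-1}}\in\cA_0$, I would consider the two cells $\kappa(i-m-1,j)$ and $\kappa(i-m-1,j-1)$, whose left faces are exactly $v_{i-m-1,\overline{j}}$ and $v_{i-m-1,\overline{j-1}}$ and whose right faces are the targets $v_{i-m,\overline{j}}$ and $v_{i-m,\overline{j-1}}$. For $1\le m\le L-1$ these are the upper-left and lower-left cells of the interior through-domain $E(i-m,j)$ of Figure~\ref{fig:Case2bi*}(c), of types $NE/E$ and $E/SE$; for $m=0$ they are the left-hand cells $\kappa_2,\kappa_3$ of $E(i,j)$ in Case 3(a)(ii)(B), whose types are read off from Figure~\ref{fig:Case3}(a) and are again $NE/E$ and $E/SE$. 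In every instance $\Phi_1>\xi_{i-m}$, so the two target right faces are absorbing, and Proposition~\ref{prop:cellAtt_Rev} applied to each cell (its left face being in $\cA_0=\cA_0^0$) places those right faces in $\cA_0$. Iterating down to $\ell=0$ yields $\cL\subset\cA_0$.

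The main obstacle is not the logical structure, which is essentially a one-line induction, but the bookkeeping: one must correctly translate the cell-type diagrams of Figure~\ref{fig:Case2bi*} and Figure~\ref{fig:Case3}(a) into entrance/absorbing labels for the specific faces $v_{i-\ell,\overline{j}}$ and $v_{i-\ell,\overline{j-1}}$, and in particular verify that in each of the three regimes (leftmost domain, interior through-domain, rightmost Case 3(a)(ii)(B) domain) the connecting west cell always has its fixed point east of the threshold $\xi_{i-\ell}$, so that the desired face is genuinely absorbing. Once this local reading is checked, the repeated application of Proposition~\ref{prop:cellAtt_Rev} is immediate, and the lemma follows exactly as Lemma~\ref{lem:2bi} is obtained for a single domain.
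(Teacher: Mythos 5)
Your proof is correct and takes essentially the same route as the paper's: a left-to-right induction along the chain of elementary domains, reading off from Figures~\ref{fig:Case2bi*}(a), (c) and \ref{fig:Case3}(a) that the left cells in each column have their fixed points east of the next threshold, so the right-hand faces are absorbing and membership in $\cA_0$ propagates. The only cosmetic difference is that you apply Proposition~\ref{prop:cellAtt_Rev} uniformly at every step, whereas the paper invokes Lemma~\ref{lem:2bi} for the base case and the explicit $\cF$-edge together with forward invariance for the inductive step; these are interchangeable.
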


\begin{proof}
By Lemma~\ref{lem:2bi}, $v_{i-L,\overline{j}}, v_{i-L,\overline{j-1}}\in \cA_0$.
From Figure~\ref{fig:Case2bi*}(c), $v_{i-L,\overline{j}}\in \cV_e(\kappa_4)$ and $v_{i-L+1,\overline{j}}\in \cV_a(\kappa_4)$ for $\kappa_4$ associated with $E(i-L+1,j)$.  
Similarly, $v_{i-L,\overline{j-1}}\in \cV_e(\kappa_3)$ and $v_{i-L+1,\overline{j-1}}\in \cV_a(\kappa_3)$ for $\kappa_3$ associated with $E(i-L+1,j)$. 
Thus by the definition of $\cF$, $v_{i-L,\overline{j}} \to v_{i-L+1,\overline{j}}$ and $v_{i-L,\overline{j-1}} \to v_{i-L+1,\overline{j-1}}$.
Therefore, $v_{i-L+1,\overline{j}}, v_{i-L+1,\overline{j-1}}\in \cA_0$.
The result now follows by induction.
\end{proof}

\begin{prop}
\label{prop:N}
Let $\Sigma = \Sigma(\Gamma,\Lambda,\Xi^1,\Xi^2,\sH^1,\sH^2)$ and $\cF$ be as in Theorem~\ref{thm:controlledPert}.
The set 
\[
\sN = \setof{N_\cA\mid \cA\in \sAtt(\cF)},
\]
where $N_\cA$ is given by Definition~\ref{defn:latticeMorphism}, is a finite distributed lattice with lattice operations
\begin{align}
\label{eq:wedge}
N_{\cA_0}\wedge N_{\cA_1} &:= N_{\cA_0\wedge\cA_1} \subset N_{\cA_0}\cap N_{\cA_1} \\
\label{eq:vee}
N_{\cA_0}\vee N_{\cA_1} &:= N_{\cA_0\vee\cA_1} = N_{\cA_0}\cup N_{\cA_1}
\end{align}
with minimal element ${\bf 0} = \emptyset$.
\end{prop}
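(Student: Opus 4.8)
The plan is to show that the assignment $\Theta\colon \sAtt(\cF)\to \sN$, $\cA\mapsto N_\cA$, is an isomorphism of finite distributive lattices, where $\sAtt(\cF)$ carries its known structure \eqref{eq:veeAtt}--\eqref{eq:wedgeAtt} and $\sN$ is ordered by inclusion. Since $\sAtt(\cF)$ is a finite distributive lattice with least element $\emptyset$, and $N_\emptyset=\emptyset$ by Lemma~\ref{lem:trivialN}, it suffices to establish two facts: first, that $\Theta$ preserves joins, i.e.\ $N_{\cA_0\vee\cA_1}=N_{\cA_0}\cup N_{\cA_1}$, which is exactly \eqref{eq:vee}; and second, that $\Theta$ is injective. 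Granting these, $\Theta$ is automatically an order isomorphism onto $(\sN,\subseteq)$, because $\cA_0\subseteq\cA_1 \iff \cA_0\vee\cA_1=\cA_1 \iff N_{\cA_0}\cup N_{\cA_1}=N_{\cA_1} \iff N_{\cA_0}\subseteq N_{\cA_1}$, the middle equivalence being the join formula. An order isomorphism between lattices is a lattice isomorphism, so $\sN$ inherits the distributive lattice structure of $\sAtt(\cF)$, its join is union, and its meet is $N_{\cA_0\wedge\cA_1}$. Finally, $\cA_0\wedge\cA_1\subseteq \cA_0\cap\cA_1$ by \eqref{eq:wedgeAtt}, and monotonicity (a consequence of join-preservation) gives $N_{\cA_0\wedge\cA_1}\subseteq N_{\cA_0}\cap N_{\cA_1}$, which is \eqref{eq:wedge}.

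For join-preservation I would first reduce to the seed data. By Proposition~\ref{prop:N0}, $N_\cA$ depends only on $\cA^0$, and for an attractor $\cA^0=\cA$. Since $\cA_0\vee\cA_1=\cA_0\cup\cA_1$ is again an attractor, $(\cA_0\cup\cA_1)^0=\cA_0\cup\cA_1=\cA_0^0\cup\cA_1^0$, so the tiles seeded by \textbf{Rule 0} and \textbf{Rule 1} (which read off only $\cN^0\cap\cM$ and $\cN^0\cap\cT$) for the union are precisely the union of those seeded for $\cA_0$ and for $\cA_1$. It then remains to check, elementary domain by elementary domain, that the closure under \textbf{Rules 2}--\textbf{5} commutes with this union of seeds. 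The one rule that is not monotone in the set of already-included tiles is \textbf{Rule 4}, whose hypothesis contains the negative clause $G^2(\kappa_\alpha)\not\subset N_\cN$. The resolution is that the wide chip $C^w(\kappa_\gamma,v_{\alpha\gamma},\pi)$ it produces always lies inside the collar one-tile $G^1(v_{\alpha\gamma})$, and whenever $G^2(\kappa_\alpha)$ is adjoined (because it was already present for $\cA_0$ or $\cA_1$) the full tile covering that chip is forced by \textbf{Rule 1} or \textbf{Rule 5}; hence point-set inclusion is preserved and no region is lost. This verification is carried out through the exhaustive local analysis of the cases in the proof of Proposition~\ref{prop:trapN}, together with Lemmas~\ref{lem:2bi} and \ref{lem:case2bi*}, which propagate membership of transparent faces across chains of adjacent elementary domains and so guarantee that the two closures agree globally.

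For injectivity I would show that $\cA$ can be read back off from $N_\cA$. Because $\cA=\cA^0\subseteq\cM\cup\cT$ by Proposition~\ref{prop:uvw}, it is enough to recover $\cA\cap\cM$ and $\cA\cap\cT$. If $\kappa$ is an attracting (type $A$) cell, the interior of $G^2(\kappa)$ is disjoint from every other tile and chip, so $G^2(\kappa)\subset N_\cA$ holds if and only if $G^2(\kappa)$ is seeded by \textbf{Rule 0} or \textbf{Rule 1}; in either case forward invariance $\cF(\cA)\subset\cA$ forces $w_\kappa\in\cA$, and conversely \textbf{Rule 0} applies when $w_\kappa\in\cA$. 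Thus $w_\kappa\in\cA\iff G^2(\kappa)\subset N_\cA$, recovering $\cA\cap\cM$. If $v$ is a transparent face, the full one-tile $G^1(v)$ can be forced only by \textbf{Rule 1}, since \textbf{Rule 5} produces a $G^1$ tile only for a face that is entrance to both adjacent cells, i.e.\ a white vertex, and white vertices are excluded from $\cN^0$, while chips are proper triangular subsets of $G^1(v)$. Hence $v\in\cA\iff G^1(v)\subset N_\cA$, recovering $\cA\cap\cT$. Therefore $N_{\cA_0}=N_{\cA_1}$ forces $\cA_0=\cA_1$.

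The main obstacle is the join-preservation step: proving $N_{\cA_0\cup\cA_1}=N_{\cA_0}\cup N_{\cA_1}$ as point sets in the presence of the non-monotone \textbf{Rule 4} and across long chains of adjacent elementary domains. This is precisely where the case-by-case content of Proposition~\ref{prop:trapN} and the propagation Lemmas~\ref{lem:2bi}--\ref{lem:case2bi*} do the real work; once it is in hand, the lattice-theoretic packaging in the remaining steps is routine.
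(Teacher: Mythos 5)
Your proposal is correct and its combinatorial core---verifying tile by tile that $N_{\cA_0\cup\cA_1}=N_{\cA_0}\cup N_{\cA_1}$, with the negative hypothesis of \textbf{Rule 4} and the long-range propagation forced by \textbf{Rule 5} handled through the case analysis of Proposition~\ref{prop:trapN} and Lemmas~\ref{lem:2bi}--\ref{lem:case2bi*}---is exactly where the paper also concentrates its effort. The packaging, however, is genuinely different. The paper first proves the inclusion $N_{\cA_0\wedge\cA_1}\subset N_{\cA_0}\cap N_{\cA_1}$ by a direct case-by-case check over chips and $0$-, $1$-, $2$-tiles, and then recycles those arguments to get the join equality; it treats $\sN$ as a family indexed by $\sAtt(\cF)$, never proves injectivity of $\cA\mapsto N_\cA$, and simply asserts that the transported operations are well defined. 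You instead establish the join formula first, deduce monotonicity of $\cA\mapsto N_\cA$ from it, and obtain the meet inclusion for free from $\cA_0\wedge\cA_1\subseteq\cA_0\cap\cA_1$; you then add an injectivity argument (recovering $\cA\cap\cM$ from the $2$-tiles of attracting cells and $\cA\cap\cT$ from full $1$-tiles, using that \textbf{Rule 5} only produces $1$-tiles over white vertices, which cannot lie in an attractor under the no-black-vertex assumption) so that the map is an honest lattice isomorphism onto $(\sN,\subseteq)$. Your route buys a cleaner logical structure and supplies the well-definedness that the paper leaves implicit; its cost is that the join equality must now be checked in both directions directly, rather than importing one direction from the meet computation, so the total amount of local case work is comparable. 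One small point to make explicit: your backward implications ($G^2(\kappa)\subset N_\cA\Rightarrow w_\kappa\in\cA$ and $G^1(v)\subset N_\cA\Rightarrow v\in\cA$) tacitly assume that a full tile cannot be covered as a point set by other members of the minimal collection without itself belonging to it; this holds because distinct tiles have disjoint interiors and chips are thin triangles properly contained in $1$-tiles, but it deserves a sentence.
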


\begin{proof}
Observe that $\sN$ is well defined and \eqref{eq:wedge} and \eqref{eq:vee} are well defined lattice operations since $\sAtt(\cF)$ is a lattice with $\vee$ and $\wedge$ defined by \eqref{eq:veeAtt} and \eqref{eq:wedgeAtt}, respectively.

By Lemma~\ref{lem:trivialN} ${\bf 0} = \emptyset$.
Thus all that remains to be shown is that the inclusion and equality of \eqref{eq:wedge} and \eqref{eq:vee} hold.

To prove that the inclusion of \eqref{eq:wedge} we consider several representative cases leaving the rest to the reader.
Let $\cA' := \cA_0\wedge \cA_1 \subset \cA_0\cap \cA_1$.

Assume that a wide chip $C^w$ is contained in $N_{\cA'}$. 
Without loss of generality we can assume that this implies that there is an elementary region $E(i,j)$ such that $N_{\cA'}\cap E(i,j)$ gives rise to Case 2(a)(i)(B) in the proof of Proposition~\ref{prop:trapN}. 
Observe that $v_2\in\cW$ and hence $v_2\not\in \cA_0 \cup \cA_1$.
If $v_3\not\in \cA_0 \cup \cA_1$, then $C^w$ belongs to both $N_{\cA_0}$ and $N_{\cA_1}$.
If $v_3\in \cA_0$ or $v_3 \in \cA_1$, then $G^1(v_3)$ belongs to $N_{\cA_0}$ or $N_{\cA_1}$.
Thus in all these cases  $G^1(v_3)$ belongs to $N_{\cA_0}\cap N_{\cA_1}$.

Assume that a narrow chip $C^n$ is contained in $N_{\cA'}$. 
Again, without loss of generality we can assume that this implies that there is an elementary region $E(i,j)$ such that $N_{\cA'}\cap E(i,j)$ gives rise to Case 2(a)(i)(A) or Case 3(c)(iii).
Recall that in Case  2(a)(i)(A), $\kappa_3$ is of type $NE$.
Thus, if $v_2\in\cA_i$ or $v_3\in\cA_i$, $i=0$ or $1$, then as indicated in Figure~\ref{fig:AcapT_Rev} by {\bf Rule 1} $G^2(\kappa_3)\cup G^1(v_3)\subset N_{\cA_i}$.
Therefore, $C^n$ is contained in $N_{\cA_0}\cap N_{\cA_1}$.
A similar argument applies to Case 3(c)(iii).

Assume that a $0$-tile $G^0(\pi)$ is contained in $N_{\cA'}$. 
If $\pi = (\xi_i,\eta_j)$, then {\bf Rule 2} is applicable to $E(i,j)$.
Reviewing the proof of Proposition~\ref{prop:trapN} we see that this happens in Cases 1(a)(ii), 1(b), 1(c)(i), 2(a), 2(b), 3(b), and 3(c).
In the subcases of Cases 1 and 2, $v,v_1\in \cT\cap\cA'$ and thus $v,v_1\in \cT\cap\cA_i$, $i=0,1$.
Therefore, after applying {\bf Rule 1} in these cases to $N_{\cA_i}$, $i=0,1$,  {\bf Rule 2} is applicable, and hence $G^0(\pi)$ is contained in $N_{\cA_i}$, $i=0,1$.
The subcases of Case 3 follows from a similar argument based on $v,v_2\in \cT\cap\cA'$.

Assume that a $1$-tile $G^1(v_\alpha)$ is contained in $N_{\cA'}$. 
This implies that {\bf Rule 1} or {\bf Rule 5} applies.
Observe that $G^1(v_\alpha)$ is introduced by {\bf Rule 1} if $v_\alpha\in \cT\cap\cA'\cap \cE(i,j)$.
But $v_\alpha\in \cT\cap\cA'\cap \cE(i,j)$ implies that $v_\alpha\in \cT\cap(\cA_0\cap \cA_1)\cap \cE(i,j)$ and
hence by {\bf Rule 1} $G^1(v_\alpha)$ belongs to $N_{\cA_i}$, $i=0,1$.
If $G^1(v_\alpha)$ is introduced by {\bf Rule 5}, then without loss of generality we can assume that we are in the setting of Case 2(b)(i) or Case 3(a)(ii)(B).
We leave it to the reader to check these cases using similar arguments as above.

Finally, assume that a $2$-tile $G^2(\kappa_\alpha)$  is contained in $N_{\cA'}$. 
This implies that {\bf Rule 0} and/or {\bf Rule 1} applies. In both cases the fact that $\cA'\subset \cA_0\cap \cA_1$
implies that {\bf Rule 0} and/or {\bf Rule 1} also applies to $N_{\cA_i}$, $i=0,1$.

We now turn to the proof of equality in \eqref{eq:vee}.
In general, since $\cA' := \cA_0\vee \cA_1 = \cA_0\cup \cA_1$, the same arguments that were used to verify the inclusion of \eqref{eq:wedge} lead to equality in the setting of \eqref{eq:vee}.
The exception is the introduction of $1$-tile $G^1(v_\alpha)$ due to {\bf Rule 5}.
This occurs in two cases Case 2(b)(i) or Case 3(a)(ii)(B).

Consider Case 2(b)(i).  Since $\cA' = \cA_0\cup \cA_1$ and $v\in\cA'$, we can without loss of generality assume that $v\in \cA_0$.
Applying Lemma~\ref{lem:2bi}  we see that $v,v_1,v_2\in \cA_0$ and hence {\bf Rule 5} applies to $\cA_0$.
Hence $\cA'\cap E(i,j) = (\cA_0\cup \cA_1)\cap E(i,j)$.

Finally, consider Case 3(a)(ii)(B). The assumption is that $G^1(v_2)\subset N_{\cA'}$, but $v_2\not\in \cA'$.
Thus, $G^1(v_2)$ was introduced by  {\bf Rule 5} applied to $E(i-1,j)$.
If $E(i-1,j)$ is of the form Case 3(a)(ii)(B), then we repeat the argument to conclude that $G^1(v_2)$ for $E(i-1,j)$ was introduced by 
{\bf Rule 5} applied to $E(i-2,j)$.
Since there are only a finite number of cases we can assume that there exists $L\geq 1$ such that $E(i-L,j)$ is of the form Case 2(b)(i)* or Case 2(b)(i)** (see Figure~\ref{fig:Case2bi*}(a) and (b)) and $E(i-\ell,j)$ is of the form indicated in Figure~\ref{fig:Case2bi*}(c) for $\ell = 1,\ldots, L-1$.
The desired equality in \eqref{eq:vee} now follows from Lemma~\ref{lem:case2bi*}.
\end{proof}

\begin{proof}[Proof of Theorem~\ref{thm:controlledPert}]
Proposition~\ref{prop:trapN} guarantees that for all $\cA\in\sAtt(\cF)$, $N_\cA \in \sInvset^+(\varphi)$.
Thus hypothesis (1) of Theorem~\ref{thm:translate} is satisfied.
Lemma~\ref{lem:trivialN} indicates that hypothesis (2) of Theorem~\ref{thm:translate} is satisfied.
Finally, Proposition~\ref{prop:N} guarantees the validity of hypothesis (3).
\end{proof}

\begin{rem}
\label{rem:locationM}
Because Theorem~\ref{thm:controlledPert} is obtained as an application of Theorem~\ref{thm:translate} we also have information concerning the location of the Morse sets $M(p)$ of the $\delta$-constrained continuous switching system.
In particular, each $p\in \bar{\sP}$  is associated to a unique a join irreducible element $N_\cA$ of the lattice $\sN$ and hence a unique $\cA\in\sAtt(\cF)$ which is join irreducible in $\sAtt(\cF)$.
Theorem~\ref{thm:translate} guarantees that
\[
M(p) \subset N_\cA\setminus N_{\pred{\cA}} \subset (0,\infty)^2.
\]
\end{rem}

We conclude this section by noting that the assumption that $f^{(\delta)}$ be constant on the tiles is not necessary.
Applying the classical continuation theorem for Morse decompositions \cite{cbms} to Theorem~\ref{thm:controlledPert} gives rise to the following result.
\begin{thm}
\label{thm:continuousPert}
Let 
\begin{equation}
\label{eq:dconstrained3}
\dot{x} = -\Gamma x + f^{(\delta)}(x), \quad x\in (0,\infty)^2
\end{equation}
be a $\delta$-constrained continuous switching system with an associated Morse graph $\sMG$ for the global attractor of \eqref{eq:dconstrained3}.
If 
\[
\sup_{x\in (0,\infty)^2} \| f(x) - f^{(\delta)}(x)\| <\epsilon
\]
for sufficiently small $\epsilon >0$, then $\sMG$ is a Morse graph for the global attractor of 
\[
\dot{x} = -\Gamma x + f(x), \quad x\in (0,\infty)^2.
\]
\end{thm}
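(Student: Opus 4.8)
The plan is to derive Theorem~\ref{thm:continuousPert} from Theorem~\ref{thm:controlledPert} by showing that the entire apparatus used to prove the latter --- the lattice $\sN$ of trapping regions and its passage through Theorem~\ref{thm:translate} --- survives a sufficiently small perturbation of the nonlinearity. The mechanism is the classical continuation principle \cite{cbms}, whose concrete content here is that strict inward transversality along the boundary of a trapping region is an open condition. First I would fix the perturbation $f$, write $g := f^{(\delta)}$, and arrange a common compact global attractor: since $g$ is assembled from the finitely many vectors $\Lambda(\kappa)$ and convex interpolations between them, its image lies in a compact subset of $(0,\infty)^2$ bounded away from the axes, so there are constants with $0 < a_i^- < g_i(x) < a_i^+$; for $\epsilon$ small the same inequalities (with slightly relaxed constants) hold for $f$. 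Hence Proposition~\ref{prop:globalAtt} applies to both fields, producing global attractors $X_g$ and $X_f$ inside a common box $B=[a^-,a^+]^2$, and I would restrict both flows exactly as in the proof of Theorem~\ref{thm:controlledPert}.

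The core step is a uniform transversality estimate. By Proposition~\ref{prop:trapN} each $N_\cA$, $\cA\in\sAtt(\cF)$, is an attracting block for $\dot x = -\Gamma x + g(x)$: its boundary is a finite union of straight segments $e$ on which $(-\Gamma x + g(x))\cdot n_e < 0$, and the transversality arguments of Sections~\ref{sec:chips}--\ref{sec:proof} (Propositions~\ref{prop:TG2}--\ref{prop:TC}) establish these inequalities with a quantitative margin. Because $\sAtt(\cF)$ is finite and each $N_\cA$ has finitely many edges, only finitely many distinct edges occur across all $N_\cA\in\sN$, so with $n_e$ the unit outward normals
\[
m := \min_{N_\cA\in\sN}\ \min_{e\subset \partial N_\cA}\ \min_{x\in e}\ \bigl(-(-\Gamma x + g(x))\cdot n_e\bigr) > 0,
\]
the minima being attained on compact sets. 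Choosing $\epsilon < m$ gives, for every boundary edge $e$ and every $x\in e$,
\[
(-\Gamma x + f(x))\cdot n_e = (-\Gamma x + g(x))\cdot n_e + (f(x)-g(x))\cdot n_e < -m + \epsilon < 0 .
\]
Thus every $N_\cA$ is again an attracting block, hence a trapping region, for the flow $\varphi$ of the perturbed system, and so $\sN\subset\sInvset^+(\varphi)$.

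I would then re-run the conclusion of Theorem~\ref{thm:controlledPert} verbatim for $\varphi$. The lattice operations on $\sN$ established in Proposition~\ref{prop:N}, namely $N_{\cA_0}\vee N_{\cA_1}=N_{\cA_0}\cup N_{\cA_1}$ and $N_{\cA_0}\wedge N_{\cA_1}=N_{\cA_0\wedge\cA_1}\subset N_{\cA_0}\cap N_{\cA_1}$, depend only on the combinatorial data $\cF$ through $\sAtt(\cF)$ and on the geometric recipe of Definition~\ref{defn:latticeMorphism}; they are independent of the flow. Hence $\sN$ is the same finite distributive lattice as before, with the same $\sJ^\vee(\sN)$ and the same indexing poset $\bar{\sP}$. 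Together with Lemma~\ref{lem:trivialN} (giving ${\bf 0}=\emptyset$) and the transversality step above, hypotheses (1)--(3) of Theorem~\ref{thm:translate} hold for $\varphi$, and the theorem yields a Morse decomposition of $X_f$ indexed by $\bar{\sP}$ with admissible order $\leq_{\bar{\sP}}$; its Morse graph is the Hasse diagram of $\bar{\sP}$, which is precisely $\sMG$.

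The step I expect to be the main obstacle is the bookkeeping that makes $m$ genuinely uniform and the hypotheses of Theorem~\ref{thm:translate} genuinely valid for $\varphi$: one must confirm that only finitely many distinct edges arise over all $N_\cA\in\sN$, that the inequalities of Propositions~\ref{prop:TG2}--\ref{prop:TC} are \emph{strict} on compact edges so that the minima defining $m$ are strictly positive, and that passing to the perturbed attractor $X_f$ --- which need not coincide with $X_g$ --- does not disturb the identification of the invariant sets $\Inv(N_\cA,\varphi)$ and the dual repellers entering the $M(p)$. This last point is where the abstract continuation theorem \cite{cbms} does real work: it guarantees that the attractor--repeller, hence Morse-set, structure varies continuously and, being indexed by a fixed finite poset, is locally constant along the segment $s\mapsto (1-s)g+sf$, which remains inside the $\epsilon$-ball for all $s\in[0,1]$.
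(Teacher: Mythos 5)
Your proposal is correct, but it is considerably more explicit than what the paper actually does: the paper's entire proof is the single sentence preceding the statement, namely an appeal to the classical continuation theorem for Morse decompositions from \cite{cbms} applied to Theorem~\ref{thm:controlledPert}. What you have written is essentially the concrete content that lies behind that citation in this particular setting. Your key observation --- that the boundary of each $N_\cA\in\sN$ consists of finitely many compact edges on which the strict inward transversality of Propositions~\ref{prop:TG2}--\ref{prop:TC} yields a uniform margin $m>0$, so that for $\epsilon<m$ every $N_\cA$ remains an attracting block for the perturbed field --- shows directly that $\sN\subset\sInvset^+(\varphi)$ for the perturbed flow, and since the lattice structure of $\sN$ (Proposition~\ref{prop:N}) and Lemma~\ref{lem:trivialN} are purely combinatorial-geometric and independent of the flow, Theorem~\ref{thm:translate} can simply be re-applied to produce a Morse decomposition indexed by the same poset $\bar{\sP}$. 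This buys two things the paper's one-line proof does not: an explicit admissible bound on $\epsilon$ (in terms of the minimal transversality defect over the finitely many edges), and independence from the abstract continuation machinery --- indeed, once the trapping regions are shown to persist, your final paragraph's appeal to continuation along the homotopy $s\mapsto(1-s)g+sf$ is not actually needed, since Theorem~\ref{thm:translate} applied afresh to $\varphi$ already delivers the Morse decomposition with Hasse diagram $\sMG$; the conclusion only asserts existence of a Morse decomposition with that Morse graph, not that the individual Morse sets continue. The only technical point worth flagging is that a few tiles adjacent to the coordinate axes (e.g.\ $G^2(0,0)$) are not closed, so one should note that the relevant interior boundary edges are still compact segments on which the continuous field attains its (negative) maximum normal component; this is harmless but should be said if the argument is written out.
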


\section{Conclusion}

We conclude  with a few comments concerning the results of this paper and potential future directions.

We begin by reviewing the conclusion of Theorem~\ref{thm:controlledPert} and in particular the Morse set $M(\bar{p})$.
Extending the discussion of Remark~\ref{rem:locationM} note that 
\[
{\bf 1} = \bigvee_{\cA\in \sAtt(\cF)}\cA \in \sN
\]
and hence
\[
M(\bar{p})\subset X \setminus N_{\bf 1}.
\]
Whether or not $M(\bar{p})\neq\emptyset$ is depends on the system being considered.
However, given the construction of $\cF$ this cannot be determined given the  calculations presented in this paper.
This is essentially due to the fact that we do not identify any dynamics that is associated with white vertices.
We are currently working on algorithms for generating an alternative state transition diagrams that are capable of capturing the dynamics imposed on $\delta$-constrained continuous switching system by white vertices.
However, the computational cost of working with these alternative state transition diagrams is greater.
Whether the additional dynamical information that can be gained is worth the additional cost in the context of the analysis of biologically motivated networks remains to be seen. 

With regard to $\delta$-constrained continuous switching systems there are three points worth emphasizing.
First, within the $\delta$ collars we impose minimal assumptions on the form of $f^{(\delta)}$.
Thus we do not need to assume that $f^{(\delta)}$ is based on a particular nonlinearity, e.g.\  a Hill function, and thus the dynamics we are recovering is valid for an extremely wide set of potential models.
Second, the larger $\delta$ is the less steep $f^{(\delta)}$ need be, and hence, the less switch-like the system needs to be.
Third, we give explicit bounds on $\delta^*$ in terms of the parameters of $\Lambda$ and $\Gamma$. 
A possible consequence of this is that the computational tools developed for switching systems can be used to guide the study of the local and global dynamics of arbitrary systems of the form \eqref{eq:abstract} for fixed families of nonlinearities.
To be more precise, assume that the $f_n$ are given in terms of Hill functions and we are interested in particular dynamical structures.
Letting the Hill coefficients $k\to\infty$ produces a switching system, the global dynamics of which can be analyzed over all of parameter space using $\cF$~\cite{paper2}. 
We then can identify  parameter values at which the desired nonlinear dynamics is exhibit and determine the maximal size of perturbation  $\delta^*$.
Given $\delta^*$ one can choose $k$ sufficiently large so that the Hill function approximates a $\delta$-constrained continuous switching system for $\delta < \delta^*$.
More standard numerical methods can then be used to identify the desired Morse set for \eqref{eq:abstract} with this large Hill coefficient $k$.
Finally, numerical continuation techniques can be employed to determine if the dynamics continues to  lower biologically motivated values of $k$.

The focus of this paper is on translating information obtained from piecewise constant models in the form of switching systems that are motivated by regulatory networks, to information about the dynamics generated by Lipschitz continuous differential equations. 
However, as is mentioned in the introduction part of the motivation for this paper is our interest in the mathematically rigorous analysis of global dynamics for multiparameter systems over large regions of parameter space and the question of whether for these purposes it is computationally efficient to use the techniques described here.
With this in mind the results presented in Section~\ref{sec:proof} are much more general than those required to study regulatory networks with two nodes.
Observe that Theorem~\ref{thm:continuousPert}  suggests that given a systems of ordinary differential equations of the form \eqref{eq:abstract} one could try to compute the associated dynamics by approximating $f$ via a linear term $\Gamma$  and a piecewise constant function $\Lambda$, and then, computing the associated Morse graph for the associated switching system.
We plan to exploring the effectiveness of such a procedure. 
However, there are at least two related issues that need to be addressed.
First, we need explicit results for bounds on $\epsilon$, the acceptable size of perturbation in Theorem~\ref{thm:continuousPert}.
Second, we need to understand how to determine the threshold values used to defined the domains of the piecewise constant functions.

\vskip0.3in

{\bf Acknowledgement:} TG was partially supported by DMS-1361240, DARPA D12AP200025 and NIH grant 1R01AG040020-01 and  SH and KM have been  partially supported by NSF grants NSF-DMS-0835621, 0915019, 1125174, 1248071, and contracts from AFOSR and DARPA. HK was partially supported by Grant-in-Aid for Scientific Research (No. 25287029, No. 26310208), Ministry of Education, Science, Technology, Culture and Sports, Japan, and JST-CREST.
HO was partially supported by Grant-in-Aid for Scientific Research (No. 24540222), Ministry of Education, Science, Technology, Culture and Sports, Japan.

\bibliographystyle{plainnat}
\bibliography{switching_bib}

\end{document}